\def\titlerunning#1{\gdef\titrun{#1}}
\def\author#1{\gdef\autrun{\def\and{\unskip, }#1}\gdef\@author{#1}}
\def\address#1{{\def\and{\\\hspace*{18pt}}\renewcommand{\thefootnote}{}%
\footnote {#1}}%
\markboth{\autrun}{\titrun}}
\def\email#1{e-mail: #1}
\def\subjclass#1{{\renewcommand{\thefootnote}{}%
\footnote{\emph{Mathematics Subject Classification (2020):} #1}}}
\def\keywords#1{\par\medskip
\noindent\textbf{Keywords.} #1}
\begin{document}

%%%%% To ease editing, add:

%\baselineskip=17pt

%%%%%%%%%%%%%%%%

%% In the running head, give an abbreviation of the title. 
\titlerunning{Geodesics in LPP}

\title{Geometry of geodesics through Busemann measures\\ in directed last-passage percolation}

\author{Christopher Janjigian
\and
Firas Rassoul-Agha
\and  
Timo Sepp\"al\"ainen}

%\date{Submitted: April 16, 2020; Revised: September 29, 2020; Accepted: March 15, 2021}
%post acceptance minor revision: July 1, 2021
%\date{March 15, 2021}
\date{September 3, 2021} %Date of final arXiv version update

\maketitle

\address{C.\ Janjigian: Purdue University,  Mathematics Department,  150 N.\ University Street,  West Lafayette IN 47907, USA; \email{cjanjigi@purdue.edu}
\and
F.\ Rassoul-Agha: University of Utah, Mathematics Department, 155S 1400E, Salt Lake City UT 84112, USA; \email{firas@math.utah.edu}
\and
T.\ Sepp\"al\"ainen: University of Wisconsin-Madison, Mathematics Department, Van Vleck Hall, 480 Lincoln Dr., Madison WI 53706, USA; \email{seppalai@math.wisc.edu}}

%\subjclass{Primary XXXX; Secondary YYYY}
\subjclass{60K35, 60K37} 

%%%%%%%%

\begin{abstract}
We consider  planar directed last-passage percolation on the square lattice with general i.i.d.\ weights and study the geometry of the full set of semi-infinite geodesics in a typical realization of the random environment. The structure of the geodesics is studied through the properties of  the Busemann functions viewed as a stochastic process indexed by the asymptotic direction. Our results are further connected to the ergodic program for and stability properties of random Hamilton-Jacobi equations. 
In the exactly solvable exponential model, our results specialize to give the first complete characterization of the uniqueness and coalescence structure of the entire family of semi-infinite geodesics for any model of this type. Furthermore, we compute statistics of locations of instability, where we discover an unexpected connection to simple symmetric random walk.

%% Keywords are optional
\keywords{Busemann functions, coalescence, corner growth model, Hamilton-Jacobi equations, instability, KPZ, last-passage percolation, random dynamical system}
\end{abstract}

\tableofcontents

% !TEX root = GeoWebPaper.tex

%Should mention that the previous state of the art largely had to restrict attention to semi-infinite geodesics which are `typical' but only for a fixed direction. \textcolor{red}{mention prominently the curious fact about the appearance of bi-infinite shock paths!}

\section{Introduction} 

\subsection{Random growth models} Irregular or random growth is a ubiquitous phenomenon in nature,   from the growth of tumors, crystals, and bacterial colonies to the propagation of forest fires and the spread of water through a porous medium. Models of random growth have been a driving force in probability theory over the last sixty years and a wellspring of important ideas \cite{Auf-Dam-Han-17}. 

The mathematical analysis of such models began in the early 1960s with the introduction of the {\it Eden model} by Eden \cite{Ede-61} and {\it first-passage percolation} (FPP) by Hammersley and Welsh \cite{Ham-Wel-65}.  
About two decades later, early forms of a directed variant of FPP,  {\it directed last-passage percolation}  (LPP), appeared in a paper by Muth \cite{Mut-79} in connection with {\it series of queues in tandem}. 
Soon after, Rost \cite{Ros-81} introduced a random growth model, now known as the {\it corner growth model} (CGM), in connection with the {\it totally asymmetric simple exclusion process} (TASEP), a model of interacting particles. 
A decade later, the CGM arose naturally from LPP in queueing theory in the work of Szczotka and Kelly \cite{Szc-Kel-90} and Glynn and Whitt \cite{Gly-Whi-91}.
Around the same time, the third author \cite{Sep-98-mprf-1} connected the CGM and LPP to  Hamilton-Jacobi equations and Hopf-Lax-Oleinik semigroups.%, bringing in yet another analytical tool to the model.
%The CGM acquired even more analytical tools, through further connections to subjects like statistical physics, integrable systems, 
%and representation theory; \addmath{is this a good reference? any others?}
%see the review \cite{Dam-Ras-Sep-16}.    

 Much of this early work was primarily concerned with the deterministic asymptotic shape and large deviations of the randomly growing interface.   The breakthrough  of Baik, Deift, and Johansson  \cite{Bai-Dei-Joh-99}  showed that   the  fluctuations of the Poissonian LPP model   have the same  limit as the fluctuations of the largest eigenvalue of  the {\it Gaussian unitary ensemble}  derived by Tracy and Widom in \cite{Tra-Wid-94}.   This result was extended to  the exactly solvable versions of the CGM  by  Johansson  in \cite{Joh-00}.  These results  marked the CGM and the related LPP and TASEP models as members of the {\it Kardar-Parisi-Zhang}  (KPZ) universality class.  This  universality class is conjectured to describe the statistics of a growing interface   observed when a rapidly mixing stable state invades a rapidly mixing metastable state.  This subject  has been a major focus of   probability theory and statistical physics over the last three decades.  Recent surveys appear in  \cite{Cor-12,Cor-16,Hal-Tak-15,Qua-Spo-15,Qua-12}.

\subsection{Geodesics} A common feature of many models of random growth is  a natural metric-like interpretation  in which there exist paths that can be thought of as geodesics. In these interpretations, the growing interface can be viewed as a sequence of balls of increasing radius and centered at the origin. This connection is essentially exact in the case of FPP, which  describes a random pseudo-metric on $\bbZ^d$. Related models like the CGM and stochastic Hamilton-Jacobi equations have natural extremizers through their Hopf-Lax-Oleinik semi-groups, which share many of the properties of geodesics. For this reason and following the convention in the field, we will call all such paths geodesics.

Considerable effort has been devoted to understanding the geometric structure of semi-infinite geodesics in models of random growth. In the mathematical literature, this program was largely pioneered in the seminal work of Newman and co-authors \cite{New-95, Lic-New-96, How-New-97, How-New-01}, beginning with his paper in the 1994 Proceedings of the ICM \cite{New-95}. Under strong hypotheses on the curvature of the limit shape, that early work showed that all such geodesics must be asymptotically directed and that for Lebesgue-almost every fixed direction, from each site of the lattice, there exists a unique semi-infinite geodesic with that asymptotic direction and all these geodesics coalesce.  In special cases where the curvature  hypotheses are met, Newman's program was subsequently implemented in LPP models  \cite{Cat-Pim-11,Cat-Pim-12,Cat-Pim-13,Fer-Pim-05, Wut-02} and  certain stochastic Hamilton-Jacobi equations \cite{Bak-Li-19,Bak-16,Bak-Cat-Kha-14}. In all  the results of the last twenty-five years, the obstruction of needing to work on direction-dependent events of full probability  has been a persistent issue. A description of the overall geometric structure of semi-infinite geodesics has remained elusive. 

It is known that the picture described by these now-classical methods cannot be complete, because uniqueness fails for countably infinitely many random directions \cite{Geo-Ras-Sep-17-ptrf-2,Fer-Mar-Pim-09,Cou-11}. In the CGM, these special directions are the asymptotic directions of {\it competition interfaces}.    These  are dual lattice paths that separate geodesics rooted at a fixed site. Competition interface directions are distinguished by the existence of (at least)  two geodesics that emanate from the same site,  have  the same asymptotic direction, but  separate immediately in their first step. Once these two geodesics separate they never intersect again.  So in these directions  coalescence also fails. 

%Aside from this, essentially nothing else is known. 

Borrowing ideas from classical metric geometry, Newman introduced the tool of Busemann functions into the field in \cite{New-95}. In Newman's work, these Busemann functions are defined as directional limits of differences of metric distances or passage times. Following Newman's work and the subsequent seminal work of Hoffman in \cite{Hof-08}, Busemann functions have become a  principal tool for studying semi-infinite geodesics. The existence of the Busemann limits, however, relies on strong hypotheses on the limit shape.  Modern work primarily uses generalized Busemann functions, which exist without assumptions on the limit shape \cite{Dam-Han-14,Dam-Han-17,Geo-Ras-Sep-17-ptrf-1,Geo-Ras-Sep-17-ptrf-2,Ahl-Hof-16-}.

\subsection{Busemann measures} The present paper  introduces a new framework that relates geometric properties of geodesics to analytic properties of a measure-valued stochastic process called the {\it Busemann process} or {\it Busemann measures}. 
These Busemann measures are  Lebesgue-Stieltjes measures of generalized Busemann functions on the space of spatial directions, and  the Busemann process is the associated family of distribution functions.   This  approach enables a study of the entire family of semi-infinite geodesics on a single event of full probability. %This is the first time this has been achieved. 

We  describe, in terms of the supports of the Busemann measures, the random exceptional directions in which uniqueness or coalescence of geodesics fails. Many of these  results hold without further assumptions on the weight distribution. 
This work also identifies key hypotheses %on the Busemann process which 
that are  equivalent to desirable coalescence and uniqueness properties of geodesics. 
We expect that our methods will apply in related models including FPP and stochastic Hamilton-Jacobi equations.

In the exactly solvable case with i.i.d.\ exponential weights, when the new results  are combined with previous work from   \cite{Cou-11, Fer-Pim-05, Geo-Ras-Sep-17-ptrf-2}, this yields  a complete characterization of the uniqueness and coalescence structure of all semi-infinite geodesics    on a single event of full probability.  Here is a  summary: 
\begin{enumerate}[label={\rm(\roman*)}, ref={\rm\roman*}]  \itemsep=3pt 
\item Every semi-infinite geodesic has an asymptotic direction.
\item 
 There exists a random countably infinite dense set of interior directions in which  there are {\it exactly two} geodesics  from each lattice site, a left geodesic and a right geodesic. These two  families of left and right geodesics can be constructed from the Busemann process.   Each family   forms a tree of coalescing geodesics.
  
  \item   In every other interior direction there is a unique geodesic from each lattice point, which again can be constructed from the Busemann process. In each such  direction these geodesics coalesce to form a tree. 
%Moreover, these families of semi-infinite geodesics which are built from the Busemann process are the only semi-infinite geodesics. 
\item\label{summary:v} The countable set of  directions of non-uniqueness is exactly  the set of asymptotic directions of competition interfaces 
from all lattice points, in addition to being the set of  discontinuity directions of the Busemann process.
\item In a direction $\xi$ of non-uniqueness, finite geodesics out of a site $x$ with endpoints going in direction $\xi$ converge to the left (resp.\ right) semi-infinite geodesic out of $x$ with asymptotic direction $\xi$ if and only if the endpoints eventually stay to the left (resp.\ right) 
of the competition interface rooted at the point where the left and right semi-infinite geodesics out of $x$ split.  
\item In a direction $\xi$ of uniqueness, finite geodesics out of a site $x$ with endpoints going in direction $\xi$ converge to the  semi-infinite geodesic out of $x$ with asymptotic direction $\xi$.
\end{enumerate}
This gives the first complete accounting of semi-infinite geodesics in a model which lies in the KPZ class.\smallskip

\subsection{Instability points} Passage times in LPP solve a variational problem  that is a   discrete version of the stochastic Burgers  Hopf-Lax-Oleinik semigroup.  
Through this connection, this paper  is also related  to the ergodic program for the %inviscid 
  stochastic Burgers equation  initiated by Sinai in \cite{Sin-92}. 
  As mentioned in point \eqref{summary:v} above, the exceptional directions in which coalescence fails correspond to directions at which the Busemann process has jump discontinuities.  % as a function of the asymptotic direction.  
  This means that the Cauchy problem at time $-\infty$ is not well-posed for certain initial conditions  that correspond to these exceptional directions. 
  In this case, it is reasonable to expect that solving the Cauchy problem with the initial condition given at time $t_0$ and sending $t_0\to-\infty$ gives multiple limits at 
  the space-time locations where the Busemann process has jump discontinuities. Thus we call these locations points of instability.
  %Thus, multiple semi-infinite optimal Lagrangian paths emanate from the spatial locations where the Busemann process has jump discontinuities. 
 %These locations then play the role of shocks in the present discrete setting.  
  In situations where the Cauchy problem is well-posed, points of instability correspond to shock locations. 
  The structure of shocks in connection with the Burgers program has been a major line of research \cite{E-etal-00,Bec-Kha-07,Bak-16}, with a conjectured relationship between shock statistics and the KPZ universality phenomenon (Bakhtin and Khanin \cite{Bak-Kha-18}). These conjectures are open. 
  
 % We study shocks in random exceptional directions, while \cite{E-etal-00,Bec-Kha-07,Bak-16,Bak-Kha-18} consider only fixed deterministic directions.  

%Our results are also connected to the ergodic program for the %inviscid 
%  stochastic Burgers equation  initiated by Sinai in \cite{Sin-92}. Passage times in LPP solve a variational problem  that is a   discrete version of a  Hopf-Lax-Oleinik semigroup.  In this interpretation, spatial locations where the Busemann process has jump discontinuities as a function of the asymptotic direction play the role of shocks.  The structure of shocks in connection with the Burgers program has been a major line of research \cite{E-etal-00,Bec-Kha-07,Bak-16}, with a conjectured relationship between shock statistics and the KPZ universality phenomenon (see Bakhtin and Khanin \cite{Bak-Kha-18}). These conjectures are entirely open. The shocks we study are associated to semi-infinite geodesics %(maximizers) 
%with asymptotic directions which are 
%exceptional in the sense that they are the directions in which coalescence fails. Those considered in \cite{E-etal-00,Bec-Kha-07,Bak-16,Bak-Kha-18} are for fixed directions.  

%In connection to the aforementioned dynamical systems point of view, 
%To the best of our knowledge, this is the first paper to consider the structure of shocks in exceptional directions. 
%\addmath{how to convey that we study things that happen a.s., but we are interested in exceptional directions because the picture is different there}

 Past works \cite{E-etal-00,Bec-Kha-07,Bak-16,Bak-Kha-18} considered shocks in  fixed deterministic directions, where the Cauchy problem at time $-\infty$ is shown to be well-posed almost surely and  these shocks are the only points of instability.  
 %We study instabilities in random exceptional directions.  
 %\textcolor{red}{point out here that for fixed directions, shocks are the only instabilities!}
  Our model is in a non-compact space setting, where these problems have been especially difficult to study. In exceptional directions, 
  points of instability turn out to have a markedly different structure from what has been seen previously in fixed directions.  Among the new phenomena are that points of instability form bi-infinite paths that both branch and coalesce. 
Bi-infinite shock paths have previously been observed only when  the space is compact  and the asymptotic direction is fixed. 
Branching shocks have not been observed.
%An example of the new phenomena we observe is that all shocks are bi-infinite in our setting. Existence of bi-infinite shocks has only previously been observed in compact space settings and for typical directions. 

 In the exponential model we  compute non-trivial  statistics of points of instability. 
 %This has not been achieved previously in a model of this type. 
 Among  our  results  is an unexpected connection with simple symmetric random walk: conditional on a $\xi$-directed path of instability points passing through the origin, the distribution of the locations of $\xi$-points of instability on the $x$-axis  %in the exponential model 
has the same law as the zero set of simple symmetric random walk sampled at even times.

\subsection{Organization of the paper}
%Section \ref{sec:busgeo} summarize  currently known results on Busemann functions and existence, uniqueness and coalescence of geodesics.  
%Section \ref{s:Bmeas} introduces Busemann measures and studies their relationship with the geometry of semi-infinite geodesics. 
%The outline of the paper is as follows. 
Section \ref{sec:busgeo} defines the model and summarizes the currently known results  on Busemann functions and existence, uniqueness, and coalescence of geodesics.
Section \ref{s:Bmeas}  contains  our main results on Busemann measures and the geometry of geodesics for general weight distributions.  Section \ref{sec:LPPDS}  connects  our general results to dynamical systems and studies
the web of instability  defined by the discontinuities of the Busemann process. Section \ref{sec:exp} specializes to the  exponential case  to compute non-trivial statistics of the Busemann process.  Proofs come  in Sections \ref{sec:bus}--\ref{sec:exp-pf}, with some auxiliary results relegated to  Appendices \ref{a:aux}--\ref{a:RW}.  Appendix \ref{app:busgeo} collects the inputs we need from previous work.

\subsection{Setting and notation} \label{sub:setting}
Throughout this paper, $(\Omega, \sF,\bbP)$ is a Polish probability space equipped with a group $T=\{T_x\}_{x\in\Z^2}$ of $\sF$-measurable $\bbP$-preserving bijections $T_x:\Omega\to\Omega$
%,  $x\in\bbZ^2$, 
such that $T_0=$ identity and  $T_xT_y=T_{x+y}$.
% for all $x,y\in\bbZ^2$. 
%Let $T$ denote the collection $\{T_x:x\in\Z^2\}$. 
$\bbE$ is expectation relative to $\bbP$.
A generic point in this space is denoted by $\w \in \Omega$. We assume that there exists a family $\{\w_x(\w) : x \in \bbZ^2\}$ of real-valued random variables called {\it weights} such that  
	\begin{align}\label{main-assump}
	\begin{split}
	&\text{$\{\w_x\}$ are i.i.d.\ with a continuous distribution under $\bbP$,  $\Var(\w_0)>0$,}\\[-2.5pt]
	&\text{and $\exists p>2$\,: $\bbE[\abs{\w_0}^p] < \infty$.}
	\end{split}
	\end{align}
%$\{\w_x\}$ are i.i.d.\ under $\bbP$, $\Var(\w_0)>0$, and $\exists p>2$\,: $\bbE[\abs{\w_0}^p] < \infty$.
We require further that  $\w_y(T_x \w) = \w_{x+y}(\w)$ for all $x,y \in \bbZ^2$.   $\P_0$  denotes the marginal distribution of $\{\w_x:x\in\Z^2\}$ under $\P$.   
Continuous distribution means that  $\P_0(X\le r)$ is a continuous function of $r\in\R$.   $X\sim\text{ Exp}(\alpha)$ means that the random variable $X$ satisfies $P(X>t)=e^{-\alpha t}$ for $t>0$ (rate $\alpha$ exponential distribution). 

The canonical setting is the one where $\Omega=\R^{\Z^2}$ is endowed with the product topology, Borel $\sigma$-algebra $\sF$, and the natural shifts, $\w_x$ are the coordinate projections, and $\P=\P_0$ is a product shift-invariant measure.

The standard basis vectors of $\bbR^2$ are  $e_1= e_+ = (1,0)$ and $e_2=e_-=(0,1)$. The $e_\pm$ notation will   conveniently  shorten some statements.  
Additional special vectors are   $\et = e_1 + e_2$, $\etstar=\et/2$, $\ex=e_2-e_1$, and $\exstar=\ex/2$.  In the dynamical view of LPP,  $\et$ is the  time coordinate and  $\ex$  the  space coordinate.  See Figure \ref{fig:evecs}.
%of  a right-handed coordinate system.  
The spatial  level at time  $t\in \bbZ$ is denoted by  $\level_t = \{x \in \bbZ^2 : x \cdot \et = t\}$.     The half-vectors $\etstar$ and $\exstar$ connect   $\Z^2$  with its dual lattice $\Z^{2*}=\etstar+\Z^2$.

\begin{figure}[h]
 	\begin{center}
 		 \begin{tikzpicture}[scale=1,>=stealth]
%			\draw[line width=1.5pt,->](0,0)--(0,1);		 
%			\draw[line width=1.5pt,->](0,0)--(1,0);		 
%			\draw[line width=1.5pt,->](.03,-.03)--(1.03,.97);		 
%			\draw[line width=1.5pt,->](-.03,.03)--(.47,.53);		 
%			\draw[line width=1.5pt,->](-.03,-.03)--(-1.03,.97);		 
%			\draw[line width=1.5pt,->](.03,.03)--(-0.47,0.53);		 
%			\draw[fill=black](0,0) circle(3pt);
			\begin{scope}
			\draw(0,-0.5)--(0,1.5);
			\draw(1,-0.5)--(1,1.5);
			\draw(-1,-0.5)--(-1,1.5);
			\draw(-1.5,0)--(1.5,0);
			\draw(-1.5,1)--(1.5,1);
			\draw[line width=1.5pt,->](0,0)--(0,1);		 
			\draw(0.2,1)node[above]{\small$e_2=e_-$};
			\draw[line width=1.5pt,->](0,0)--(1,0);		 
			\draw(1.65,0)node[above]{\small$e_1=e_+$};
			\draw[line width=1.5pt,->](0,0)--(1,1);		 
			\draw(1.2,1)node[above]{\small$\et$};
			\draw[line width=1.5pt,->](0,0)--(-1,1);		 
			\draw(-1.2,1)node[above]{\small$\ex$};
			\draw[fill=black](0,0) circle(3pt);
			\draw(0,-.2)node[left]{\small$0$};
			\end{scope}

			\begin{scope}[shift={(4,0)}]
			\draw(0,-0.5)--(0,1.5);
			\draw(1,-0.5)--(1,1.5);
			\draw(-1,-0.5)--(-1,1.5);
			\draw(-1.5,0)--(1.5,0);
			\draw(-1.5,1)--(1.5,1);
			\draw[dashed](-1.5,0.5)--(1.5,0.5);
			\draw[dashed](-.5,-0.5)--(-.5,1.5);
			\draw[dashed](.5,-0.5)--(.5,1.5);
			\draw[line width=1.5pt,->](0,0)--(0.5,0.5);		 
			\draw(0.8,0.5)node[above]{\small$\etstar$};
			\draw[line width=1.5pt,->](0,0)--(-0.5,0.5);		 
			\draw(-0.7,0.5)node[above]{\small$\exstar$};
			\draw[fill=black](0,0) circle(3pt);
			\draw(0,-.2)node[left]{\small$0$};
			\end{scope}
			
			\begin{scope}[shift={(7,0)}]
			\draw(0,-0.5)--(0,1.5);
			\draw(1,-0.5)--(1,1.5);
			\draw(-.5,0)--(1.5,0);
			\draw(-.5,1)--(1.5,1);
			\draw[line width=1.5pt](1,0)--(0,1);
			\draw(0.7,0.7)node{\small$\Uset$};
			\draw[fill=black](0,0) circle(3pt);
			\draw(0,-.2)node[left]{\small$0$};
			\draw(0.2,1)node[above]{\small$e_2$};
			\draw(1.2,0)node[above]{\small$e_1$};
			\end{scope}
			
		\end{tikzpicture}
 	 \end{center}
 	\caption{\small  An illustration of the  vectors $e_1$, $e_2$, $e_{\pm}$, $\et$, $\ex$, $\etstar$,  $\exstar$, and the set $\Uset$. The dashed lines in the middle plot are edges of the dual lattice $\Z^{2*}=\bbZ^2+\etstar$.}
 \label{fig:evecs}
 \end{figure}

A statement with $\pm$ and possibly also $\mp$  is a conjunction of two statements: one for the top signs, and another one for the bottom signs.  We employ $\sigg$ to represent an arbitrary element of $\{-,+\}$.

% To avoid confusion with such statements we also employ the notational convention that $\sigg$ is an element of $\{-,+\}$. %,and 
%\note{looks like we never need the opposite sign.}
%if the opposite sign is needed in the same statement, it is denoted by $\giss$.

%${\scaleobj{0.5}{\boxminus}}$ 

 %   $\boxempty$    $\ovoid$ 

We  use  $\R_+=[0,\infty)$, $\Z_+=\Z\cap\R_+$  and $\N=\{1,2,3,\dotsc\}$.   For $x,y\in\R^2$, inequalities such as $x\le y$ and $x< y$, 
and operations such as $x\wedge y=\min(x,y)$ and $x\vee y=\max(x,y)$
are understood coordinatewise.  (In particular, $x<y$ means $x\cdot e_i<y\cdot e_i$ for both $i=1,2$.)  For $x\le y$ in $\Z^2$, $\lzb x, y\rzb$ denotes the rectangle $\{z\in\Z^2:x\le z\le y\}$. 
For integers $i\le j$, $\lzb i,j\rzb$ denotes the interval $[i,j]\cap\Z$.  For $m\le n$ in $\Z\cup\{-\infty,\infty\}$ we denote a sequence $\{a_i:m\le i\le n\}$ by $a_{m,n}$. 

A path $\geod{m,n}$ in $\bbZ^2$ with $\geod{i+1}- \geod{i} \in \{e_1,e_2\}$ for all $i$ is called an {\it up-right} path. 
Throughout, paths are indexed so that $\geod{k}\cdot\et=k$.

For vectors $\zeta,\eta \in \bbR^2$, denote  open and closed line segments by  $]\zeta,\eta[\, = \{t\zeta + (1-t)\eta : 0 < t < 1\}$ and  $[\zeta,\eta] = \{t \zeta + (1-t) \eta : 0 \leq t \leq 1\}$, with the consistent definitions for  $]\zeta,\eta]$ and $[\zeta,\eta[$.   $\Uset = [e_2,e_1]$ with  relative interior $\ri \Uset = ]e_2,e_1[$. See Figure \ref{fig:evecs}. A left-to-right ordering of points $\zeta,\eta\in\R^2$ with $\zeta\cdot\et=\eta\cdot\et$ is defined by $\zeta \prec \eta$ if $\zeta \cdot e_1 < \eta \cdot e_1$ and $\zeta \preceq \eta$ if $\zeta \cdot e_1  \leq \eta \cdot e_1$.   This leads to notions of left and right limits: if  $\zeta_n \to \xi$ in $\Uset$, then $\zeta_n \nearrow \xi$ if $\zeta_n \prec \zeta_{n+1}$ for all $n$, while $\zeta_n \searrow \xi$ if $\zeta_{n+1} \prec \zeta_n$  for all $n$. 
%Given a sequence $\zeta_n \in \Uset$, we say that $\zeta_n \nearrow \xi$ if $\zeta_n \prec \zeta_{n+1}$ for all $n$ and $\zeta_n \to \xi$ as $n\to\infty$. Similarly, given a sequence $\eta_n \in \Uset$, we say $\eta_n \searrow \xi$ if $\eta_{n+1} \prec \eta_n$ for all $n$ and $\eta_n \to \xi$ as $n \to \infty$. 

The support $\supp\mu$ of a signed Borel measure $\mu$  is the smallest closed set whose complement has zero measure under the total variation measure $\abs\mu$.   
%Let $\bbX$ be a topological space equipped with the Borel $\sigma$-algebra and let $\mu$ be a non-negative Borel measure on $\bbX$. The support of $\mu$ is a closed subset of $\bbX$ and is denoted by $\supp \mu$. It is the set of points $x \in \bbX$ with the property that every open set $O$ with $x \in O$ has $\mu(O) >0$. If $\mu$ is a non-positive measure, then the support of $\mu$ is defined to be $\supp (-\mu)$ in the sense just described. 
%
%If $\mu$ is a signed measure on a measurable space $(\bbX,\cX)$, the total variation measure of $\mu$ is a positive measure on $(\bbX,\cX)$, which we denote by $|\mu|$. It is defined by $|\mu|(A) = \sup \sum_i |\mu(A_i)|$, where the supremum is over countable measurable partitions $\{A_i\} \subset \bbX$ of $A$. 
\medskip

% !TEX root = GeoWebPaper.tex

%\section{The geometry of geodesics: known results}\label{sec:busgeo}
\section{Preliminaries on last-passage percolation}\label{sec:busgeo}

This section introduces   the background required for the main results in Sections \ref{s:Bmeas}--\ref{sec:exp}.  To avoid excessive technical detail at this point,  precise  statements of  previous  results needed for the proofs later in the paper  are deferred to Appendix \ref{app:busgeo}.

\subsection{The shape function} \label{s:shape} 
Recall the assumption \eqref{main-assump}.
For $x \leq y$ in $\bbZ^2$ satisfying $x \cdot \et = k$ and $y \cdot \et = m$,  denote by $\Pi_x^y$ the collection of up-right paths $\geod{k,m}$ which satisfy $\geod{k} = x$ and $\geod{m} = y$. 
The {\it last-passage time} from $x$ to $y$ is defined by 
\be\label{G1} 
G_{x,y} = G(x,y)=\max_{\geod{k,m} \in \Pi_x^y} \sum_{i=k}^{m-1} \w_{\geod{i}}. 
\ee
A maximizing path is called a (point-to-point or finite) {\it geodesic} and denoted by $\gamma^{x,y}$.
%A geodesic is a path $\geod{k,m} \in \Pi_x^y$ with the property that 
%\begin{align*}
%G_{x,y} &= \sum_{i=k}^{m} \w_{\geo{i}{}{}}. 
%\end{align*}
%Geodesics between sites $x \in \bbZ^2$ and $y\in \bbZ^2$ satisfying $x \leq y$ always exist since there are only finitely many such paths. 
%The hypothesis that the weights are continuous implies that there is a event of full probability on which all (finite) geodesics are unique. 
Under the i.i.d.\ continuous distribution assumption \eqref{main-assump},  $\gamma^{x,y}$   is almost surely  unique. 

%\subsection{The shape function}
The shape theorem \cite{Mar-04} says there exists a non-random  function $\gpp:\bbR_+^2 \to \bbR$ such that with probability one
\be\label{sh-th} 
\lim_{n\to\infty} \;\max_{x\, \in \,\bbZ^2_+ : \, |x|_1 = n} \frac{\abs{G_{0,x} - \gpp(x)}}{n} = 0.
\ee
This {\it shape function} $\gpp$  is symmetric, concave, and homogeneous of degree one. By homogeneity, $\gpp$  is determined by its values on $\Uset$. 
%For $\xi \in \bbR_+^2$,   the superdifferential of $g$ at $\xi$ is %the set 
%\begin{align*}
%\partial \gpp(\xi) &= \{h \in \bbR^2 : \forall \zeta \in \bbR_+^2, \, \gpp(\zeta) \le   \gpp(\xi) + h \cdot (\zeta-\xi)\}.
%\end{align*}
%A quick computation checks that if $h \in \partial \gpp(\xi)$ then $h\cdot\xi = \gpp(\xi)$. See \cite[Lemma 2.1]{Jan-Ras-18-arxiv}.
%
%Differentiability of $\gpp$ at $\xi$ is equivalent to $|\partial \gpp(\xi)| = 1$ \cite[Theorem 25.2]{Roc-70}. In this case, $\partial \gpp(\xi) = \{\nabla \gpp(\xi)\}$. 
%We introduce the image of $\Uset$ under the superdifferential map
%\begin{align*}
%\partial \gpp (\Uset) &= \{h \in \bbR^2 : \exists \xi \in \Uset :h \in \partial \gpp(\xi)\}
%\end{align*}
Concavity implies the existence of one-sided derivatives:
\begin{align*}
\nabla \gpp(\xi \pm) \cdot e_1 = \lim_{\e \searrow 0} \frac{\gpp(\xi \pm \e e_1) - \gpp(\xi)}{\pm \e} \qquad{ and }\qquad \nabla \gpp(\xi \pm) \cdot e_2 = \lim_{\e \searrow 0} \frac{\gpp(\xi \mp \e e_2) - \gpp(\xi)}{\mp \e}.
\end{align*}
By \cite[Lemma 4.7(c)]{Jan-Ras-18-arxiv} 
%these are the two extreme points of the convex set $\partial \gpp(\xi).$ Therefore 
differentiability of $\gpp$ at $\xi\in\ri\Uset$ is the same as $\nabla \gpp(\xi+) = \nabla \gpp(\xi -)$.
Denote the directions of differentiability by
\be\label{df:D} %\begin{align*}
\Diff = \{\xi \in \ri \sU : \gpp \text{ is differentiable at }\xi\}.
\ee%\end{align*}

For $\xi \in \ri \Uset$,  define the maximal linear segments of $g$ with slopes given by  the    right $(\sigg=+)$ and the  left $(\sigg=-)$ derivatives of $g$ at $\xi$  to be
\begin{align*}
\Uset_{\xi\sig} = \bigl\{\zeta \in \ri\Uset : \gpp(\zeta) - \gpp(\xi) = \nabla \gpp(\xi\sigg) \cdot (\zeta - \xi)\bigr\},\quad\sigg\in\{-,+\}. 
\end{align*}
%It is possible that either or both of these sets can degenerate to a point. 
We say $g$ is {\it strictly concave} at $\xi\in\ri\Uset$ if $\Uset_{\xi-}=\Uset_{\xi+}=\{\xi\}$.  Geometrically this means that $\xi$ does not lie on a nondegenerate closed linear segment of $\gpp$. 
The usual notion of strict concavity on an open  subinterval of $\Uset$ is the same as having this pointwise 
strict concavity at   all $\xi$ in the interval.

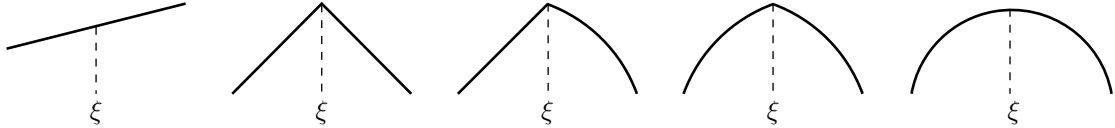
\begin{figure}[h]
\begin{center}
\begin{tikzpicture}[>=latex,scale=1]
\begin{scope}
\draw[line width=1pt] (-1.19,0.6)--(1.19,1.2);
\draw[ line width=0.5pt,dashed](0,0.89)--(0,0);
\draw(0,-.25)node{$\xi$};
\end{scope}
\begin{scope}[shift={(3,0)}]
\draw[line width=1pt] (-1.19,0)--(0,1.2)--(1.19,0);
\draw[ line width=0.5pt,dashed](0,1.19)--(0,0);
\draw(0,-.25)node{$\xi$};
\end{scope}
\begin{scope}[shift={(6,0)}]
\draw[line width=1pt] (-1.19,0)--(0.01,1.2);
\draw[line width=1pt] (1.19,0) arc (20:70:2cm);
\draw[ line width=0.5pt,dashed](0.01,1.2)--(0.01,0);
\draw(0,-.25)node{$\xi$};
\end{scope}
 \begin{scope}[shift={(9,0)}]
\draw[line width=1pt] (1.19,0) arc (20:70:2cm);
\draw[line width=1pt] (-1.19,0) arc (160:110:2cm);
\draw[ line width=0.5pt,dashed](0,1.2)--(0,0);
\draw(0,-.25)node{$\xi$};
\end{scope}
\begin{scope}[shift={(11.5,0)}]
\draw[line width=1pt] (2,0) arc (10:170:1.35cm);
\draw[ line width=0.5pt,dashed](0.65,1.13)--(0.65,0);
\draw(0.7,-.25)node{$\xi$};
\end{scope}
\end{tikzpicture}
\end{center}
\caption{\small   In the first three graphs $\gpp$ is not strictly concave at $\xi$ while in the last two it is.}
\label{fig:conc} \end{figure}

For a given $\xi\in\ri\Uset$, let $\ximin\preceq\ximax$ denote the endpoints of the (possibly degenerate) interval 
\begin{align*}
\Uset_{\xi} &= \Uset_{\xi-} \cup \Uset_{\xi +} = [\hspace{0.5pt}\ximin,\ximax\hspace{0.8pt}].  
\end{align*}
%Martin's shape function asymptotic  at the boundary \cite[Theorem 2.4]{Mar-04} implies that $\gpp$  is not   linear on any nondegenerate interval with one endpoint $e_i$. Thus we set  $\Uset_{e_i} = \{e_i\}$ for $i \in \{1,2\}$.   
If $\xi\in\Diff$ then $\Uset_{\xi-}=\Uset_{\xi+}=\Uset_{\xi}$ while if $\xi\notin\Diff$ then $\Uset_{\xi-}\cap\,\Uset_{\xi+}=\{\xi\}$. Set $\Uset_{e_i}=\{e_i\}$, for $i\in\{1,2\}$.\smallskip

Additional control over the geometry of geodesics is provided by this regularity condition:
%\begin{condition}\label{g-reg}
\begin{align}\label{g-reg}
\begin{split}
&\text{The shape function $\gpp$ is strictly concave at all $\xi\notin\Diff$ or, equivalently, $\gpp$ is differentiable}\\[-2.5pt]
&\text{at the endpoints of its linear segments.}
\end{split}
\end{align}
%\end{condition}

Condition \eqref{g-reg} holds obviously  if $\gpp$ is either differentiable or strictly concave. Both of these latter properties are true  for exponential  weights   and are conjectured to be valid more generally for continuously distributed  weights. Under % the regularity condition 
\eqref{g-reg},   if both $ \Uset_{\xi-}$  and $\Uset_{\xi +}$ are nondegenerate intervals,  then   $ \Uset_{\xi-}=\Uset_{\xi +}=\Uset_{\xi}$ (leftmost graph in Figure \ref{fig:conc}). 

%\textcolor{red}{Mention that the only known model that satisfies condition \eqref{g-reg} and continuous weights is the exponential one, and give the formula for the shape. Also mention that we will be explicit whenever a result requires this condition. And that this condition is conjectured to be always the case, under e.g.\ continuous weights.}

\subsection{The Busemann process}
%\subsection{The Busemann process, semi-infinite geodesics and competition interfaces}
\label{sec:bus-brief}
Under regularity condition \eqref{g-reg}, it is known that for each fixed 
$\xi \in \Diff$     %CHANGED HERE
 and $x,y \in \bbZ^2$, there is a $\xi$-dependent event of full probability on which the limit
\begin{align}
\B{\xi}(x,y) = \lim_{n\to\infty} (G_{x,v_n} - G_{y,v_n})  \label{eq:Busfn}
\end{align}
exists and agrees for all sequences $v_n \in \bbZ^2$ such that $\abs{v_n}\to\infty$ and  $v_n/n \to \xi$.  Similar  limits appear in metric geometry under the name of {\it Busemann functions}.

The goal of this paper  is to  study the LPP model  without a priori hypotheses on the shape function.  Hence  the  limit in \eqref{eq:Busfn} cannot serve as a starting point. Instead we work with a stochastic process of {\it generalized} Busemann functions, indexed by $\xi \in \ri \Uset$,  constructed through a weak limit procedure on an extended probability space. See Remark \ref{rmk:weak-lim} for a brief discussion of the construction of this process in \cite{Jan-Ras-20-aop}, which is based in part on ideas from \cite{Dam-Han-14, Geo-Ras-Sep-17-ptrf-1}. This  process agrees with  \eqref{eq:Busfn} when the limit in  \eqref{eq:Busfn}  exists. 
% and has many  properties   implied by \eqref{eq:Busfn} even when  the limit does not exist. % Theorem \ref{thm:Bus}  in Appendix \ref{app:busgeo} collects these properties. In particular, it shows that there

The construction in \cite{Jan-Ras-20-aop} produces  a probability space  $(\Omega, \sF, \bbP)$ with a group of shifts $T =\{T_x : x \in \bbZ^2\}$ that satisfies  the requirements of Section \ref{sub:setting} and a stochastic process  $\bigl\{\B{\xi\sigg}(x,y): x,y \in \bbZ^2,\, \xi \in \ri \Uset, \,\sigg \in \{-,+\}\bigr\}$ on $\Omega$, which we   call the {\it Busemann process}. We record here those properties of this process that  are needed for Sections \ref{s:Bmeas}--\ref{sec:exp}. 
% understanding our results in Sections \ref{s:Bmeas}--\ref{sec:exp}  and leave the rest to the appendix. 

In general, there is a $T$-invariant full probability  event on which the following hold. For all $\xi \in \ri \Uset$, $x,y,z \in \bbZ^2$, and $\sigg \in \{-,+\}$:
 \begin{align}
	&\B{\xi\sig}(x+z,y+z,\w)=\B{\xi\sig}(x,y,T_z\w),\label{cov-prop}\\
	&\B{\xi\sig}(x,y,\w)+\B{\xi\sig}(y,z,\w)=\B{\xi\sig}(x,z,\w),\label{coc-prop}\\
	&\min\bigl\{\B{\xi\sig}(x,x+e_1,\w),\B{\xi\sig}(x,x+e_2,\w)\bigr\}=\w_x,\quad\text{and}\label{rec-prop2}\\
	&\E\bigl[ \B{\xi\sig}(x,x+e_i)\bigr] = \nabla\gpp(\xi\sigg)\cdot e_i.\label{E[h(B)]}
\end{align}

Properties \eqref{cov-prop}--\eqref{coc-prop} express that each $\B{\xi\sig}$ is a {\it covariant cocycle}.   The {\it weights recovery} property \eqref{rec-prop2}  is the key  that relates these cocycles to the LPP process. 
 \eqref{E[h(B)]} shows that the Busemann process is naturally parametrized by the superdifferential of the shape function $\gpp$.
 The following monotonicity  is inherited from the path structure: for all $x \in \bbZ^2$ and $\xi,\xi' \in \ri\Uset$ with $\xi\prec\xi'$,
\begin{align}\label{mono}
\begin{split}
&\B{\xi-}(x,x+e_1,\w) \ge \B{\xi+}(x,x+e_1,\w)\ge \B{\xi'-}(x,x+e_1,\w)\ge \B{\xi'+}(x,x+e_1,\w)\quad\text{and} \\
&\B{\xi-}(x,x+e_2,\w) \le \B{\xi+}(x,x+e_2,\w)\le \B{\xi'-}(x,x+e_2,\w)\le \B{\xi'+}(x,x+e_2,\w).
\end{split}
\end{align} 
%All of these properties are elementary when the limit \eqref{eq:Busfn} exists.

As a consequence of monotonicity and the cocycle property \eqref{coc-prop}, left and right limits exist. The signs in $\B{\xi\pm}$   correspond to   left and  right continuity: for all $x,y \in \bbZ^2$,   $\xi \in \ri \Uset$, and  $\sigg \in \{-,+\}$, 
	\begin{align}\label{Busemann-limits}
	\begin{split}
	\B{\xi-}(x,y,\w)=\lim_{\ri\Uset\,\ni\, \zeta \nearrow \xi}\B{\zeta\sig}(x,y,\w)\quad\text{and}\quad
	\B{\xi+}(x,y,\w)=\lim_{\substack{\ri\Uset\,\ni\,\zeta \searrow \xi}}\B{\zeta\sig}(x,y,\w).
	\end{split}
	\end{align}
When $\B{\xi+}(x,y,\w)=\B{\xi-}(x,y,\w)$ we drop the $+/-$ distinction and 
write $\B{\xi}(x,y,\w)$.

%We will work on the probability space constructed in Theorem \ref{thm:Bus} and so the 
%The standing assumption for the rest of the paper will be the following:
%	\begin{assumption}\label{main-assump}
%%	\text{Assume the setting of Theorem \ref{thm:Bus} and that $\P(\w_0\le r)$ is continuous in $r$.}
%	  $\P_0$ is a probability measure on $\R^{\Z^2}$ under which the coordinate projections are i.i.d.\ with a positive variance, $p>2$ finite moments, and a continuous distribution, i.e.\ $\P_0(\w_0\le r)$ 
%	is continuous in $r$.    The processes $\{\w_x:x\in\Z^2\}$ and $\{\B{\xi\sig}(x,y):x,y\in\Z^2, \sigg\in\{-,+\}, \xi \in \ri \Uset\}$ are defined on the measure-preserving system $(\Omega,\sF,\bbP,T)$ with properties \eqref{cov-prop}--\eqref{Busemann-limits} above, and $\{\w_x\}$ has distribution $\P_0$. 
%	%  $(\Omega,\sF,\bbP,T)$, $\{\w_x:x\in\Z^2\}$, and $\{\B{\xi\sig}(x,y):x,y\in\Z^2, \sigg\in\{-,+\}, \xi \in \ri \Uset\}$   are as constructed in Theorem \ref{thm:Bus}.
%	\end{assumption}

Theorem \ref{thm:Bus}  in Appendix \ref{app:busgeo} contains the complete list of the properties of the Busemann process that  are  used in the proofs in Sections \ref{sec:bus}--\ref{sec:webpf}. 

\subsection{Semi-infinite geodesics}\label{sec:geod-brief}

A path $\geod{k,\infty}$ with $\geod{i+1}- \geod{i} \in \{e_1,e_2\}$ for all $i\ge k$ is called a {\it semi-infinite geodesic emanating from, or rooted at}, $x$ if $\geod{k}=x$ and for any $m,n \in \bbZ_+$ with $k\le m \leq n$, the restricted path $\geod{m,n}$ is a geodesic between $\geod{m}$ and $\geod{n}$.  A path $\geod{-\infty,\infty}$ with $\geod{i+1}- \geod{i} \in \{e_1,e_2\}$ for all $i$ is called a {\it bi-infinite geodesic} if $\geod{m,n}$ is a geodesic for any $m\le n$ in $\Z$.
Due to the fact that the set of admissible steps is $\{e_1,e_2\}$, from each site $x$,  there are always two trivial semi-infinite geodesics, namely $x+\Z_+ e_1$, which we denote by $\geo{}{x}{e_1}$, and $x+\Z_+ e_2$, which we denote by $\geo{}{x}{e_2}$. There are two trivial bi-infinite geodesics going through $x$, namely $x+\Z e_1$ and $x+\Z e_2$, which we do not introduce notation for. %CHANGED HERE

A semi-infinite geodesic $\geod{k,\infty}$, or a bi-infinite geodesic $\geod{-\infty,\infty}$, is  {\it directed}  into a set $\cA\subset\Uset$ if the limit points of $\geod{n}/n$ as $n\to\infty$  are all in $\cA$.  When $\cA=\{\xi\}$ the condition becomes $\lim_{n\to\infty} \geod{n}/n=\xi$ and we  say $\geod{k,\infty}$ is {\it $\xi$-directed}.\smallskip

Using the Busemann process, we construct a semi-infinite path $\geo{}{x}{\xi\sig}$  for each $\xi\in\ri\Uset$, both signs $\sigg\in\{-,+\}$,  and all  $x \in \bbZ^2$,  via these rules: the initial point is  $\geo{m}{x}{\xi\sig}=x$ where $m=x\cdot\et$, and for $n\ge m$
\be\label{d:bgeo}\begin{aligned}
\geo{n+1}{x}{\xi\sig} &= \begin{cases}
\geo{n}{x}{\xi\sig} + e_1, & \text{ if } \B{\xi\sig}(\geo{n}{x}{\xi\sig},\geo{n}{x}{\xi\sig} + e_1) < \B{\xi\sig}(\geo{n}{x}{\xi\sig},\geo{n}{x}{\xi\sig} + e_2),\\
 \geo{n}{x}{\xi\sig} + e_2, & \text{ if } \B{\xi\sig}(\geo{n}{x}{\xi\sig},\geo{n}{x}{\xi\sig} + e_1) > \B{\xi\sig}(\geo{n}{x}{\xi\sig},\geo{n}{x}{\xi\sig}+ e_2),\\
\geo{n}{x}{\xi\sig} + e_{\sig},& \text{ if } \B{\xi\sig}(\geo{n}{x}{\xi\sig},\geo{n}{x}{\xi\sig} + e_1) = \B{\xi\sig}(\geo{n}{x}{\xi\sig},\geo{n}{x}{\xi\sig}+ e_2).
\end{cases}
\end{aligned}
\ee
As above, we dispense with the $\pm$ distinction when $\geo{}{x}{\xi+}=\geo{}{x}{\xi-}$.  These geodesics inherit an ordering from \eqref{mono}:   for all $x\in\Z^2$, $n\ge x\cdot\et$, and $\zeta\prec\eta$ in $\ri\Uset$,
	\begin{align}\label{path-ordering}
	\geo{n}{x}{\zeta-}\preceq\geo{n}{x}{\zeta+}\preceq\geo{n}{x}{\eta-}\preceq\geo{n}{x}{\eta+}.
	\end{align}

Similarly, the geodesics inherit one-sided continuity from \eqref{Busemann-limits}  in the sense of convergence of finite length segments: 
for all $x \in \bbZ^2$,   $\xi \in \ri \Uset$ and $\sigg\in\{-,+\}$, if $k = x \cdot \et$ and $m \geq k$ is an integer, then
\be\label{pmgeolim}
\lim_{\ri\Uset\,\ni\,\eta \searrow \xi} \geo{k,m}{x}{\eta\sig}  =  \geo{k,m}{x}{\xi+}\quad \text{and}\quad\lim_{\ri\Uset\,\ni\,\zeta \nearrow \xi} \geo{k,m}{x}{\zeta\sig} =  \geo{k,m}{x}{\xi-}.
\ee 

 An elementary  argument given in 
\cite[Lemma 4.1]{Geo-Ras-Sep-17-ptrf-2}
shows that properties \eqref{coc-prop} and \eqref{rec-prop2} combine to imply that these paths are all semi-infinite geodesics and that, moreover, for all choices of $x \in \bbZ^2$,  $n\ge x\cdot\et$,  $\sigg\in\{-,+\}$, and $\xi\in\ri\Uset$, we have 
\begin{align} \label{eq:buspass}
G(x,\geo{n}{x}{\xi\sig}) &= \B{\xi\sig}(x,\geo{n}{x}{\xi\sig}).
\end{align}

%\textcolor{red}{Mention that under condition \eqref{g-reg}, it is enough to just assume the setting in the first paragraph of Section \ref{sub:setting} and continuous weights. I.e.\ in this case, the Busemann process does not require extending the space any further.}

%\begin{proof}
%We prove the $\xi+$ limit, with the $\xi-$ limit being similar. It suffices to show that at  each site $x$ with $k=x\cdot\et$, for any sequence $\eta_n\searrow\xi$, for $n$ sufficiently large,  $\geo{k+1}{x}{\eta_n-}=\geo{k+1}{x}{\xi+}$. Suppose first that $\B{\xi+}(x,x+e_1) > \B{\xi+}(x,x+e_2)$. Then by \eqref{Busemann-limits} there exists $n_0$ so that for all $n \geq n_0, \B{\eta_n-}(x,x+e_1)> \B{\eta_n-}(x,x+e_2)$, in which case $\geo{k+1}{x}{\eta_n-} = \geo{k+1}{x}{\xi+}=x+e_2$. The case of $\B{\xi+}(x,x+e_1) < \B{\xi+}(x,x+e_2)$ is similar. Now suppose that $\B{\xi+}(x,x+e_1) = \B{\xi+}(x,x+e_2)= \w_{x}$. Then by monotonicity \eqref{mono}, we have $\w_x \leq\B{\eta_n-}(x,x+e_1) \leq \B{\xi+}(x,x+e_1) = \w_x$ for all $n$. It follows that $\geo{k+1}{x}{\eta_n} = \geo{k+1}{x}{\xi+}=x+e_1$.
%\end{proof}

Below  are the  main properties of these  Busemann geodesics  $\geo{}{x}{\xi\sig}$ under assumption \eqref{main-assump}, from  article \cite {Geo-Ras-Sep-17-ptrf-2}.  (Theorem \ref{thm1}  in Appendix \ref{app:busgeo} provides  a more  precise accounting.) 
\begin{enumerate}[label={\rm(\roman*)}, ref={\rm\roman*}]  \itemsep=3pt 
\item Every semi-infinite geodesic is $\Uset_\xi$-directed for some $\xi\in\Uset$.
\item $\geo{}{x}{\xi\sig}$ is $\Uset_{\xi\sig}$-directed for each $x \in \bbZ^2$ and each $\xi \in \Uset$.
\item \label{pt:unique} If $\xi,\ximin,\ximax \in \Diff$, then there is a $\xi$-dependent event of full probability on which $\geo{}{x}{\xi-} = \geo{}{x}{\xi+}$ for all $x \in \bbZ^2$.
\item\label{pt:coal}  There is a $\xi$-dependent event of full probability on which $\geo{}{x}{\xi\sig}$ and $\geo{}{y}{\xi\sig}$ coalesce for each $\sig \in \{+,-\}$. That is, for each $x,y \in \bbZ^2$, there exists an $\w$-dependent $K \in \bbN$ such that for all $k \geq K$,  $\geo{k,\infty}{x}{\xi\sig}= \geo{k,\infty}{y}{\xi\sig}$. 
\end{enumerate}
% structure of semi-infinite geodesics under \eqref{main-assump}. 

%\smallskip

The regularity condition \eqref{g-reg} guarantees that $\geo{}{x}{\xi-}$ and $\geo{}{x}{\xi+}$ are extreme among the $\Uset_\xi$-directed geodesics out of $x$ in the sense that for any $x\in\Z^2$,  $\xi\in\ri\Uset$,  and any $\Uset_\xi$-directed semi-infinite geodesic $\geod{}$ emanating from $x$, we have 
\begin{equation}\label{eq:geoorder}	
	\geo{n}{x}{\xi-} \preceq \geod{n} \preceq \geo{n}{x}{\xi+} 
	%\geo{n}{x}{\xi-} \cdot e_1 \leq \geod{n} \cdot e_1 \leq \geo{n}{x}{\xi+} \cdot e_1 
\end{equation}
for all $n\ge x\cdot\et$. We record this fact as Theorem \ref{thm:extreme} in Appendix \ref{app:busgeo}. 
%Extremality gives  control of    general semi-infinite geodesics through   $\geo{}{x}{\xi+}$ and $\geo{}{x}{\xi-}$. 

Under the regularity condition \eqref{g-reg} and $\xi,\ximin,\ximax\in\Diff$, part \eqref{pt:unique} combined with \eqref{eq:geoorder} implies that there is a $\xi$-dependent event of full probability on which there is a unique $\Uset_\xi$-directed geodesic from each $x \in \bbZ^2$. Moreover, by part \eqref{pt:coal}, all of these geodesics coalesce. On the other hand, under the same condition, it is known that 
there are exceptional random directions at which both uniqueness and coalescence fail. We discuss these directions in the next subsection.

 %Among the main goals of this paper is the  study  of the structure of semi-infinite geodesics in such exceptional directions and in particular to prove results about the structure of all semi-infinite geodesics on a single event of full probability.
%%%%%  Timo's comment: I think this has been said enough? Or not? 

% without direction dependence in the event of full probability on which the result holds.

%\timonote{Clumsy transition here.  First it's ``(c) and (e)'' but then "this". F: Better now?  T: No, because the new assumption introduced below only addresses (c), not (c) and (e). F: OK now??} 
%\note{only a small number of our results!! we could say exactly to what purpose we would use this.}

%Henceforth, we redefine $\Omega_\xi$ to denote the smaller full-measure event $\Omega'_\xi$ and redefine $\Omega_0$ to denote $\Omega'''_0\cap\bigcap_{\xi\in\Udense}\Omega'_\xi$.
%Below, we will repeat this abuse of notation several times, where we use 
%$\Omega_\xi$ to denote full-measure events that may depend on $\xi$ and then use $\Omega_0$ to denote 
%a full-measure event that does not depend on the direction $\xi$ and is contained in $\bigcap_{\xi\in\Udense}\Omega_\xi$.

%\textcolor{red}{Say something about the spanning tree for a fixed $\xi$? Say something about the dual graph and the shock-like structure? (in which case, we can add the figures)  Maybe not...}

\subsection{Non-uniqueness  of directed semi-infinite geodesics}
\label{s:mult-geo} 
%\subsection{Multiple directed semi-infinite geodesics and the Busemann measures}
%\addmath{WARNING: under differentiability, not \eqref{g-reg}.  if there are corners, then there are multiple geodesics!}
%Under the regularity condition \eqref{g-reg} Theorems \ref{thm:extreme} and \ref{thm1} give existence, uniqueness, and coalescence of $\Uset_\xi$-directed geodesics for a fixed $\xi\in\ri\Uset$, where uniqueness also requires $\xi \in \Diff$.  The uniqueness result in Theorem \ref{thm1}\eqref{thm1:cont} holds only on a full-measure event that depends on the direction $\xi$.   
For a fixed site $x \in \bbZ^2$, a natural direction in which non-uniqueness occurs is  the {\it competition interface direction}, which we denote by $\cid(T_x\w)$.  At the origin, $\cid(\w)\in\ri\Uset$ is   the unique direction such that 
	\begin{align}\label{cid}
	\B{\zeta\pm}(e_1,e_2)<0\quad\text{if}\quad\zeta\prec\cid(\w)
	\quad\text{and}\quad
	\B{\zeta\pm}(e_1,e_2)>0\quad\text{if}\quad\zeta\succ\cid(\w).
	\end{align}
Theorem \ref{thm:cif} records the main properties of competition interface directions, including the existence and uniqueness of such a direction.

Under  the regularity condition \eqref{g-reg}, we also have the following alternative description of $\cid(\w)$. Fix a site $x\in\Z^2$.  The uniqueness of finite geodesics implies that the collection of geodesics from $x$ to all points  $y \in x+ \bbZ^2_+$ form a tree $\sT_x$ rooted
at $x$ and spanning $x+\bbZ_+^2$. The subtree rooted at $x+e_1$ is separated from the subtree rooted at $x+e_2$  by a path $\{\varphi_n^x:n\ge x\cdot\et\}$ on the dual lattice $\etstar+\Z^2$, known as the {\it competition interface}. See Figure \ref{fig:cif1}.

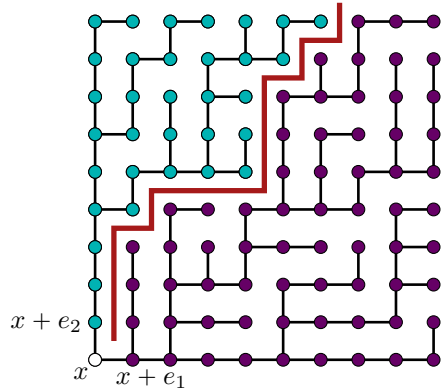
\begin{figure}[H]
\begin{center}

\begin{tikzpicture}[>=latex,scale=0.5]

\draw(-0.37,-0.35)node{$x$};
\draw(-1.3,1)node{$x+e_2$};
\draw(1.5,-0.5)node{$x+e_1$};

% TREE

\draw[line width=1pt](9,0)--(0,0)--(0,9);     %width of tree lines
\draw[line width=1pt](2,0)--(2,4)--(5,4)--(5,7)--(7,7)--(7,9)--(9,9);
\draw[line width=1pt](1,0)--(1,3);
\draw[line width=1pt](0,4)--(1,4)--(1,5)--(2,5)--(2,7);
\draw[line width=1pt](2,5)--(4,5)--(4,6);
\draw[line width=1pt](3,5)--(3,6)--(3,8)--(5,8)--(5,9)--(6,9);
\draw[line width=1pt](3,7)--(4,7);
\draw[line width=1pt](6,7)--(6,8);
\draw[line width=1pt](0,6)--(1,6)--(1,8)--(2,8)--(2,9)--(3,9);
\draw[line width=1pt](4,8)--(4,9);
\draw[line width=1pt](0,9)--(1,9);
\draw[line width=1pt](5,4)--(7,4)--(7,5)--(9,5)--(9,8);
\draw[line width=1pt](6,4)--(6,6)--(7,6);
\draw[line width=1pt](8,5)--(8,8);
\draw[line width=1pt](2,1)--(8,1)--(8,2)--(9,2);
\draw[line width=1pt](2,2)--(4,2)--(4,3)--(6,3);
\draw[line width=1pt](9,0)--(9,1);

%copyright purposes
\draw[line width=1pt](5,0)--(5,1);
\draw[line width=2pt,color=white](4,1)--(5,1);

%copyright purposes 2
\draw[line width=1pt](4,3)--(4,4);
\draw[line width=2pt,color=white](3,4)--(4,4);

\draw[line width=1pt](5,1)--(5,2)--(7,2)--(7,3);
\draw[line width=1pt](8,2)--(8,4)--(9,4);
\draw[line width=1pt](3,2)--(3,3);
\draw[line width=1pt](8,3)--(9,3);

%%% Competition interface

\draw[line width=2.0pt,color=nicosred](.5,.5)--(0.5,3.5)--(1.5,3.5)--(1.5, 4.5)--(4.5,4.5)--(4.5,7.5)--(5.5,7.5)--(5.5,8.5)--(6.5,8.5)--(6.5, 9.5);   %width of comp int
%\draw[line width=3.5pt, densely dotted](.5,.5)--(0.5,3.5)--(1.5,3.5)--(1.5, 4.5)--(4.5,4.5)--(4.5,7.5)--(5.5,7.5)--(5.5,8.5)--(6.5,8.5)--(6.5, 9.5);    %style of comp int line
%\draw[line width=3.5pt, dashed](.5,.5)--(0.5,3.5)--(1.5,3.5)--(1.5, 4.5)--(4.5,4.5)--(4.5,7.5)--(5.5,7.5)--(5.5,8.5)--(6.5,8.5)--(6.5, 9.5);
\foreach \x in {0,...,9}{
             \foreach \y in {0,...,9}{
\draw[ fill =sussexp](\x,\y)circle(1.7mm);    %size of bullet
}}

\foreach \x in{0,...,6}{
		\draw[ fill =sussexg](\x, 9)circle(1.7mm);    %size of bullet		
				}

\foreach \x in{0,...,5}{
		\draw[ fill =sussexg](\x, 8)circle(1.7mm);    %size of bullet		
				}

\foreach \x in{0,...,4}{
		\foreach \y in {5,6,7}{
		\draw[fill =sussexg](\x, \y)circle(1.7mm);    %size of bullet		
				}}

\foreach \x in{0,...,1}{
		\draw[ fill =sussexg](\x, 4)circle(1.7mm);    %size of bullet		
				}

\foreach \x in{0,...,0}{
		\foreach \y in {1,2,3}{
		\draw[fill =sussexg](\x, \y)circle(1.7mm);    %size of bullet		
				}}

\draw[fill=white](0,0)circle(1.7mm); 

\end{tikzpicture}

\end{center}
\caption{\small   The geodesic tree $\mathcal{T}_x$ rooted at $x$.  
The competition interface  (solid line)  emanates from $x+\etstar$ and separates 
the   subtrees of $\mathcal{T}_x$ rooted at $x+e_1$   and $x+e_2$.}
\label{fig:cif1}
\end{figure}

Under condition \eqref{g-reg}, the competition interface satisfies $\varphi_n^x/n \to \cid(T_x\w)$, given by \eqref{cid}. Moreover, each of these two trees contains at least one semi-infinite geodesic with asymptotic direction $\cid(T_x\w)$. Indeed, $\cid(T_x\w)$ is the unique direction with the property that there exist at least two semi-infinite geodesics rooted at $x$, with asymptotic direction $\cid(T_x\w)$, and which differ in their first step. See Figure \ref{fig:cif2}. Theorem \ref{thm-Coupier} records the fact that when the weights are exponentially distributed, there are no directions $\xi$ with three $\xi$-directed geodesics emanating from the same point.

\begin{figure}[H]
\centering
\begin{subfigure}{0.38\textwidth}
\includegraphics[width=\textwidth]{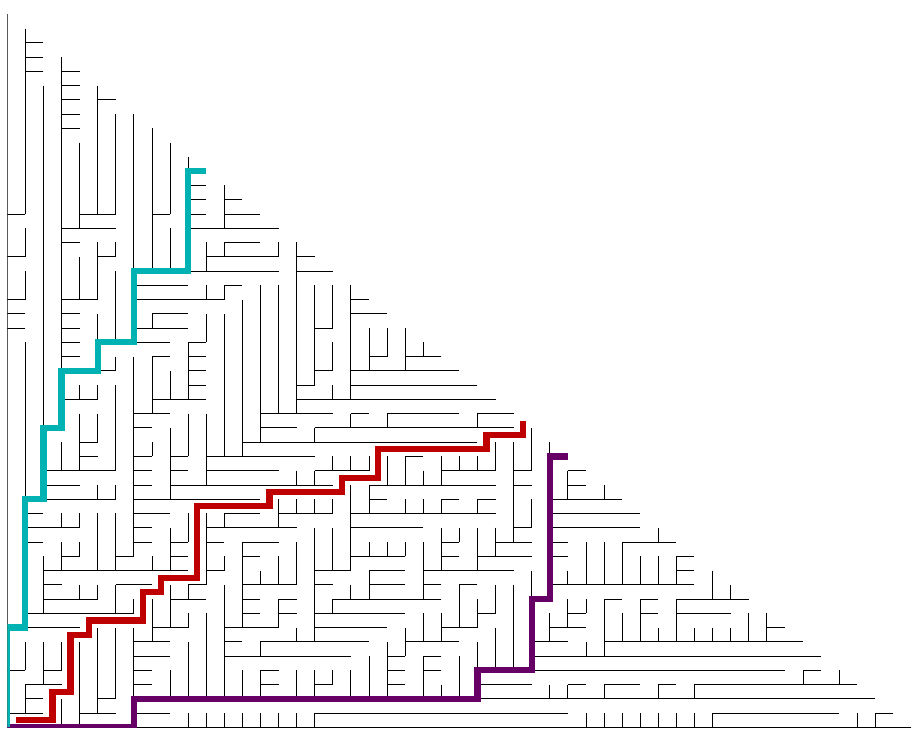}
\end{subfigure}
\begin{subfigure}{0.59\textwidth}
% This file was created by matlab2tikz.
%
%The latest updates can be retrieved from
%  http://www.mathworks.com/matlabcentral/fileexchange/22022-matlab2tikz-matlab2tikz
%where you can also make suggestions and rate matlab2tikz.
%
%
\definecolor{sussexg}{rgb}{0,0.7,0.7}
\definecolor{sussexp}{rgb}{0.4,0,0.4}
\definecolor{nicos-red}{rgb}{0.75,0.0,0.0}
\begin{tikzpicture}[scale=1.2]

\begin{axis}[%
%width=6.028in,
%height=4.754in,
%at={(1.011in,0.642in)},
%scale only axis,
%xmin=0,
%xmax=292,
%ymin=0,
%ymax=292,
unit vector ratio*=1 1 1,
axis line style={draw=none},
ticks=none,
axis x line*=bottom,
axis y line*=left
]

\addplot [color=nicos-red,  line width=1.0pt, forget plot]
  table[row sep=crcr]{%
0.5	0.5\\
1.5	0.5\\
1.5	1.5\\
2.5	1.5\\
2.5	2.5\\
3.5	2.5\\
4.5	2.5\\
4.5	3.5\\
5.5	3.5\\
6.5	3.5\\
7.5	3.5\\
8.5	3.5\\
8.5	4.5\\
8.5	5.5\\
8.5	6.5\\
8.5	7.5\\
9.5	7.5\\
10.5	7.5\\
10.5	8.5\\
11.5	8.5\\
12.5	8.5\\
13.5	8.5\\
14.5	8.5\\
15.5	8.5\\
15.5	9.5\\
16.5	9.5\\
17.5	9.5\\
18.5	9.5\\
19.5	9.5\\
20.5	9.5\\
21.5	9.5\\
22.5	9.5\\
23.5	9.5\\
24.5	9.5\\
25.5	9.5\\
25.5	10.5\\
26.5	10.5\\
27.5	10.5\\
28.5	10.5\\
29.5	10.5\\
30.5	10.5\\
31.5	10.5\\
32.5	10.5\\
33.5	10.5\\
34.5	10.5\\
35.5	10.5\\
36.5	10.5\\
37.5	10.5\\
38.5	10.5\\
38.5	11.5\\
38.5	12.5\\
38.5	13.5\\
38.5	14.5\\
39.5	14.5\\
40.5	14.5\\
41.5	14.5\\
42.5	14.5\\
42.5	15.5\\
43.5	15.5\\
44.5	15.5\\
45.5	15.5\\
46.5	15.5\\
47.5	15.5\\
47.5	16.5\\
48.5	16.5\\
49.5	16.5\\
50.5	16.5\\
51.5	16.5\\
52.5	16.5\\
53.5	16.5\\
54.5	16.5\\
55.5	16.5\\
56.5	16.5\\
57.5	16.5\\
58.5	16.5\\
59.5	16.5\\
59.5	17.5\\
60.5	17.5\\
60.5	18.5\\
61.5	18.5\\
62.5	18.5\\
63.5	18.5\\
64.5	18.5\\
65.5	18.5\\
66.5	18.5\\
67.5	18.5\\
68.5	18.5\\
69.5	18.5\\
70.5	18.5\\
70.5	19.5\\
71.5	19.5\\
72.5	19.5\\
73.5	19.5\\
74.5	19.5\\
75.5	19.5\\
76.5	19.5\\
77.5	19.5\\
78.5	19.5\\
79.5	19.5\\
80.5	19.5\\
81.5	19.5\\
82.5	19.5\\
83.5	19.5\\
84.5	19.5\\
85.5	19.5\\
85.5	20.5\\
86.5	20.5\\
87.5	20.5\\
88.5	20.5\\
88.5	21.5\\
89.5	21.5\\
89.5	22.5\\
90.5	22.5\\
91.5	22.5\\
91.5	23.5\\
92.5	23.5\\
93.5	23.5\\
94.5	23.5\\
95.5	23.5\\
96.5	23.5\\
97.5	23.5\\
98.5	23.5\\
99.5	23.5\\
100.5	23.5\\
100.5	24.5\\
101.5	24.5\\
101.5	25.5\\
102.5	25.5\\
103.5	25.5\\
104.5	25.5\\
105.5	25.5\\
106.5	25.5\\
107.5	25.5\\
108.5	25.5\\
109.5	25.5\\
110.5	25.5\\
111.5	25.5\\
111.5	26.5\\
112.5	26.5\\
113.5	26.5\\
114.5	26.5\\
115.5	26.5\\
116.5	26.5\\
117.5	26.5\\
117.5	27.5\\
118.5	27.5\\
118.5	28.5\\
118.5	29.5\\
119.5	29.5\\
119.5	30.5\\
120.5	30.5\\
121.5	30.5\\
122.5	30.5\\
123.5	30.5\\
124.5	30.5\\
125.5	30.5\\
126.5	30.5\\
126.5	31.5\\
127.5	31.5\\
128.5	31.5\\
128.5	32.5\\
129.5	32.5\\
130.5	32.5\\
131.5	32.5\\
132.5	32.5\\
133.5	32.5\\
134.5	32.5\\
135.5	32.5\\
136.5	32.5\\
137.5	32.5\\
138.5	32.5\\
139.5	32.5\\
140.5	32.5\\
141.5	32.5\\
142.5	32.5\\
143.5	32.5\\
144.5	32.5\\
145.5	32.5\\
146.5	32.5\\
147.5	32.5\\
148.5	32.5\\
149.5	32.5\\
150.5	32.5\\
151.5	32.5\\
152.5	32.5\\
153.5	32.5\\
154.5	32.5\\
155.5	32.5\\
156.5	32.5\\
157.5	32.5\\
158.5	32.5\\
159.5	32.5\\
160.5	32.5\\
161.5	32.5\\
162.5	32.5\\
163.5	32.5\\
164.5	32.5\\
165.5	32.5\\
166.5	32.5\\
167.5	32.5\\
167.5	33.5\\
168.5	33.5\\
169.5	33.5\\
169.5	34.5\\
170.5	34.5\\
171.5	34.5\\
172.5	34.5\\
173.5	34.5\\
174.5	34.5\\
175.5	34.5\\
176.5	34.5\\
177.5	34.5\\
178.5	34.5\\
179.5	34.5\\
180.5	34.5\\
181.5	34.5\\
181.5	35.5\\
182.5	35.5\\
183.5	35.5\\
183.5	36.5\\
183.5	37.5\\
183.5	38.5\\
184.5	38.5\\
185.5	38.5\\
186.5	38.5\\
187.5	38.5\\
188.5	38.5\\
189.5	38.5\\
190.5	38.5\\
191.5	38.5\\
191.5	39.5\\
192.5	39.5\\
193.5	39.5\\
194.5	39.5\\
195.5	39.5\\
196.5	39.5\\
197.5	39.5\\
198.5	39.5\\
199.5	39.5\\
200.5	39.5\\
201.5	39.5\\
202.5	39.5\\
203.5	39.5\\
204.5	39.5\\
205.5	39.5\\
206.5	39.5\\
207.5	39.5\\
208.5	39.5\\
209.5	39.5\\
210.5	39.5\\
211.5	39.5\\
212.5	39.5\\
213.5	39.5\\
214.5	39.5\\
215.5	39.5\\
216.5	39.5\\
217.5	39.5\\
218.5	39.5\\
219.5	39.5\\
220.5	39.5\\
220.5	40.5\\
220.5	41.5\\
221.5	41.5\\
222.5	41.5\\
223.5	41.5\\
223.5	42.5\\
224.5	42.5\\
225.5	42.5\\
225.5	43.5\\
226.5	43.5\\
227.5	43.5\\
228.5	43.5\\
229.5	43.5\\
230.5	43.5\\
231.5	43.5\\
232.5	43.5\\
233.5	43.5\\
234.5	43.5\\
235.5	43.5\\
236.5	43.5\\
237.5	43.5\\
238.5	43.5\\
239.5	43.5\\
240.5	43.5\\
241.5	43.5\\
241.5	44.5\\
242.5	44.5\\
243.5	44.5\\
244.5	44.5\\
245.5	44.5\\
246.5	44.5\\
247.5	44.5\\
};
\addplot [color=sussexp,  line width=1.0pt, forget plot]
  table[row sep=crcr]{%
0	0\\
1	0\\
2	0\\
3	0\\
4	0\\
5	0\\
6	0\\
7	0\\
8	0\\
9	0\\
10	0\\
11	0\\
12	0\\
13	0\\
14	0\\
14	1\\
15	1\\
16	1\\
17	1\\
18	1\\
19	1\\
20	1\\
21	1\\
22	1\\
23	1\\
24	1\\
25	1\\
26	1\\
27	1\\
28	1\\
29	1\\
30	1\\
30	2\\
31	2\\
32	2\\
33	2\\
34	2\\
35	2\\
36	2\\
37	2\\
38	2\\
39	2\\
40	2\\
41	2\\
42	2\\
43	2\\
44	2\\
45	2\\
46	2\\
47	2\\
48	2\\
49	2\\
50	2\\
51	2\\
52	2\\
53	2\\
54	2\\
55	2\\
56	2\\
57	2\\
58	2\\
59	2\\
60	2\\
61	2\\
62	2\\
63	2\\
64	2\\
64	3\\
65	3\\
66	3\\
67	3\\
68	3\\
69	3\\
70	3\\
71	3\\
72	3\\
73	3\\
74	3\\
75	3\\
76	3\\
77	3\\
78	3\\
79	3\\
80	3\\
81	3\\
82	3\\
83	3\\
84	3\\
85	3\\
86	3\\
87	3\\
88	3\\
89	3\\
90	3\\
91	3\\
92	3\\
93	3\\
94	3\\
95	3\\
96	3\\
97	3\\
98	3\\
99	3\\
100	3\\
100	4\\
101	4\\
102	4\\
103	4\\
104	4\\
105	4\\
106	4\\
107	4\\
107	5\\
108	5\\
109	5\\
110	5\\
111	5\\
112	5\\
113	5\\
114	5\\
115	5\\
115	6\\
116	6\\
117	6\\
117	7\\
117	8\\
117	9\\
118	9\\
119	9\\
120	9\\
121	9\\
122	9\\
123	9\\
124	9\\
125	9\\
126	9\\
127	9\\
128	9\\
129	9\\
130	9\\
131	9\\
132	9\\
132	10\\
133	10\\
134	10\\
135	10\\
136	10\\
137	10\\
138	10\\
138	11\\
138	12\\
139	12\\
140	12\\
140	13\\
140	14\\
141	14\\
142	14\\
143	14\\
144	14\\
145	14\\
146	14\\
147	14\\
148	14\\
149	14\\
150	14\\
150	15\\
151	15\\
152	15\\
153	15\\
154	15\\
155	15\\
156	15\\
157	15\\
158	15\\
159	15\\
160	15\\
161	15\\
161	16\\
162	16\\
163	16\\
164	16\\
165	16\\
166	16\\
167	16\\
168	16\\
168	17\\
169	17\\
170	17\\
171	17\\
172	17\\
173	17\\
174	17\\
174	18\\
175	18\\
176	18\\
177	18\\
178	18\\
179	18\\
180	18\\
181	18\\
182	18\\
183	18\\
184	18\\
185	18\\
186	18\\
187	18\\
188	18\\
189	18\\
190	18\\
191	18\\
192	18\\
193	18\\
194	18\\
195	18\\
196	18\\
197	18\\
198	18\\
199	18\\
200	18\\
201	18\\
202	18\\
203	18\\
204	18\\
204	19\\
205	19\\
206	19\\
207	19\\
208	19\\
209	19\\
210	19\\
211	19\\
212	19\\
213	19\\
214	19\\
215	19\\
216	19\\
217	19\\
218	19\\
219	19\\
220	19\\
221	19\\
221	20\\
221	21\\
222	21\\
223	21\\
223	22\\
224	22\\
225	22\\
226	22\\
227	22\\
228	22\\
229	22\\
229	23\\
230	23\\
230	24\\
230	25\\
231	25\\
232	25\\
232	26\\
233	26\\
234	26\\
234	27\\
235	27\\
236	27\\
237	27\\
238	27\\
239	27\\
240	27\\
240	28\\
240	29\\
241	29\\
242	29\\
242	30\\
243	30\\
244	30\\
245	30\\
246	30\\
247	30\\
248	30\\
249	30\\
249	31\\
250	31\\
250	32\\
251	32\\
252	32\\
253	32\\
253	33\\
254	33\\
255	33\\
256	33\\
257	33\\
257	34\\
258	34\\
};
\addplot [color=sussexg,  line width=1.0pt, forget plot]
  table[row sep=crcr]{%
0	0\\
0	1\\
0	2\\
1	2\\
1	3\\
1	4\\
1	5\\
1	6\\
1	7\\
2	7\\
2	8\\
3	8\\
3	9\\
4	9\\
5	9\\
5	10\\
6	10\\
7	10\\
7	11\\
7	12\\
7	13\\
8	13\\
9	13\\
10	13\\
11	13\\
12	13\\
13	13\\
13	14\\
14	14\\
14	15\\
14	16\\
14	17\\
14	18\\
15	18\\
16	18\\
17	18\\
18	18\\
19	18\\
20	18\\
21	18\\
22	18\\
23	18\\
24	18\\
25	18\\
25	19\\
25	20\\
26	20\\
27	20\\
27	21\\
27	22\\
27	23\\
27	24\\
28	24\\
29	24\\
29	25\\
30	25\\
30	26\\
31	26\\
32	26\\
33	26\\
34	26\\
35	26\\
36	26\\
36	27\\
37	27\\
38	27\\
39	27\\
40	27\\
41	27\\
42	27\\
43	27\\
44	27\\
45	27\\
46	27\\
47	27\\
48	27\\
49	27\\
49	28\\
49	29\\
50	29\\
51	29\\
52	29\\
52	30\\
53	30\\
54	30\\
54	31\\
54	32\\
55	32\\
56	32\\
57	32\\
58	32\\
59	32\\
60	32\\
60	33\\
61	33\\
61	34\\
62	34\\
63	34\\
64	34\\
65	34\\
66	34\\
66	35\\
66	36\\
66	37\\
66	38\\
66	39\\
66	40\\
66	41\\
66	42\\
67	42\\
68	42\\
69	42\\
70	42\\
71	42\\
72	42\\
73	42\\
74	42\\
75	42\\
76	42\\
76	43\\
77	43\\
77	44\\
77	45\\
78	45\\
79	45\\
80	45\\
81	45\\
82	45\\
83	45\\
84	45\\
85	45\\
86	45\\
87	45\\
88	45\\
89	45\\
90	45\\
91	45\\
92	45\\
93	45\\
94	45\\
95	45\\
96	45\\
97	45\\
98	45\\
99	45\\
100	45\\
101	45\\
102	45\\
103	45\\
103	46\\
103	47\\
104	47\\
105	47\\
106	47\\
107	47\\
108	47\\
109	47\\
110	47\\
111	47\\
112	47\\
113	47\\
114	47\\
115	47\\
116	47\\
116	48\\
116	49\\
117	49\\
117	50\\
117	51\\
118	51\\
119	51\\
120	51\\
121	51\\
122	51\\
123	51\\
124	51\\
124	52\\
125	52\\
126	52\\
126	53\\
126	54\\
127	54\\
128	54\\
128	55\\
129	55\\
130	55\\
130	56\\
131	56\\
131	57\\
131	58\\
131	59\\
131	60\\
131	61\\
132	61\\
133	61\\
133	62\\
134	62\\
135	62\\
135	63\\
136	63\\
136	64\\
136	65\\
137	65\\
137	66\\
138	66\\
139	66\\
140	66\\
140	67\\
140	68\\
140	69\\
141	69\\
141	70\\
141	71\\
142	71\\
142	72\\
143	72\\
144	72\\
145	72\\
146	72\\
147	72\\
147	73\\
148	73\\
149	73\\
150	73\\
150	74\\
151	74\\
151	75\\
152	75\\
153	75\\
153	76\\
153	77\\
154	77\\
154	78\\
155	78\\
156	78\\
157	78\\
158	78\\
159	78\\
160	78\\
160	79\\
160	80\\
160	81\\
160	82\\
161	82\\
161	83\\
162	83\\
163	83\\
163	84\\
163	85\\
164	85\\
165	85\\
166	85\\
167	85\\
168	85\\
169	85\\
169	86\\
169	87\\
170	87\\
171	87\\
172	87\\
173	87\\
174	87\\
175	87\\
176	87\\
177	87\\
178	87\\
179	87\\
180	87\\
181	87\\
181	88\\
182	88\\
183	88\\
184	88\\
185	88\\
186	88\\
186	89\\
187	89\\
188	89\\
189	89\\
190	89\\
191	89\\
192	89\\
193	89\\
194	89\\
195	89\\
196	89\\
197	89\\
197	90\\
198	90\\
198	91\\
199	91\\
200	91\\
201	91\\
};
\end{axis}
\end{tikzpicture}%
\end{subfigure}
\caption{\small  The competition interface (middle path) separating the two $\cid$-directed geodesics. The left picture is a small portion of the right one. In the picture on the right the $x$-axis appears to be stretched, but the scales of the axes are in fact identical.}
\label{fig:cif2}
\end{figure}

% !TEX root = GeoWebPaper.tex

\section{Busemann measures, exceptional directions, and coalescence points}\label{s:Bmeas} 
%\subsection{Preliminaries} 
The central theme of   this paper is the relationship between analytic properties of the Busemann process and the geometric properties of the geodesics $\geo{}{\abullet}{\xi\sig}$ for   $\xi \in \ri \Uset$ and $\sigg\in\{-,+\}$. 
%A major role will be played by directions of discontinuity of the Busemann process. 
It will be convenient in what follows to have a bookkeeping tool for the locations at which the Busemann processes are not locally constant. A natural way to record this information is through the supports of the associated Lebesgue-Stieltjes measures.

%We begin with the observation that the Busemann processes have locally bounded total variation, which justifies the construction of these measures.
%
%\begin{corollary}
%There exists a $T$-invariant Borel set $\Omega_0 \subset \Omega$ with $\bbP(\Omega_0) = 1$ such that, for all $\w \in \Omega_0$ and  $x,y \in \bbZ^2$, the processes $\xi \mapsto \B{\xi\pm}(x,y)$ are of locally bounded total variation.
%\end{corollary}
%\begin{proof}
%By \textcolor{red}{(cocycle property)}, it suffices to prove this result for $\xi \mapsto \B{\xi\pm}(x,x+e_i)$, $i \in \{1,2\}$, which are monotone functions.  
%\end{proof}
As functions of the direction parameter $\xi$, $\B{\xi-}_{x,x+e_i}$  and $\B{\xi+}_{x,x+e_i}$ are respectively left- and right-continuous versions of the same monotone function and satisfy the cocycle property \eqref{coc-prop}. As a consequence, 
for each $x,y\in\Z^2$, $\sigg\in\{-,+\}$, $\xi\mapsto\B{\xi\sig}(x,y)$ has locally bounded total variation.  
%The functions $\xi \mapsto \B{\xi\pm}(x,y)$ have  locally bounded total variation and 
%they agree at $\xi$ in the   dense set $\Udense$. 
Hence on each compact subset  $K$ of $\ri \Uset$ there exists a signed Lebesgue-Stieltjes measure  $\mu_{x,y}^{K}$  with the property that whenever 
%$\zeta,\eta \in K$ satisfy 
$\zeta \prec \eta$ and $[\zeta, \eta]\subset K$,
\be\label{Bmeas}\begin{aligned}
\mu_{x,y}^{K}(\,]\zeta, \eta])  = \B{\eta+}_{x,y} - \B{\zeta +}_{x,y}
\quad\text{and}\quad
\mu_{x,y}^{K}([\zeta , \eta [\,)= \B{\eta-}_{x,y} - \B{\zeta -}_{x,y}.
\end{aligned}\ee
%Recall the dense set $\Udense$, chosen after Theorem \ref{thm:Bus}. The concavity of $\gpp$ implies that $\ri\Uset\setminus\Diff$ is at most countable and hence $\Udense$ is dense in $\ri\Uset$. Then \eqref{B+=B-:temp} implies that in fact $\mu_{x,y}^{K,\pm}$ are the same measure, which we will denote by $\mu_{x,y}^K$.

The restriction to compact sets is a technical point: in general, $\B{\xi+}_{x,y}$ and $\B{\xi-}_{x,y}$  are signed sums of monotone functions and thus correspond to formal linear combinations of positive measures. 
By the limit in \eqref{B:inf-lim}, 
%Theorem \ref{thm:Bus}\eqref{thm:cocyexist:f},  
each of these positive measures assigns infinite mass to the interval $\ri\Uset$ and if any two of the measures come with different signs, the formal linear combination will not define a signed measure on all of $\ri\Uset$. We will ignore this technical point in what follows and write $\mu_{x,y}(\bbullet)$ for the value of this measure and $|\mu_{x,y}|(\bbullet)$ for the value of the total variation measure whenever they are unambiguously defined. In that vein, we define the \emph{support of the measure $\mu_{x,y}$} on $\ri\Uset$ as 
\begin{align}
\supp{\mu_{x,y}} &= \bigcup_{\zeta,\,\eta\,\in\,\ri\Uset: \, \zeta\prec\eta}
%{\substack{\zeta,\eta\,\in\,\ri\Uset\\ \zeta\prec\eta}}
\supp{\mu_{x,y}^{[\zeta,\eta]}},
\end{align}
where $\supp{\mu_{x,y}^{[\zeta,\eta]}}$ is, as usual, the 
%\reversemarginpar\addmath{do we need a reference to any of this? C: I vote no.}
support of the (well-defined) total variation measure $\abs{\mu_{x,y}^{[\zeta,\eta]}}$. Naturally, this definition agrees with the standard notion of the support of a measure when $\mu_{x,y}$ is a well-defined positive or negative measure on $\Uset$. 

\subsection{Coalescence and the Busemann measures}

The first  result below relates membership in the support with the existence of disjoint Busemann geodesics. 
\begin{theorem}\label{thm:nonint}
The following  holds with $\P$-probability one. For all $x\ne y$ in $\Z^2$ and $\xi\in\ri\Uset$  statements \eqref{thm:nonint.a} and \eqref{thm:nonint.b} below are equivalent: 
\begin{enumerate}[label={\rm(\roman*)}, ref={\rm\roman*}] \itemsep=3pt 
\item\label{thm:nonint.a} $\xi \in \supp{\mu_{x,y}}$.
\item\label{thm:nonint.b}  Either $\geo{}{x}{\xi-} \cap \geo{}{y}{\xi+} = \varnothing$ or $\geo{}{x}{\xi+} \cap \geo{}{y}{\xi-} = \varnothing$.
\end{enumerate}
Under the regularity condition \eqref{g-reg}, \eqref{thm:nonint.a} and \eqref{thm:nonint.b} are equivalent to
\begin{enumerate}[label={\rm(\roman*)}, ref={\rm\roman*},resume] \itemsep=3pt 
\item\label{thm:nonint.c}There exist $\Uset_\xi$-directed semi-infinite geodesics $\geod{}^x$ and $\geod{}^y$ out of $x$ and $y$, respectively,
%\begin{align*}
%&\ximin\cdot e_1 \leq \varliminf_{n\to\infty} \frac{\geod{n}^x \cdot e_1}{n} \leq \varlimsup_{n\to\infty} \frac{\geod{n}^x \cdot e_1}{n}  \leq \ximax \cdot e_1,  \\
%&\ximin\cdot e_1 \leq \varliminf_{n\to\infty} \frac{\geod{n}^y \cdot e_1}{n} \leq \varlimsup_{n\to\infty} \frac{\geod{n}^y \cdot e_1}{n}  \leq \ximax \cdot e_1, %\text{ and }
%\end{align*}
such that $\geod{}^x \cap \geod{}^y = \varnothing$.
\end{enumerate}
\end{theorem}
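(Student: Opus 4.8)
The plan is to prove the chain of equivalences \eqref{thm:nonint.a} $\Leftrightarrow$ \eqref{thm:nonint.b}, and then \eqref{thm:nonint.a} $\Leftrightarrow$ \eqref{thm:nonint.c} under \eqref{g-reg}. The key technical dictionary is \eqref{eq:buspass}, which says $G(x,\geo{n}{x}{\xi\sig})=\B{\xi\sig}(x,\geo{n}{x}{\xi\sig})$, together with the construction \eqref{d:bgeo} of the Busemann geodesics and the monotonicity/ordering relations \eqref{mono}, \eqref{path-ordering}, \eqref{eq:geoorder}. I would work throughout on the single $T$-invariant full-probability event where all the cited properties of the Busemann process hold simultaneously, plus the event of almost sure uniqueness of finite geodesics.

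First I would translate ``$\xi\in\supp\mu_{x,y}$'' into a statement about Busemann increments. By definition of the support via \eqref{Bmeas}, $\xi\in\supp\mu_{x,y}$ iff for every $\zeta\prec\xi\prec\eta$ in $\ri\Uset$ we have $\B{\eta-}_{x,y}\ne\B{\zeta+}_{x,y}$ (up to the obvious care with endpoints); by monotonicity and the left/right continuity \eqref{Busemann-limits} this is equivalent to the one-sided condition $\B{\xi-}_{x,y}\ne\B{\xi+}_{x,y}$, i.e.\ $\xi$ is a jump point of $\zeta\mapsto\B{\zeta\pm}(x,y)$. So the theorem reduces to showing: $\B{\xi-}(x,y)\ne\B{\xi+}(x,y)$ $\iff$ one of the crossed pairs of geodesics $(\geo{}{x}{\xi-},\geo{}{y}{\xi+})$ or $(\geo{}{x}{\xi+},\geo{}{y}{\xi-})$ is disjoint. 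For the direction \eqref{thm:nonint.b}$\Rightarrow$\eqref{thm:nonint.a}, I argue the contrapositive: if $\B{\xi-}(x,y)=\B{\xi+}(x,y)$, then a standard cocycle/induction argument (as in \cite[Lemma 4.1]{Geo-Ras-Sep-17-ptrf-2}) using \eqref{coc-prop} and \eqref{rec-prop2} shows the $-$ and $+$ geodesics out of $x$ coincide, and that $\geo{}{x}{\xi\sig}$ and $\geo{}{y}{\xi\sig}$ must coalesce — indeed two up-right paths with the same admissible steps that do not meet would force a strict inequality among Busemann increments somewhere, contradicting equality of the limiting values; hence all four geodesics eventually coincide and in particular both crossed pairs intersect. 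For \eqref{thm:nonint.a}$\Rightarrow$\eqref{thm:nonint.b}: if $\B{\xi-}(x,y)\ne\B{\xi+}(x,y)$, WLOG $\B{\xi-}(x,y)<\B{\xi+}(x,y)$; then I claim $\geo{}{x}{\xi+}\cap\geo{}{y}{\xi-}=\varnothing$. If they met at a common point $v$, then by \eqref{eq:buspass} and \eqref{coc-prop}, $\B{\xi+}(x,y)=\B{\xi+}(x,v)-\B{\xi+}(y,v)=G(x,v)-G(y,v)$ and similarly $\B{\xi-}(x,y)=G(x,v)-G(y,v)$ using that both geodesics pass through $v$ and the recovery property lets us read off passage times along them regardless of sign; this forces $\B{\xi+}(x,y)=\B{\xi-}(x,y)$, a contradiction. (The opposite strict inequality gives disjointness of the other crossed pair by the symmetric argument, swapping the roles of $x$ and $y$ or of $\pm$.)

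For the equivalence with \eqref{thm:nonint.c} under \eqref{g-reg}: \eqref{thm:nonint.b}$\Rightarrow$\eqref{thm:nonint.c} is immediate since $\geo{}{x}{\xi\sig}$ is $\Uset_{\xi\sig}$-directed hence $\Uset_\xi$-directed, so the disjoint crossed pair already furnishes the required $\geod{}^x,\geod{}^y$. For the converse \eqref{thm:nonint.c}$\Rightarrow$\eqref{thm:nonint.b}, I use the extremality \eqref{eq:geoorder}: any $\Uset_\xi$-directed geodesic $\geod{}^x$ out of $x$ satisfies $\geo{n}{x}{\xi-}\preceq\geod{n}^x\preceq\geo{n}{x}{\xi+}$, and likewise for $\geod{}^y$. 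If $\B{\xi-}(x,y)=\B{\xi+}(x,y)$ then by the coalescence conclusion from the first part all of $\geo{}{x}{\xi\pm}$ and $\geo{}{y}{\xi\pm}$ eventually agree on a single ray $\mathcal R$; the ordering then sandwiches both $\geod{}^x$ and $\geod{}^y$ so that they must each eventually merge into $\mathcal R$ as well (two up-right paths squeezed between two paths that coincide must coincide with them from some level on), whence $\geod{}^x\cap\geod{}^y\ne\varnothing$. This contradicts \eqref{thm:nonint.c}, so \eqref{thm:nonint.a} holds, and we already showed \eqref{thm:nonint.a}$\Rightarrow$\eqref{thm:nonint.b}.

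The main obstacle I anticipate is the careful bookkeeping in the step ``equal Busemann values force coalescence of the whole quadruple of geodesics,'' and in particular making the sandwiching argument \eqref{eq:geoorder} rigorous when $\Uset_\xi$ is a nondegenerate segment: one must ensure that $\Uset_\xi$-directedness of $\geod{}^x$ genuinely places it between $\geo{}{x}{\xi-}$ and $\geo{}{x}{\xi+}$ (this is exactly Theorem~\ref{thm:extreme}, invoked via \eqref{g-reg}) and that the common ray $\mathcal R$ is independent of which site we start from. A secondary subtlety is the translation between the support of the Lebesgue--Stieltjes measure and the pointwise jump condition, since $\mu_{x,y}$ is only defined on compact subsets of $\ri\Uset$ as a signed measure; but the definition of $\supp\mu_{x,y}$ as the union over $[\zeta,\eta]\subset\ri\Uset$ handles this, and monotonicity of each of the two coordinate increments $\B{\cdot}(x,x+e_1)$ and $\B{\cdot}(x,x+e_2)$ means ``$\xi$ charged by $|\mu_{x,y}|$'' is exactly ``$\xi$ is a discontinuity point of $\zeta\mapsto\B{\zeta}(x,y)$.''
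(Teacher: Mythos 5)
Your proposal rests on a foundational misidentification that undermines the whole argument: you translate ``$\xi\in\supp\mu_{x,y}$'' into ``$\B{\xi-}(x,y)\ne\B{\xi+}(x,y)$,'' i.e.\ you identify the support with the set of jump points. This is false in general: the support of a Lebesgue--Stieltjes measure is closed, so it contains accumulation points of jumps and the closed support of any continuous part, at which $\B{\xi-}(x,y)=\B{\xi+}(x,y)$ may hold. The theorem is stated under the general hypothesis \eqref{main-assump}, not under the jump process condition \eqref{cond:jumpcond}, and the paper takes pains to prove (Proposition \ref{pr:supp2}) that $\xi\notin\supp\mu_{x,y}$ is equivalent to $\B{\xi-}(x,y)=\B{\xi+}(x,y)$ \emph{together with} $\coal{\xi\pm}(x,y)\in\Z^2$. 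That extra coalescence-finiteness condition is not a formality: Theorem \ref{thm:supp-tri}\eqref{thm:supp-tri:3} shows precisely that $\xi$ can be in the support with $\coal{\xi+}(x,y)=\infty$ and no jump at $\xi$. Your proof, built on the wrong dictionary, silently imports the jump condition and therefore proves a strictly weaker statement.

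The second genuine gap is in the ``standard cocycle/induction'' step you invoke for \eqref{thm:nonint.b}$\Rightarrow$\eqref{thm:nonint.a}. You assert that $\B{\xi-}(x,y)=\B{\xi+}(x,y)$ forces $\geo{}{x}{\xi-}=\geo{}{x}{\xi+}$ and forces coalescence of $\geo{}{x}{\xi\sig}$ with $\geo{}{y}{\xi\sig}$, and that ``two up-right paths \dots that do not meet would force a strict inequality among Busemann increments somewhere.'' Neither claim is justified. Equality of the cocycle on a single pair $(x,y)$ does not constrain the edge increments $\B{\xi\pm}(x,x+e_i)$ that drive the local tie-breaking rule in \eqref{d:bgeo}, and non-coalescence of Busemann geodesics is exactly the phenomenon the whole section is devoted to analyzing, so it cannot be dispatched by a handwave. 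What the paper actually does here is carefully compare the passage-time decompositions \eqref{eq:busdiff} at the two coalescence points and use the almost-sure genericity condition \eqref{paths.2} (no nontrivial integer relations among the weights) to rule out accidental cancellations. Finally, in your argument for \eqref{thm:nonint.a}$\Rightarrow$\eqref{thm:nonint.b}, the claim that if $v\in\geo{}{x}{\xi+}\cap\geo{}{y}{\xi-}$ then ``the recovery property lets us read off passage times along them regardless of sign'' is wrong: \eqref{eq:buspass} gives $\B{\xi+}(y,v)=G(y,v)$ only when $v\in\geo{}{y}{\xi+}$, which you have not established. (A correct version of this local step does exist via the general inequality $\B{\xi\sig}(u,w)\ge G(u,w)$, but even repaired it would only give that a shared point forces $\B{\xi-}(x,y)\ge\B{\xi+}(x,y)$ — a contradiction with your WLOG, not with $\xi\in\supp\mu_{x,y}$ — and so it still does not reach the actual theorem.) The paper's proof of this implication instead runs an extended case analysis (the two pictures in Figure \ref{fig:nonint}) based on the ordering \eqref{path-ordering}, planarity, and uniqueness of finite geodesics, and it is the technical heart of the result. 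I would recommend you first prove Propositions \ref{pr:supp1} and \ref{pr:supp2} as the paper does — they supply the correct and nontrivial bridge between the measure-theoretic notion of support and the geometry of coalescence — and only then attempt Theorem \ref{thm:nonint}.
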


The difference between statements \eqref{thm:nonint.b} and \eqref{thm:nonint.c} is that if $\xi\not\in\supp{\mu_{x,y}}$ then \eqref{thm:nonint.b} leaves open the possibility that even though $\geo{}{x}{\xi-}$ and $\geo{}{y}{\xi+}$ intersect and   $\geo{}{x}{\xi+}$ and $\geo{}{y}{\xi-}$ intersect,  there may be other $\Uset_\xi$-directed geodesics out of $x$ and $y$  that do not intersect. This is because without the regularity condition \eqref{g-reg}, we currently do not know whether  \eqref{eq:geoorder} holds, that is, whether $\geo{}{x}{\xi+}$ is 
the rightmost and $\geo{}{x}{\xi-}$ the  leftmost $\Uset_\xi$-directed geodesic out of $x$.
%extreme among $\Uset_\xi$-directed geodesics out of $x$.

The subsequent  several results relate the support of Busemann measures to the coalescence geometry of geodesics.   For $x,y\in\Z^2$,   $\xi\in\ri\Uset$, and signs   $\sigg\in\{-,+\}$, define 
  the {\it coalescence point}  of  the geodesics $\geo{}{x}{\xi\sig}$ and $\geo{}{y}{\xi\sig}$
by 
\be\label{df:coal-z} \begin{aligned}
\coal{\xi\sig}(x,y) &= \begin{cases}
\text{first point in $\geo{}{x}{\xi\sig}\cap \geo{}{y}{\xi\sig}$}, & \text{if } \geo{}{x}{\xi\sig}\cap \geo{}{y}{\xi\sig} \neq \emptyset \\[3pt] 
\infty, &  \text{if } \geo{}{x}{\xi\sig}\cap \geo{}{y}{\xi\sig}= \emptyset.  
\end{cases}
\end{aligned}\ee
The  first point $z$ in $\geo{}{x}{\xi\sig}\cap \geo{}{y}{\xi\sig}$ is identified uniquely by choosing  the common point $z=\geo{k}{x}{\xi\sig}=\geo{k}{y}{\xi\sig}$ that  minimizes $k$. 
In the expression above,  $\infty$ is the point added in the one-point compactification of $\Z^2$. 
%where we have taken the convention that $\coal{\xi\sig}(x,y) = \infty$ if the geodesics rooted at $x$ and $y$ do not coalesce. 
If the two geodesics  $\geo{}{x}{\xi\sig}$ and  $\geo{}{y}{\xi\sig}$ ever meet, they coalesce due to the local rule in \eqref{d:bgeo}. We write $\coal{\xi}{(x,y)}$ when $\coal{\xi-}(x,y)=\coal{\xi+}(x,y)$. 

As $\Z^2\cup\{\infty\}$-valued functions,  $\xi\mapsto\coal{\xi+}(x,y)$ is right-continuous and $\xi\mapsto\coal{\xi-}(x,y)$ is left-continuous.  Namely, 
a consequence of \eqref{pmgeolim} is that for $\xi \in \ri \Uset$ and $\sigg\in\{-,+\}$, 
\begin{align}\label{coal-lim}
\lim_{\ri\Uset\,\ni\,\eta \searrow \xi} \coal{\eta \sig}(x,y) = \coal{\xi+}(x,y). 
\end{align}
If $\coal{\xi+}(x,y) = \infty$ this limit still holds in the sense that then $\abs{\coal{\eta \sig}(x,y)}\to\infty$.   The analogous statement holds for convergence from the left to  $\coal{\xi-}(x,y)$. 
%These limits are understood in the extended sense if $\coal{\xi+}(x,y) = \infty$ or $\coal{\xi-}(x,y)=\infty.$ 

The next theorem states that an interval of directions outside the support of a Busemann measure corresponds to geodesics following common initial segments to a common coalescence point. 

\begin{theorem}\label{thm:1path}   With probability one,   simultaneously  for all $\zeta\prec\eta$ in $\ri\Uset$ and all $x,y\in\Z^2$,  statements  \eqref{thm:1path.a}, \eqref{thm:1path.b}, and \eqref{thm:1path.c} below are equivalent:   
\begin{enumerate}[label={\rm(\roman*)}, ref={\rm\roman*}] \itemsep=3pt 
\item\label{thm:1path.a} $\abs{\mu_{x,y}}(\,]\zeta, \eta[\,)=0$.
\item\label{thm:1path.b}   Letting $k=x\cdot\et$ and $\ell=y\cdot\et$,  there exist a point $z$ with $z\cdot\et=m\ge k\vee\ell$  and path segments $\pi_{k,m}$ and $\wt\pi_{\ell,m}$  with these properties:   $\pi_k=x$, $\wt\pi_\ell=y$,  $\pi_m=\wt\pi_m=z$, and  for all $\xi\in\,]\zeta, \eta[$ and $\sigg\in\{-,+\}$ we have   $\geo{k,m}{x}{\xi\sig}=\pi_{k,m}$ and $\geo{\ell,m}{y}{\xi\sig}=\wt\pi_{\ell,m}$.  
\item\label{thm:1path.c} Letting $k=x\cdot\et$ and $\ell=y\cdot\et$,  there exists a point $z$ with $z\cdot\et=m\ge k\vee\ell$ so that for all $\xi\in\,]\zeta, \eta[$ and $\sigg\in\{-,+\}$, $\coal{\xi\sig}(x,y)=z$.
\end{enumerate}
\end{theorem} 

%Given the relevance of the support of the Busemann measures, 
%t is natural to wonder about its  structure.
%% looks like. An argument, given in Section \ref{sec:bus}, 
% The next lemma implies that the support   is always nowhere dense. % We record this fact in the following lemma. 
 
 The next lemma verifies that intervals that satisfy statement \eqref{thm:1path.a} of Theorem \ref{thm:1path}  almost surely make up a  random  dense open subset of $\ri\Uset$. 
% Recall that $\Udense$ is an arbitrary countable dense subset of points of differentiability of $\gpp$.

\begin{lemma}\label{lm:isolated}   
 Let $\Udense\subset\ri\Uset$ be a fixed countable dense set of points of differentiability of $\gpp$.
Then the  following holds with $\P$-probability one: 
for every $x,y\in\Z$ and every $\xi\in\Udense$, there exist $\zeta\prec\xi\prec\eta$ in $\ri\Uset$ such that  $\abs{\mu_{x,y}}(\,]\zeta, \eta[\,)=0$. 
%$]\zeta,\eta[\,\cap\supp{\mu_{x,y}}=\varnothing$.
\end{lemma}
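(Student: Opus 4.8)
\textbf{Proof plan for Lemma \ref{lm:isolated}.}

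The plan is to fix $x,y\in\Z^2$ and a point $\xi\in\Udense$, and to show that $\xi$ is $\P$-a.s.\ an isolated point of $\supp\mu_{x,y}$ in the sense required; the full statement then follows by a countable union over the pairs $(x,y)$ and the countable set $\Udense$. The heart of the matter is that $\xi\in\Udense\subset\Diff$, so $\gpp$ is differentiable at $\xi$, and by the regularity assumptions in place (or directly since $\xi\in\Diff$ and $\ximin=\ximax=\xi$ at a point of strict concavity reachable through $\Diff$) we have $\Uset_{\xi-}=\Uset_{\xi+}=\{\xi\}$, so that part \eqref{pt:unique} of the Busemann-geodesic properties applies: on a $\xi$-dependent full-probability event, $\geo{}{x}{\xi-}=\geo{}{x}{\xi+}=:\geo{}{x}{\xi}$ and likewise for $y$, and by part \eqref{pt:coal} these two geodesics coalesce at some finite point $z=\coal{\xi}(x,y)$. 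In particular $\xi\notin\supp\mu_{x,y}$ by (the easy direction of) Theorem \ref{thm:nonint}: since $\geo{}{x}{\xi-}=\geo{}{x}{\xi+}$ meets $\geo{}{y}{\xi\pm}$, statement \eqref{thm:nonint.b} fails, hence so does \eqref{thm:nonint.a}.

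Knowing $\xi\notin\supp\mu_{x,y}$ is not quite the statement, since the support is a closed set and a priori $\xi$ could be a limit of support points from one side even if it is not itself in the support — wait, no: $\supp\mu_{x,y}$ is closed, so $\xi\notin\supp\mu_{x,y}$ already gives an open neighborhood $]\zeta,\eta[\,\ni\xi$ disjoint from $\supp\mu_{x,y}$, and then $\abs{\mu_{x,y}}(]\zeta,\eta[)=0$ by definition of support. So the real content is just to justify $\xi\notin\supp\mu_{x,y}$ cleanly. The cleanest route avoids Theorem \ref{thm:nonint} and argues directly: $\xi\notin\supp\mu_{x,y}$ means there is a neighborhood of $\xi$ on which $\eta\mapsto\B{\eta\pm}(x,x+e_i)$ is constant for $i=1,2$, equivalently (by the cocycle property \eqref{coc-prop} and \eqref{mono}) $\B{\xi-}(x,x+e_i)=\B{\xi+}(x,x+e_i)$ together with continuity of $\eta\mapsto\B{\eta\pm}$ at $\xi$, i.e.\ $\xi$ is a continuity point of the Busemann process along the pair $(x,y)$. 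I would instead prove the following: on the full-probability event where \eqref{pt:unique} and \eqref{pt:coal} hold at $\xi$, one has, for all $\eta$ in a neighborhood of $\xi$, $\geo{k,m}{x}{\eta\pm}=\geo{k,m}{y}{\eta\pm}$ restricted to the common segment up to the coalescence point $z$ — this uses the one-sided continuity \eqref{pmgeolim} of the Busemann geodesics together with the fact that the (unique) finite geodesic from $x$ to $z$ and from $y$ to $z$ is locally stable. More precisely: let $z=\coal{\xi}(x,y)$ with $z\cdot\et=m$; by \eqref{pmgeolim} applied at $\xi$ from both sides, $\geo{k,m}{x}{\eta\sig}\to\geo{k,m}{x}{\xi}$ as $\eta\to\xi$ and similarly for $y$; since these are finitely many lattice paths, for $\eta$ close enough to $\xi$ they equal the limiting paths exactly, so $\geo{}{x}{\eta\sig}$ and $\geo{}{y}{\eta\sig}$ agree up through $z$ and hence coalesce by the local rule \eqref{d:bgeo}; this forces $\coal{\eta\sig}(x,y)=z$ for all such $\eta$, which by the remark following Theorem \ref{thm:1path} is equivalent to $\abs{\mu_{x,y}}(]\zeta,\eta[)=0$ for the corresponding interval.

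So the ordered steps are: (1) reduce to fixed $(x,y)$ and fixed $\xi\in\Udense$ via a countable union; (2) on the full-probability event of \eqref{pt:unique}–\eqref{pt:coal} at $\xi$, set $z=\coal{\xi}(x,y)$, finite; (3) use the one-sided geodesic continuity \eqref{pmgeolim} at $\xi$ to find $\zeta\prec\xi\prec\eta$ such that for every $\eta'\in\,]\zeta,\eta[$ and every sign, $\geo{}{x}{\eta'\sig}$ and $\geo{}{y}{\eta'\sig}$ agree on their segments up to $z$; (4) conclude via the local coalescence rule that $\coal{\eta'\sig}(x,y)=z$ on that whole interval, hence by the equivalence in the remark after Theorem \ref{thm:1path}, $\abs{\mu_{x,y}}(]\zeta,\eta[)=0$. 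The main obstacle — and the one place needing care — is step (3): the convergence in \eqref{pmgeolim} is only one-sided ($\eta\searrow\xi$ gives the $\xi+$ limit, $\zeta\nearrow\xi$ gives the $\xi-$ limit), and since $\xi\in\Diff$ we have $\geo{}{x}{\xi+}=\geo{}{x}{\xi-}$ so both one-sided limits coincide with the unique geodesic and the argument closes; but one must also check that the coalescence point does not "escape to $\infty$" — it cannot, precisely because it equals the finite $z$ for the limiting configuration and the paths are eventually exactly equal to their limits on any fixed finite window containing $z$. A minor additional point is ensuring $m=z\cdot\et\ge k\vee\ell$ so that the path-segment formulation of Theorem \ref{thm:1path}\eqref{thm:1path.b} literally applies, which holds automatically since $z$ lies on geodesics emanating from both $x$ and $y$.
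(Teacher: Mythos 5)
Your overall strategy — $\xi\in\Diff$ gives a unique coalescing family of $\xi$-geodesics, hence $\coal{\xi\pm}(x,y)$ coincide as a lattice point, hence $\xi\notin\supp\mu_{x,y}$, and then the closedness of the support hands you the open interval — is exactly the paper's route (the paper's proof is two lines: cite the coalescence point identity and Proposition \ref{pr:supp2}). The observation that "$\xi$ not in the closed set $\supp\mu_{x,y}$" already finishes the proof is correct, and your later path-segment argument through \eqref{pmgeolim} and Theorem \ref{thm:1path}\eqref{thm:1path.b} is a reasonable re-derivation of Proposition \ref{pr:supp2}, though it is not needed once you cite that proposition.

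There is, however, a genuine gap in the one load-bearing step. You justify $\geo{}{x}{\xi-}=\geo{}{x}{\xi+}$ by asserting that $\xi\in\Diff$ gives $\Uset_{\xi-}=\Uset_{\xi+}=\{\xi\}$ (equivalently $\ximin=\ximax=\xi$), so that part \eqref{pt:unique} applies. This is false: $\xi\in\Diff$ means only that the one-sided gradients agree at $\xi$, and $\xi$ can perfectly well sit in the interior of a maximal linear segment of $\gpp$ (where $\gpp$ is smooth but not strictly concave), in which case $\Uset_\xi$ is a nondegenerate interval and $\ximin,\ximax$ need not be in $\Diff$ at all. Part \eqref{pt:unique} — i.e.\ Theorem \ref{thm1}\eqref{thm1:cont} — requires $\ximin,\ximax\in\Diff$, which is precisely the content of the regularity condition \eqref{g-reg} that Lemma \ref{lm:isolated} does \emph{not} assume. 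Your parenthetical appeal to "the regularity assumptions in place" is therefore also off base. The correct justification is Theorem \ref{thm:Bus}\eqref{thm:cocyexist:g}, which says that for every $\xi\in\Diff$ and $\w\in\Omega^1_\xi$ the Busemann functions satisfy $\B{\xi+}(x,y,\w)=\B{\xi-}(x,y,\w)$ with \emph{no} hypothesis on $\ximin,\ximax$; combined with the no-ties property \eqref{eq:ties}, this forces $\geo{}{x}{\xi-}=\geo{}{x}{\xi+}$, and then Theorem \ref{thm1}\eqref{thm1:coal} gives coalescence and $\coal{\xi\pm}(x,y)\in\Z^2$. With that substitution your argument matches the paper's and is complete.
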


%For $x,y\in\Z^2$,   $\xi\in\ri\Uset$, and signs   $\sigg\in\{-,+\}$, define 
%the {\it coalescence level} $\ell^{\xi\sig}(x,y)$ of the $\geo{}{x}{\xi\sig}$ and $\geo{}{y}{\xi\sig}$ geodesics as
%the smallest $\ell\ge x\cdot\et\vee y\cdot\et$ such that $\geo{\ell}{x}{\xi\sig}=\geo{\ell}{y}{\xi\sig}$, if such an $\ell$ exists, 
%and $\infty$ otherwise.
%\be\label{df:coal-l}  
%\ell^{\xi\sig}(x,y)=\inf\{x\cdot\et: x \in \geo{}{x}{\xi\sig}\cap \geo{}{y}{\xi\sig}\}
%\ee
%with the convention that the infimum of an empty set is $\infty$. 

%%%%%  OLD def. of coalescence pt. 
%\be\label{df:coal-z} \begin{aligned}
%\coal{\xi\sig}(x,y) &= \begin{cases}
%\geo{\ell^{\xi\sig}}{x}{\xi\sig}=\geo{\ell^{\xi\sig}}{y}{\xi\sig}, & \text{if } \geo{}{x}{\xi\sig}\cap \geo{}{y}{\xi\sig} \neq \emptyset, \\[3pt] 
%\infty, &  \text{if } \geo{}{x}{\xi\sig}\cap \geo{}{y}{\xi\sig}= \emptyset, 
%\end{cases}
%\end{aligned}\ee

A  natural question is whether the measure is Cantor-like with no isolated points of support, or   if the support   consists entirely of isolated points, or if both are possible.  These features also turn out to have counterparts in coalescence properties. 
  For a set $\cA\subset\Uset$  say that $\xi$ is a {\it  limit point of $\cA$ from the right} 
if $\cA$ intersects $]\xi, \eta[$ for each $\eta\succ\xi$, with a similar definition for limit points from the left.

\begin{theorem}\label{thm:supp-tri}
The following  holds with probability one. For all $x,y \in \bbZ^2$ and $\xi \in \ri \Uset$:
\begin{enumerate} [label={\rm(\alph*)}, ref={\rm\alph*}]   \itemsep=3pt 
\item\label{thm:supp-tri:1} $\xi\notin\supp{\mu_{x,y}}$ $\Longleftrightarrow$ $\coal{\xi+}(x,y)=\coal{\xi-}(x,y) \in \bbZ^2$.
\item\label{thm:supp-tri:2} $\xi$ is an isolated point of $\supp{\mu_{x,y}}$ $\Longleftrightarrow$ $\coal{\xi+}(x,y) \neq \coal{\xi-}(x,y)$ but both $\coal{\xi\pm}(x,y) \in \bbZ^2$.
\item\label{thm:supp-tri:3} $\xi$ is a limit point of $\supp{\mu_{x,y}}$ from the right $\Longleftrightarrow$ $\coal{\xi+}(x,y)=\infty$.  Similarly,  $\xi$ is a limit point of $\supp{\mu_{x,y}}$ from the left  $\Longleftrightarrow$ $\coal{\xi-}(x,y)=\infty$. 
 \end{enumerate}
\end{theorem}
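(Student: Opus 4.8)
The plan is to deduce the three equivalences from Theorem~\ref{thm:1path}, together with the observation stated immediately after it (that condition~\eqref{thm:1path.b} is the same as constancy of $\coal{\xi'\sig}(x,y)$ in $\xi'$ over the open interval), and from the one-sided continuity of the coalescence point in \eqref{coal-lim}. Everything is proved on the single full-probability event on which Theorem~\ref{thm:1path} holds simultaneously for all $x,y\in\Z^2$ and all $\zeta\prec\eta$ in $\ri\Uset$ and on which \eqref{pmgeolim}--\eqref{coal-lim} hold; no further almost-sure argument is needed, as what follows is deterministic bookkeeping on that event.

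The first step is a short dictionary: for $x,y\in\Z^2$ and $\zeta\prec\eta$ in $\ri\Uset$, the three conditions (i) $\abs{\mu_{x,y}}(\,]\zeta,\eta[\,)=0$, (ii) $]\zeta,\eta[\,\cap\,\supp{\mu_{x,y}}=\varnothing$, and (iii) there is $z\in\Z^2$ with $\coal{\xi'\sig}(x,y)=z$ for all $\xi'\in\,]\zeta,\eta[$ and $\sigg\in\{-,+\}$, are equivalent. Here (i) and (iii) are equivalent by Theorem~\ref{thm:1path} and the observation following it, while (i) and (ii) are equivalent by the standard fact that the complement of $\supp{\mu_{x,y}}$ is the largest open set on which $\abs{\mu_{x,y}}$ vanishes (applied on compact subintervals of $\ri\Uset$, consistently with the definition of $\supp{\mu_{x,y}}$). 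The second step upgrades this to a one-sided form: $\xi$ is \emph{not} a limit point of $\supp{\mu_{x,y}}$ from the left iff $\abs{\mu_{x,y}}(\,]\zeta,\xi[\,)=0$ for some $\zeta\prec\xi$, iff (by the dictionary) there are $\zeta\prec\xi$ and $z\in\Z^2$ with $\coal{\eta\sig}(x,y)=z$ for all $\eta\in\,]\zeta,\xi[$ and $\sigg\in\{-,+\}$, iff $\coal{\xi-}(x,y)\in\Z^2$; the last equivalence is the left-hand version of \eqref{coal-lim} together with the fact that a $\Z^2\cup\{\infty\}$-valued function with a finite limit is eventually equal to that limit. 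Symmetrically, $\xi$ is not a limit point of $\supp{\mu_{x,y}}$ from the right iff $\coal{\xi+}(x,y)\in\Z^2$. In contrapositive form, these two statements are exactly part~\eqref{thm:supp-tri:3}.

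Part~\eqref{thm:supp-tri:1} then follows: $\xi\notin\supp{\mu_{x,y}}$ iff $\xi$ has a two-sided open neighborhood disjoint from $\supp{\mu_{x,y}}$, iff (by the dictionary, evaluating (iii) at $\xi'=\xi$) $\coal{\xi+}(x,y)=\coal{\xi-}(x,y)\in\Z^2$; for the reverse implication one uses the two one-sided statements to produce the left and the right halves of such a neighborhood and glues them at $\xi$, where the common value $\coal{\xi-}(x,y)=\coal{\xi+}(x,y)$ is attained for both signs. Part~\eqref{thm:supp-tri:2} is then purely formal: a point of $\ri\Uset$ is an isolated point of a set iff it lies in the set but is neither a left nor a right limit point of it; substituting part~\eqref{thm:supp-tri:1} for the membership condition and part~\eqref{thm:supp-tri:3} for the two limit-point conditions yields exactly ``both $\coal{\xi\pm}(x,y)\in\Z^2$ and $\coal{\xi+}(x,y)\neq\coal{\xi-}(x,y)$.''

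I do not expect a genuine obstacle, since the analytic content is already in Theorem~\ref{thm:1path} and the rest is bookkeeping. The two points requiring care are: first, being consistent about the meaning of $\mu_{x,y}$ and $\supp{\mu_{x,y}}$ on the noncompact parameter set $\ri\Uset$, which is handled by phrasing everything through restrictions to compact subintervals, as in the definition of $\supp{\mu_{x,y}}$; and second, invoking the discreteness of $\Z^2\cup\{\infty\}$ to convert the sequential continuity in \eqref{coal-lim} into honest local constancy of $\xi\mapsto\coal{\xi\pm}(x,y)$ on one-sided neighborhoods, which is what makes the gluing step in part~\eqref{thm:supp-tri:1} and the limit-point reformulations in part~\eqref{thm:supp-tri:3} valid.
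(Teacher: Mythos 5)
Your proof is correct and follows essentially the same route as the paper: both arguments translate the analytic side conditions into statements about the coalescence point $\coal{\xi\sig}(x,y)$ and use the one-sided continuity \eqref{coal-lim} together with the discreteness of $\Z^2\cup\{\infty\}$ to pass between local constancy on one-sided intervals and finiteness of $\coal{\xi\pm}(x,y)$. The only cosmetic difference is in how part~\eqref{thm:supp-tri:1} is obtained: the paper reads it off directly from Proposition~\ref{pr:supp2}, whereas you re-derive it from the one-sided statements of part~\eqref{thm:supp-tri:3} plus the dictionary, using Theorem~\ref{thm:1path} (which is itself the packaged form of Proposition~\ref{pr:supp1}) as the single analytic input; this makes the argument marginally more self-contained but does not change its substance, since Propositions~\ref{pr:supp1} and~\ref{pr:supp2} are proved by the same technique.
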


%\begin{theorem}\label{th:isopt}
%%Let $\Omega_0$ be the full measure event in Proposition \ref{pr:supp2}.
%%There exists a $T$-invariant Borel set $\Omega_0 \subset \Omega$ with $\bbP(\Omega_0) = 1$ so that 
%For all $\w \in \Omega_0$, all $x,y \in \bbZ^2$, and $\xi \in \ri \Uset$, the following are equivalent.
%\begin{enumerate} [label={\rm(\alph*)}, ref={\rm\alph*}]   \itemsep=3pt 
%\item\label{prop:isopt:1} $\coal{\xi+}(x,y), \coal{\xi-}(x,y) \in \bbZ^2$ and $\coal{\xi+}(x,y) \neq \coal{\xi-}(x,y)$.
%\item\label{prop:isopt:2} $\xi$ is an isolated point of $\supp{\mu_{x,y}}$.
% \end{enumerate}
%\end{theorem}
%
%
%\begin{proposition} \label{pr:supp2}
%%There is a $T$-invariant event $\Omega_0$ with $\bbP(\Omega_0)=1$ and such that 
%%Let $\Omega_0$ be the full measure event from Proposition \ref{pr:supp1}.
%For all $\w \in \Omega_0$ and all $x,y \in \bbZ^2$, the following are equivalent:
%\begin{enumerate} [label={\rm(\alph*)}, ref={\rm\alph*}]   \itemsep=3pt 
%\item\label{eq110}  $\xi \not\in \supp{\mu_{x,y}}$.
%\item\label{eq111} $\coal{\xi-}(x,y) = \coal{\xi+}(x,y) \in \bbZ^2$.
%\item\label{eq112} $\B{\xi-}(x,y) = \B{\xi+}(x,y)$ and $\coal{\xi-}(x,y), \coal{\xi+}(x,y) \in \bbZ^2$. 
%\end{enumerate}
%\end{proposition}

 This motivates  
the following condition on the Busemann process   which  will be invoked in some  results in the sequel: 
%\begin{condition}\label{cond:jumpcond}
\begin{align}\label{cond:jumpcond}
\begin{minipage}{0.9\textwidth}
There exists a full $\P$-probability event on which every point of $\supp{\mu_{x,y}}$ is isolated, for all $x,y\in\Z^2$. 
%\P\Bigl\{\forall x,y\in\Z^2:\text{ every point of $\supp{\mu_{x,y}}$ is isolated}\Bigr\}=1.
\end{minipage}
\end{align}\smallskip
%\end{condition}
Equivalently, condition \eqref{cond:jumpcond} says that   $\xi \mapsto \B{\xi\pm}(x,y)$ is a  jump process whose jumps do not accumulate on $\ri \Uset$. For this reason, we  refer to \eqref{cond:jumpcond} as the  {\it jump process condition}.
%In particular,   $\aUset$ is countable. 
%\normalmarginpar\addmath{would be better to put this right after the condition is stated. but we need $\Omega_0$ for the statement!!}
%We cite the jump process condition \eqref{cond:jumpcond} explicitly in all statements   that require it.
It is shown in \cite[Theorem 3.4]{Fan-Sep-20} that \eqref{cond:jumpcond} holds 
when the weights $\w_x$ are i.i.d.\ exponential random variables.  
In addition to Lemma \ref{lm:isolated}, this is a further  reason to expect that \eqref{cond:jumpcond} holds very generally. 
%which further justifies highlighting it.  
%This and the following lemma suggest  that  \eqref{cond:jumpcond}  should hold more generally.

Under condition \eqref{cond:jumpcond}  Theorem \ref{thm:supp-tri} extends to a global  coalescence statement.  
 
\begin{theorem}\label{thm:+-coal}
%Assume \eqref{main-assump}. 
%There is an event of full $\bbP$ probability on which s
Statements \eqref{thm:+-coal.a} and  \eqref{thm:+-coal.b} below are equivalent.
\begin{enumerate}[label={\rm(\roman*)}, ref={\rm\roman*}] \itemsep=3pt 
\item\label{thm:+-coal.a} The jump process condition \eqref{cond:jumpcond} holds.
\item\label{thm:+-coal.b} This holds with $\P$-probability one: for all $x,y\in\Z^2$, all $\xi\in\ri\Uset$, and both signs $\sigg\in\{-,+\}$, the geodesics $\geo{}{x}{\xi\sig}$ and $\geo{}{y}{\xi\sig}$ coalesce.
\end{enumerate}
\end{theorem}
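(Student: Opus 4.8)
The plan is to establish the equivalence by proving each direction separately, with the implication \eqref{thm:+-coal.a}$\Rightarrow$\eqref{thm:+-coal.b} being the substantive one. For the direction \eqref{thm:+-coal.b}$\Rightarrow$\eqref{thm:+-coal.a}, I would argue by contraposition: if \eqref{cond:jumpcond} fails, then on an event of positive probability there are $x,y\in\Z^2$ and $\xi\in\ri\Uset$ such that $\xi$ is a non-isolated point of $\supp{\mu_{x,y}}$, i.e.\ $\xi$ is a limit point of $\supp{\mu_{x,y}}$ from the right or from the left. By Theorem~\ref{thm:supp-tri}\eqref{thm:supp-tri:3}, this forces $\coal{\xi+}(x,y)=\infty$ or $\coal{\xi-}(x,y)=\infty$, which is precisely the statement that the geodesics $\geo{}{x}{\xi\sig}$ and $\geo{}{y}{\xi\sig}$ fail to coalesce for one of the signs. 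Hence \eqref{thm:+-coal.b} fails. This direction is essentially a restatement of part \eqref{thm:supp-tri:3} of the previous theorem and should be short.

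For the main direction \eqref{thm:+-coal.a}$\Rightarrow$\eqref{thm:+-coal.b}, I would work on the full-probability event where both \eqref{cond:jumpcond} and the conclusions of Theorem~\ref{thm:supp-tri} hold. Fix $x,y\in\Z^2$ and $\sigg\in\{-,+\}$. By Theorem~\ref{thm:supp-tri}\eqref{thm:supp-tri:3}, non-coalescence of $\geo{}{x}{\xi\sig}$ and $\geo{}{y}{\xi\sig}$ at a direction $\xi$ (for at least one of the two signs) requires $\xi$ to be a limit point of $\supp{\mu_{x,y}}$ from the left or from the right. But under \eqref{cond:jumpcond} every point of $\supp{\mu_{x,y}}$ is isolated, so $\supp{\mu_{x,y}}$ has no limit points inside $\ri\Uset$ at all — in particular no one-sided limit points. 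The only subtlety is that Theorem~\ref{thm:supp-tri}\eqref{thm:supp-tri:3} characterizes when $\coal{\xi+}(x,y)=\infty$ and when $\coal{\xi-}(x,y)=\infty$ separately, and I need to rule out non-coalescence for the specific matched signs $\geo{}{x}{\xi\sig}$ vs.\ $\geo{}{y}{\xi\sig}$. I would handle this by noting that the coalescence point $\coal{\xi\sig}(x,y)$ used in Theorem~\ref{thm:supp-tri} is exactly the coalescence point of $\geo{}{x}{\xi\sig}$ and $\geo{}{y}{\xi\sig}$ (this is the definition \eqref{df:coal-z}), so finiteness of $\coal{\xi+}(x,y)$ and of $\coal{\xi-}(x,y)$ — which both follow from the absence of one-sided limit points of the support — gives coalescence for both signs. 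Taking the intersection over the countably many pairs $(x,y)$ preserves the full-probability event.

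A cleaner route that avoids separating signs: argue directly that if \eqref{cond:jumpcond} holds, then for each pair $x,y$ the support $\supp{\mu_{x,y}}$ is a closed discrete (hence at most countable, locally finite) subset of $\ri\Uset$, so for every $\xi\in\ri\Uset$ there is an interval $]\zeta,\eta[\,\ni\xi$ with $\abs{\mu_{x,y}}(]\zeta,\eta[)=0$, except possibly at the isolated support points themselves, where one still has punctured neighborhoods $]\zeta,\xi[$ and $]\xi,\eta[$ of zero mass. Then invoke Theorem~\ref{thm:supp-tri}\eqref{thm:supp-tri:1}–\eqref{thm:supp-tri:2}: at non-support points, $\coal{\xi+}(x,y)=\coal{\xi-}(x,y)\in\Z^2$; at isolated support points, $\coal{\xi\pm}(x,y)\in\Z^2$ (both finite, just possibly different). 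In all cases $\coal{\xi+}(x,y)$ and $\coal{\xi-}(x,y)$ lie in $\Z^2$, i.e.\ the matched-sign geodesics coalesce for both $\sigg=+$ and $\sigg=-$.

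The main obstacle I anticipate is simply bookkeeping: making sure the characterizations in Theorem~\ref{thm:supp-tri} are applied with the correct sign conventions and that ``limit point from the left/right'' and ``$\coal{\xi\pm}=\infty$'' are matched up without error, and confirming that ``every point of the support is isolated'' genuinely excludes \emph{one-sided} limit points (it does, since an isolated point of a set has a full punctured neighborhood disjoint from the set). There is essentially no hard analysis here — the theorem is a corollary of Theorem~\ref{thm:supp-tri} plus the definition of \eqref{cond:jumpcond}; the content of the equivalence is that ``jumps do not accumulate'' is exactly equivalent to ``no geodesic pair escapes to infinity without meeting,'' and both directions are read off Theorem~\ref{thm:supp-tri}\eqref{thm:supp-tri:3}.
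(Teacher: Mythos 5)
Your proof is correct and follows essentially the same route as the paper's: both directions are read off Theorem \ref{thm:supp-tri} (together with Proposition \ref{pr:supp2}), the forward direction by noting that under \eqref{cond:jumpcond} every $\xi$ is either a non-support point (part (\ref{thm:supp-tri:1})) or an isolated support point (part (\ref{thm:supp-tri:2})), so $\coal{\xi\pm}(x,y)\in\Z^2$ in every case. The only cosmetic difference is that the paper proves the converse directly (coalescence $\Rightarrow$ isolated support points, via Proposition \ref{pr:supp2} and Theorem \ref{thm:supp-tri}(\ref{thm:supp-tri:2})) whereas you argue by contraposition through Theorem \ref{thm:supp-tri}(\ref{thm:supp-tri:3}); these are logically equivalent and equally short.
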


%When condition \eqref{cond:jumpcond} or a condition implying it is assumed, our results will hold on the smaller 
%$T$-invariant full measure event:
%	\begin{eqnarray}\label{Omjump}
%	\Omjump=\Omega_0\cap\Omega_0^5\cap\Omega_0^6.
%	\end{eqnarray} 
%All of our results will hold on one of the following $T$-invariant full $\P$-measure events: $\Omega_0$, $\Omjump$, $\Omega_0\cap\Ombiinf$, $\Omjump\cap\Ombiinf$, and $\Omjump\cap\OmCou$, depending on the assumptions of the result.\smallskip
%The results that  assume the weights are exponentially distributed 
%will hold on $\Omjump_0\cap\OmCou$ or $\Omjump_0\cap\Ombiinf$.\smallskip
%In the course of some of the proofs we will also need the following $T$-invariant full $\P$-measure events that depend on $\xi\in\ri\Uset$:
%	\begin{align}\label{Omxi}
%	\begin{split}
%	&\Omega_\xi=\Omega_0\cap\Omega_\xi^1\cap\Omega_\xi^3\quad\text{and}\\
%	&\Omjump_\xi=\Omega_\xi\cap\Omega_0^5,\quad\text{if Condition \eqref{cond:jumpcond} is assumed.}
%	\end{split}
%	\end{align}

We introduce  the random  set of exceptional directions obtained by taking the union of the supports of  the Busemann measures:
%We denote the set of points which lie in the support of some Busemann measure by $\aUset$:
\begin{align} 
\aUset &= \bigcup_{x,\,y\,\in\,\bbZ^2} \supp{\mu_{x,y}} \;\subset\;\ri\Uset. \label{df:aUset}
\end{align}
It turns out that not all pairs $x,y$ are necessary for the union.  It suffices  to take  pairs of adjacent points  along horizontal or vertical lines, or along  any  bi-infinite path  with nonpositive local slopes.  
\begin{lemma}\label{lm:downright}
%Assume \eqref{main-assump}.
%There exists a $T$-invariant event $\Omega_0$ with $\P(\Omega_0)=1$ and such that 
The following holds for $\P$-almost every $\w$.  
Let $x_{-\infty,\infty}$ be any bi-infinite path in $\Z^2$ such that  $\forall i\in\Z$,  $(x_{i+1}-x_i)\cdot e_1\ge 0$ and $(x_{i+1}-x_i)\cdot e_2\le 0$ and not both are zero. Then 
%For any path $x_{-\infty,\infty}$ with $x_{i+1}-x_i\in\{e_1,-e_2\}$ for all $i\in\Z$, we have
	\[\aUset= \bigcup_{i\in\Z} \supp\mu_{x_i,x_{i+1}}.\]
\end{lemma}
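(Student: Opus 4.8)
The inclusion $\bigcup_{i\in\Z}\supp\mu_{x_i,x_{i+1}}\subseteq\aUset$ is immediate from \eqref{df:aUset}, so the work is the reverse inclusion, and I would start with two reductions. By the cocycle property \eqref{coc-prop}, $\mu_{x,y}=\sum_j\mu_{z_j,z_{j+1}}$ along any unit-step lattice path $x=z_0,\dots,z_n=y$, and $\supp\mu_{z,z-e_r}=\supp\mu_{z-e_r,z}$; hence $\aUset=\bigcup_{v\in\Z^2,\,r\in\{1,2\}}\supp\mu_{v,v+e_r}$ and it suffices to bound the support of each nearest-neighbor edge by $\bigcup_i\supp\mu_{x_i,x_{i+1}}$. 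Second, refining each step of the given path into unit steps from $\{e_1,-e_2\}$ gives a bi-infinite nearest-neighbor staircase through every $x_i$; since by the monotonicity \eqref{mono} the increments of $\B{\xi\sig}$ along such unit steps are monotone of one sign in $\xi$, each $\mu_{x_i,x_{i+1}}$ is a nonpositive sum of the refining unit-edge measures with no cancellation, so $\supp\mu_{x_i,x_{i+1}}$ is exactly the union of the supports of those unit edges. Thus the union $\bigcup_i\supp\mu_{x_i,x_{i+1}}$ is unchanged by the refinement, and I may assume that $\mathcal Y=(x_i)$ is a nearest-neighbor down-right staircase.

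Next I would use the unconditional equivalence of statements \eqref{thm:nonint.a} and \eqref{thm:nonint.b} in Theorem \ref{thm:nonint} to turn everything into geodesics: $\xi_0\in\aUset$ iff there are $a\ne b$ with $\geo{}{a}{\xi_0-}\cap\geo{}{b}{\xi_0+}=\varnothing$ or $\geo{}{a}{\xi_0+}\cap\geo{}{b}{\xi_0-}=\varnothing$, and $\xi_0\in\bigcup_i\supp\mu_{x_i,x_{i+1}}$ iff the same holds for some adjacent pair $(x_i,x_{i+1})$. So the lemma is equivalent to the geometric assertion: \emph{whenever some pair of Busemann geodesics $\geo{}{a}{\xi_0-},\geo{}{b}{\xi_0+}$ is disjoint, some adjacent pair $\geo{}{x_i}{\xi_0-},\geo{}{x_{i+1}}{\xi_0+}$, or the reversed pair, is disjoint.} I would argue this by contraposition: assume $\xi_0\notin\bigcup_i\supp\mu_{x_i,x_{i+1}}$, so by part \eqref{thm:supp-tri:1} of Theorem \ref{thm:supp-tri} every adjacent pair on $\mathcal Y$ has $\coal{\xi_0+}(x_i,x_{i+1})=\coal{\xi_0-}(x_i,x_{i+1})\in\Z^2$, and deduce that all the Busemann geodesics $\{\geo{}{z}{\xi_0\sig}:z\in\Z^2,\ \sig\in\{-,+\}\}$ coalesce, so none are disjoint.

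The mechanism would be a sandwiching along $\mathcal Y$. One first upgrades the hypothesis, via Theorem \ref{thm:1path} and the one-sided continuity \eqref{coal-lim}, to the statement that the adjacent $\mathcal Y$-geodesics follow common initial segments to common coalescence points on each maximal interval of constancy of the relevant Busemann measure; then one propagates this outward from $\mathcal Y$. For a base point $z$ on the side of $\mathcal Y$ into which the recovery relation \eqref{rec-prop2} and the cocycle relation \eqref{coc-prop} propagate — equivalently, whose geodesic $\geo{}{z}{\xi_0\sig}$ crosses $\mathcal Y$ — this is routine: $\B{\xi\sig}(z,z+e_r)$ is an explicit finite composition of $\min$, $\max$, and $+$ applied to finitely many increments along $\mathcal Y$ and finitely many weights (the standard queueing/last-passage recursion), so constancy in $\xi$ is inherited directly, and $\geo{}{z}{\xi_0-}=\geo{}{z}{\xi_0+}$ merges into the common tree. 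Bracketing $\geo{}{z}{\xi_0\sig}$ between geodesics emanating from nearby points of $\mathcal Y$ (using \eqref{path-ordering} and monotonicity of geodesics in their base point) then transfers coalescence to every such $z$.

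The main obstacle is the opposite side of $\mathcal Y$, where the geodesics run away from the path and no finite reconstruction of $\B{\xi\sig}$ from the increments along $\mathcal Y$ exists — the horizontal line through the origin already shows that the vertical increments above it need not be determined by the horizontal increments along it, so the easy propagation breaks down and the naive bracketing by same-level points of $\mathcal Y$ is unavailable. Handling this case is where the argument must genuinely exploit the bi-infiniteness of $\mathcal Y$ together with the covariance \eqref{cov-prop} and ergodicity of the shifts $T$: morally, $\xi_0$-variation of a far edge that stayed invisible to $\mathcal Y$ would, by stationarity, have to recur along $\mathcal Y$ itself. Converting that heuristic into a clean argument, and splicing it together with the sandwiching on the easy side, is the step I expect to be the hard part.
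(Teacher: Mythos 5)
Your first reduction (refining to a nearest-neighbor down-right staircase; the unit-edge measures $\mu_{y_k,y_{k+1}}$ are all nonpositive by \eqref{mono}, so no cancellation occurs) and the trivial inclusion agree with the paper's proof, and your ``easy side'' propagation to the south-west of $\mathcal Y$ via the recursion $\B{\xi\sig}(x,x+e_1)=\w_x+\bigl(\B{\xi\sig}(x+e_1,x+\et)-\B{\xi\sig}(x+e_2,x+\et)\bigr)^+$ is sound. But the obstruction you identify on the north-east side is a genuine gap, not a technicality: the Busemann process there is not any function of the increments along $\mathcal Y$, so constancy on $\mathcal Y$ does not propagate forward, and the contraposition route as you have set it up does not close. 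The appeal to stationarity you gesture at in the last paragraph is the right instinct but is not yet an argument.

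The paper avoids the problem by running the implication in the opposite direction --- from the instability point to $\mathcal Y$, not from $\mathcal Y$ outward. Given $\xi\in\aUset$, Step 1 of the proof of Theorem \ref{thm:shock1} produces a dual point $x^*\in\shock{\xi}$ (use the cocycle decomposition of $\mu_{z,y}$ into nearest-neighbor edges and the nonnegativity of $\mu_{x+e_1,x}$ and $\mu_{x,x+e_2}$). Theorem \ref{thm:shock1} then supplies a bi-infinite directed path through $x^*$ in the instability graph $\shockG{\xi}$, and Lemma \ref{hor-red} shows this path is not eventually horizontal or vertical in either direction --- that lemma is the rigorous form of your ergodicity heuristic, using the ergodic theorem together with the cocycle shape theorem \eqref{eq:erg-coc} and the known boundary behavior of $\gpp$. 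A bi-infinite up-right dual path that is not eventually axis-parallel must cross any bi-infinite down-right primal staircase; at a crossing edge $(x_i,x_{i+1})$ the duality rule and Lemma \ref{lem:shockequiv} (or Theorem \ref{thm:nonint}) yield $\xi\in\supp\mu_{x_i,x_{i+1}}$. So the missing ingredient in your write-up is precisely the bi-infinite shock-path structure of Section \ref{sec:web}: it is the bookkeeping device that propagates \emph{instability} from an arbitrary witness edge down to the staircase, which is both easier and the only direction that works, since propagation of \emph{constancy} away from the staircase is blocked on the north-east side for the reason you noticed.
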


%Under the regularity condition \eqref{g-reg}, $\aUset$ is almost surely dense outside the closed linear segments of $\gpp$ and 
%\note{F edited this a bit. $\aUset$ does not contain all of $\Diff^c$, without ergodicity}
%has atoms exactly at the directions of non-differentiability.
%but is random otherwise.
%It is {\it a priori} not clear from \eqref{cid} if  $\cid\in\aUset$. 
%Theorems \ref{thm1}\eqref{thm1:exist}, \ref{thm:cif}\eqref{thm:cif:(ii)}, and \ref{thm:nonint} (applied to the points ${x+e_1,x+e_2}$) imply that $\cid(T_x\w)\in\supp{\mu_{x+e_1,x+e_2}}\subset\aUset$.  

  The remainder of this section addresses   (i) characterizations of $\aUset$ and  (ii) its significance for  uniqueness and coalescence of geodesics. 
  The first item relates the exceptional directions to asymptotic directions of competition interfaces. 
  
  \begin{theorem}\label{thm:Vcid} 
  The following hold for $\P$-almost every $\w$.
\begin{enumerate}[label={\rm(\alph*)}, ref={\rm\alph*}] \itemsep=3pt 
\item\label{thm:Vcid.1} 
 For   all $x\in\Z^2$, $  \supp \mu_{x,x+e_1} \cap \supp \mu_{x,x+e_2}=\{\cid(T_x\w)\} $.   In particular, 
 $\aUset\supset\{\cid(T_x\w): x\in\Z^2\}$.  
   \item\label{thm:Vcid.2} 
 Under the jump process condition \eqref{cond:jumpcond},  
$\aUset=\{\cid(T_x\w): x\in\Z^2\}$. 
%$\xi \in \aUset$ if and only if there exists $x \in \bbZ^2$ such that $\xi = \cid(\T_x\w)$.
\end{enumerate}

  \end{theorem}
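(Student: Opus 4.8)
The plan is to prove part \eqref{thm:Vcid.1} first, and then deduce part \eqref{thm:Vcid.2} from it together with Lemma \ref{lm:downright} and the jump process condition \eqref{cond:jumpcond}. For part \eqref{thm:Vcid.1}, fix a site $x$; by covariance \eqref{cov-prop} it suffices to treat $x=0$. The key object is the scalar process $\xi\mapsto \B{\xi\pm}(e_1,e_2) = \B{\xi\pm}(0,e_2)-\B{\xi\pm}(0,e_1)$. By monotonicity \eqref{mono} (applied to the $e_1$ and $e_2$ increments in opposite directions) this is a nondecreasing function of $\xi$, and by \eqref{cid} it changes sign exactly at $\cid(\w)$, i.e.\ it is negative to the left and positive to the right of $\cid(\w)$. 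First I would argue that $\xi\notin\supp\mu_{0,e_1}\cap\supp\mu_{0,e_2}$ for $\xi\ne\cid(\w)$: if $\xi\prec\cid(\w)$ then $\B{\xi\pm}(e_1,e_2)<0$, which forces $\geo{}{0}{\xi\pm}$ to step in the $e_1$ direction at the origin and (by the local rule \eqref{d:bgeo} and the weights recovery relation) $\B{\xi\pm}(0,0+e_1)=\w_0$ is locally determined by the environment near $0$ in a way that makes $\xi\mapsto\B{\xi\pm}(0,e_1)$ locally constant at $\xi$ — more carefully, one uses that in a neighborhood of such a $\xi$ the geodesics $\geo{}{0}{\zeta\pm}$ all agree on an initial segment (by \eqref{pmgeolim} and the strict inequality $\B{\xi\pm}(e_1,e_2)<0$, which is preserved on a one-sided neighborhood), hence by \eqref{eq:buspass} the value $\B{\zeta\pm}(0,\geo{n}{0}{\zeta\pm})=G(0,\cdot)$ is constant in $\zeta$ on that neighborhood, so $\mu_{0,e_1}$ puts no mass near $\xi$. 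Symmetrically, $\xi\succ\cid(\w)$ gives $\xi\notin\supp\mu_{0,e_2}$. This shows $\supp\mu_{0,e_1}\cap\supp\mu_{0,e_2}\subseteq\{\cid(\w)\}$.

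For the reverse inclusion $\cid(\w)\in\supp\mu_{0,e_1}\cap\supp\mu_{0,e_2}$, I would use Theorem \ref{thm:cif} (existence/uniqueness of the competition-interface direction, in the appendix) which tells us that there are at least two $\cid(\w)$-directed geodesics out of $0$ that separate in their first step; combined with the extremality statement \eqref{eq:geoorder}, these must be $\geo{}{0}{\cid(\w)-}$ (going in $e_2$ first) and $\geo{}{0}{\cid(\w)+}$ (going in $e_1$ first), which therefore never meet. Applying Theorem \ref{thm:nonint} with the pair $(x,y)=(0,e_1)$: the geodesic $\geo{}{0}{\cid-}$ steps to $e_2$ and $\geo{}{e_1}{\cid+}$ starts at $e_1$; one shows these are disjoint (the first is, after its first step, to the strict left of the second by \eqref{path-ordering} applied together with the separation of $\geo{}{0}{\cid\pm}$), whence $\cid(\w)\in\supp\mu_{0,e_1}$ by Theorem \ref{thm:nonint}. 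The same argument with $(0,e_2)$ gives $\cid(\w)\in\supp\mu_{0,e_2}$. This proves \eqref{thm:Vcid.1}; the inclusion $\aUset\supseteq\{\cid(T_x\w):x\in\Z^2\}$ is immediate from the definition \eqref{df:aUset}.

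For part \eqref{thm:Vcid.2}, assume the jump process condition \eqref{cond:jumpcond}. By Lemma \ref{lm:downright}, $\aUset=\bigcup_{i}\supp\mu_{x_i,x_{i+1}}$ for any down-right bi-infinite path; apply this to the staircase path consisting of alternating $e_1$ and $-e_2$ steps through all of $\Z^2$, so that the edges $\{x_i,x_{i+1}\}$ range precisely over all pairs $\{x,x+e_1\}$ and $\{x,x-e_2\}$, i.e.\ (re-indexing) over all $\{x,x+e_1\}$ and $\{x,x+e_2\}$. Thus $\aUset=\bigcup_{x}\bigl(\supp\mu_{x,x+e_1}\cup\supp\mu_{x,x+e_2}\bigr)$, and it remains to show each of these supports is contained in $\{\cid(T_z\w):z\in\Z^2\}$. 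Here is where \eqref{cond:jumpcond} enters: an isolated point $\xi$ of, say, $\supp\mu_{x,x+e_1}$ is by Theorem \ref{thm:supp-tri}\eqref{thm:supp-tri:2} (or directly from the jump structure) a jump of $\xi\mapsto\B{\xi\pm}(x,x+e_1)$, and at a jump point the local rule \eqref{d:bgeo} forces $\geo{}{x}{\xi-}$ and $\geo{}{x}{\xi+}$ to separate at some site $z$ on their common initial segment; by the extremality \eqref{eq:geoorder} and Theorem \ref{thm:cif} applied at $z$, this separation direction is exactly $\cid(T_z\w)$, so $\xi=\cid(T_z\w)$. Combined with \eqref{thm:Vcid.1} this gives the equality in \eqref{thm:Vcid.2}.

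The main obstacle I anticipate is the careful handling of the "locally constant" claim in the first paragraph: going from the sign condition $\B{\xi\pm}(e_1,e_2)<0$ to genuine local constancy of $\xi\mapsto\B{\xi\pm}(0,e_1)$ requires chaining together the one-sided continuity \eqref{pmgeolim} of geodesic segments, the identity \eqref{eq:buspass} relating Busemann values along the geodesic to point-to-point passage times, and the fact that passage times $G(0,\cdot)$ do not depend on $\xi$ at all — so the subtlety is making sure the geodesic segment really stabilizes on a genuine (two-sided or appropriately one-sided) neighborhood of $\xi$ and that this forces the measure to vanish there rather than merely having no atom. A secondary technical point, needed to get a single almost-sure event valid simultaneously for all $x$ and all $\xi$, is to run all of these arguments on the full-probability events provided by Theorems \ref{thm:nonint}, \ref{thm:supp-tri}, \ref{thm:cif} and the jump-process event, which are all already "for all $x,y,\xi$" statements, so no additional countable-intersection bookkeeping beyond what those theorems supply is required.
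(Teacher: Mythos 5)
Your overall strategy lines up with the paper's, but there are several errors and unnecessary detours worth flagging.

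First, a labeling error in the forward direction of part \eqref{thm:Vcid.1}. You write that $\xi\prec\cid(\w)$ and $\B{\xi\pm}(e_1,e_2)<0$ force $\geo{}{0}{\xi\pm}$ to step to $e_1$ and make $\B{\xi\pm}(0,e_1)=\w_0$. In fact $\B{\xi\pm}(e_1,e_2)=\B{\xi\pm}(0,e_2)-\B{\xi\pm}(0,e_1)<0$ means $\B{\xi\pm}(0,e_2)<\B{\xi\pm}(0,e_1)$, so by the recovery property \eqref{rec-prop2} it is $\B{\xi\pm}(0,e_2)=\w_0$, the geodesic steps to $e_2$, and the conclusion is $\xi\notin\supp\mu_{0,e_2}$ (not $\mu_{0,e_1}$). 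The whole passage and its ``symmetric'' counterpart are mirror-imaged. The final intersection statement happens to survive the swap, but as written the individual claims are false.

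Second, you have made this step far harder than it needs to be. There is no need to route through stabilization of geodesic segments, \eqref{pmgeolim}, and \eqref{eq:buspass}. The moment you know $\B{\zeta\pm}(e_1,e_2)<0$, recovery already gives $\B{\zeta\pm}(0,e_2)=\w_0$ for every such $\zeta$, so $\zeta\mapsto\B{\zeta\pm}(0,e_2)$ is literally the constant $\w_0$ on $]e_2,\cid(\w)[$, hence $\supp\mu_{0,e_2}\subset[\cid(\w),e_1[$ with nothing further to check. The ``main obstacle'' you anticipate dissolves.

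Third, and this is a genuine gap: for the reverse inclusion in part \eqref{thm:Vcid.1} you invoke the extremality \eqref{eq:geoorder} and Theorem \ref{thm:cif}\eqref{thm:cif:(ii)}, both of which require the regularity condition \eqref{g-reg}. Theorem \ref{thm:Vcid}\eqref{thm:Vcid.1} does not assume \eqref{g-reg}, so these tools are not available. The fix is to argue directly from the Busemann characterization \eqref{cid}: taking limits $\zeta\nearrow\cid$ and $\eta\searrow\cid$ in \eqref{cid} (via \eqref{Busemann-limits}) yields $\B{\cid-}(x,x+e_2)\le\B{\cid-}(x,x+e_1)$ and $\B{\cid+}(x,x+e_1)\le\B{\cid+}(x,x+e_2)$, so by the rule \eqref{d:bgeo} the geodesics $\geo{}{x}{\cid-}$ and $\geo{}{x}{\cid+}$ take their first steps to $x+e_2$ and $x+e_1$ respectively. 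Uniqueness of finite geodesics then forbids $\coal{\cid-}(x,x+e_i)=\coal{\cid+}(x,x+e_i)\in\Z^2$ for either $i$, and Proposition \ref{pr:supp2} gives $\cid(T_x\w)\in\supp\mu_{x,x+e_1}\cap\supp\mu_{x,x+e_2}$. This is how the paper does it, and it uses only the Busemann structure. Your route through Theorem \ref{thm:nonint} is fine in spirit, but the appeal to \eqref{eq:geoorder} is what has to go.

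Fourth, for part \eqref{thm:Vcid.2} the detour through Lemma \ref{lm:downright} is unnecessary. Starting directly from $\zeta\in\supp\mu_{x,y}$ for arbitrary $x,y$, the jump condition plus Theorem \ref{thm:supp-tri}\eqref{thm:supp-tri:2} give $\coal{\zeta-}(x,y)\ne\coal{\zeta+}(x,y)$ with both in $\Z^2$, so $\geo{}{x}{\zeta-}$ and $\geo{}{x}{\zeta+}$ split at some $z$. Here again you reach for ``extremality \eqref{eq:geoorder} and Theorem \ref{thm:cif}'' to conclude $\cid(T_z\w)=\zeta$, but neither is needed: that $\geo{}{z}{\zeta+}$ steps to $z+e_1$ and $\geo{}{z}{\zeta-}$ to $z+e_2$ translates via \eqref{d:bgeo} into $\B{\zeta-}(z+e_1,z+e_2)\le 0\le\B{\zeta+}(z+e_1,z+e_2)$, and the characterization \eqref{cid} (applied at $z$ via covariance) identifies $\zeta=\cid(T_z\w)$ directly.
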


The next issue is the relationship between   $\aUset$ and  regularity properties of $\gpp$.   Recall the definition \eqref{df:D} of $\Diff$ as the set of differentiability points of $\gpp$.   Let $\fUset$ be the subset of $\ri\Uset$ that remains after removal of all open linear segments of $\gpp$ and removal of  those endpoints of linear segments that are differentiability points.  Equivalently, $\fUset$ consists of those $\xi\in\ri\Uset$ at which $\gpp$ is either non-differentiable or strictly concave. 

\begin{theorem}\label{th:V1}  $ $ 
\begin{enumerate}[label={\rm(\alph*)}, ref={\rm\alph*}] \itemsep=3pt 
\item\label{th:V1.a} Let  $\xi\in\ri\Uset$. Then  $\xi\in\Diff$ if and only if $\P(\xi\in\aUset)=0$. %Equivalently,    if and only if $\P(\xi\in\supp{\mu_{x,x+e_i}})=0$ for any {\rm(}and hence all{\rm)} $x\in\Z^2$ and $i\in\{1,2\}$. 
If  $\xi\notin\Diff$ then 
\[  \P(\exists x:  \,\cid(T_x\w) =\xi) = \P(\xi\in\aUset)= 1.  \]  

\item\label{th:V1.b} For $\P$-almost every $\w$, the set  $\{\cid(T_x\w): x\in\Z^2\}$ and the set  $\aUset$ are  dense subsets of $\fUset$. 

%\item\label{th:V1.c}  %Under  the regularity condition \eqref{g-reg},  $(\ri\Uset)\setminus\Diff\subset\aUset$ for $\P$-almost every $\w$.
%Assume the regularity condition \eqref{g-reg}. Then  for $\P$-almost every $\w$, $\aUset$ contains all points of non-differentiability of $\gpp$.
\end{enumerate}
\end{theorem}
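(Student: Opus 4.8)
\textbf{Proof proposal for Theorem \ref{th:V1}.}

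The plan is to derive both parts from the pointwise characterizations already established, namely Theorem \ref{thm:Vcid} and Theorem \ref{thm:supp-tri}, combined with the translation-ergodicity of $\P$ under the shift group $T$ and the basic properties of the shape function recalled in Section \ref{s:shape}. For part \eqref{th:V1.a}, the key link is the identity $\supp\mu_{x,x+e_1}\cap\supp\mu_{x,x+e_2}=\{\cid(T_x\w)\}$ from Theorem \ref{thm:Vcid}\eqref{thm:Vcid.1} together with the indexing relation \eqref{E[h(B)]}, which says $\E[\B{\xi\sigg}(x,x+e_i)]=\nabla\gpp(\xi\sigg)\cdot e_i$. First I would show that $\xi\in\Diff$ forces $\P(\xi\in\aUset)=0$: if $\gpp$ is differentiable at $\xi$, then by \eqref{E[h(B)]} and monotonicity \eqref{mono}, the left and right versions $\B{\xi-}(x,x+e_i)$ and $\B{\xi+}(x,x+e_i)$ have equal expectations while being $\P$-a.s.\ ordered, hence are $\P$-a.s.\ equal for each fixed $x$; a countable union over $x\in\Z^2$ gives that $\xi\notin\supp\mu_{x,x+e_i}$ for all $x$ simultaneously, and by Lemma \ref{lm:downright} (taking a down-right path through any prescribed pair of points) this yields $\xi\notin\aUset$ a.s. Conversely, if $\xi\notin\Diff$, then $\nabla\gpp(\xi-)\cdot e_1>\nabla\gpp(\xi+)\cdot e_1$, so by \eqref{E[h(B)]} the random variables $\B{\xi-}(0,e_1)$ and $\B{\xi+}(0,e_1)$ have distinct expectations and are ordered a.s., forcing $\P(\B{\xi-}(0,e_1)>\B{\xi+}(0,e_1))>0$; an ergodicity argument (the event $\{\exists x:\B{\xi-}(x,x+e_1)>\B{\xi+}(x,x+e_1)\}$ is $T$-invariant and has positive probability, hence probability one) upgrades this to $\P(\xi\in\aUset)=1$, and then $\{\cid(T_x\w)=\xi \text{ for some }x\}$ has probability one by the same invariance argument applied to the a.s.\ fact (Theorem \ref{thm:Vcid}\eqref{thm:Vcid.1}) that $\cid(T_x\w)\in\supp\mu_{x,x+e_1}$ whenever the latter is nonempty near $\xi$, combined with the structure of $\supp\mu$.

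For part \eqref{th:V1.b}, density of $\aUset$ in $\fUset$ should follow by noting that $\fUset$ is, by definition, the closure in $\ri\Uset$ of the set of $\xi$ at which $\gpp$ is non-differentiable (together with strictly concave points, which are limits of such or are themselves handled directly): every point of $\fUset\setminus\Diff$ lies in $\aUset$ a.s.\ by part \eqref{th:V1.a}, and the non-differentiability points are countable, so a single full-probability event captures all of them; since the isolated strictly-concave points of $\fUset$ cannot form gaps (any neighborhood of such a point, intersected with $\fUset$, contains non-differentiability points or the point is a limit of linear-segment endpoints), $\aUset$ is dense in $\fUset$. For the competition-interface directions, I would use that $\aUset\supset\{\cid(T_x\w):x\in\Z^2\}$ from Theorem \ref{thm:Vcid}\eqref{thm:Vcid.1}, so it suffices to show this latter set is dense in $\fUset$; given $\xi\in\fUset$ and $\e>0$, one finds (via part \eqref{th:V1.a} and a.s.-density of non-differentiability directions, or via \eqref{cid} and monotonicity of $\zeta\mapsto\B{\zeta\pm}(e_1,e_2)$) a site $x$ whose competition interface direction lies within $\e$ of $\xi$: concretely, the sign changes of $\B{\zeta\pm}(e_1,e_2)$ across the jumps of the Busemann process, localized near $\xi$ by a covariance argument, produce such an $x$.

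I would reduce the quantifier-swapping carefully: part \eqref{th:V1.a} is a \emph{fixed-}$\xi$ statement, so no issue with exceptional events arises there; but part \eqref{th:V1.b} requires a \emph{single} full-probability event valid for all $\xi$ simultaneously. This is achieved by observing that $\aUset$ and $\{\cid(T_x\w):x\in\Z^2\}$ are defined on the full-probability event of Section \ref{sec:bus-brief}, and density in $\fUset$ — a deterministic closed set — only needs to be checked against a countable dense subset of $\fUset$, reducing to countably many fixed-direction statements from part \eqref{th:V1.a}.

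\textbf{Main obstacle.} The delicate step is the converse direction in part \eqref{th:V1.a}: upgrading $\P(\xi\in\aUset)>0$ to $=1$ for $\xi\notin\Diff$, and simultaneously pinning down that the jump of the Busemann process at such a $\xi$ is realized as a genuine competition-interface direction at \emph{some} lattice site with probability one. The positivity is soft (from \eqref{E[h(B)]} and \eqref{mono}), but the zero-one upgrade needs translation-ergodicity of $\P$ applied to the $T$-invariant event $\{\w:\xi\in\aUset(\w)\}$ — one must check this event is indeed invariant (immediate from \eqref{cov-prop} and Lemma \ref{lm:downright}) — and then Theorem \ref{thm:Vcid}\eqref{thm:Vcid.1} bridges the support membership to competition interfaces. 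Threading this so that the \emph{same} fixed $\xi$ gets a competition interface exactly in that direction (not merely an approximation) is where the argument is tightest, and it relies essentially on \eqref{cid}, on the a.s.\ identity in Theorem \ref{thm:Vcid}\eqref{thm:Vcid.1}, and on the fact that a jump of $\zeta\mapsto\B{\zeta\pm}(x,x+e_i)$ at $\xi$ for \emph{some} $x$ occurs with probability one by ergodicity, after which the weights-recovery relation \eqref{rec-prop2} and \eqref{cid} locate the competition interface there.
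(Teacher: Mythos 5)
Your proposal contains genuine gaps in both directions of part~\eqref{th:V1.a} and in the density argument of part~\eqref{th:V1.b}.

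\textbf{Forward direction of \eqref{th:V1.a}.} You argue that for $\xi\in\Diff$, the expectations \eqref{E[h(B)]} plus monotonicity \eqref{mono} force $\B{\xi-}(x,x+e_i)=\B{\xi+}(x,x+e_i)$ a.s., and then immediately conclude $\xi\notin\supp\mu_{x,x+e_i}$. This last inference is false in general: continuity of a monotone function at a point does not exclude that point from the support of its Lebesgue--Stieltjes measure (the Cantor function is continuous at every point of its support). Proposition~\ref{pr:supp2} makes this precise: $\xi\notin\supp\mu_{x,y}$ requires \emph{both} $\B{\xi-}(x,y)=\B{\xi+}(x,y)$ \emph{and} $\coal{\xi\pm}(x,y)\in\Z^2$. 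The paper therefore invokes Theorem~\ref{thm1}\eqref{thm1:coal} (coalescence of $\xi\pm$ geodesics for the fixed $\xi$) together with Theorem~\ref{thm:Bus}\eqref{thm:cocyexist:g} (no $\pm$ distinction at $\xi\in\Diff$) before calling on Proposition~\ref{pr:supp2}. Your route simply omits the coalescence ingredient.

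\textbf{Converse direction of \eqref{th:V1.a}.} Two problems. First, you rely on ergodicity of $\P$ under the shift, but as Remark~\ref{rmk:weak-lim} warns, the construction of Theorem~\ref{thm:Bus} guarantees only $T$-invariance in general; ergodicity is available only under extra hypotheses (Remark~\ref{rk:ergodicity} needs \eqref{g-reg}, and Theorem~\ref{th:V1} is stated without it). Second, even after upgrading to $\P(\xi\in\aUset)=1$, you need to produce a lattice site $x$ with $\cid(T_x\w)=\xi$, and the passage you sketch from ``a jump at $\xi$ on some edge'' to ``$\xi\in\supp\mu_{x,x+e_1}\cap\supp\mu_{x,x+e_2}$ for some $x$'' is not carried out; a jump in $\B{\xi\pm}(x,x+e_1)$ alone need not locate a competition interface at $x$ (the two geodesics out of $x$ may both take the $e_2$ step). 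The paper sidesteps both difficulties by quoting Theorem~\ref{thm:cif}\eqref{thm:cif.c1} directly, which already asserts $\xi\in\{\cid(T_y\w):y\in\Z^2\}$ a.s.\ for $\xi\notin\Diff$, and this is a stronger statement than what your argument aims at; the rest follows from Theorem~\ref{thm:Vcid}\eqref{thm:Vcid.1}.

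\textbf{Part \eqref{th:V1.b}.} Your final reduction---check density against a countable dense subset of $\fUset$, then apply part~\eqref{th:V1.a} for each such direction---does not work. If $\gpp$ is differentiable (e.g.\ strictly concave and smooth, as in the exponential case), then $\fUset=\ri\Uset$ while every fixed $\xi$ satisfies $\P(\xi\in\aUset)=0$ by part~\eqref{th:V1.a}. The countable intersection of full-probability events you would form then says $\aUset$ \emph{avoids} your chosen countable dense set a.s., which gives you nothing. This is the classical obstruction of random exceptional sets: $\aUset$ is a random set that a.s.\ contains no fixed point of $\Diff$ yet is a.s.\ dense. The paper handles this through Theorem~\ref{thm:cif}\eqref{thm:cif.c}, which holds on a \emph{single} full-probability event simultaneously for all pairs $\zeta\prec\eta$ with $\nabla\gpp(\zeta+)\ne\nabla\gpp(\eta-)$, and asserts that $]\zeta,\eta[$ contains a value $\cid(T_y\w)$; this is exactly the uniform statement your countable-reduction strategy cannot supply. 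The case $\xi\in\fUset\setminus\Diff$ is then settled by Theorem~\ref{thm:cif}\eqref{thm:cif.c1}, and the inclusion $\aUset\subset\fUset$ a.s.\ follows from Lemma~\ref{lm:V1} together with part~\eqref{th:V1.a} applied to the countably many differentiable endpoints of linear segments (where the countable reduction \emph{is} legitimate, since there one wants probability zero, not one).
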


% Under  the regularity condition \eqref{g-reg}, parts \eqref{th:V1.a} and \eqref{th:V1.c} imply a sharp dichotomy for every $\xi\in\ri\Uset$:   $\P(\xi\in\aUset)=0$ or $1$ depending on whether $\xi\in\Diff$ or not.  

%%%%%    OLD THEOREM V1 BEFORE THE TECHNICAL STATEMENTS BECAME LEMMAS IN THE PROOF SECTION  %%%%%%%%%
%\begin{theorem}\label{th:V1}
%%\normalmarginpar\note{In the language of Daley and Vere-Jones, $\xi$ is a fixed atom of the random measure (Def. 9.3.I in Volume 2), in case we want to mention this.}
%For any $\xi\in\ri\Uset$ we have $\xi\in\Diff$ if and only if $\P(\xi\in\aUset)=0$ which itself holds if and only if $\P(\xi\in\supp{\mu_{x,x+e_i}})=0$ for any {\rm(}and hence all{\rm)} $x\in\Z^2$ and $i\in\{1,2\}$. 
%
%The following hold for $\P$-almost every $\w$.
%\begin{enumerate}[label={\rm(\alph*)}, ref={\rm\alph*}] \itemsep=3pt 
%\item\label{th:V1.0} 
%Let $\zeta\prec\eta$ in $\ri\Uset$.  Then $\,]\zeta,\eta[\,\cap\,\aUset\ne\varnothing$ if and only if $\nabla\gpp(\zeta+)\ne\nabla\gpp(\eta-)$. 
%  \item\label{th:V1.0'} 
%Let $\zeta\prec\eta$ in $\ri\Uset$.  If $\nabla\gpp(\zeta-)=\nabla\gpp(\eta-)$, then $[\zeta,\eta[\,\cap\,\aUset=\varnothing$. Similarly, if 
%$\nabla\gpp(\zeta+)=\nabla\gpp(\eta+)$, then $]\zeta,\eta]\cap\,\aUset=\varnothing$. 
%\end{enumerate}
%\end{theorem}
%%%%%%%%%%%%%   END OLD THEOREM V1 %%%%%%%%%%%%

The next  theorem identifies  $\aUset$ as   the set of
directions with multiple semi-infinite geodesics.  
As before,  the regularity condition \eqref{g-reg} allows us to talk about general $\Uset_\xi$-directed semi-infinite geodesics, instead of only the Busemann geodesics $\geo{}{x}{\xi\sig}$.  
% Under  the jump process condition \eqref{cond:jumpcond} the $+$ and $-$ geodesics coalesce (Theorem \ref{thm:+-coal}), and consequently the $\pm$  geodesics must agree or separate. 

%\newpage

\begin{theorem}\label{th:V2}
%Assume \eqref{main-assump}.
%There exists a $T$-invariant event $\Omega_0$ with $\P(\Omega_0)=1$ and such that 
The following hold for $\P$-almost every $\w$.
\begin{enumerate}[label={\rm(\alph*)}, ref={\rm\alph*}] \itemsep=3pt 
\item\label{th:V2.a}  
$\xi\in(\ri\Uset)\setminus\aUset$ if and only if the following is true: $\geo{}{x}{\xi+}=\geo{}{x}{\xi-}$ for all $x\in\Z^2$ and all these geodesics coalesce.
\item\label{th:V2.b} Under the regularity condition \eqref{g-reg}, $\xi\in(\ri\Uset)\setminus\aUset$ if and only if the following is true: there exists a unique $\Uset_\xi$-directed semi-infinite geodesic out of every $x\in\Z^2$ and all these geodesics coalesce.
\item\label{th:V2.c}  Under  the jump process condition \eqref{cond:jumpcond} the existence of $x\in\Z^2$ such that $\geo{}{x}{\xi+}=\geo{}{x}{\xi-}$ 
implies that $\geo{}{y}{\xi+}=\geo{}{y}{\xi-}$ for all $y\in\Z^2$,  all these geodesics coalesce, and $\xi\in(\ri\Uset)\setminus\aUset$.
\item\label{th:V2.d} Assume  both the regularity condition \eqref{g-reg} and the jump process condition \eqref{cond:jumpcond}.  Suppose there exists   $x\in\Z^2$  such that $\geo{}{x}{\xi+}=\geo{}{x}{\xi-}$.  Then  there is a unique $\Uset_\xi$-directed semi-infinite geodesic out of every $x\in\Z^2$,  all these geodesics coalesce, and $\xi\in(\ri\Uset)\setminus\aUset$.   
\end{enumerate}
\end{theorem}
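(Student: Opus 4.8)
The plan is to treat part~\eqref{th:V2.a} as the core statement, then derive part~\eqref{th:V2.b} from it and part~\eqref{th:V2.d} from part~\eqref{th:V2.c} using the extremality of the Busemann geodesics guaranteed by~\eqref{g-reg} (that is, \eqref{eq:geoorder}/Theorem~\ref{thm:extreme}), and to prove part~\eqref{th:V2.c} by a uniqueness-of-geodesics argument. Everything takes place on a single full-probability event on which the conclusions of Theorems~\ref{thm:nonint}, \ref{thm:supp-tri} and~\ref{thm:+-coal}, the a.s.\ uniqueness of all point-to-point geodesics, and the deterministic features of~\eqref{d:bgeo} hold simultaneously. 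Two elementary facts get used repeatedly: \textbf{(i)} since the local rule~\eqref{d:bgeo} is deterministic given the Busemann process, whenever $\geo{}{x}{\xi\sig}$ passes through a site $z$ its continuation from $z$ equals $\geo{}{z}{\xi\sig}$; and \textbf{(ii)} every finite segment of a Busemann geodesic is a point-to-point geodesic, hence agrees with any up-right path that is a geodesic between the same two endpoints.

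I would begin with part~\eqref{th:V2.a}. The implication ``$\Leftarrow$'' is immediate: if $\geo{}{x}{\xi+}=\geo{}{x}{\xi-}$ for all $x$ and these coalesce, then $\coal{\xi+}(x,y)=\coal{\xi-}(x,y)\in\Z^2$ for all $x,y$, so $\xi\notin\supp\mu_{x,y}$ by Theorem~\ref{thm:supp-tri}\eqref{thm:supp-tri:1}, i.e.\ $\xi\notin\aUset$. For ``$\Rightarrow$'', assume $\xi\notin\aUset$. Theorem~\ref{thm:supp-tri}\eqref{thm:supp-tri:1} gives $\coal{\xi+}(x,y),\coal{\xi-}(x,y)\in\Z^2$ for all $x,y$, so the $+$ Busemann geodesics coalesce pairwise and the $-$ Busemann geodesics coalesce pairwise. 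To see that moreover $\geo{}{x}{\xi+}=\geo{}{x}{\xi-}$, suppose these split somewhere: let $z$ be a common site from which $\geo{}{x}{\xi-}$ steps to $z+e_2$ and $\geo{}{x}{\xi+}$ steps to $z+e_1$ (by \eqref{path-ordering} a split can only look like this). Since $\xi\notin\supp\mu_{z+e_1,\,z+e_2}$, Theorem~\ref{thm:nonint} forces $\geo{}{z+e_1}{\xi+}\cap\geo{}{z+e_2}{\xi-}\neq\varnothing$; by fact~\textbf{(i)} a common site $w$ lies on both $\geo{}{x}{\xi+}$ and $\geo{}{x}{\xi-}$ at a level exceeding $z\cdot\et$, and then by fact~\textbf{(ii)} the two segments from $x$ to $w$ coincide, contradicting that they differ one level above $z$. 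Hence $\geo{}{x}{\xi+}=\geo{}{x}{\xi-}$ for every $x$, which together with the coalescence just noted completes part~\eqref{th:V2.a}.

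For part~\eqref{th:V2.c}, assume \eqref{cond:jumpcond} and that $\pi:=\geo{}{x_0}{\xi+}=\geo{}{x_0}{\xi-}$ for some $x_0$. By Theorem~\ref{thm:+-coal}, \eqref{cond:jumpcond} is equivalent to coalescence of all same-sign Busemann geodesics, so ``all these geodesics coalesce'' follows once $\geo{}{y}{\xi+}=\geo{}{y}{\xi-}$ is established for all $y$, and then $\xi\notin\aUset$ follows from part~\eqref{th:V2.a}. Fix $y$. By Theorem~\ref{thm:+-coal} again, $\geo{}{y}{\xi+}$ coalesces with $\geo{}{x_0}{\xi+}=\pi$ and $\geo{}{y}{\xi-}$ coalesces with $\geo{}{x_0}{\xi-}=\pi$; let $w$ be a site on $\pi$ whose level is large enough that both coalescences have already occurred, so $w\in\geo{}{y}{\xi+}\cap\geo{}{y}{\xi-}$. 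By fact~\textbf{(ii)} the segments from $y$ to $w$ of $\geo{}{y}{\xi+}$ and $\geo{}{y}{\xi-}$ coincide, and by fact~\textbf{(i)} from $w$ onward each of them coincides with the continuation of $\pi$ from $w$ (namely $\geo{}{w}{\xi+}=\geo{}{w}{\xi-}$). Hence $\geo{}{y}{\xi+}=\geo{}{y}{\xi-}$.

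Finally, parts~\eqref{th:V2.b} and~\eqref{th:V2.d} reduce to the above. Under \eqref{g-reg}, \eqref{eq:geoorder} says $\geo{}{x}{\xi-}$ and $\geo{}{x}{\xi+}$ bracket every $\Uset_\xi$-directed geodesic out of $x$; since each $\geo{}{x}{\xi\sig}$ is itself $\Uset_\xi$-directed, there is a unique $\Uset_\xi$-directed geodesic out of $x$ precisely when $\geo{}{x}{\xi-}=\geo{}{x}{\xi+}$, in which case it equals $\geo{}{x}{\xi}$. Substituting this equivalence into part~\eqref{th:V2.a} yields part~\eqref{th:V2.b}, and into part~\eqref{th:V2.c} yields part~\eqref{th:V2.d}. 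The hard part throughout is promoting the measure-theoretic/coalescence hypotheses (``$\xi\notin\supp\mu_{x,y}$'', ``$\geo{}{x_0}{\xi+}=\geo{}{x_0}{\xi-}$'') to genuine equality of pairs of Busemann geodesics as lattice paths, rather than merely eventual coalescence; the device that makes this work is the a.s.\ uniqueness of point-to-point geodesics, which pins the initial segments of two Busemann geodesics that share a common root and a common downstream site. The remaining work---tracking first/last disagreement sites, using covariance to relabel roots in the continuation identities, and checking the cited theorems may be invoked simultaneously over all $x,y,\xi$ on a single event---is routine bookkeeping.
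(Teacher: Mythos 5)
Your proof is correct and follows essentially the same architecture as the paper's: prove part~\eqref{th:V2.a} and part~\eqref{th:V2.c} directly, then read off parts~\eqref{th:V2.b} and~\eqref{th:V2.d} via the extremality of Busemann geodesics under~\eqref{g-reg} (Theorem~\ref{thm:extreme}). The one place you diverge is the ``$\Rightarrow$'' direction of part~\eqref{th:V2.a}: the paper argues directly that a split of $\geo{}{x}{\xi\pm}$ at $z$ forces $\xi=\cid(T_z\w)$, which lies in $\aUset$ by Theorem~\ref{thm:Vcid}\eqref{thm:Vcid.1}, whereas you reach a contradiction by applying Theorem~\ref{thm:nonint} to the split edge $(z+e_1,z+e_2)$ and invoking uniqueness of finite geodesics; both are valid and of comparable length, with the paper's route being slightly more direct since it avoids a proof by contradiction and reuses an already-established characterization of $\aUset$.
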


By the uniqueness of finite geodesics, 
two   geodesics emanating from the same site $x$ cannot intersect after they separate. Consequently, non-uniqueness of semi-infinite directed geodesics implies the existence of non-coalescing semi-infinite directed geodesics.   
 When both  conditions \eqref{g-reg} and \eqref{cond:jumpcond} hold, Theorem \ref{th:V2}\eqref{th:V2.d}  
shows the converse:   uniqueness     implies   coalescence. 
%However,   non-coalescence does not automatically imply non-uniqueness simply because unique geodesics may fail to coalesce. 
%The converse may, {\it a priori}, not hold. In principle, it might happen that for a given direction $\xi$, out of each site there exists a unique $\Uset_\xi$-directed geodesic, and yet some of these geodesics do not coalesce.
%Using uniqueness of finite geodesics, one quickly sees that non-uniqueness of semi-infinite geodesics with the same asymptotic direction rooted at a single site implies that coalescence of geodesics with that asymptotic direction cannot hold. The converse is not clear: one could imagine, for example, infinitely many disjoint geodesics with the same asymptotic direction.  
%\note{C: Added. Please read. F: reworded..}

We close this section with a theorem that collects those previously established properties of geodesics which hold when both the regularity condition \eqref{g-reg} and the jump process condition \eqref{cond:jumpcond} are in force. 
Lemma \ref{lm:capst-aux} justifies that the geodesics in part \eqref{th:capst.d} are $\xi$-directed rather than merely   $\Uset_\xi$-directed.  
 
\begin{theorem}\label{th:capst}   Assume  the regularity condition \eqref{g-reg} and the jump process condition \eqref{cond:jumpcond}.   The following hold for $\P$-almost every $\w$.
\begin{enumerate}     [label={\rm(\alph*)}, ref={\rm\alph*}]   \itemsep=3pt  
\item\label{th:capst.a} $\xi \in \aUset$ if and only if there exist $x,y \in \bbZ^2$ with $\B{\xi-}(x,y) \neq \B{\xi+}(x,y)$.
\item\label{th:capst.b} $\xi \in \aUset$ if and only if there exists $x \in \bbZ^2$ such that $\xi = \cid(\T_x\w)$.
\item\label{th:capst.c} If $\xi \in (\ri\Uset) \backslash \aUset$, then for each $x \in \bbZ^2$,    
$\geo{}{x}{\xi}=\geo{}{x}{\xi-}=\geo{}{x}{\xi+}$ and this   is the  unique  $\Uset_\xi$-directed semi-infinite geodesic out of $x$. 
%$\geod{}^{x,\xi}$ rooted at $x$ and that satisfies 
%\begin{align}\label{xi-lln}
%\lim_{n\to\infty} \frac{\geod{n}^{x,\xi}}{n} = \xi.
%\end{align}
For any $x,y \in \bbZ^2$, $\geo{}{x}{\xi}$ and $\geo{}{y}{\xi}$ coalesce.
\item\label{th:capst.d} If   $\xi \in \aUset$, then from   each  $x \in \bbZ^2$  there exist at least two   $\xi$-directed semi-infinite  geodesics   that  separate eventually, namely  $\geo{}{x}{\xi-}$ and $\geo{}{x}{\xi+}$. 
  For each pair $x,y \in \bbZ^2$,  $\geo{}{x}{\xi-}$ and $\geo{}{y}{\xi-}$ coalesce and $\geo{}{x}{\xi+}$ and $\geo{}{y}{\xi+}$ coalesce. 
\end{enumerate}

\end{theorem}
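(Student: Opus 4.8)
The plan is to read off Theorem~\ref{th:capst} from the results already established in Section~\ref{s:Bmeas}: once both the regularity condition \eqref{g-reg} and the jump process condition \eqref{cond:jumpcond} are assumed, each of the four assertions reduces to a statement proved earlier, and the only bookkeeping is to run all of them on the intersection of the (finitely many) corresponding full-probability events.

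For part \eqref{th:capst.a} I would argue straight from the definitions. If $\B{\xi-}(x,y)\ne\B{\xi+}(x,y)$ for some $x,y$, then by \eqref{Bmeas} the Lebesgue--Stieltjes measure $\mu_{x,y}$ puts an atom of size $\B{\xi+}(x,y)-\B{\xi-}(x,y)$ at $\xi$, so $\xi\in\supp\mu_{x,y}\subseteq\aUset$ by \eqref{df:aUset}; this implication needs no hypothesis. For the converse, suppose $\xi\in\aUset$ and fix $x,y$ with $\xi\in\supp\mu_{x,y}$. By \eqref{cond:jumpcond}, $\xi$ is an isolated point of $\supp\mu_{x,y}$, so there are $\zeta\prec\xi\prec\eta$ in $\ri\Uset$ with $\supp\mu_{x,y}^{[\zeta,\eta]}\cap\,]\zeta,\eta[\,\subseteq\{\xi\}$; hence all the mass $\abs{\mu_{x,y}^{[\zeta,\eta]}}$ carries on $]\zeta,\eta[$ sits at $\xi$, and, again by \eqref{Bmeas}, this atom is $\B{\xi+}(x,y)-\B{\xi-}(x,y)$, which is therefore nonzero. (One could equally route this through Theorem~\ref{thm:supp-tri}\eqref{thm:supp-tri:2}, using the local rule \eqref{d:bgeo} to see that a change of the coalescence point forces a change of some Busemann increment.) Part \eqref{th:capst.b} is then immediate from Theorem~\ref{thm:Vcid}\eqref{thm:Vcid.2}.

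For part \eqref{th:capst.c} I would invoke Theorem~\ref{th:V2}\eqref{th:V2.a} to get $\geo{}{x}{\xi-}=\geo{}{x}{\xi+}$ for every $x$ (hence the common path $\geo{}{x}{\xi}$) together with coalescence of that family, and Theorem~\ref{th:V2}\eqref{th:V2.b} to upgrade this to uniqueness among \emph{all} $\Uset_\xi$-directed semi-infinite geodesics out of $x$; the latter is where \eqref{g-reg} enters, via the extremality bound \eqref{eq:geoorder} (Theorem~\ref{thm:extreme}). For part \eqref{th:capst.d}, fix $\xi\in\aUset$. Since $\xi\notin(\ri\Uset)\setminus\aUset$, the contrapositive of Theorem~\ref{th:V2}\eqref{th:V2.c} shows that \emph{no} $x$ satisfies $\geo{}{x}{\xi+}=\geo{}{x}{\xi-}$; thus for every $x$ the ordered pair $\geo{}{x}{\xi-}\preceq\geo{}{x}{\xi+}$ consists of two distinct geodesics rooted at $x$, and by uniqueness of point-to-point geodesics under \eqref{main-assump} they cannot meet again once they first branch, so they separate eventually. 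That both are genuinely $\xi$-directed, not merely $\Uset_\xi$-directed, is Lemma~\ref{lm:capst-aux}; under \eqref{g-reg} this rests on a direction of $\aUset$ being a point of strict concavity of $\gpp$, so that $\Uset_{\xi-}=\Uset_{\xi+}=\{\xi\}$. Finally, the coalescence of $\geo{}{x}{\xi\sigg}$ with $\geo{}{y}{\xi\sigg}$ for both signs is exactly Theorem~\ref{thm:+-coal}\eqref{thm:+-coal.b}, which applies since \eqref{cond:jumpcond} is assumed.

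Being a synthesis of earlier results, the argument has no genuinely hard step; the point demanding the most care is the converse in part \eqref{th:capst.a}, where \eqref{cond:jumpcond} is indispensable --- without it $\supp\mu_{x,y}$ could contain non-atomic, Cantor-like portions and one could not pass from ``$\xi$ lies in the support'' to ``$\xi$ is a jump of the Busemann increment'' --- and where one must remember that $\mu_{x,y}$ is a genuine signed measure only on compact subintervals of $\ri\Uset$, so every statement about atoms and support should be localized to a suitable $[\zeta,\eta]$. A secondary subtlety is the $\xi$-directedness upgrade in part \eqref{th:capst.d}: this is the second place \eqref{g-reg} is needed, ruling out a direction of non-uniqueness that sits at an endpoint of a nondegenerate linear segment of $\gpp$, a situation in which the Busemann geodesics would be only $\Uset_\xi$-directed.
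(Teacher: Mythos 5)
Your proof is correct and takes essentially the same route the paper intends: Theorem~\ref{th:capst} is stated as a collection of previously established facts, and the paper's only additional input is Lemma~\ref{lm:capst-aux}, which you correctly invoke for the $\xi$-directedness in part~\eqref{th:capst.d}. Your direct measure-theoretic argument for the converse of part~\eqref{th:capst.a} via the atom at $\xi$ is a clean shortcut of the route through Theorem~\ref{thm:supp-tri}\eqref{thm:supp-tri:2} and Proposition~\ref{pr:supp2}, but the substance is identical; the remaining parts cite the same lemmas (\ref{thm:Vcid}\eqref{thm:Vcid.2}, \ref{th:V2}, \ref{thm:+-coal}) the paper relies on.
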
 

\smallskip

%Similarly to Theorem \ref{thm:nonint},  without the regularity condition \eqref{g-reg}, parts \eqref{th:V2.a} and \eqref{th:V2.c} in Theorem \ref{th:V2} leave open the possibility that even though $\geo{}{x}{\xi\pm}$ agree, there may be other $\Uset_\xi$-directed geodesics.

\subsection{Exponential case}
We   specialize to the case where 
	\begin{align}\label{exp-assump}
	\text{$\{\w_x : x \in \bbZ^2\}$ are i.i.d.\ mean one exponential random variables.}
	\end{align} 
%\eqref{exp-assump} is the standing assumption for this section.  
 Rost's classic result \cite{Ros-81} gives the shape function 
%It is shown in \cite[Theorem 3.4]{Fan-Sep-20}  that Condition \eqref{cond:jumpcond} holds under \eqref{exp-assump}. Also, under \eqref{exp-assump}, it is \addmath{reference?} known that 
	\begin{align}\label{exp-shape}
	\gpp(\xi)=%\E[\w_0]
	\bigl(\sqrt{\xi\cdot e_1}+\sqrt{\xi\cdot e_2}\,\bigr)^2,\quad\xi\in\R_+^2.
	\end{align}
 The regularity condition \eqref{g-reg} is satisfied as  $\gpp$ is strictly concave and differentiable on $\ri \Uset$.   The supports $\supp\mu_{x,y}$ are unions of inhomogeneous Poisson processes and hence   the jump process condition \eqref{cond:jumpcond} is satisfied.   This comes from  \cite[Theorem 3.4]{Fan-Sep-20} and is described in Section \ref{sub:palm} below. These two observations imply that the conclusions of Theorem \ref{th:capst} hold. With some additional work, we can go beyond the conclusions of Theorem \ref{th:capst} in this solvable setting.
 % The distribution of the Busemann  measures $\mu_{x,x-e_1}$ for horizontal edges  is  described below in \eqref{Bppa4}.  
 % Thus  Theorem \ref{th:capst} holds in the exponential model. 
%Before moving to the statement of our results in the exponential model, we introduce the notation $\spt{\xi}{x}$ for 

Let $\spt{\xi}{x}$ denote  the location where the $\xi+$ and $\xi-$ geodesics out of $x$ split:
\begin{align}
\spt{\xi}{x} &= \begin{cases}
\text{last point in  } \geo{}{x}{\xi-} \cap \geo{}{x}{\xi+} & \text{ if }\geo{}{x}{\xi-} \neq \geo{}{x}{\xi+}, \\
\infty & \text{ if } \geo{}{x}{\xi-} = \geo{}{x}{\xi+}.
\end{cases}
\end{align}

For part \eqref{th:exp1.c} in the next theorem, recall the finite geodesic $\gamma^{x,y}$ defined below \eqref{G1} and the  competition interface path $\varphi^x$ introduced in Section \ref{s:mult-geo}.   Convergence of paths means that any finite segments eventually coincide. 
  
\begin{theorem}\label{th:exp1}
Assume \eqref{exp-assump}.  Then the conclusions of Theorem \ref{th:capst} hold with $\Uset_\xi=\{\xi\}$ for all $\xi \in \ri \Uset$.  Additionally,  the following  hold $\P$-almost surely. 
\begin{enumerate}     [label={\rm(\alph*)}, ref={\rm\alph*}]   \itemsep=3pt  
\item\label{th:exp1.a} If  $\xi \in \aUset$ then from  each  $x \in \bbZ^2$ there emanate exactly two semi-infinite $\xi$-directed  geodesics that eventually separate, namely  $\geo{}{x}{\xi-}$ and $\geo{}{x}{\xi+}$. 
\item\label{th:exp1.b} For any $\xi \in \ri \Uset$ and any three $\xi$-directed semi-infinite geodesics rooted at any three points, at least two of the geodesics coalesce.
%Nowhere on the lattice do there exist three non-coalescing semi-infinite geodesics with the same asymptotic direction in $\ri\Uset$.   
\item\label{th:exp1.c} Let  $x\in\Z^2$, $\xi\in\ri\Uset$, and let $\{v_n\}_{n\ge m} $ be any sequence on $\Z^2$ such that $v_n\cdot\et=n$ and  $v_n/n \to \xi$.  If  $v_n \prec \varphi_n^{\spt{\xi}{x}}$ for all sufficiently large $n$, then $\gamma^{x,v_n} \to \gamma^{x,\xi-}$ as $n\to\infty$. If $\varphi_n^{\spt{\xi}{x}} \prec v_n$ for all sufficiently large $n$ then $\gamma^{x,v_n} \to \gamma^{x,\xi+}$ as $n\to\infty$.
\item\label{th:exp1:d} For each $x \in \bbZ^2$, the entire collection of semi-infinite geodesics emanating from $x$ is exactly $\bigl\{\geo{}{x}{e_1},\geo{}{x}{e_2},\geo{}{x}{\xi \sigg} : \xi \in \ri \Uset, \sigg \in \{+,-\}\bigr\}$. %CHANGED HERE 
\end{enumerate}
\end{theorem} 

%%%%%%    OLD EXPONENTIAL THEOREM  %%%%%
%\begin{theorem}\label{th:exp1}
%Assume \eqref{exp-assump}.
%%There is a $T$-invariant event $\Omega_0$ with $\bbP(\Omega_0) = 1$ and  such that  
%The following statements hold for $\P$-almost every $\w$.
%\begin{enumerate}     [label={\rm(\alph*)}, ref={\rm\alph*}]   \itemsep=3pt  
%\item\label{th:exp1.a} $\xi \in \aUset$ if and only if there exist $x,y \in \bbZ^2$ with $\B{\xi+}(x,y) \neq \B{\xi-}(x,y)$.
%\item\label{th:exp1.b} $\xi \in \aUset$ if and only if there exists $x \in \bbZ^2$ such that $\xi = \cid(\T_x\w)$.
%\item\label{th:exp1.c} If $\xi \in (\ri \sU) \backslash \aUset$, then for each $x \in \bbZ^2$,    
%$\geo{}{x}{\xi}=\geo{}{x}{\xi+}=\geo{}{x}{\xi-}$ and this   is the  unique semi-infinite $\xi$-directed geodesic out of $x$. 
%%$\geod{}^{x,\xi}$ rooted at $x$ and that satisfies 
%%\begin{align}\label{xi-lln}
%%\lim_{n\to\infty} \frac{\geod{n}^{x,\xi}}{n} = \xi.
%%\end{align}
%For any $x,y \in \bbZ^2$, $\geo{}{x}{\xi}$ and $\geo{}{y}{\xi}$ coalesce.
%\item\label{th:exp1.d} Let  $\xi \in \aUset$.   Then for each  $x \in \bbZ^2$ there exist exactly two semi-infinite $\xi$-directed  geodesics emanating from $x$.  % and satisfying \eqref{xi-lln}.
%These two   are exactly $\geo{}{x}{\xi+}$ and $\geo{}{x}{\xi-}$
%%. For each $x \in \bbZ^2$, $\geo{}{x}{\xi+}$ and $\geo{}{x}{\xi-}$ 
%that eventually separate.  For each pair $x,y \in \bbZ^2$,  $\geo{}{x}{\xi+}$ and $\geo{}{y}{\xi+}$ coalesce and $\geo{}{x}{\xi-}$ and $\geo{}{y}{\xi-}$ coalesce. 
%\end{enumerate}
%\end{theorem}
%%%%%%%%%%  END OLD EXP THEOREM  %%%%%%%

Theorem \ref{th:exp1} resolves a number of previously open problems on the geometry of geodesics in the exponential model. It shows that in all but countably many exceptional directions, the collection of geodesics with that asymptotic direction  coalesce and form a tree. These exceptional directions are identified both with the directions of discontinuity of the Busemann process and the asymptotic directions of competition interfaces.   Moreover, in each exceptional direction $\xi\in \aUset$, ahead of each lattice site $x$, there is a $\xi$-directed  competition interface at which the $\xi-$ and $\xi+$ geodesics out of $x$ split. These are the only two $\xi$-directed geodesics rooted at $x$. Strikingly, each of the two families of $\xi-$ and $\xi+$ geodesics has the same structure as the collection of  geodesics in a typical direction: each family forms a tree of coalescing semi-infinite paths.

%In particular, almost surely, there is no direction in $\ri\Uset$  in which there are three or more disjoint semi-infinite geodesics  from any three lattice sites.  The reason is that among three geodesics, two must have the same sign $+$ or $-$ and hence these two coalesce.  

 Theorem \ref{th:exp1} utilizes  Theorem \ref{thm-Coupier}, due to Coupier \cite{Cou-11}, that rules out three geodesics that have the same direction,  emanate from a common vertex, and eventually separate.    It appears that the modification argument of \cite{Cou-11} cannot rule out three non-coalescing geodesics from distinct roots, and so  Theorem \ref{th:exp1}\eqref{th:exp1.b}   significantly extends Theorem \ref{thm-Coupier}. 
 
Finally, Theorem \ref{th:exp1} gives a complete description of the coalescence structure of finite geodesics to semi-infinite geodesics in the exponential model. Part \eqref{th:exp1.c} says that if we consider a sequence of lattice sites $v_n$ with asymptotic direction $\xi$, then the geodesic from $x$ to $v_n$ will converge to the $\xi-$ geodesic out of $x$ if and only if $v_n$ eventually stays to the left of the competition interface emanating from the site $\spt{\xi}{x}$ where the $\xi-$ and $\xi+$ geodesics out of $x$ separate. Similarly it will converge to the $\xi+$ geodesic if and only if it stays to the right of that path. The competition interface lives on the dual lattice, so for large $n$ every point  $v_n$ is either to the left or to the right of the competition interface. The coalescence structure of semi-infinite geodesics to arbitrary sequences $v_n$ with $v_n/n \to \xi$ then follows by passing to subsequences.

 The results of Section \ref{s:Bmeas}  are proved in Section \ref{sec:bus}, except   Lemma \ref{lm:downright} which is proved  at the end of Section \ref{shockgraphs:pfs}.

% !TEX root = GeoWebPaper.tex

\section{Last-passage percolation as a dynamical system} \label{sec:LPPDS}

After the general description of uniqueness and coalescence of Section \ref{s:Bmeas}, we take a closer look at the spatial structure of the set of lattice points where particular values or ranges of values from the set  $\aUset$ of exceptional directions appear. (Recall its definition \eqref{df:aUset}.)  
As mentioned in the introduction, there is a connection to instability in noise-driven conservation laws.  The next section explains this  point of view.  

\subsection{Discrete Hamilton-Jacobi equations}\label{s:HJ}
We take a dynamical point of view of LPP.   Time proceeds in the negative diagonal direction $-\et=-e_1-e_2$ and the spatial axis is $\ex=e_2-e_1$.  For each $t\in\Z$, the spatial level at time $t$ is $\level_{t}=\{x\in\Z^2: x\cdot \et=t\}$.    For $x\in\Z^2$ and $A\subset\Z^2$ let $\Pi_x^A$ denote the set of up-right paths $\pi_{k,m}$ such  that  $\pi_k=x$ and $\pi_m\in A$, where $k=x\cdot\et$ and $m$ is any integer $\ge k$ such that $A\cap\level_m\ne\varnothing$.  For each $\xi\in\ri\Uset$ and sign $\sigg\in\{-,+\}$,    the Busemann function $\B{\xi\sig}$ satisfies the following equation:   for all $t\le t_0$ and $x\in\level_{t}$, 
\be\label{hj8} 
\B{\xi\sig}(x,0)= \max\Bigl\{ \, \sum_{i=t}^{t_0-1} \w_{\pi_i} +  \B{\xi\sig}(\pi_{t_0}, 0)  :  \pi\in\Pi_x^{\level_{t_0}} \Bigr\} . 
\ee
The unique   maximizing path  in \eqref{hj8} is the geodesic segment  $\geo{t,t_0}{x}{\xi\sig}$.  

Equation  \eqref{hj8} can be viewed as a discrete Hopf-Lax-Oleinik semigroup.   
For example, equation \eqref{hj8} is an obvious  discrete analogue of the variational formula (1.3) of \cite{Bak-Cat-Kha-14}.  At first blush  the two formulas appear different because  (1.3) of \cite{Bak-Cat-Kha-14} contains a  kinetic energy term.  However, this term is not needed in \eqref{hj8} above  because all admissible steps are of size one and all paths between  levels $\level_{t}$ and $\level_{t_0}$ have equal length (number of steps).   

Through this analogy with a Hopf-Lax-Oleinik semigroup we can regard  $\B{\xi\sig}(\bbullet\,,0)$   as a global solution of a discrete stochastic Hamilton-Jacobi equation started in the infinite past ($t_0\to\infty$) and driven by the noise $\w$.  The spatial difference $\B{\xi\sig}(x+e_1, x+e_2)=\B{\xi\sig}(x+e_1, 0)-\B{\xi\sig}(x+e_2,0)$ can then be viewed as a global solution of a discretized stochastic Burgers equation.  %Article  \cite{Jan-Ras-19-pams-} presents  a more detailed view of this connection. 
%Theorem 3.2(f)

%\subsection{Flow of Busemann measures} 

By Lemma \ref{lem:buselln}, if $\gpp$ is differentiable on $\ri\Uset$, then  
$\B{\xi+}$ and $\B{\xi-}$ both satisfy for each $x \in \bbZ^2$
\begin{align*}
\lim_{\abs{n}\to\infty} \frac{\B{\xi\pm}\left(x,x+n\ex \right)}{n} &= \nabla \gpp(\xi)\cdot\ex.
\end{align*} 
Thus, $\B{\xi\pm}$ are two solutions with the same value of the conserved quantity.
%Lemma 3.1 in \cite{Jan-Ras-19-pams-} shows that the asymptotic velocity is the conserved quantity of this discrete stochastic Burgers equation. 
Under the jump process condition \eqref{cond:jumpcond}, $\xi \in \supp \mu_{x+e_1,x+e_2}$ if and only if $\B{\xi+}(x+e_1,x+e_2) \neq \B{\xi-}(x+e_1,x+e_2)$.  
This means that the locations $x$ where $\xi \in \supp \mu_{x+e_1,x+e_2}$ are precisely the space-time points at which the two solutions $\B{\xi\pm}$ differ. 
It is reasonable to expect then that these points are locations of instability in the following sense. The spatial differences of the solution to the stochastic Hamilton-Jacobi equation started at time $t_0$ with a linear initial condition dual to $\xi$,
	\[\max\{G_{x+e_1,y}-y\cdot\nabla\gpp(\xi):y\cdot\et=t_0\}-\max\{G_{x+e_2,y}-y\cdot\nabla\gpp(\xi):y\cdot\et=t_0\},\]
has at least two limit points $\B{\xi\pm}(x+e_1,x+e_2)+(e_1-e_2)\cdot\nabla\gpp(\xi)$ as $t_0\to\infty$.  
This is supported by simulations and is hinted at by Theorem \ref{th:exp1}\eqref{th:exp1.c}.

%By Theorem \ref{thm:nonint}, $\xi \in \supp \mu_{x+e_1,x+e_2}$ if and only if $\geo{}{x+e_2}{\xi-}\cap\geo{}{x+e_1}{\xi+}=\varnothing$. 
%This is analogous to two distinct Lagrangian minimizers from the infinite past terminating at a shock. 
%% that two minimizers terminate at a shock point in a scalar conservation law.
%%On the other hand, under condition \eqref{g-reg}, the family $\{\B{\xi+}(x,y) : x,y \in \bbZ^2\}$ generates the rightmost  $\Uset_{\xi}$-directed geodesics out of each site. Similarly $\{\B{\xi-}(x,y) : x,y \in \bbZ^2\}$ generates the leftmost geodesics. This is analogous to the picture one sees in the continuous space Burgers equation by taking spatial left and  right limits.  In this sense, we can think of the condition $\B{\xi+}(x+e_1,x+e_2) \neq \B{\xi-}(x+e_1,x+e_2)$ as being analogous to the left and right gradients of a solution differing at these sites, which is the usual definition of a shock in the continuous setting.  Even without condition \eqref{cond:jumpcond}, \medskip
%Significant attention has been devoted to understanding the statistics and structure of shocks in the continuous  stochastic Burgers equation \cite{Sin-92,E-etal-00,Bec-Kha-07,Bak-Kha-18,Bak-16}.
%%going back to the seminal work of Sinai in \cite{Sin-92}. Shocks have also attracted substantial attention in recent years \cite{E-etal-00,Bec-Kha-07,Bak-Kha-18,Bak-16}. 
%%\cite{Kha-Zha-17} (higher dimentions, compact setting)
%%
%Our interest in the present section is to introduce a natural notion of a shock in our setting, which essentially agrees with the previous description. 
With these points in mind, we now define what we mean by instability points and then turn to studying their geometric structure. Proofs of the results of this section appear in Section \ref{sec:webpf}.

\subsection{Webs of instability}\label{sec:web} 
For a direction  $\xi \in \ri \Uset$ and a sign $\sigg\in\{-,+\}$,   let $\G{\xi\sig}$ be the directed graph whose vertex set is  $\bbZ^2$ and  whose  edge  set includes  $(x, x+e_i)$  whenever $\geo{m+1}{x}{\xi\sig} = x + e_i$. Here $m=x\cdot\et$  and we consider both $i\in\{1,2\}$.   These are the directed graphs of $\xi\sigg$ geodesics defined by  \eqref{d:bgeo}. By construction, each $\G{\xi\sig}$ is a disjoint union of trees, i.e.\ a forest, and for each $x \in \Z^2$, the geodesic $\geo{}{x}{\xi\sig}$ follows the directed edges of   $\G{\xi\sig}$. 
%By Theorem \ref{thm:+-coal}, under the jump process condition \eqref{cond:jumpcond} each  $\G{\xi\sig}$ is a  tree. 
%Theorem \ref{thm1}\eqref{thm1:biinf} says that for a fixed $\xi\in\ri\Uset$ and $\w\in\Omega_\xi^3$, $\G{\xi\sig}$ does not contain any bi-infinite up-right paths.
%\timonote{Think about removing last two sentences.} 	

Recall the vectors    $\etstar=\et/2= (e_1 + e_2)/2$  and $\exstar=\ex/2=(e_2-e_1)/2$. 
Let $\dG{\xi\sig}$ be the directed graph whose vertex set is the dual lattice $\Z^{2*}=\etstar+\bbZ^2$ and whose edge set is defined by this rule:  for each $x\in\bbZ^2$,  on the dual lattice $x+\etstar$ points to $x+\etstar-e_i$ in $\dG{\xi\sig}$ if and only if on the original lattice  $x$ points to $x+e_i$ in $\G{\xi\sig}$.   Pictorially this means that $\dG{\xi\sig}$ contains all the  south and west  directed nearest-neighbor  edges of  $\Z^{2*} $  that do not  cross an edge of $\G{\xi\sig}$.     See Figure \ref{fig:trees} for an illustration.

\begin{figure}[h]
\centering
 		 \begin{tikzpicture}[scale=1,>=stealth]
			\begin{scope}
			\draw(0,-.5)--(0,1.5);
			\draw(1,-.5)--(1,1.5);
			\draw(-.5,0)--(1.5,0);
			\draw(-.5,1)--(1.5,1);
			\draw[color=darkblue,line width=2pt,->](0,0)--(0,1);
			\draw[color=nicos-red,line width=1pt,->](.5,.5)--(.5,-.5);
			\draw[fill=black](0,0) circle(2pt);
			\draw(-.05,-0.15)node[left]{$x$};
			\draw[fill=white](.5,.5) circle(2pt);
			\draw(0.5,0.62)node[right]{$x^*=x+\etstar$};
			\end{scope}
			
			\begin{scope}[shift={(0,2.5)}]
			\draw(0,-.5)--(0,1.5);
			\draw(1,-.5)--(1,1.5);
			\draw(-.5,0)--(1.5,0);
			\draw(-.5,1)--(1.5,1);
			\draw[color=darkblue,line width=2pt,->](0,0)--(1,0);
			\draw[color=nicos-red,line width=1pt,->](.5,.5)--(-.5,.5);
			\draw[fill=black](0,0) circle(2pt);
			\draw(-.05,-.15)node[left]{$x$};
			\draw[fill=white](.5,.5) circle(2pt);
			\draw(0.5,0.62)node[right]{$x^*=x+\etstar$};
			\end{scope}			
		\end{tikzpicture}
		\hspace{2cm}
\includegraphics[trim={5.4cm 15.6cm 7.5cm 3.6cm},clip,width=5cm]{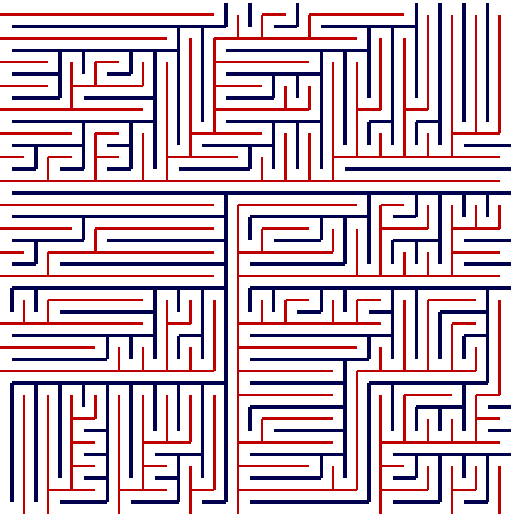}%\hspace{1.5cm}
\caption{\small  Left plot: An illustration of the duality relation between the edges of $\G{\xi{\scriptscriptstyle{\boxempty}}}$ (black/thick) and those of $\dG{\xi{\scriptscriptstyle{\boxempty}}}$ (red/thin). Right plot: An illustration of a (blue/thick) north-east directed  geodesic graph $\G{\xi{\scriptscriptstyle{\boxempty}}}$   and its (red/thin) south-west directed  dual $\dG{\xi{\scriptscriptstyle{\boxempty}}}$.}
%\caption{\small  An illustration of a (blue/thick) north-east directed  geodesic graph $\G{\xi\sig}$   and its (red/thin) south-west directed  dual $\dG{\xi\sig}$.}
%Two simulations of the (blue/thick)  geodesic graphs ($\G{\xi+}$ or $\G{\xi-}$) and its (red/thin) dual. 
%Weights are i.i.d.\ exponential(1). $\xi=(e_1+e_2)/2$ on the left and $\xi=(2e_1+e_2)/3$ on the right. }
\label{fig:trees}\end{figure}

For  $\zeta\preceq\eta$ in $\ri\Uset$ let  the graph 
%\addmath{use $\dG{\cup[\zeta, \eta]}$ instead?}
$\dS{[\zeta,\eta]}$ be the union of  the graphs $\dG{\xi\sig}$ over $\xi\in[\zeta,\eta]$ and $\sigg\in\{-,+\}$.
 That is, the vertex set of  $\dS{[\zeta,\eta]}$ is $\Z^{2*}$, and the edge set of  $\dS{[\zeta,\eta]}$ is the union of the  edge sets of $\dG{\xi\pm}$ over $\xi\in[\zeta,\eta]$.   From each point $x^*\in\Z^{2*}$   a directed edge of $\dS{[\zeta,\eta]}$ points to $x^*-e_1$ or $x^*-e_2$ or both.  Due to  the monotonicity \eqref{mono} of the Busemann functions,   $\dS{[\zeta,\eta]}$ is  the union of just the two  graphs $\dG{\zeta-}$ and $\dG{\eta+}$.  In particular,   $x^*$ points to $x^*-e_2$ in $\dS{[\zeta,\eta]}$ if and only if $x^*-\etstar$ points to $x^*+\exstar$ in $\G{\zeta-}$, and 
 $x^*$ points to $x^*-e_1$ in $\dS{[\zeta,\eta]}$ if and only if $x^*-\etstar$  points to $x^*-\exstar$ in $\G{\eta+}$.
%due to the monotonicity that forces horizontal arrows in $\G{\xi-}$ to be also in $\G{\xi+}$ and a similar story for vertical arrows.

Identify the space-time point % $x^* \in \etstar+\bbZ^2$ 
$x+ \etstar \in\bbZ^{2*}$
on the dual lattice with the diagonal edge that connects $x+e_1$ and $x+e_2$
%$x^*+\exstar$ and $x^*-\exstar$ 
on the primal lattice (see Figure \ref{fig:dual}).
 %For example, $0^*=(e_1+e_2)/2$.  %%% Timo: thought this was unnecessary here. 
 %\addmath{dispense with the $A$-shock point and just focus on the type of $A$ we actually use?}
% For a subset $A\subset\ri\Uset$, $\w\in\Omega$, and $x \in \bbZ^2$, we say there is an $A$-{\it shock point} in configuration $\w$  at the dual lattice point $x^*=x+\etstar$ whenever   $A\cap\supp\mu_{x+e_1,x+e_2}\neq\varnothing$. 
Call the dual lattice point $x^*=x+\etstar$  a $[\zeta, \eta]$-{\it instability point}  if $[\zeta, \eta]\cap\supp\mu_{x+e_1,x+e_2}\neq\varnothing$.   
%As mentioned earlier in \eqref{mu=|mu|}, $\supp{\mu_{x+e_1,x+e_2}}=\supp\mu_{x+e_1,x+e_2}$.
%For $\zeta\prec\eta$ we 
%say $x^*$ is a $(\zeta,\eta)$-shock point if it is a $A$-shock point with $A=\{t\zeta+(1-t)\eta:0<t<1\}$. 
If $\zeta=\eta=\xi$,  call  $x^*$  a {\it $\xi$-instability point}.   
Denote the set of $[\zeta,\eta]$-instability points   by $\shock{[\zeta,\eta]}$,  with  $\shock{\xi}=\shock{[\xi,\xi]}$.  
$\shock{[\zeta,\eta]}$ is the union of $\shock{\xi}$ over $\xi\in[\zeta,\eta]$.
    Theorem \ref{thm:nonint} and the ordering \eqref{path-ordering}  of geodesics give the following characterization in terms of disjoint geodesics, alluded to in Section \ref{s:HJ}. 

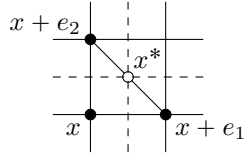
\begin{figure}%[h]
 	\begin{center}
 		 \begin{tikzpicture}[scale=1,>=stealth]
			\draw(0,1)--(1,0);
			\draw[fill=black](1,0) circle(2pt);
			\draw[fill=black](0,1) circle(2pt);
			\draw[fill=black](0,0) circle(2pt);
			\draw[fill=white](.5,.5) circle(2pt);
			\draw(0,-0.5)--(0,1.5);
			\draw(1,-0.5)--(1,1.5);
			\draw(-.5,0)--(1.5,0);
			\draw(-.5,1)--(1.5,1);
			\draw[dashed](-.5,0.5)--(1.5,0.5);
			\draw[dashed](.5,-0.5)--(.5,1.5);
			\draw(0,-.2)node[left]{$x$};
			\draw(.45,.75)node[right]{$x^*$};
			\draw(0,1.2)node[left]{$x+e_2$};
			\draw(1,-.2)node[right]{$x+e_1$};
			
		\end{tikzpicture}
 	 \end{center}
 	\caption{\small  The edge $(x+e_1,x+e_2)$ is identified with the dual point $x^*=x+\etstar$.}
 \label{fig:dual}
 \end{figure}

\begin{lemma}\label{lem:shockequiv} 
The following holds for $\bbP$-almost every $\w$.   Let $\zeta\preceq\eta$, including the case $\zeta=\eta=\xi$.     Let $x\in\Z^2$ and   $x^*=x+\etstar$.   Then $x^*\in\shock{[\zeta,\eta]}$    if and only if $\geo{}{x+e_2}{\zeta-}\cap\geo{}{x+e_1}{\eta+}=\varnothing$. 
\end{lemma}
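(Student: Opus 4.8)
The plan is to recast both sides of the asserted equivalence in terms of the Busemann geodesics rooted at $a:=x+e_1$ and $b:=x+e_2$ — two sites on the common level $k+1$, where $k=x\cdot\et$, with $b\prec a$ since $b\cdot e_1=x\cdot e_1<x\cdot e_1+1=a\cdot e_1$ — and then compare geodesics using the direction-monotonicity \eqref{path-ordering}. Recall that $x^*\in\shock{[\zeta,\eta]}$ means precisely $[\zeta,\eta]\cap\supp\mu_{a,b}\ne\varnothing$. I would first record two elementary facts. (i) If two up-right lattice paths start on the same level with the first strictly to the left of the second, and they never share a vertex, then the first stays strictly to the left of the second at every level: otherwise there is a smallest level at which their $e_1$-coordinates agree, and since each $e_1$-coordinate changes by $0$ or $1$ per level this would force the two paths through a common vertex. (ii) By \eqref{path-ordering}, for all $\xi\in[\zeta,\eta]$, $\sigg\in\{-,+\}$ and $n\ge k+1$ one has $\geo{n}{b}{\zeta-}\preceq\geo{n}{b}{\xi\sig}$ and $\geo{n}{a}{\xi\sig}\preceq\geo{n}{a}{\eta+}$; i.e.\ at each level $n$, $\geo{n}{b}{\zeta-}$ and $\geo{n}{a}{\eta+}$ are respectively the leftmost and rightmost among all $\geo{n}{b}{\xi\sig}$ and $\geo{n}{a}{\xi\sig}$ with $\xi\in[\zeta,\eta]$ and $\sigg\in\{-,+\}$.

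For the forward implication I would pick $\xi_0\in[\zeta,\eta]\cap\supp\mu_{a,b}$ and apply Theorem~\ref{thm:nonint} to the pair $(a,b)$ at direction $\xi_0$: it yields that at least one of $\geo{}{a}{\xi_0-}\cap\geo{}{b}{\xi_0+}$ and $\geo{}{a}{\xi_0+}\cap\geo{}{b}{\xi_0-}$ is empty. In either case this gives a disjoint pair of geodesics, one rooted at $b$ and one rooted at $a$, both with direction parameter $\xi_0\in[\zeta,\eta]$, and the $b$-rooted one starts strictly to the left of the $a$-rooted one. By fact~(i) the $b$-rooted geodesic then stays strictly to the left of the $a$-rooted one at every level $n\ge k+1$; bounding the former below by $\geo{n}{b}{\zeta-}$ and the latter above by $\geo{n}{a}{\eta+}$ via fact~(ii), I conclude that the $e_1$-coordinate of $\geo{n}{b}{\zeta-}$ is strictly below that of $\geo{n}{a}{\eta+}$ for every $n\ge k+1$. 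Since both paths live only at levels $\ge k+1$, they never meet, i.e.\ $\geo{}{x+e_2}{\zeta-}\cap\geo{}{x+e_1}{\eta+}=\varnothing$.

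For the converse I would argue contrapositively: assume $[\zeta,\eta]\cap\supp\mu_{a,b}=\varnothing$. Since $\supp\mu_{a,b}$ is closed and $[\zeta,\eta]$ compact, there are $\zeta'\prec\zeta\preceq\eta\prec\eta'$ in $\ri\Uset$ with $]\zeta',\eta'[\,\cap\supp\mu_{a,b}=\varnothing$, hence $\abs{\mu_{a,b}}(\,]\zeta',\eta'[\,)=0$. Theorem~\ref{thm:1path}, applied to $(a,b)$ on $]\zeta',\eta'[$, then produces a single lattice point $z$ through which all of the geodesics $\geo{}{a}{\xi\sig}$ and $\geo{}{b}{\xi\sig}$, $\xi\in\,]\zeta',\eta'[$, $\sigg\in\{-,+\}$, pass. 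Taking $\xi=\zeta$ with sign $-$ and $\xi=\eta$ with sign $+$ (both legal since $\zeta,\eta\in\,]\zeta',\eta'[$) shows $z\in\geo{}{b}{\zeta-}\cap\geo{}{a}{\eta+}$, so the intersection is non-empty. The degenerate case $\zeta=\eta=\xi$ is handled identically, since still $\zeta'\prec\xi\prec\eta'$. Intersecting the $\P$-a.s.\ events underlying Theorems~\ref{thm:nonint} and~\ref{thm:1path} and property \eqref{path-ordering} yields one full-probability event on which the equivalence holds simultaneously for all $x\in\Z^2$ and all $\zeta\preceq\eta$.

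The step I expect to be the main obstacle — really the only point that needs care — is reconciling Theorem~\ref{thm:nonint}, a statement about non-intersection of a \emph{specific} pair of $\xi\sigg$-geodesics at a \emph{single} direction, with the lemma, which concerns an \emph{interval} of directions and the \emph{extreme} pair $\bigl(\geo{}{x+e_2}{\zeta-},\geo{}{x+e_1}{\eta+}\bigr)$. Facts~(i) and~(ii) are exactly the bridge: monotonicity in $\xi$ lets one slide the disjoint pair produced by Theorem~\ref{thm:nonint} outward to the extremal geodesics, and the combinatorial no-crossing fact upgrades ``weakly ordered'' to ``strictly ordered at every level'', which is what forces $\geo{}{x+e_2}{\zeta-}$ and $\geo{}{x+e_1}{\eta+}$ to be disjoint. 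The remaining steps — the support/neighbourhood argument and the bookkeeping with \eqref{path-ordering} and the one-sided geodesic limits — are routine.
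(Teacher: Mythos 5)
Your proof is correct and follows essentially the same route as the paper's: the forward direction via Theorem~\ref{thm:nonint} together with the direction-ordering \eqref{path-ordering} and the no-crossing planarity fact, and the converse via the coalescence characterizations underlying Propositions~\ref{pr:supp1} and~\ref{pr:supp2} (you invoke them through Theorem~\ref{thm:1path} after the compactness widening, the paper applies the propositions directly at $\zeta$, on $]\zeta,\eta[$, and at $\eta$, but these amount to the same thing). Your explicit handling of the ``either\ldots or\ldots'' in Theorem~\ref{thm:nonint}---observing that either alternative produces a disjoint $b$-rooted, $a$-rooted pair which the outer geodesics must sandwich---is in fact a touch more careful than the paper's one-line assertion there.
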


%%%%%%%%     OLD VERSION WITH (a)-(b)  %%%%%%
%\begin{lemma}\label{lem:shockequiv} 
%The following holds for $\bbP$-almost every $\w$ and every $x\in\Z^2$.  Let  $x^*=x+\etstar$.  
%\begin{enumerate}     [label={\rm(\alph*)}, ref={\rm\alph*}]   \itemsep=3pt  
%\item For $\zeta\prec\eta$ in $\ri\Uset$,  $x^*\in\shock{[\zeta,\eta]}$    if and only if $\geo{}{x+e_2}{\zeta-}\cap\geo{}{x+e_1}{\eta+}=\varnothing$. 
%\item  For $\xi\in\ri\Uset$,  $x^*\in\shock{\xi}$  if and only if $\geo{}{x+e_2}{\xi-}\cap\geo{}{x+e_1}{\xi+}=\varnothing$.
%\end{enumerate} 
%\end{lemma}
%%%%%%%%     OLD VERSION WITH (a)-(b)  %%%%%%

%$\xi$-shock points are in a sense analogous to the locations of shocks in (bi-infinite in time) global solutions of Burgers equation. 

%x^* is a [\zeta,\eta]-shock iff \supp\mu_{x+e_1,x+e_2} intersects [\zeta,\eta] iff 
%\exists\xi\in[\zeta,\eta] s.t. \xi\in\supp\mu_{x+e_1,x+e_2} iff \exists \xi\in[\zeta,\eta] s.t. x^* is a \xi-shock. 
  Let  the {\it instability graph} $\shockG{[\zeta,\eta]}$ be the subgraph of $\dS{[\zeta,\eta]}$  with vertex set $\shock{[\zeta,\eta]}$ and those  directed edges of $\dS{[\zeta,\eta]}$  that point  from some  $x^*\in\shock{[\zeta,\eta]}$ to  a point 
$x^*-e_i\in\shock{[\zeta,\eta]}$, for either  $i\in\{1,2\}$.  (The proof of  Theorem  \ref{thm:shock1} in Section \ref{shockgraphs:pfs}  shows that {\it every} edge of $\dS{[\zeta,\eta]}$ that emanates from a point of $\shock{[\zeta,\eta]}$ is in fact an edge of  $\shockG{[\zeta,\eta]}$.)   

In the  case $\zeta=\eta=\xi$ write  $\shockG{\xi}$ for   $\shockG{[\xi,\xi]}$.  Explicitly,  the vertices of $\shockG{\xi}$ are dual points $x+\etstar$ such that $\xi\in\supp\mu_{x+e_1,x+e_2}$ and the edges are those of $\dG{\xi-}\cup\dG{\xi+}$ that connect these points.  

The graph $\shockG{[\zeta,\eta]}$ is also the edge union of the graphs $\shockG{\xi}$  over $\xi\in[\zeta, \eta]$. To see this, let $x^*=x+\etstar$.  If $\xi=\cid(T_x\w)\in[\zeta, \eta]$ then  $\shockG{\xi}$ contains both edges from $x^*$ to $x^*-e_1$ and $x^*-e_2$, as does $\shockG{[\zeta,\eta]}$.    If $\cid(T_x\w)\notin[\zeta, \eta]$ then in  $\shockG{\xi}$ and in $\shockG{[\zeta,\eta]}$, $x^*$ points to   the same vertex $x^*-e_i$.  

%Write $\shock{\xi}$ and $\shockG{\xi}$ for $\shock{[\xi,\xi]}$ and $\shockG{[\xi,\xi]}$. 
%It follows   that   $\shock{[\zeta,\eta]}$ is the union of $\shock{\xi}$ over $\xi\in[\zeta,\eta]$.

 \begin{remark}
By the continuity \eqref{pmgeolim} and the fact that the support of a measure is a closed set we get that almost surely, 
for any $\zeta\preceq\eta$ in $\ri\Uset$ and for any finite box $[-L,L]^2\cap\Z^2$, $\shockG{[\zeta',\eta']}=\shockG{[\zeta,\eta]}$ on the entire box, for $\zeta'\prec\zeta$ close enough to $\zeta$ and $\eta'\succ\eta$ close enough to $\eta$.
This explains why the two top graphs in Figure \ref{fig:shocks2b} are identical and are in fact equal to $\shockG{\xi}$.
 \end{remark}

The message of the  next theorem is that  instability points exist for all exceptional directions in $\aUset$, and these  instability points arrange themselves on  bi-infinite directed paths  in the instability graphs.

\begin{theorem}\label{thm:shock1}
The following holds for $\bbP$-almost every $\w$. Pick any $\zeta\preceq\eta$ in $\ri\Uset$ such that  $[\zeta,\eta]\cap\,\aUset\ne\varnothing$, including   the case $\zeta=\eta=\xi$.    Then the  instability graph $\shockG{[\zeta,\eta]}$ is an infinite directed graph.   Furthermore,  $\shockG{[\zeta,\eta]}$ equals the union of the bi-infinite directed paths of the graph $\dG{\cup[\zeta,\eta]}$.   In the backward {\rm(}north and east{\rm)} orientation, each such path is $[\zetamin,\etamax]$-directed. 
\end{theorem}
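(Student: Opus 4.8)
\emph{Setup and reduction.} I would work throughout on a single full-probability event supporting Lemma \ref{lem:shockequiv}, Theorems \ref{thm:nonint}, \ref{thm:1path}, \ref{thm:Vcid}, the orderings \eqref{path-ordering}, the coalescence-on-meeting rule following \eqref{d:bgeo}, and uniqueness of point-to-point geodesics, all simultaneously in every parameter. Fix $\zeta\preceq\eta$ and abbreviate $L_y=\geo{}{y}{\zeta-}$, $R_y=\geo{}{y}{\eta+}$. By Lemma \ref{lem:shockequiv}, $x^*=x+\etstar\in\shock{[\zeta,\eta]}$ iff $L_{x+e_2}\cap R_{x+e_1}=\varnothing$, and in $\dS{[\zeta,\eta]}=\dG{\zeta-}\cup\dG{\eta+}$ one has $x^*\to x^*-e_2$ iff $L_x$ steps $e_2$ and $x^*\to x^*-e_1$ iff $R_x$ steps $e_1$ (the local ordering $\geo{}{x}{\zeta-}\preceq\geo{}{x}{\eta+}$ forces the two trees to agree away from split points), so every dual vertex has out-degree $\ge 1$. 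The whole statement then follows from two propagation claims. \textbf{(A) Forward closure:} if $x^*\in\shock{[\zeta,\eta]}$ and $x^*\to y^*$ is an edge of $\dS{[\zeta,\eta]}$, then $y^*\in\shock{[\zeta,\eta]}$. For $y^*=x^*-e_1=(x-e_1)^*$ one has $R_x$ steps $e_1$, so $R_x=\{x\}\cup R_{x+e_1}$; root-monotonicity (same level) gives $L_{x-e_1+e_2}\preceq L_{x+e_2}$, hence $L_{(x-e_1)+e_2}\cap R_{(x-e_1)+e_1}=L_{x-e_1+e_2}\cap R_x=\varnothing$. The case $y^*=x^*-e_2$ is symmetric (compare $R_{x+e_1-e_2}$ with $R_{x+e_1}$). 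This is precisely the parenthetical claim in the definition of $\shockG{[\zeta,\eta]}$. \textbf{(B) Backward existence:} every $x^*\in\shock{[\zeta,\eta]}$ has a $\dS{[\zeta,\eta]}$-predecessor in $\shock{[\zeta,\eta]}$.

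\emph{Proof of (B), the crux.} Since $L_{x+e_2}$ and $R_{x+e_1}$ are disjoint they cannot both step toward $x+e_1+e_2$, so either $L_{x+e_2}$ steps $e_2$ (making $(x+e_2)^*$ a predecessor) or $R_{x+e_1}$ steps $e_1$ (making $(x+e_1)^*$ one). If exactly one holds, a one-line comparison as in (A) puts that predecessor in $\shock{[\zeta,\eta]}$. The difficult case is $L_{x+e_2}$ steps $e_2$ \emph{and} $R_{x+e_1}$ steps $e_1$ simultaneously; there I would argue by contradiction, assuming neither $(x+e_2)^*$ nor $(x+e_1)^*$ is in $\shock{[\zeta,\eta]}$, i.e.\ $L_{x+2e_2}$ first meets $R_{x+e_1+e_2}$ at $z_1$ and $L_{x+e_1+e_2}$ first meets $R_{x+2e_1}$ at $z_2$. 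Root-monotonicity gives the level-wise chain $L_{x+2e_2}\preceq L_{x+e_1+e_2}\preceq R_{x+e_1+e_2}\preceq R_{x+2e_1}$, which squeezes $z_1$ and $z_2$ onto both $L_{x+e_1+e_2}$ and $R_{x+e_1+e_2}$; since these two geodesics share the root $x+e_1+e_2$, uniqueness of point-to-point geodesics makes them coincide up to $\max\{z_1\cdot\et,z_2\cdot\et\}$. Chasing coalescences along that common segment (geodesics of the same sign merge once they meet) then produces a point of $L_{x+2e_2}\cap R_{x+2e_1}$ at a level $\ge x\cdot\et+2$, hence a point of $L_{x+e_2}\cap R_{x+e_1}$, contradicting $x^*\in\shock{[\zeta,\eta]}$. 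Iterating (B) backward and following out-edges forward (each edge changes $x^*\cdot\et$ by $\mp1$, so both rays are infinite) and invoking (A) along the forward ray, every vertex and every edge of $\shockG{[\zeta,\eta]}$ lies on a bi-infinite directed path of $\dS{[\zeta,\eta]}$ whose vertices are all in $\shock{[\zeta,\eta]}$; this is the inclusion $\shockG{[\zeta,\eta]}\subseteq\bigl(\text{union of bi-infinite directed paths of }\dS{[\zeta,\eta]}\bigr)$.

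\emph{Reverse inclusion, infiniteness, directedness.} For the reverse inclusion I would show that a bi-infinite directed path $P$ of $\dS{[\zeta,\eta]}$ cannot pass through a non-instability point $v^*$. If $v^*\notin\shock{[\zeta,\eta]}$, then $[\zeta,\eta]$ is disjoint from the closed set $\supp\mu_{v+e_1,v+e_2}$, hence lies in an open interval $]\zeta',\eta'[\subset\ri\Uset$ on which $|\mu_{v+e_1,v+e_2}|$ vanishes; Theorem \ref{thm:1path} then produces a common point $z$ together with finite segments $\pi$ from $v+e_1$ to $z$ and $\widetilde\pi$ from $v+e_2$ to $z$ that equal $\geo{}{v+e_1}{\xi\sigg}$, resp.\ $\geo{}{v+e_2}{\xi\sigg}$, for every $\xi\in\,]\zeta',\eta'[$ and both signs. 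In particular $\pi$ and $\widetilde\pi$ are edge-subsets of both $\G{\zeta-}$ and $\G{\eta+}$, so no edge of $\dS{[\zeta,\eta]}=\dG{\zeta-}\cup\dG{\eta+}$ crosses $\pi\cup\widetilde\pi$; since $\pi\cup\widetilde\pi$ together with the segment $[v+e_1,v+e_2]$ through $v^*$ bounds a finite region, the north--east (backward) half of $P$ from $v^*$ is trapped there and cannot be infinite, a contradiction. Hence every vertex of $P$ is in $\shock{[\zeta,\eta]}$ and every edge of $P$ is an edge of $\shockG{[\zeta,\eta]}$, giving the equality. Infiniteness when $[\zeta,\eta]\cap\aUset\ne\varnothing$ follows because any $\xi_0$ in that intersection lands, after a cocycle decomposition \eqref{coc-prop} of $\mu_{u,v}$ along a lattice path plus the inclusion $\supp\mu_{w,w+e_i}\subseteq\supp\mu_{w+e_1,w+e_2}$ (from $\mu_{w+e_1,w+e_2}=\mu_{w,w+e_2}-\mu_{w,w+e_1}$, Theorem \ref{thm:Vcid}\eqref{thm:Vcid.1}, and the sign change \eqref{cid}), in some $\supp\mu_{w+e_1,w+e_2}$, so $w^*\in\shock{[\zeta,\eta]}$ and the first part applies. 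For directedness, fix $v^*\in P$ (so $v^*\in\shock{[\zeta,\eta]}$ and $L_{v+e_2}\cap R_{v+e_1}=\varnothing$): the only $\dS{[\zeta,\eta]}$-edges crossing the $\G{\zeta-}$-geodesic $L_{v+e_2}$ are split-point edges $w^*\to w^*-e_1$, traversed left-to-right in the backward direction, so $P$ backward, starting to the right of $L_{v+e_2}$, never reaches its left; symmetrically $P$ backward stays left of $R_{v+e_1}$. Thus $P$ backward is sandwiched between $L_{v+e_2}$, whose subsequential limits of $\cdot/n$ lie in $\Uset_{\zeta-}\subseteq[\zetamin,\zetamax]$, and $R_{v+e_1}$, whose limits lie in $\Uset_{\eta+}\subseteq[\etamin,\etamax]$; a level-by-level comparison of $e_1$-coordinates shows every limit point of $P^{\mathrm{bwd}}_n/n$ lies in $[\zetamin,\etamax]$.

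\emph{Main obstacle.} The only step that is not routine bookkeeping is case (B) with $L_{x+e_2}$ stepping $e_2$ and $R_{x+e_1}$ stepping $e_1$: there no single monotonicity comparison exhibits an instability predecessor, and one must instead extract a contradiction from the two hypothetical coalescences, turning them (via uniqueness of point-to-point geodesics out of the shared root $x+e_1+e_2$) into a forbidden meeting of $L_{x+e_2}$ with $R_{x+e_1}$. Everything else is either a direct comparison of Busemann geodesics via \eqref{mono} and \eqref{path-ordering} or the (conceptually clean) observation that no edge of $\dS{[\zeta,\eta]}$ can cross a path that is simultaneously a $\zeta-$ and an $\eta+$ geodesic.
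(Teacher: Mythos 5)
Your proof is correct, and on the key step it takes a genuinely different route from the paper's. Both proofs share forward closure (your claim~(A), the paper's Step~2) and the nonemptiness observation via cocycle additivity (your infiniteness paragraph, the paper's Step~1). The divergence is in \emph{backward existence}. The paper proves a global statement (Lemma~\ref{lm:ans}\eqref{ans111}): $x^*\in\shock{[\zeta,\eta]}$ iff a semi-infinite backward path exists, established by a compactness/diagonal argument --- for each level $k>x^*\cdot\et$ pick $y^*_k$ strictly between the two disjoint flanking geodesics, use Lemma~\ref{lm:an1} to get a finite $\dS{[\zeta,\eta]}$-path from $y^*_k$ to $x^*$, and extract a subsequential limit; the fact that the resulting ray lies in $\shock{[\zeta,\eta]}$ and is directed is then Lemma~\ref{lm:ans}\eqref{ans110}, via Lemma~\ref{no-cross}. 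You instead prove the local statement~(B): every $x^*\in\shock{[\zeta,\eta]}$ has an in-edge from a point of $\shock{[\zeta,\eta]}$, handled by case analysis, with the delicate case (both $\geo{}{x+e_2}{\zeta-}$ stepping $e_2$ and $\geo{}{x+e_1}{\eta+}$ stepping $e_1$) resolved by the clever contradiction: assuming both candidate predecessors are non-instability points forces, via uniqueness of finite geodesics out of the shared root $x+e_1+e_2$ together with planar ordering, a meeting of $\geo{}{x+e_2}{\zeta-}$ and $\geo{}{x+e_1}{\eta+}$. You likewise replace the paper's Lemma~\ref{lm:ans}\eqref{ans110} by a trapping argument through Theorem~\ref{thm:1path}: a non-instability point $v^*$ is enclosed by a finite region bounded by the common $\zeta-$/$\eta+$ geodesic segments out of $v+e_1$ and $v+e_2$, which no $\dS{[\zeta,\eta]}$-edge can cross, so the backward half of any bi-infinite $\dS{[\zeta,\eta]}$-path through $v^*$ would be finite. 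Your route is more elementary and constructive (no compactness), at the cost of an explicit case split; the paper's route is more modular, packaging Lemma~\ref{lm:an1} as an ancestor characterization that is re-used elsewhere (e.g.\ in the proof of Theorem~\ref{thm:shocks}\eqref{thm:shocks.e}). Two small imprecisions that do not affect correctness: in (A) your ``root-monotonicity (same level)'' comparison $L_{x-e_1+e_2}\preceq L_{x+e_2}$ involves roots at adjacent levels, so one must first advance $L_{x-e_1+e_2}$ one step before applying same-level monotonicity; and in your nonemptiness paragraph the citations of Theorem~\ref{thm:Vcid}\eqref{thm:Vcid.1} and \eqref{cid} are unnecessary --- the inclusion of supports follows directly from \eqref{mono}, which gives $\mu_{w,w+e_2}$ positive and $\mu_{w,w+e_1}$ negative, so that $\mu_{w+e_1,w+e_2}$ is a sum of two positive measures with no cancellation.
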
 

In particular,   if $x^*\in\shock{[\zeta,\eta]}$ and $m=x^*\cdot\et$,  there exists a bi-infinite sequence $\{x^*_n\}_{n\in\Z}\subset\shock{[\zeta,\eta]}$ such that $x^*_m=x^*$ and for each $n$,   $x^*_n\cdot\et=n$ and  $x^*_n$ points to  $x^*_{n-1}$   in the graph $\shockG{[\zeta,\eta]}$.  As $n\to\infty$,  the limit points of $n^{-1}x^*_n$ lie in $[\zetamin,\etamax]$.

\begin{figure}[h]
\centering
\includegraphics[width=0.45\textwidth]{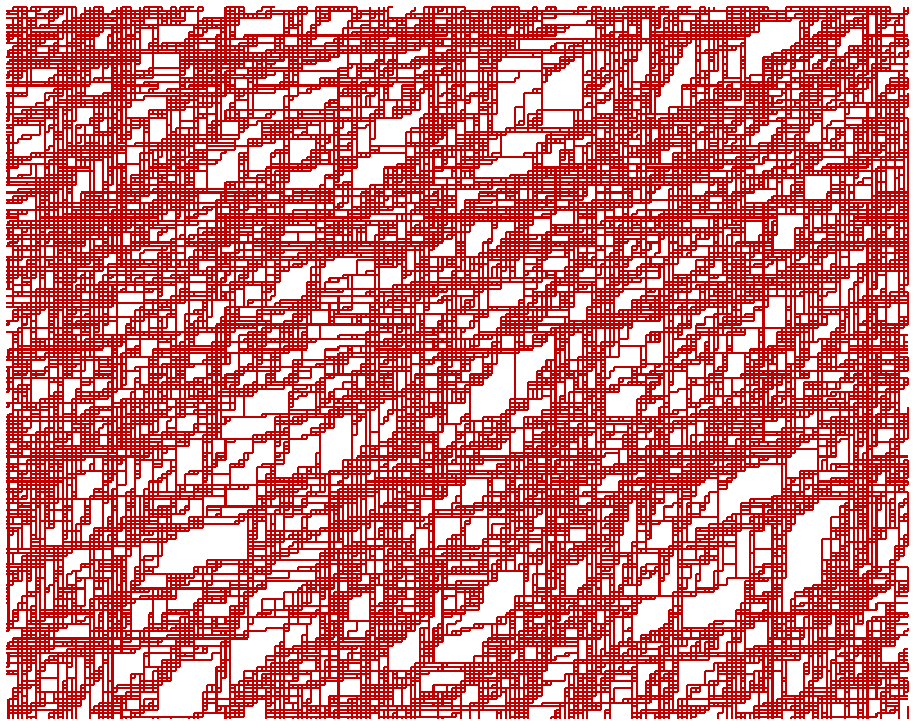}\ \ \ 
\includegraphics[width=0.45\textwidth]{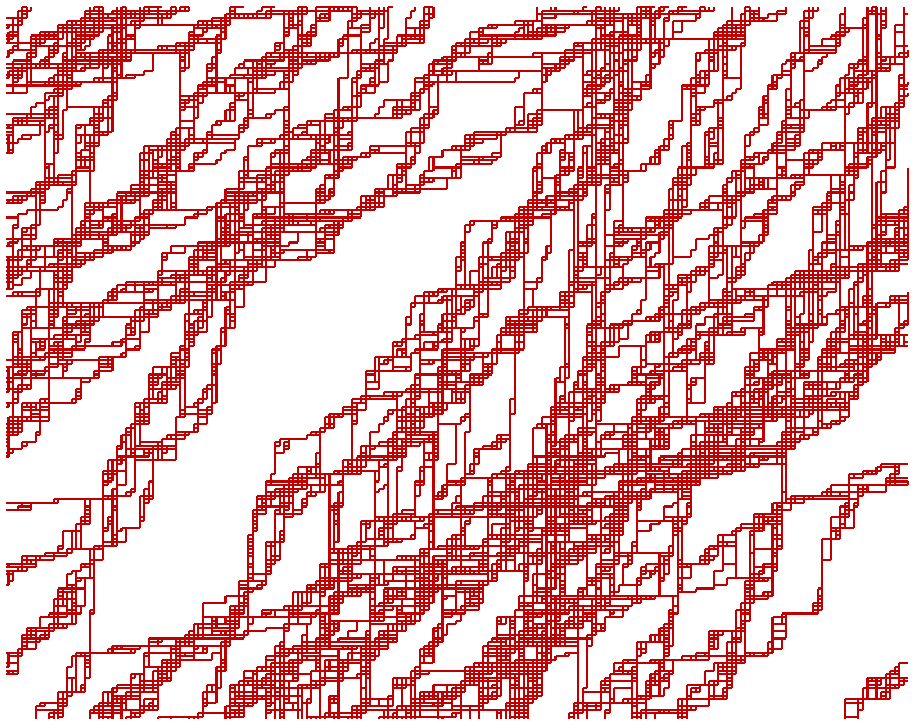}
\includegraphics[width=0.45\textwidth]{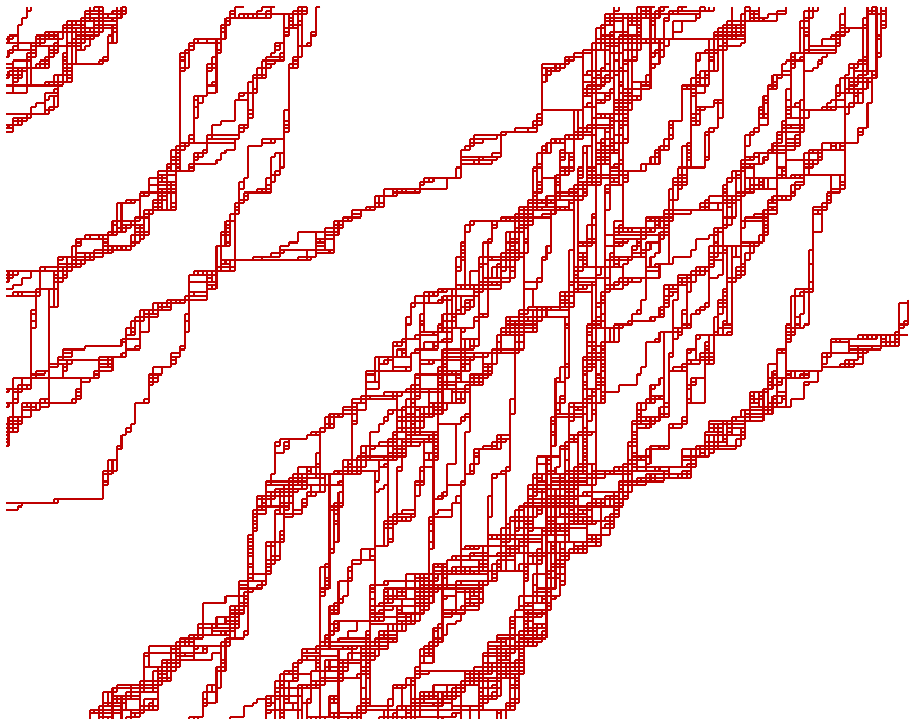}\ \ \ 
\includegraphics[width=0.45\textwidth]{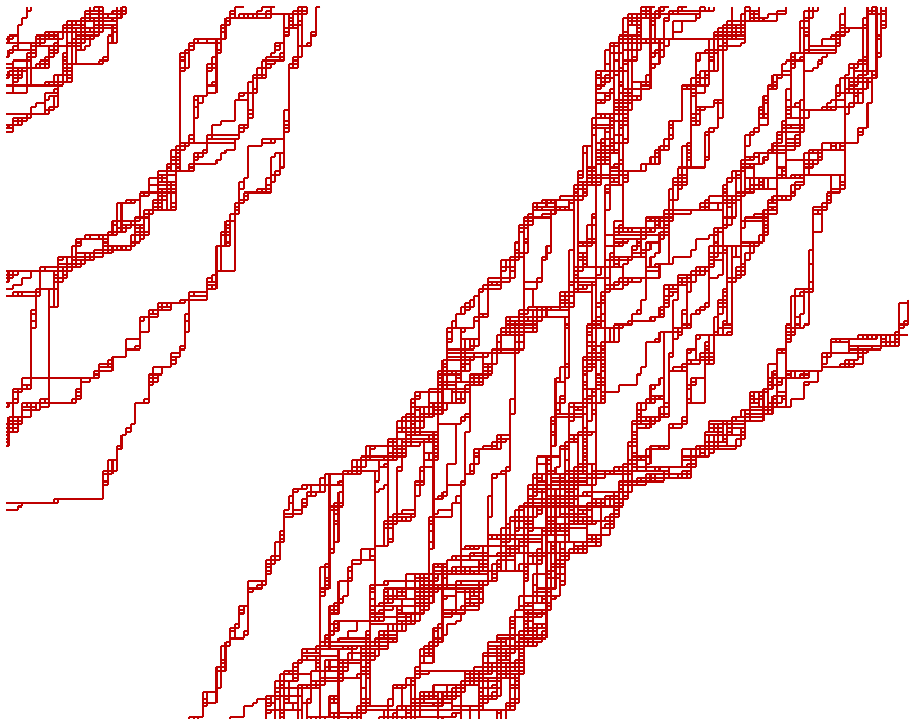}
\caption{\small  Four nested down-left pointing  $\shockG{[\zeta,\eta]}$ graphs in the square  $[-100,100]^2$. Top to bottom, left to right, in reading order, 
%$[\zeta,\eta]$ equals  $[(0.096,0.904),(0.772,0.228)]$, $[(0.219,0.781),(0.595,0.405)]$, $[(0.318,0.682),(0.476,0.524)]$, and $[(0.355,0.645),(0.436,0.564)]$.  
$[\zeta\cdot e_1,\eta\cdot e_1]$ equals  $[0.096,0.772]$, $[0.219,0.595]$, $[0.318,0.476]$, and $[0.355,0.436]$.  
Two further nested subgraphs appear in Figure \ref{fig:shocks2b}.  
In the simulation the weights were exponentially distributed and we chose the direction $\xi$ to be a jump point of the Busemann process on the edge $(0,e_1)$.}
%In the simulation we chose the direction $\xi$ to be the jump of the Busemann process on the edge $(0,e_1)$ 
%such that $\xi\cdot e_1$ is maximal in $[0,0.6]$.} %\textcolor{red}{(to guarantee $\alpha$ is far from $1$, to control the truncation for the vertical edges.)}}
\label{fig:shocks2a}\end{figure}

\begin{figure}[h]
\centering
\includegraphics[width=0.45\textwidth]{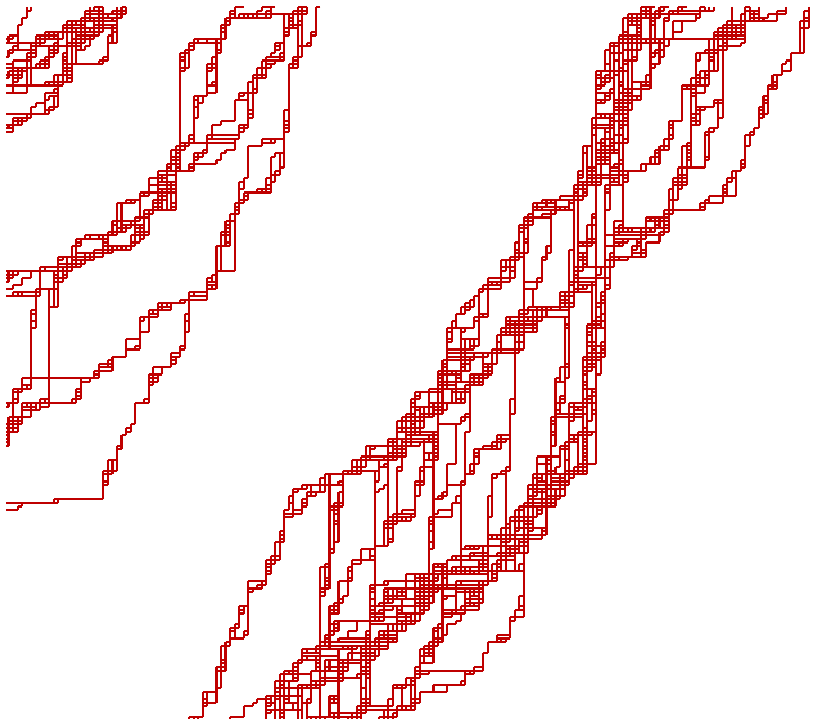}\ \ \ 
\includegraphics[width=0.45\textwidth]{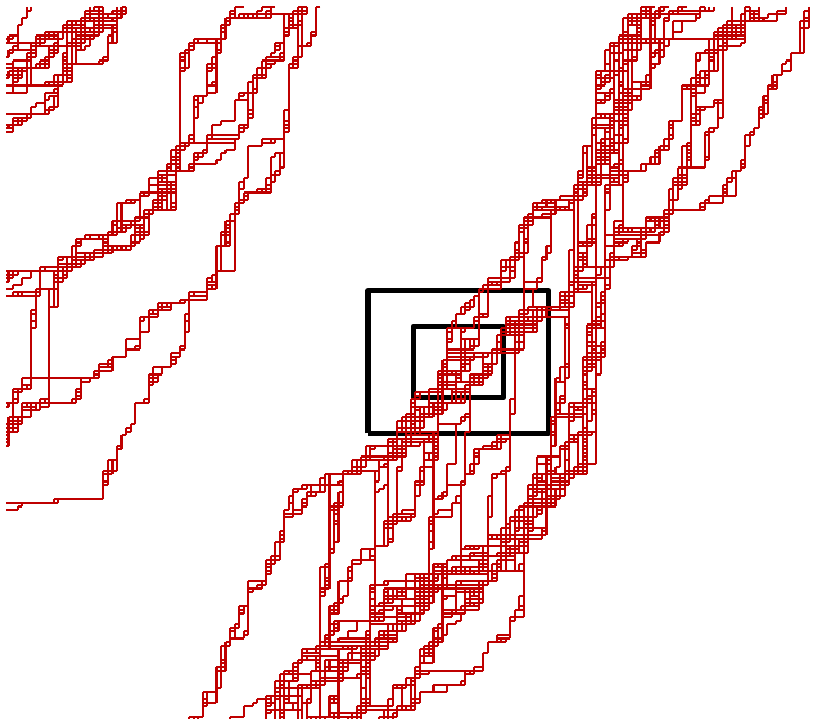}
\includegraphics[width=0.45\textwidth]{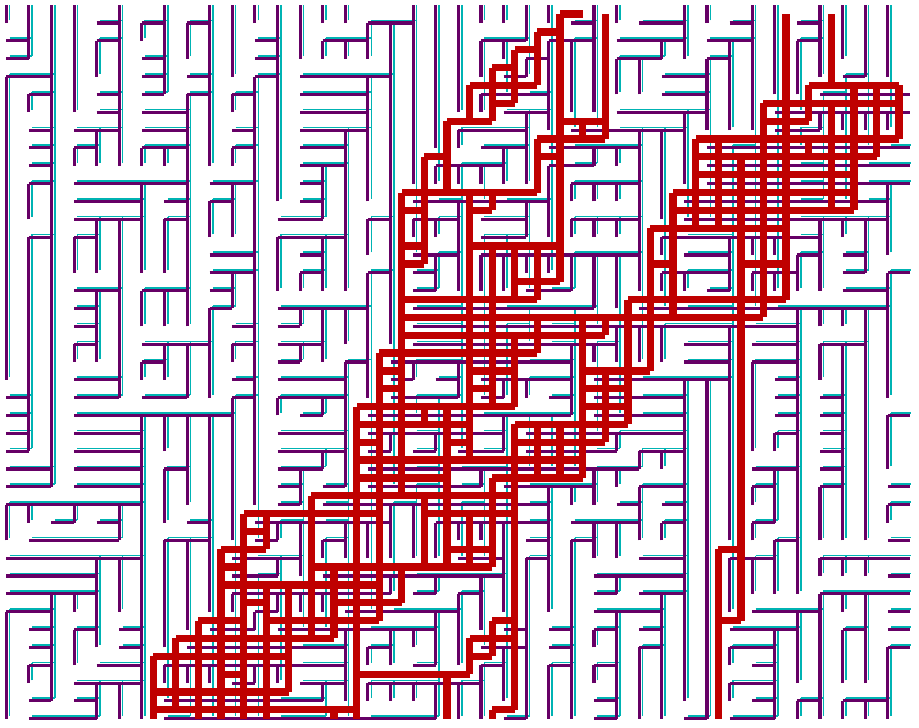}\ \ \  
\includegraphics[width=0.45\textwidth]{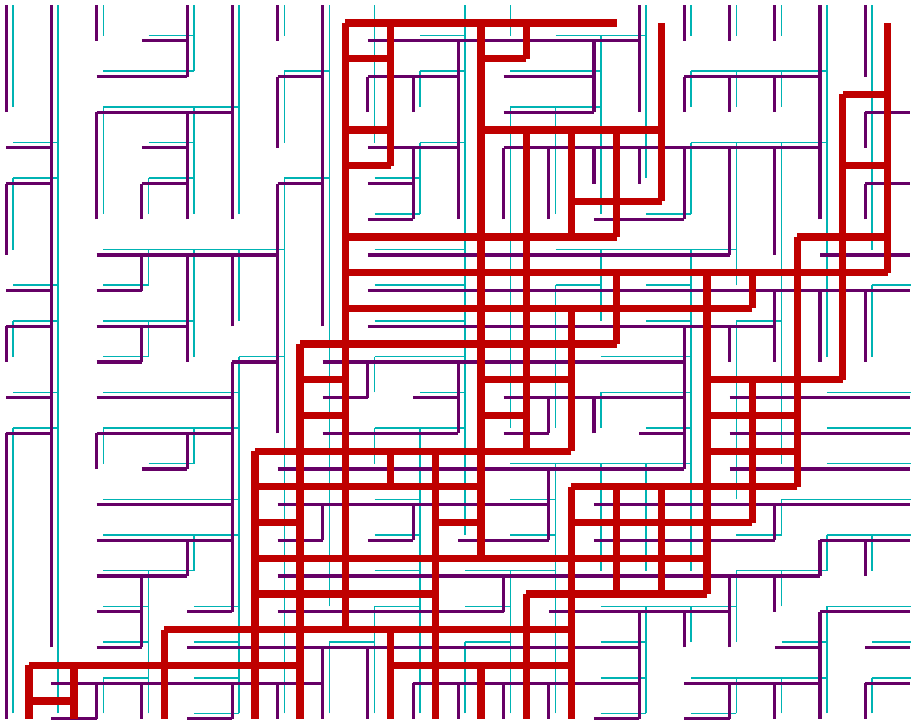}
\caption{\small  Continuing with the simulation setting of Figure \ref{fig:shocks2a}, the top two pictures  are $\shockG{[\zeta,\eta]}$ graphs in $[-100,100]^2$ with 
%$[\zeta,\eta]=[(0.374,0.626),(0.417,0.583)]$ (left) and $[(0.393,0.607),(0.397,0.603)]$ (right). 
$[\zeta\cdot e_1,\eta\cdot e_1]=[0.374,0.417]$ (left) and $[0.393,0.397]$ (right). 
The two graphs are in fact identical.  The  pictures on the second row zoom into the framed squares of the  top right picture, the left one into the square $[-20,20]^2$ and  the right  one into $[-10,10]^2$.  Besides the down-left pointing red $\shockG{[\zeta,\eta]}$ graphs, the bottom pictures include   the up-right pointing graphs $\G{\zeta-}$ (green/lighter) and $\G{\eta+}$ (purple/darker).  Whenever $\G{\zeta-}$  and $\G{\eta+}$ separate at $x$,    green points up and purple points right, and  $\shockG{[\zeta,\eta]}$ has a branch point at $x+\etstar$.   The blue/green trees that occupy the islands surrounded by red paths are described in Section \ref{sec:Busflow}. 
% The weights and the direction $\xi$ in this simulation are the same as the ones in Figure \ref{fig:shocks2a}. Thus, the graphs on the first row are subgraphs of the ones in Figure \ref{fig:shocks2a}.
}
\label{fig:shocks2b}\end{figure}

% About the graph.   $\G{\zeta-}$ (green/lighter) and $\G{\eta+}$ (purple/darker).  These point together except when green up and purple right.  

Next we   describe  the branching and coalescing of the bi-infinite directed paths that make up   the  graph $\shockG{[\zeta,\eta]}$.  
If there is a  directed path  in the graph $\shockG{[\zeta,\eta]}$  from $y^*$ to $x^*$, then   $y^*$ is an {\it ancestor} of $x^*$ and  equivalently  $x^*$ is a {\it descendant} of $y^*$.  
%If the path from $y^*$ to $x^*$ is  a single edge, in which case  $x^*\in\{y^*-e_1,y^*-e_2\}$,  we say that $y^*$ is a {\it parent} of $x^*$ and $x^*$ is an {\it offspring} of $y^*$.
Let $\ans{[\zeta,\eta]}{x^*}$ denote the set of ancestors of $x^*$ in the graph $\shockG{[\zeta,\eta]}$.  Abbreviate again  $\ans{\xi}{x^*}=\ans{[\xi,\xi]}{x^*}$.\smallskip  
%, i.e.\ the points $y^*\in\shock{\xi}$ such that $x^*\absorb y^*$.

A point $x^*\in\shock{[\zeta,\eta]}$ is   a {\it branch point}  in the  graph $\shockG{[\zeta,\eta]}$  if  $x^*$  is an ancestor of both $x^*-e_1$ and $x^*-e_2$.  Branch points are dual to those where $\zeta-$ and $\eta+$ geodesics separate. Similarly, $x^*\in\shock{[\zeta,\eta]}$ is a {\it coalescence point} if both $x^*+e_1$ and $x^*+e_2$ are ancestors of $x^*$.   Figures \ref{fig:shocks2a} and \ref{fig:shocks2b} display  simulations that illustrate the branching and coalescing.

For the sharpest branching and coalescing properties in the next theorem, we invoke again the regularity condition \eqref{g-reg}  and  the jump process condition \eqref{cond:jumpcond}, and additionally the non-existence of non-trivial bi-infinite geodesics: 

 %\begin{condition}\label{cond:biinf}
\begin{align}\label{cond:biinf}
\begin{minipage}{0.9\textwidth}
There exists a full $\bbP$-probability event on which the only bi-infinite geodesics are the trivial ones: $x+\Z e_i$ for $x\in\Z^2$ and $i\in\{1,2\}$.
\end{minipage}
\end{align}
%\end{condition}
Condition  \eqref{cond:biinf} is known to hold  in the exponential case %\cite[Theorem 1]{Bas-Hof-Sly-18-}.  
  \cite{Bal-Bus-Sep-20,Bas-Hof-Sly-18-}.

%to competition interface roots, of which there are infinitely many. 
%\timonote{Need more precise language, for isn't every vertex a root of a competition interface? F: OK now?}  
%Under the jump process condition \eqref{cond:jumpcond}, any two shock points have  a common ancestor. 
%%\note{F edited the text here a bit. Please read}
%Under the no bi-infinite geodesics condition \eqref{cond:biinf}  any two shock points also share a common descendant.  If both \eqref{cond:jumpcond} and \eqref{cond:biinf} hold,  there are infinitely many coalescence points.  
%
 
\begin{theorem}\label{thm:shocks}
The following hold for $\bbP$-almost every $\w$ and all $\zeta\preceq\eta$ in $\ri\Uset$ such that  $[\zeta,\eta]\cap\,\aUset\ne\varnothing$.  The case $\zeta=\eta=\xi$ is included unless otherwise stated.
\begin{enumerate}[label={\rm(\alph*)}, ref={\rm\alph*}] \itemsep=3pt 
%%%%%%   Parts commented out below moved to a separate theorem 2019-11-23 %%%%%%%
%\item\label{thm:shocks.0} Shock points exist: $\shock{[\zeta,\eta]}\neq\varnothing$.
%\item\label{thm:shocks.a} Any path that starts at a shock point $x^*\in\shock{[\zeta,\eta]}$ and takes steps in $\{-e_1,-e_2\}$ along edges of the graph $\dS{[\zeta,\eta]}$  remains entirely in the graph $\shockG{[\zeta,\eta]}$.   In particular,  each $x^*\in\shock{[\zeta,\eta]}$ is the starting point of an infinite directed  path in $\shockG{[\zeta,\eta]}$. 
%%has all its vertices in $\shock{[\zeta,\eta]}$ and is hence a path in $\shockG{[\zeta,\eta]}$. 
%\item\label{thm:shocks.b} For any point $x^*\in\shock{[\zeta,\eta]}$, there exists a semi-infinite  up-right path $x^*_{m,\infty}$ of points in $\shock{[\zeta,\eta]}$ emanating from $x^*_m=x^*$ and such that for all $n\ge m$, $x^*_{n+1}-x^*_n\in\{e_1,e_2\}$ and $x^*_{n+1}$ is a parent of $x^*_n$ in $\shockG{[\zeta,\eta]}$. All such paths are $[\zetamin,\etamax]$-directed. 
\item\label{thm:shocks.c} $x^*$ is a branch point in $\shockG{[\zeta,\eta]}$
if and only if $\cid(T_{x^*-\etstar}\w)\in[\zeta,\eta]$.
\item\label{thm:shocks.d} If $\zeta\prec\eta$, then any $x^*,y^*\in\shock{[\zeta,\eta]}$ have a common descendant: $\exists z^*\in\shock{[\zeta,\eta]}$ such that $x^*,y^*\in\ans{[\zeta,\eta]}{z^*}$. If we assume the no bi-infinite geodesics condition \eqref{cond:biinf}, then the same statement holds also for the case $\zeta=\eta=\xi$.
\item\label{thm:shocks.e} Assume the jump process condition \eqref{cond:jumpcond}.  
Then any $x^*,y^*\in\shock{[\zeta,\eta]}$ have a common ancestor 
$z^*\in\ans{[\zeta,\eta]}{x^*}\cap\ans{[\zeta,\eta]}{y^*}$. 
\item\label{thm:shocks.f} Suppose  $\zeta\prec\eta$ are such that $]\zeta,\eta[\,\cap\aUset\ne\varnothing$.  Then  for any $z\in\Z^2$ there is a coordinatewise strictly ordered infinite sequence $z<z_1^*<z_2^*< \dotsm<z_n^*<\dotsm$   such that each $z_n^*$ is a   branch point in $\shockG{[\zeta,\eta]}$.  
There are also  infinitely many coalescence points in $\shockG{[\zeta,\eta]}$. 
\item\label{thm:shocks.g} If the jump process condition \eqref{cond:jumpcond} holds and $\xi\in\aUset$, then  for any $z\in\Z^2$ there is a coordinatewise strictly ordered infinite sequence $z<z_1^*<z_2^*< \dotsm<z_n^*<\dotsm$   such that each $z_n^*$ is a   branch point in $\shockG{\xi}$. 
%\timonote{Should this somehow be combined with \eqref{thm:shocks.e}?} 
If additionally the no bi-infinite geodesics condition \eqref{cond:biinf} holds, then there are infinitely many coalescence points in $\shockG{\xi}$.
%This is because if say $\xi\in[\zeta,\eta]\cap\supp{\mu_{x+e_1,x+e_2}}$, then there are infinitely many points where the $\xi\pm$ geodesics split.)}
%\timonote{Should we include the injective mapping from branch points to coalescence points? Or should this actually go into \eqref{thm:shocks.d}?} 
\end{enumerate}
\end{theorem}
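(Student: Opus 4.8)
\emph{Setup and the branch-point characterization \eqref{thm:shocks.c}.} All statements are verified on a single full-probability event carrying the conclusions of Theorem~\ref{thm:shock1}, Lemma~\ref{lem:shockequiv}, Theorems~\ref{thm:Vcid}, \ref{th:V1}, \ref{th:V2}, the covariance/cocycle/recovery/monotonicity properties \eqref{cov-prop}--\eqref{mono}, and the defining property \eqref{cid} of $\cid$; when \eqref{cond:jumpcond} or \eqref{cond:biinf} is in force we intersect this event with the corresponding full-probability event (for \eqref{cond:jumpcond}, that of Theorem~\ref{thm:+-coal}). Fix $x^*=x+\etstar$. By the part of Theorem~\ref{thm:shock1} stating that every edge of $\dS{[\zeta,\eta]}$ out of a point of $\shock{[\zeta,\eta]}$ is an edge of $\shockG{[\zeta,\eta]}$, and because a directed path of $\dS{[\zeta,\eta]}$ strictly lowers both coordinates, $x^*$ is an ancestor of $x^*-e_i$ precisely when the single edge $x^*\to x^*-e_i$ is present. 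Hence, if $x^*\in\shock{[\zeta,\eta]}$, then $x^*$ is a branch point iff it has out-degree two in $\dS{[\zeta,\eta]}=\dG{\zeta-}\cup\dG{\eta+}$. By the duality rule defining the graphs $\dG{\cdot}$ together with the path ordering \eqref{path-ordering}, the only configuration producing out-degree two is that $\geo{}{x}{\zeta-}$ takes the $e_2$-step and $\geo{}{x}{\eta+}$ takes the $e_1$-step out of $x$; by the local rule \eqref{d:bgeo} this reads $\B{\zeta-}(x+e_1,x+e_2)\le 0\le\B{\eta+}(x+e_1,x+e_2)$, which by \eqref{cid} is equivalent to $\zeta\preceq\cid(\T_x\w)\preceq\eta$. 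Finally, if $\cid(\T_x\w)\in[\zeta,\eta]$ then $\xi\mapsto\B{\xi}(x+e_1,x+e_2)$ changes sign at $\cid(\T_x\w)$ and so is not locally constant there, whence $\cid(\T_x\w)\in\supp\mu_{x+e_1,x+e_2}$ and $x^*\in\shock{[\zeta,\eta]}$; combined with the equivalence just proved, this yields \eqref{thm:shocks.c}.

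\emph{Common descendants \eqref{thm:shocks.d} and common ancestors \eqref{thm:shocks.e}.} In $\dG{\zeta-}$ every dual vertex has a \emph{unique} forward (southwest) successor, namely the child not blocked by the $\zeta-$ geodesic through it; and by the same part of Theorem~\ref{thm:shock1} the forward $\dG{\zeta-}$-orbit of an instability point stays inside $\shock{[\zeta,\eta]}$. Thus to produce a common descendant it suffices that the forward $\dG{\zeta-}$-orbits out of $x^*$ and $y^*$ eventually coincide. This is a planar coalescence statement: if two such orbits, ordered left to right, remained disjoint for all times, then the lattice sites trapped in the strip between them would have their $\zeta-$ geodesics confined to that strip, contradicting coalescence of the $\zeta-$ geodesic forest (Lemma~\ref{lem:shockequiv} being the bridge between membership in $\shock{[\zeta,\eta]}$ and disjointness of geodesics). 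For $\zeta\prec\eta$ the sandwiching argument can be run with a genuinely interior direction $\xi'\in\,]\zeta,\eta[\,$ rather than with $\zeta-$, which sidesteps any direction-dependent full-probability event and hence yields the conclusion for all such $\zeta\prec\eta$ simultaneously; for $\zeta=\eta=\xi$ the trapped-strip scenario instead manufactures a non-trivial bi-infinite geodesic, excluded by \eqref{cond:biinf}. Statement \eqref{thm:shocks.e} is the backward (northeast) counterpart: under \eqref{cond:jumpcond}, Theorem~\ref{thm:+-coal} gives that the $\zeta-$ and the $\eta+$ geodesic families each coalesce, and feeding this through the same planar correspondence read backward forces the backward dual trajectories of $x^*$ and $y^*$ to meet, giving a common ancestor.

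\emph{Infinitely many branch and coalescence points \eqref{thm:shocks.f} and \eqref{thm:shocks.g}.} For \eqref{thm:shocks.f}, $]\zeta,\eta[\,\cap\aUset\ne\varnothing$ together with $\aUset\subseteq\fUset$ (Theorem~\ref{th:V1}\eqref{th:V1.b}) gives $]\zeta,\eta[\,\cap\fUset\ne\varnothing$, so by the density of $\{\cid(\T_x\w):x\in\Z^2\}$ in $\fUset$ (same theorem) and the openness of $]\zeta,\eta[\,$ some site $x_1$ has $\cid(\T_{x_1}\w)\in\,]\zeta,\eta[\,$. Reducing to a countable dense family of pairs $\zeta\prec\eta$, the shift-invariant event $\{\exists x:\cid(\T_x\w)\in\,]\zeta,\eta[\,\}$ then has probability one, hence $\P(\cid(\w)\in\,]\zeta,\eta[\,)>0$, and the multiparameter ergodic theorem supplies infinitely many $w>z$ (coordinatewise) with $\cid(\T_w\w)\in\,]\zeta,\eta[\,$; by \eqref{thm:shocks.c} each $w+\etstar$ is a branch point of $\shockG{[\zeta,\eta]}$. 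An elementary extraction (Dickson's lemma) selects from these an increasing chain $z<z_1^*<z_2^*<\cdots$ of branch points. For the coalescence points, by \eqref{thm:shocks.d} the two forward orbits issuing from a branch point re-coalesce, and the first point at which the forward orbits coming from two distinct branch points of the chain meet is a coalescence point; unboundedness of the chain then yields infinitely many of these. For \eqref{thm:shocks.g} one works with the given $\xi\in\aUset$: under \eqref{cond:jumpcond}, Theorem~\ref{thm:Vcid}\eqref{thm:Vcid.2} and the contrapositive of Theorem~\ref{th:V2}\eqref{th:V2.c} give $\geo{}{x}{\xi-}\ne\geo{}{x}{\xi+}$ for \emph{every} $x$, so the split point $\spt{\xi}{x}$ is finite; at $u=\spt{\xi}{x}$ the geodesics $\geo{}{u}{\xi-}$ and $\geo{}{u}{\xi+}$ leave $u$ by different steps, so, exactly as in \eqref{thm:shocks.c}, $\cid(\T_u\w)=\xi$ and $u+\etstar$ is a branch point of $\shockG{\xi}$. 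Taking $x=z+n\et$ one has $\spt{\xi}{x}\ge x\ge z$ with level tending to $\infty$, giving infinitely many branch points above $z$ and hence an increasing chain; invoking additionally \eqref{cond:biinf} makes \eqref{thm:shocks.d} available for $\zeta=\eta=\xi$, and the same ``branch then re-coalesce'' argument produces infinitely many coalescence points of $\shockG{\xi}$.

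\emph{Main obstacle.} The heart of the argument is \eqref{thm:shocks.d}--\eqref{thm:shocks.e}: converting coalescence of semi-infinite geodesics into coalescence of the dual instability trajectories through a rigorous planarity argument, and --- most delicately --- pinning down exactly how the no bi-infinite geodesics hypothesis \eqref{cond:biinf} is needed in the single-direction case, where two forward dual trajectories must be kept from escaping to ``parallel infinities.'' A second, more bookkeeping, difficulty is organizing every conclusion on one full-probability event good for all $\zeta\preceq\eta$, which is why the strict case $\zeta\prec\eta$ is routed through an interior direction instead of a fixed-direction coalescence statement.
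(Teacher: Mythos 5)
Parts \eqref{thm:shocks.c} and \eqref{thm:shocks.g} (the branch-point construction half) track the paper's argument closely and are fine. Your proof of \eqref{thm:shocks.d}, however, is geometrically muddled: the forward $\dG{\zeta-}$ orbits out of $x^*,y^*$ run \emph{southwest}, whereas the $\zeta-$ geodesics of sites in the ``trapped strip'' run \emph{northeast}, so those geodesics are not confined to the strip --- they simply exit through the finite northeast boundary near $x^*,y^*$. The actual contradiction one needs is with Theorem \ref{thm1}\eqref{thm1:biinf} (only finitely many $\xi$-geodesics pass through any fixed site), not with coalescence of the $\zeta-$ forest as you assert. The paper does this cleanly by picking $\xi\in\Udense\cap[\zeta,\eta]$ and using Theorem \ref{thm1}\eqref{thm1:coal}--\eqref{thm1:biinf} to conclude that $\dG{\xi}$ is a single coalescing tree contained in $\dG{\cup[\zeta,\eta]}$; this also explains precisely why \eqref{cond:biinf} is the extra hypothesis in the $\zeta=\eta=\xi$ case.

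The real gap is in \eqref{thm:shocks.e}. You claim that feeding coalescence of the $\zeta-$ and $\eta+$ families ``through the same planar correspondence read backward forces the backward dual trajectories of $x^*$ and $y^*$ to meet.'' But unlike the forward direction, backward (northeast) dual trajectories are not unique --- a dual vertex can have in-degree $0$, $1$, or $2$, and the branching is exactly what the rest of the theorem is about --- so ``the backward dual trajectory'' is not well-defined, and nothing in your sketch explains how to choose trajectories that are guaranteed to merge. The paper instead builds the common ancestor explicitly: after the $\zeta-$ geodesics of $x^*+\exstar$, $y^*+\exstar$ coalesce and the $\eta+$ geodesics of $x^*-\exstar$, $y^*-\exstar$ coalesce, take $z$ on the coalesced $\zeta-$ geodesic above both coalescence levels; since $\geo{}{z}{\eta+}$ joins the disjoint $\eta+$ family, it must split off from $\geo{}{z}{\zeta-}$ at some $z'$, and $z'+\etstar$ is an ancestor of both $x^*$ and $y^*$ via Lemma \ref{lm:an1}. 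You need an argument of this kind. Two smaller issues: in \eqref{thm:shocks.f} the paper invokes Theorem \ref{thm:cif}\eqref{thm:cif.c} directly, valid for all $\zeta\prec\eta$ at once, whereas your ergodic-theorem route through a countable dense family of pairs does not obviously cover intervals whose intersection with $\aUset$ is a single isolated point of $\fUset$; and your claim that ``unboundedness of the chain yields infinitely many'' coalescence points is not a count --- the paper proves this via an explicit injection from branch points to coalescence points using the inner-path construction \eqref{aux783}.
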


\begin{remark}\label{rk:erg2}
If regularity condition \eqref{g-reg} holds, then part \eqref{thm:shocks.f} holds for $\zeta\prec\eta$ with $[\zeta,\eta]\cap\aUset\ne\varnothing$. %and $\w$ in the full measure set $\Omreg$, where $\Omreg$ was introduced in Lemma \ref{lm:B+>B-}. 
The proof of this is given right after that of Theorem \ref{thm:shocks} in Section \ref{shockgraphs:pfs}.
\end{remark}

Given that there are infinitely many instability points when instability points exist, it is natural to wonder what their density on the lattice is. We identify the following trichotomy.
%either $\xi\not\in\aUset$, or 
%$\xi \in \aUset \cap \Diff$ and the $\xi$-shock points have zero density, or $\xi \in(\ri\Uset)\setminus\Diff\subset\aUset$ and the $\xi$-shock points have positive density.

\begin{proposition} \label{prop:trichotomy}
Assume the regularity condition \eqref{g-reg}.
Then for $\bbP$-almost every $\w$ and all $\xi\in\ri\Uset$, exactly one of the following three scenarios happens: 
\begin{enumerate}[label={\rm(\alph*)}, ref={\rm\alph*}]
\item $\xi\not\in\aUset$ and hence there are no $\xi$-instability points.
\item $\xi\in\aUset\cap\Diff$ and there are infinitely many $\xi$-instability points but they have zero density.
\item $\xi\not\in\Diff$ and $\xi$-instability points have positive density.
\end{enumerate}
\end{proposition}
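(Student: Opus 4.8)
The plan is to translate instability into a statement about the measures $\mu_{x+e_1,x+e_2}$ and read off the density from an ergodic average. Recall that $x^\ast=x+\etstar$ is a $\xi$-instability point if and only if $\xi\in\supp\mu_{x+e_1,x+e_2}$, and by covariance \eqref{cov-prop} the random closed set $\supp\mu_{x+e_1,x+e_2}(\w)$ equals $\supp\mu_{e_1,e_2}(\T_x\w)$. Setting $h_\xi(\w)=\ind\{\xi\in\supp\mu_{e_1,e_2}(\w)\}$ --- a bounded measurable function, since $\{\xi\in\supp\mu_{e_1,e_2}\}=\bigcap_n\{\abs{\mu_{e_1,e_2}}(\,]\zeta_n,\eta_n[\,)>0\}$ for any rationals $\zeta_n\nearrow\xi$, $\eta_n\searrow\xi$ --- the number of $\xi$-instability points in $\Z^{2*}\cap([-N,N]^2+\etstar)$ equals $\sum_{x\in\Z^2\cap[-N,N]^2}h_\xi(\T_x\w)$. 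Using that $\T$ acts ergodically on $(\Omega,\sF,\P)$ (true in the canonical i.i.d.\ setting and, by construction, on the extended space of \cite{Jan-Ras-19-aop-}), the multiparameter ergodic theorem gives, for each \emph{fixed} $\xi$, that the density of $\xi$-instability points in $\Z^{2*}$ is a.s.\ equal to $\E[h_\xi]=\P(\xi\in\supp\mu_{e_1,e_2})$. Finally, since $\gpp$ restricted to the segment $\Uset$ is concave, $(\ri\Uset)\setminus\Diff$ is countable, and by Theorem~\ref{th:V1}\eqref{th:V1.a} each such direction lies in $\aUset$ with probability one; intersecting over this countable set, on a full-probability event the three scenarios are precisely $\xi\notin\aUset$, $\xi\in\aUset\cap\Diff$, $\xi\notin\Diff$, and these partition $\ri\Uset$. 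In scenario (a), $\supp\mu_{x+e_1,x+e_2}\subset\aUset$ forces the absence of $\xi$-instability points; in (b) and (c), $\xi\in\aUset$, so Theorem~\ref{thm:shock1} with $\zeta=\eta=\xi$ shows $\shockG{\xi}$ is an infinite graph, hence there are infinitely many $\xi$-instability points.

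For scenario (c), I would fix one of the countably many $\xi\notin\Diff$ and produce an atom. The map $\xi'\mapsto\B{\xi'}(e_1,e_2)=\B{\xi'}(0,e_2)-\B{\xi'}(0,e_1)$ is monotone by \eqref{mono}, so $\mu_{e_1,e_2}\ge0$ and $\mu_{e_1,e_2}(\{\xi\})=\B{\xi+}(e_1,e_2)-\B{\xi-}(e_1,e_2)\ge0$. Taking expectations and using \eqref{E[h(B)]},
\[
\E\bigl[\mu_{e_1,e_2}(\{\xi\})\bigr]=\bigl(\nabla\gpp(\xi+)-\nabla\gpp(\xi-)\bigr)\cdot(e_2-e_1),
\]
which is $\ge0$ by concavity of $\gpp$ along $\Uset$, and is strictly positive exactly when $\xi\notin\Diff$ (by Euler's relation $(\nabla\gpp(\xi+)-\nabla\gpp(\xi-))\cdot\xi=0$ together with $\xi\in\Diff\iff\nabla\gpp(\xi+)=\nabla\gpp(\xi-)$, recorded before \eqref{df:D}). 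A nonnegative random variable with positive mean is positive with positive probability, so $\P(\xi\in\supp\mu_{e_1,e_2})\ge\P(\mu_{e_1,e_2}(\{\xi\})>0)>0$; by the first paragraph the density of $\xi$-instability points is a.s.\ this positive number. Intersecting over the countably many $\xi\notin\Diff$ settles (c). (The analogous positive density along any fixed level line follows the same way from the one-dimensional ergodic theorem.)

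For scenario (b), the fixed-$\xi$ identity cannot be invoked directly, since the relevant directions --- those in $\aUset\cap\Diff$ --- need not form a countable set. I would argue uniformly. For each pair of rationals $\zeta\prec\eta$ in $\ri\Uset$, the ergodic theorem applied to $\w\mapsto\ind\{\abs{\mu_{e_1,e_2}}(\,]\zeta,\eta[\,)>0\}$ gives, a.s., that $(2N+1)^{-2}\#\{x\in\Z^2\cap[-N,N]^2:\abs{\mu_{x+e_1,x+e_2}}(\,]\zeta,\eta[\,)>0\}$ converges as $N\to\infty$ to $p(\zeta,\eta):=\P(\abs{\mu_{e_1,e_2}}(\,]\zeta,\eta[\,)>0)$; let $G$ be a full-probability event on which this holds for all rational $\zeta\prec\eta$. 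On $G$, given $\xi\in\Diff$ and $\e>0$, pick rationals $\zeta\prec\xi\prec\eta$ with $p(\zeta,\eta)<\e$; this is possible because the events $\{\abs{\mu_{e_1,e_2}}(\,]\zeta,\eta[\,)>0\}$ decrease, as $\zeta\nearrow\xi$ and $\eta\searrow\xi$, to $\{\xi\in\supp\mu_{e_1,e_2}\}$, so $p(\zeta,\eta)\downarrow\P(\xi\in\supp\mu_{e_1,e_2})$, which equals $0$ for $\xi\in\Diff$ since $\supp\mu_{e_1,e_2}\subset\aUset$ and $\P(\xi\in\aUset)=0$ (Theorem~\ref{th:V1}\eqref{th:V1.a}). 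Since $\xi\in\supp\mu_{x+e_1,x+e_2}$ implies $\abs{\mu_{x+e_1,x+e_2}}(\,]\zeta,\eta[\,)>0$, the number of $\xi$-instability points in $[-N,N]^2+\etstar$ is bounded by the count above, so its density is $\le p(\zeta,\eta)<\e$; letting $\e\downarrow0$ gives density $0$ for every $\xi\in\Diff$, which in particular handles (b) and is consistent with (a).

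\textbf{Main obstacle.} Everything apart from the last step --- the partition of cases, the atom computation in (c), and the infinitude of instability points --- is routine given Theorems~\ref{th:V1} and \ref{thm:shock1} and the cocycle relations. The nontrivial point is scenario (b): without the jump process condition there is no a priori control on the cardinality of $\aUset$, so the clean identity ``density $=\P(\xi\in\supp\mu_{e_1,e_2})$'' must be upgraded to an upper bound that holds simultaneously for the uncountably many $\xi\in\Diff$. The device of enclosing $\xi$ in rational intervals and using continuity from above of $\abs{\mu_{e_1,e_2}}$ is precisely what reduces this to countably many applications of the ergodic theorem.
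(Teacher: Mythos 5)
Your proposal is correct and follows essentially the same route as the paper's, which deduces the trichotomy from Lemma \ref{lm:den}: apply the ergodic theorem to a countable family of directions (or rational intervals), then extend to all $\xi$ by monotonicity of the indicator functions and continuity of the limiting densities at differentiability points. Where you differ is in how positivity of the density for $\xi\notin\Diff$ is established: the paper relies on Theorem~\ref{th:V1}\eqref{th:V1.a} (which in turn rests on Theorem~\ref{thm:cif}), whereas you give a direct and arguably cleaner computation $\E[\mu_{e_1,e_2}(\{\xi\})]=(\nabla\gpp(\xi+)-\nabla\gpp(\xi-))\cdot(e_2-e_1)$ and invoke Euler's relation; that is a nice shortcut and worth keeping. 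One small inaccuracy to flag: you justify ergodicity by saying it holds ``by construction, on the extended space of \cite{Jan-Ras-19-aop-}''. That construction only guarantees stationarity, not ergodicity; it is the regularity condition \eqref{g-reg} (assumed in the proposition) that makes the Busemann process a measurable function of the i.i.d.\ weights and hence ergodic, as explained in Remark~\ref{rk:ergodicity} --- the hypothesis you need is present, but the stated reason for ergodicity should be this one.
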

%(\ri\Uset)\setminus\Diff\subset\aUset\subset\ri\Uset

%\subsection{Mass transport along the shocks}
%
%\textcolor{red}{also, where does the picture with equivalence classes belong?}
We return to this question in Section \ref{sec:exp} in the solvable case of exponential weights, where we can say significantly more.

\subsection{Flow of Busemann measure} \label{sec:Busflow}
 This section views the instability graph $\shockG{[\zeta,\eta]}$ as a description of the south-west directed  flow of Busemann measure on the dual lattice. 
 As discussed in Section \ref{s:HJ}, we can think of the function $\B{\xi\sig}(x+e_1, x+e_2)$ %=\B{\xi\sig}(x+e_1, 0)-\B{\xi\sig}(x+e_2,0)$ 
  as a global solution of a discretized stochastic Burgers equation.  We can assign the value $\B{\xi\sig}(x+e_1, x+e_2)$ to the dual point $x^*=x+\etstar$ that represents the diagonal edge $(x+e_1, x+e_2)$.  Then the cocycle property \eqref{coc-prop} gives us a flow of Busemann measure along the south and west pointing edges of the dual lattice $\Z^{2*}$. 
 First decompose   the Busemann measure of the  edge $(x+e_1, x+e_2)$ as a sum  $\mu_{x+e_1, x+e_2}=\mu_{x+e_1, x}+\mu_{x,x+e_2}$ of two positive measures.  This is justified by  the cocycle property \eqref{coc-prop}. 
Then  stipulate that measure $\mu_{x+e_1, x}$ flows south from $x^*$ to $x^*-e_2$ and contributes to Busemann measure $\mu_{x-\ex, x}$,  while measure $\mu_{x,x+e_2}$ flows west from $x^*$ to $x^*-e_1$ and contributes to Busemann measure $\mu_{x, x+\ex}$.   See Figure \ref{fig:flow}.
%Again the cocycle property \eqref{coc-prop}  ensures that 

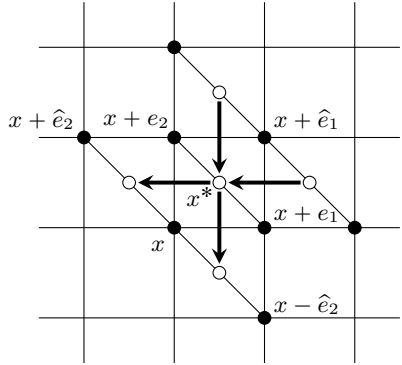
\begin{figure}[h]
 	\begin{center}
 		 \begin{tikzpicture}[scale=1.2,>=stealth]
			\draw(0,-.5)--(0,3.5);		 
			\draw(1,-.5)--(1,3.5);		 
			\draw(2,-.5)--(2,3.5);		 
			\draw(3,-.5)--(3,3.5);		 
			\draw(-.5,0)--(3.5,0);		 
			\draw(-.5,1)--(3.5,1);		 
			\draw(-.5,2)--(3.5,2);		 
			\draw(-.5,3)--(3.5,3);	
			\draw(0,2)--(2,0);
			\draw(1,2)--(2,1);
			\draw(1,3)--(3,1);
			\draw[line width=1.5pt,->](1.4,1.5)--(0.6,1.5);	 
			\draw[line width=1.5pt,->](1.5,1.4)--(1.5,0.6);	 
			\draw[line width=1.5pt,->](2.4,1.5)--(1.6,1.5);	 
			\draw[line width=1.5pt,->](1.5,2.4)--(1.5,1.6);	 
			\draw[fill=white](1.5,1.5) circle(2pt);
			\draw(1.55,1.35)node[left]{\small$x^*$};
			\draw[fill=white](0.5,1.5) circle(2pt);
			\draw[fill=white](1.5,0.5) circle(2pt);
			\draw[fill=white](2.5,1.5) circle(2pt);
			\draw[fill=white](1.5,2.5) circle(2pt);
			\draw[fill=black](1,1) circle(2pt);
			\draw(1,0.8)node[left]{\small$x$};
			\draw[fill=black](2,0) circle(2pt);
			\draw(2,.15)node[right]{\small$x-\ex$};
			\draw[fill=black](0,2) circle(2pt);
			\draw(0,2.2)node[left]{\small$x+\ex$};
			\draw[fill=black](1,2) circle(2pt);
			\draw(1.05,2.2)node[left]{\small$x+e_2$};
			\draw[fill=black](2,1) circle(2pt);
			\draw(2,1.15)node[right]{\small$x+e_1$};
			\draw[fill=black](2,2) circle(2pt);
			\draw(2,2.2)node[right]{\small$x+\et$};
			\draw[fill=black](3,1) circle(2pt);
			\draw[fill=black](1,3) circle(2pt);
		
		\end{tikzpicture}
 	 \end{center}
 	\caption{\small  The flow of Busemann measures follows the arrows.  The antidiagonal edge $(x+e_1, x+e_2)$  is identified with the dual point $x^*=x+\etstar$.   The Busemann measure  $\mu_{x+e_1,x+e_2}$ on this edge is composed of the mass flowing from the north and east, and it in turns divides its mass between the flows south and west.}
 \label{fig:flow}
 \end{figure}

The cocycle property also tells us that $\mu_{x+e_1, x+e_2}=\mu_{x+e_1, x+\et}+\mu_{x+\et,x+e_2}$.  This represents $\mu_{x+e_1, x+e_2}$ as the sum of the contributions it receives from the next level up: $\mu_{x+e_1, x+\et}$ comes from the east from dual vertex $x+e_1+\etstar$, while $\mu_{x+\et,x+e_2}$ comes from the north  from dual vertex $x+e_2+\etstar$. 

Now pick a pair of directions $\zeta\preceq\eta$ in $\ri\Uset$,  and consider the graph $\cB^*_{[\zeta, \eta]}$  on the dual lattice $\Z^{2*}$ obtained as follows.  Include vertex $x^*=x+\etstar$ if $[\zeta, \eta]\cap\supp\mu_{x+e_1, x+e_2}\ne\varnothing$.  For $i\in\{1,2\}$,  include dual edge $(x^*, x^*-e_i)$ if $[\zeta, \eta]$ intersects $\supp\mu_{x, x+e_{3-i}}$, or somewhat pictorially, if some of the support in $[\zeta, \eta]$ flows along the dual edge  $(x^*, x^*-e_i)$. \smallskip 

The results of this section hold $\P$-almost surely simultaneously for all $\zeta\preceq\eta$ in $\ri\Uset$, including for the case $\zeta=\eta=\xi$.

\begin{theorem}\label{th:flowS}
  The graphs $\cB^*_{[\zeta, \eta]}$ and $\shockG{[\zeta,\eta]}$ are the same.  
\end{theorem}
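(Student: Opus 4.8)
The plan is to reduce both graphs to one and the same explicitly described graph on the dual lattice. Throughout, fix the full--probability event on which \eqref{coc-prop}, \eqref{mono}, \eqref{rec-prop2}, the defining property \eqref{cid} of the competition interface direction (Theorem~\ref{thm:cif}), Theorem~\ref{thm:Vcid}, and the parenthetical remark following the definition of $\shockG{[\zeta,\eta]}$ (proved inside the proof of Theorem~\ref{thm:shock1}) all hold simultaneously over all directions and all lattice sites; everything below is a deterministic consequence, so the conclusion holds simultaneously for all $\zeta\preceq\eta$, including $\zeta=\eta=\xi$. First observe that the vertex sets already match: by definition both $\cB^*_{[\zeta,\eta]}$ and $\shockG{[\zeta,\eta]}$ have vertex set $\shock{[\zeta,\eta]}=\{x+\etstar:[\zeta,\eta]\cap\supp\mu_{x+e_1,x+e_2}\ne\varnothing\}$. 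So the work is to match the edges out of each such vertex $x^*=x+\etstar$.

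The key input is a decomposition/localization lemma for $\mu_{x+e_1,x+e_2}$. By the cocycle property \eqref{coc-prop}, $\mu_{x+e_1,x+e_2}=\mu_{x+e_1,x}+\mu_{x,x+e_2}$, and by the monotonicity \eqref{mono} the maps $\xi\mapsto\B{\xi\pm}(x,x+e_2)$ and $\xi\mapsto\B{\xi\pm}(x,x+e_1)$ are respectively nondecreasing and nonincreasing, so $\mu_{x,x+e_2}$ and $\mu_{x+e_1,x}=-\mu_{x,x+e_1}$ are positive measures and $\supp\mu_{x+e_1,x+e_2}=\supp\mu_{x+e_1,x}\cup\supp\mu_{x,x+e_2}$. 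Next, by \eqref{cid} shifted to $x$ via covariance, $\B{\xi+}(x+e_1,x+e_2)<0$ for $\xi\prec\cid(T_x\w)$, so by weights recovery \eqref{rec-prop2} we get $\B{\xi+}(x,x+e_2)=\w_x$ on the relatively open set $\{\xi\prec\cid(T_x\w)\}$; hence $\supp\mu_{x,x+e_2}\subseteq\{\xi\succeq\cid(T_x\w)\}$, and moreover $\cid(T_x\w)\in\supp\mu_{x,x+e_2}$ --- otherwise $\B{\xi+}(x,x+e_2)$ would equal $\w_x$ on a whole neighborhood of $\cid(T_x\w)$, and recovery would force $\B{\xi+}(x+e_1,x+e_2)\le0$ for $\xi$ just above $\cid(T_x\w)$, contradicting \eqref{cid} (alternatively, the membership $\cid(T_x\w)\in\supp\mu_{x,x+e_2}$ follows from Theorem~\ref{thm:Vcid}\eqref{thm:Vcid.1}, since $\supp\mu_{x,x+e_1}=\supp\mu_{x+e_1,x}$). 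Symmetrically $\supp\mu_{x+e_1,x}\subseteq\{\xi\preceq\cid(T_x\w)\}$ and $\cid(T_x\w)\in\supp\mu_{x+e_1,x}$.

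With this in hand I would match the edges. For $\cB^*_{[\zeta,\eta]}$: given a vertex $x^*\in\shock{[\zeta,\eta]}$, the edge $(x^*,x^*-e_1)$ is present iff $[\zeta,\eta]\cap\supp\mu_{x,x+e_2}\ne\varnothing$. By the lemma, if this holds then some point of the intersection lies in $[\cid(T_x\w),\eta]$, so $\eta\succeq\cid(T_x\w)$; conversely if $\eta\succeq\cid(T_x\w)$ then either $\zeta\preceq\cid(T_x\w)$, whence $\cid(T_x\w)\in[\zeta,\eta]\cap\supp\mu_{x,x+e_2}$, or $\zeta\succ\cid(T_x\w)$, whence $[\zeta,\eta]$ is disjoint from $\supp\mu_{x+e_1,x}$ and so $\varnothing\ne[\zeta,\eta]\cap\supp\mu_{x+e_1,x+e_2}=[\zeta,\eta]\cap\supp\mu_{x,x+e_2}$. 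Thus $(x^*,x^*-e_1)\in\cB^*_{[\zeta,\eta]}$ iff $\eta\succeq\cid(T_x\w)$, and symmetrically $(x^*,x^*-e_2)\in\cB^*_{[\zeta,\eta]}$ iff $\zeta\preceq\cid(T_x\w)$. For $\shockG{[\zeta,\eta]}$: by the cited remark, the edges out of $x^*$ are exactly the edges of $\dS{[\zeta,\eta]}=\dG{\zeta-}\cup\dG{\eta+}$ out of $x^*$; using the primal--dual correspondence and the monotonicity \eqref{mono} recorded in Section~\ref{sec:web}, $x^*$ points to $x^*-e_1$ in $\dS{[\zeta,\eta]}$ iff $x$ points to $x+e_1$ in $\G{\eta+}$, which by the local rule \eqref{d:bgeo} (ties go to $e_1$ for the $+$ geodesic) is iff $\B{\eta+}(x,x+e_1)\le\B{\eta+}(x,x+e_2)$, i.e. iff $\B{\eta+}(x+e_1,x+e_2)\ge0$, and by \eqref{cid} this is iff $\eta\succeq\cid(T_x\w)$; likewise $x^*$ points to $x^*-e_2$ in $\dS{[\zeta,\eta]}$ iff $\B{\zeta-}(x+e_1,x+e_2)\le0$ iff $\zeta\preceq\cid(T_x\w)$. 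Hence the edges out of $x^*$ agree in the two graphs, and since the vertex sets agree this gives $\cB^*_{[\zeta,\eta]}=\shockG{[\zeta,\eta]}$. (If one prefers to avoid citing the remark, one can instead check directly that each $\cB^*$-edge lands in $\shock{[\zeta,\eta]}$: e.g. $\mu_{x,x+e_2}$ is a positive summand of $\mu_{x,x+\ex}=\mu_{x^*-e_1\,\mathrm{edge}}$ by \eqref{coc-prop}, so $[\zeta,\eta]\cap\supp\mu_{x,x+e_2}\ne\varnothing$ forces $x^*-e_1\in\shock{[\zeta,\eta]}$, and symmetrically for $e_2$.)

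The main obstacle is the decomposition/localization lemma of the second paragraph: identifying $\supp\mu_{x,x+e_2}$ and $\supp\mu_{x+e_1,x}$ as the right and left pieces of $\supp\mu_{x+e_1,x+e_2}$, meeting exactly at $\cid(T_x\w)$. The containment of the two supports in the half-lines $\{\xi\succeq\cid(T_x\w)\}$ and $\{\xi\preceq\cid(T_x\w)\}$ is the heart of the matter and comes from combining weights recovery \eqref{rec-prop2} with the sign characterization \eqref{cid}; the extra fact that $\cid(T_x\w)$ actually belongs to both supports is what makes the degenerate endpoint cases ($\zeta=\cid(T_x\w)$, $\eta=\cid(T_x\w)$, and $\zeta=\eta=\xi$) come out correctly, since otherwise the single--point interval $[\zeta,\eta]$ touching $\cid(T_x\w)$ could fail to meet the relevant support. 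Once this lemma is secured, the remainder is the routine bookkeeping of the third paragraph.
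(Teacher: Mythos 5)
Your proof is correct, and it takes a genuinely different route from the paper's. The paper proves Theorem~\ref{th:flowS} in one line by deferring to Lemma~\ref{lm:cross}, which it then proves through the geodesic picture: the forward direction uses Theorem~\ref{thm:nonint} (membership in $\supp\mu_{x,x+e_1}$ forces the $\xi-$ geodesic out of $x$ to step to $x+e_2$), and the backward direction uses geodesic ordering \eqref{path-ordering} together with Proposition~\ref{pr:supp1} to conclude non-coalescence and hence nonempty support. You instead characterize the edges out of a vertex $x^*$ in \emph{both} graphs by one and the same condition on the location of $\cid(T_x\w)$ relative to $[\zeta,\eta]$: the edge $(x^*,x^*-e_1)$ is present iff $\eta\succeq\cid(T_x\w)$, the edge $(x^*,x^*-e_2)$ iff $\zeta\preceq\cid(T_x\w)$. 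On the $\cB^*$ side this is driven by the localization $\supp\mu_{x,x+e_2}\subset\{\xi\succeq\cid(T_x\w)\}$, $\supp\mu_{x+e_1,x}\subset\{\xi\preceq\cid(T_x\w)\}$, with $\cid(T_x\w)$ in both supports, which you derive from recovery \eqref{rec-prop2}, the sign characterization \eqref{cid}, and Theorem~\ref{thm:Vcid}\eqref{thm:Vcid.1}; on the $\shockG$ side it is the tie-breaking rule in \eqref{d:bgeo} together with the one-sided limits \eqref{Busemann-limits} (which give $\B{\cid(T_x\w)+}(x+e_1,x+e_2)\ge 0\ge\B{\cid(T_x\w)-}(x+e_1,x+e_2)$). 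The net effect is that you bypass both the disjoint-geodesics machinery of Theorem~\ref{thm:nonint} and the coalescence dictionary of Proposition~\ref{pr:supp1}, working entirely at the level of the Busemann process and its sign structure; this is cleaner and arguably more elementary for the theorem at hand. What it forgoes is that Lemma~\ref{lm:cross} is a useful standalone statement the paper reuses (e.g.\ just after Proposition~\ref{lm:K9}), whereas your argument does not isolate that equivalence as a reusable lemma---though in fact it proves it en route, since both sides of Lemma~\ref{lm:cross} are shown equivalent to the same $\cid$-comparison. Your optional bypass of the parenthetical remark (that $\dS{[\zeta,\eta]}$-edges out of $\shock{[\zeta,\eta]}$ stay in $\shock{[\zeta,\eta]}$) via the positive-measure decomposition of $\mu_{x,x+\ex}$ is also correct and makes the argument self-contained.
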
 

Under  the jump condition \eqref{cond:jumpcond}, a closed set cannot intersect the support without actually having nonzero measure.  Thus under \eqref{cond:jumpcond},   Theorem \ref{th:flowS} tells us that $\shockG{[\zeta,\eta]}$ is precisely the graph along which positive Busemann measure in the interval $[\zeta, \eta]$  flows. 

 \medskip
 
 Next we describe the ``islands'' on $\Z^2$ carved out by the paths of the graph $\shockG{[\zeta, \eta]}$  (islands surrounded by red paths in Figures \ref{fig:shocks2a} and \ref{fig:shocks2b}).   These islands are trees, they are the connected components of an intersection of geodesic graphs, and they are the equivalence classes of an equivalence  relation defined in terms of the supports of Busemann measures. 
 
Define the graph $\G{\cap[\zeta, \eta]}=\bigcap_{\xi\in[\zeta, \eta]} (\G{\xi-}\cap\G{\xi+})$ on the  vertex set  $\Z^2$ by keeping only those edges that lie in each geodesic graph $\G{\xi\sig}$ as $\xi$ varies over $[\zeta, \eta]$ and $\sigg$ over $\{-,+\}$.  
%Equivalently,  $\G{\cap[\zeta, \eta]}=\G{\zeta-}\cap\G{\eta+}$ in terms of the two extreme ones. 
Also, directly from the definitions  follows that an edge of $\Z^2$ lies in $\G{\cap[\zeta, \eta]}$ if and only if the dual edge it crosses does not lie in the graph $\dS{[\zeta, \eta]}$ introduced in Section \ref{sec:web}.   Since each $\G{\xi\sig}$ is a forest, $\G{\cap[\zeta, \eta]}$ is a forest, that is, a union of disjoint trees.  
%  A priori these trees can be finite or infinite.  

Define an equivalence relation $\zesim$ on $\Z^2$  by   $x\zesim y$ if and only if  $\supp{\mu_{x,y}}\cap[\zeta, \eta]=\varnothing$. It is an equivalence relation because $\mu_{x,x}$ is the identically zero measure, and  $\B{\xi\sig}_{x,z}= \B{\xi\sig}_{x,y}+\B{\xi\sig}_{y,z}$  implies that $\abs{\mu_{x,z}} \le \abs{\mu_{x,y}} + \abs{\mu_{y,z}}$.     In terms of coalescence,  $x\zesim y$ if and only if the coalescence points  $\coal{\xi\sig}(x,y)$ remain  constant in $\Z^2$ as  $\xi$ varies across $[\zeta, \eta]$ and $\sigg$ over $\{-,+\}$.  (This follows from   Propositions \ref{pr:supp1}  and  \ref{pr:supp2} proved below.)    As usual, replace $\zesim$ with $\xisim$ when $[\zeta, \eta]=[\xi, \xi]$. 

\begin{proposition}\label{lm:K9} 
The equivalence classes of the relation $\zesim$ are exactly the connected components {\rm(}subtrees{\rm)} of $\G{\cap[\zeta, \eta]}$. 
\end{proposition}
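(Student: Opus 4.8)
The plan is to establish the equivalence of the two partitions by showing that the equivalence relation $\zesim$ coincides with the relation "$x$ and $y$ lie in the same connected component of $\G{\cap[\zeta,\eta]}$." I would argue this through a local characterization of the edges of $\G{\cap[\zeta,\eta]}$ in terms of the supports of Busemann measures on nearest-neighbor edges, and then bootstrap from the local to the global statement using the cocycle/subadditivity property of the measures. The key bridge is the observation, already recorded in Section \ref{sec:Busflow}, that an edge $(x,x+e_i)$ of $\Z^2$ lies in $\G{\cap[\zeta,\eta]}$ if and only if the dual edge it crosses is \emph{not} in $\dS{[\zeta,\eta]}$, equivalently (unwinding the definition of $\dS{[\zeta,\eta]}$ via the monotonicity \eqref{mono}) if and only if $[\zeta,\eta]\cap\supp\mu_{x,x+e_i}=\varnothing$ --- i.e. $x\zesim x+e_i$.

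First I would prove the forward inclusion: if $x$ and $y$ are in the same component of $\G{\cap[\zeta,\eta]}$, then $x\zesim y$. Take a path $x=v_0,v_1,\dots,v_n=y$ in $\G{\cap[\zeta,\eta]}$ (ignoring edge orientation). By the local characterization above, each consecutive pair satisfies $\supp\mu_{v_j,v_{j+1}}\cap[\zeta,\eta]=\varnothing$, i.e. $v_j\zesim v_{j+1}$. Since $\zesim$ is an equivalence relation (transitivity follows from $\abs{\mu_{x,z}}\le\abs{\mu_{x,y}}+\abs{\mu_{y,z}}$, which forces $\supp\mu_{x,z}\subseteq\supp\mu_{x,y}\cup\supp\mu_{y,z}$, as noted in the text), we conclude $x\zesim y$. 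This direction is essentially bookkeeping once the local characterization is in hand.

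The converse --- if $x\zesim y$ then $x,y$ lie in the same component of $\G{\cap[\zeta,\eta]}$ --- is the step I expect to be the main obstacle, since it requires producing an explicit connecting path inside $\G{\cap[\zeta,\eta]}$ from the hypothesis that a certain measure vanishes on $[\zeta,\eta]$. The natural route is via Theorem \ref{thm:1path}: the hypothesis $\supp\mu_{x,y}\cap[\zeta,\eta]=\varnothing$ is not literally $\abs{\mu_{x,y}}(\,]\zeta,\eta[\,)=0$, but by shrinking to $[\zeta',\eta']\subset\,]\zeta,\eta[\,$ and then using right/left continuity and closedness of supports (the Remark before Theorem \ref{thm:shock1}), one reduces to an interior statement and applies Theorem \ref{thm:1path}\eqref{thm:1path.b}. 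That theorem gives a common point $z$ with path segments $\pi_{k,m}$ from $x$ and $\wt\pi_{\ell,m}$ from $y$ such that $\geo{}{x}{\xi\sig}$ follows $\pi$ and $\geo{}{y}{\xi\sig}$ follows $\wt\pi$ for all $\xi\in\,]\zeta,\eta[$ and both signs. By the local characterization, every edge of $\pi$ and of $\wt\pi$ lies in $\G{\cap[\zeta,\eta]}$ (since along such an edge $\geo{n+1}{v}{\xi\sig}=v+e_i$ for all $\xi$ in the open interval, and by continuity \eqref{pmgeolim} also at the endpoints, hence the Busemann measure of that edge is zero on $[\zeta,\eta]$). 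Thus $x$ and $y$ are joined to the common vertex $z$ by paths in $\G{\cap[\zeta,\eta]}$, so they lie in the same component. Combining the two inclusions yields the claim. I would also remark at the end that the statement about coalescence points being constant across $[\zeta,\eta]$, quoted in the text, is an immediate corollary of this together with the equivalence in Theorem \ref{thm:1path}\eqref{thm:1path.b} (via uniqueness of finite geodesics).
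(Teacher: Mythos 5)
Your overall architecture is close to the paper's (the paper also combines Proposition \ref{pr:supp1}/Proposition \ref{pr:supp2} with the closed-support/compactness trick and the tree-path picture), and the two directions of your argument are essentially sound. But one of your claimed equivalences is false as stated, and you should be careful because you phrase it as the ``key bridge.''

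Specifically, the second ``iff'' in your local characterization is wrong: it is \emph{not} true that $[\zeta,\eta]\cap\supp\mu_{x,x+e_i}=\varnothing$ implies $(x,x+e_i)\in\G{\cap[\zeta,\eta]}$. Take $i=1$ and a site $x$ with $\cid(T_x\w)\succ\eta$, and suppose $\B{\xi\sig}(x,x+e_1)$ is constant on a neighborhood of $[\zeta,\eta]$ (so $\supp\mu_{x,x+e_1}\cap[\zeta,\eta]=\varnothing$). By the characterization \eqref{cid} and recovery \eqref{rec-prop2}, $\B{\xi\sig}(x,x+e_2)=\w_x<\B{\xi\sig}(x,x+e_1)$ for every $\xi\in[\zeta,\eta]$, so the geodesic out of $x$ takes the $e_2$-step in every $\G{\xi\sig}$ and the edge $(x,x+e_1)$ is \emph{not} in $\G{\cap[\zeta,\eta]}$. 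The correct statement is Lemma \ref{lm:cross}, which characterizes $[\zeta,\eta]\cap\supp\mu_{x,x+e_i}\neq\varnothing$ by membership of the dual edge in $\shockG{[\zeta,\eta]}$, not in the larger graph $\dS{[\zeta,\eta]}$; these two do differ. Fortunately, you only ever use the correct direction (edge in $\G{\cap[\zeta,\eta]}$ implies $x\zesim x+e_i$), both in the forward inclusion via transitivity and in the converse where the detour ``hence the Busemann measure of that edge is zero'' is superfluous --- being in $\G{\cap[\zeta,\eta]}$ already \emph{is} the statement that all $\geo{}{v}{\xi\sig}$ use that edge, which is what you need. So the proof survives, but the iff should be replaced by the one-sided implication, and the converse's parenthetical trimmed so it no longer appears to invoke the false direction. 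Finally, the phrase ``shrinking to $[\zeta',\eta']\subset\,]\zeta,\eta[\,$'' is reversed: closedness of the support and compactness of $[\zeta,\eta]$ let you \emph{enlarge} to an open interval $]\zeta',\eta'[\,\supset[\zeta,\eta]$ with $\abs{\mu_{x,y}}(\,]\zeta',\eta'[\,)=0$; that is what Theorem \ref{thm:1path} needs so that the resulting common path segments lie in $\G{\cap[\zeta,\eta]}$ after taking $\xi\to\zeta,\eta$ limits.
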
 

Lemma \ref{lm:cross} proved below shows that nearest-neighbor points of $\Z^2$ are in distinct $\zesim$ equivalence classes if and only if the edge between them is bisected by an edge of the instability graph $\shockG{[\zeta,\eta]}$.   Together with Proposition \ref{lm:K9} this tells us that the paths of $\shockG{[\zeta, \eta]}$ are precisely the boundaries that separate distinct connected components of $\G{\cap[\zeta, \eta]}$ and the equivalence classes of $\zesim$.

The next two lemmas indicate how 
the structure of the   subtrees  of $\G{\cap[\zeta, \eta]}$ is constrained by the fact that they are  intersections of geodesic trees.     These properties are clearly visible in  the bottom pictures of Figure \ref{fig:shocks2b} where these  subtrees     are the blue/green trees in  the  islands separated by red paths.

\begin{lemma}\label{lm:K10}   Let  $\cK$ be a subtree of  $\G{\cap[\zeta, \eta]}$ and let  $x$ and  $y$ be two distinct vertices of $\cK$.   Assume that  neither strictly dominates the other in the coordinatewise ordering, that is, both coordinatewise strict  inequalities $x<y$ and $y<x$ fail.  Then    the  entire rectangle  $\lzb x\wedge y, x\vee y\rzb$  is a subset of the vertex set of $\cK$.  
\end{lemma}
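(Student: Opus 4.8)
\emph{Proof sketch.} The plan is to trap every vertex of the rectangle $R:=\lzb x\wedge y,\,x\vee y\rzb$ inside a ``funnel'' cut out by the Busemann geodesics emanating from $x$ and from $y$, and then to exclude any ``splitting'' behaviour inside that funnel using Lemma~\ref{lem:shockequiv}. Relabelling if necessary, assume $x\cdot e_1\le y\cdot e_1$ and $x\cdot e_2\ge y\cdot e_2$, with at least one inequality strict (as $x\ne y$); thus $x$ is the upper-left and $y$ the lower-right corner of $R$, whose other two corners are $x\wedge y$ and $x\vee y$. Two preliminary observations. (i) From the description $\dS{[\zeta,\eta]}=\dG{\zeta-}\cup\dG{\eta+}$ and the edge-duality, an edge of $\Z^2$ lies in $\G{\cap[\zeta,\eta]}$ precisely when it lies in both $\G{\zeta-}$ and $\G{\eta+}$; hence at each $u$ the out-degree in $\G{\cap[\zeta,\eta]}$ is $1$ unless $\geo{m+1}{u}{\zeta-}\ne\geo{m+1}{u}{\eta+}$ ($m=u\cdot\et$), in which case call $u$ \emph{splitting} and note, using \eqref{path-ordering}, that then $\geo{m+1}{u}{\zeta-}=u+e_2$ and $\geo{m+1}{u}{\eta+}=u+e_1$. (ii) Since $x$ and $y$ are vertices of the same tree $\cK$, following out-edges of $\G{\cap[\zeta,\eta]}$ from $x$ and from $y$ gives two directed paths that meet at $w:=\coal{\zeta-}(x,y)$; these are portions of $\geo{}{x}{\zeta-}$ and of $\geo{}{y}{\zeta-}$ from their starting points to $w$, all of whose edges lie in $\G{\cap[\zeta,\eta]}$, and along which $\geo{}{x}{\zeta-}=\geo{}{x}{\eta+}$ and $\geo{}{y}{\zeta-}=\geo{}{y}{\eta+}$ (so also $w=\coal{\eta+}(x,y)$). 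Finally, $w\ge x\vee y$ coordinatewise, hence $w\ge z$ for every $z\in R$.

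Fix $z\in R$. Since $\G{\zeta-}$ is a forest, two of its geodesics either eventually coincide or stay disjoint, and disjoint up-right paths preserve their left-to-right order; combining this with the elementary bound that a geodesic out of a vertex $v$ has $e_1$-coordinate between $v\cdot e_1$ and $v\cdot e_1+(\ell-v\cdot\et)$ at level $\ell$, and with the fact that the coordinates of $z$ are sandwiched between those of $x$ and of $y$, one gets that $\geo{}{z}{\zeta-}$ lies weakly to the right of $\geo{}{x}{\zeta-}$ at every level $\ge\max(x\cdot\et,z\cdot\et)$ and weakly to the left of $\geo{}{y}{\zeta-}$ at every level $\ge\max(y\cdot\et,z\cdot\et)$. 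Evaluating at level $w\cdot\et$, where $\geo{}{x}{\zeta-}$ and $\geo{}{y}{\zeta-}$ both equal $w$, forces $\geo{}{z}{\zeta-}$ to pass through $w$. The same argument with $\eta+$ in place of $\zeta-$ (legitimate since by (ii) the two boundary geodesics coincide with the $\zeta-$ ones up to $w$) shows that $\geo{}{z}{\eta+}$ also passes through $w$.

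Suppose, for contradiction, that $\geo{}{z}{\zeta-}$ meets a splitting vertex $u_0$ at a level strictly below $w\cdot\et$. Then $\geo{}{z}{\zeta-}$ and $\geo{}{z}{\eta+}$ agree up to $u_0$ and then separate, so $u_0+e_2$ lies on $\geo{}{z}{\zeta-}$ and $u_0+e_1$ lies on $\geo{}{z}{\eta+}$; being distinct they lie strictly below level $w\cdot\et$, so the tails $\geo{}{u_0+e_2}{\zeta-}$ and $\geo{}{u_0+e_1}{\eta+}$ both pass through $w$, whence $\geo{}{u_0+e_2}{\zeta-}\cap\geo{}{u_0+e_1}{\eta+}\ne\varnothing$. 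On the other hand, ``$u_0$ splitting'' unpacks via \eqref{d:bgeo} and the cocycle property \eqref{coc-prop} to $\B{\zeta-}(u_0+e_1,u_0+e_2)\le 0\le\B{\eta+}(u_0+e_1,u_0+e_2)$. The function $\xi\mapsto\B{\xi\pm}(u_0+e_1,u_0+e_2)$ is non-decreasing by \eqref{mono}, so $\mu_{u_0+e_1,u_0+e_2}$ is a non-negative measure with mass $\B{\eta+}(u_0+e_1,u_0+e_2)-\B{\zeta-}(u_0+e_1,u_0+e_2)\ge 0$ on $[\zeta,\eta]$; if this mass is $0$ the function is identically $0$ on $[\zeta,\eta]$, which by the sign-change property \eqref{cid} (applied at $u_0$) forces $\zeta=\eta=\cid(T_{u_0}\w)$, and $\cid(T_{u_0}\w)$ still lies in $\supp\mu_{u_0+e_1,u_0+e_2}$ because that function is strictly negative just to its left and strictly positive just to its right. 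Either way $[\zeta,\eta]\cap\supp\mu_{u_0+e_1,u_0+e_2}\ne\varnothing$, i.e.\ $u_0+\etstar\in\shock{[\zeta,\eta]}$, so Lemma~\ref{lem:shockequiv} gives $\geo{}{u_0+e_2}{\zeta-}\cap\geo{}{u_0+e_1}{\eta+}=\varnothing$, contradicting the previous sentence.

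Therefore $\geo{}{z}{\zeta-}$ meets no splitting vertex before level $w\cdot\et$, so $\geo{}{z}{\zeta-}=\geo{}{z}{\eta+}$ on the segment from $z$ to $w$, and every edge of that segment lies in $\G{\cap[\zeta,\eta]}$; by (ii) the same is true of the segment of $\geo{}{x}{\zeta-}$ from $x$ to $w$. Both segments contain $w$, so $z$ and $x$ lie in the same connected component of $\G{\cap[\zeta,\eta]}$, namely $\cK$. As $z\in R$ was arbitrary, $R$ is contained in the vertex set of $\cK$. The one genuinely delicate step is the contradiction in the third paragraph, i.e.\ ruling out splitting vertices on $\geo{}{z}{\zeta-}$ below the coalescence point $w$; the rest is a soft funnel/trapping argument.
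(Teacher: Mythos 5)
Your argument is correct, and it reaches the conclusion via essentially the same high-level idea as the paper: the two boundary Busemann-geodesic paths and their coalescence point trap the geodesics emanating from any point of the rectangle, which must therefore pass through that coalescence point, after which one concludes that the relevant path segments lie in $\G{\cap[\zeta,\eta]}$. The paper takes a shorter route at the last step: once $\geo{}{u}{\xi\sig}$ is known to pass through the coalescence point $z$ for all $\xi\in[\zeta,\eta]$ and $\sigg\in\{-,+\}$, uniqueness of finite geodesics instantly forces all those finite segments to coincide, so the common path lies in $\G{\cap[\zeta,\eta]}$ and is connected to $\cK$ at $z$. You instead restrict to the two extreme geodesics $\geo{}{z}{\zeta-}$, $\geo{}{z}{\eta+}$ (which is legitimate by your observation (i) that $\G{\cap[\zeta,\eta]}=\G{\zeta-}\cap\G{\eta+}$, itself a consequence of \eqref{path-ordering}), and then rule out splitting vertices by a contradiction through Lemma~\ref{lem:shockequiv}. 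This is a valid but unnecessarily heavy detour: having established that both $\geo{}{z}{\zeta-}$ and $\geo{}{z}{\eta+}$ pass through $w$, uniqueness of finite geodesics between $z$ and $w$ already tells you they coincide on that segment, so the entire splitting-vertex / shock-graph contradiction, including the Busemann-measure positivity discussion for the degenerate case $\zeta=\eta$, can be deleted wholesale. Relatedly, even if one wanted to invoke Lemma~\ref{lem:shockequiv}, the fact that $u_0+\etstar\in\shock{[\zeta,\eta]}$ follows in one line from $\B{\zeta-}(u_0+e_1,u_0+e_2)\le 0\le\B{\eta+}(u_0+e_1,u_0+e_2)$, which places $\cid(T_{u_0}\w)\in[\zeta,\eta]$ by \eqref{cid}, and then Theorem~\ref{thm:Vcid}\eqref{thm:Vcid.1}; the excursion into whether the measure has positive or zero mass is not needed. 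One small expository point to fix: when you suppose $\geo{}{z}{\zeta-}$ meets ``a'' splitting vertex $u_0$ and then assert that $\geo{}{z}{\zeta-}$ and $\geo{}{z}{\eta+}$ agree up to $u_0$, you should say explicitly that $u_0$ is the \emph{first} splitting vertex encountered; as written the assertion does not follow for an arbitrary one. None of these affect correctness, but streamlining the third paragraph to a single appeal to uniqueness of finite geodesics would bring your proof into close alignment with the paper's, which is both shorter and conceptually cleaner.
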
 

In particular, if for some integers $\{t,k,\ell\}$, level-$t$ lattice points $(k, t-k)$ and $(\ell, t-\ell)$ are vertices of a subtree $\cK$, the entire discrete interval $\{(i, t-i): i\in\lzb k,\ell\rzb\}$ is a subset of the vertex set of $\cK$.  Similarly, points on horizontal and vertical line segments between vertices of a subtree $\cK$ are again vertices of $\cK$.

\begin{lemma} \label{lm:K12}   Let  $\cK$ be a subtree of  $\G{\cap[\zeta, \eta]}$.  
There is at most one vertex  $x$ in $\cK$ such that $\{x- e_1, x- e_2\}\cap\cK=\varnothing$.  Such a point $x$ exists if and only if $\inf\{ t\in\Z: \cK\cap\level_t\ne\varnothing\}>-\infty$.  In that case   $\cK$ lies in $\{y: y\ge x\}$. 
%$\{y:  y\cdot( e_1+ e_2)> x\cdot( e_1+ e_2)\}$ 
\end{lemma}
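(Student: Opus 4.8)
Write $|v|=v\cdot\et$ for the level of $v\in\Z^2$, and set
\[
t_0=\inf\{t\in\Z:\cK\cap\level_t\ne\varnothing\}\in\Z\cup\{-\infty\}.
\]
The plan is to prove the single sharpened statement from which all three assertions follow: a vertex $x\in\cK$ satisfies $\{x-e_1,x-e_2\}\cap\cK=\varnothing$ if and only if $t_0>-\infty$ and $x$ is the unique vertex of $\cK$ on the level $\level_{t_0}$, and in that case $x$ is the coordinatewise minimum of $\cK$.

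First I would record that \emph{when $t_0>-\infty$, $\cK$ has a single vertex on $\level_{t_0}$}: if $u\prec v$ were two of them, then $u\wedge v$ would sit at level $|u\wedge v|<t_0$ (since $u\cdot e_2>v\cdot e_2$), yet $u\wedge v\in\cK$ by Lemma \ref{lm:K10} (neither of $u,v$ strictly dominates the other), contradicting the definition of $t_0$. Denote this vertex by $x_0$ when it exists.

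The core step is the claim that if $x\in\cK$ satisfies $\{x-e_1,x-e_2\}\cap\cK=\varnothing$, then $|x|=t_0$. Suppose not: then $\cK$ contains a vertex of level $<|x|$, and since $\cK$ is a connected tree and the level changes by exactly $\pm1$ along each edge of a path in $\cK$, there is a vertex $x'\in\cK$ with $|x'|=|x|-1$. On $\level_{|x|-1}$ the two points whose first coordinates are $x\cdot e_1-1$ and $x\cdot e_1$ are exactly $x-e_1$ and $x-e_2$; since neither of these lies in $\cK$, we must have $x'\cdot e_1\le x\cdot e_1-2$ or $x'\cdot e_1\ge x\cdot e_1+1$. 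In the first case $x'\cdot e_1<x\cdot e_1$ while $x'\cdot e_2>x\cdot e_2$, so neither of $x,x'$ strictly dominates the other; Lemma \ref{lm:K10} then gives $\lzb x\wedge x',\,x\vee x'\rzb\subseteq\cK$, and a direct check of coordinates shows $x-e_1$ lies in this rectangle — a contradiction. The case $x'\cdot e_1\ge x\cdot e_1+1$ is symmetric and produces $x-e_2\in\cK$. This proves the claim; in particular $t_0>-\infty$, and by the previous paragraph $x=x_0$.

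It then remains to check two points. When $t_0>-\infty$, the vertex $x_0$ itself satisfies $\{x_0-e_1,x_0-e_2\}\cap\cK=\varnothing$, since $x_0-e_1$ and $x_0-e_2$ lie on $\level_{t_0-1}$, below the minimum level of $\cK$; this gives existence. And $\cK\subseteq\{y:y\ge x_0\}$: if some $y\in\cK$ had $y\cdot e_1<x_0\cdot e_1$, then $|y|\ge t_0$ would force $y\cdot e_2>x_0\cdot e_2$, and Lemma \ref{lm:K10} would put $y\wedge x_0$, a point of level $<t_0$, into $\cK$ — impossible; hence $y\cdot e_1\ge x_0\cdot e_1$, and symmetrically $y\cdot e_2\ge x_0\cdot e_2$. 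Assembling the three steps yields the lemma. The one delicate point is the core claim: the auxiliary vertex $x'$ must be taken exactly one level below $x$, so that $x-e_1$ and $x-e_2$ are the only obstructing points on $\level_{|x|-1}$ and are forced into the rectangle furnished by Lemma \ref{lm:K10}; a vertex of $\cK$ at an arbitrary lower level would not do.
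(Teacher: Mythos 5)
Your proof is correct and rests on the same key lemma (Lemma \ref{lm:K10}) as the paper, but the reduction is organized differently. The paper's proof takes any vertex $y\in\cK$ with $y\not\ge x$ and, in the case $y<x$ coordinatewise, follows the geodesic paths of $\cK$ from $y$ up to level $x\cdot\et$ to obtain a point $y'\ne x$ on that level, to which Lemma \ref{lm:K10} is then applied. You instead prove the sharper structural statement that a vertex of $\cK$ with no $-e_i$-neighbor must sit on the minimum level $t_0$: the discrete intermediate-value observation produces a vertex $x'$ of $\cK$ at level $x\cdot\et-1$ (necessarily distinct from $x-e_1$ and $x-e_2$), and Lemma \ref{lm:K10} applied to $(x,x')$ gives the contradiction. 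Your version avoids explicit reference to the geodesic structure — only connectivity and the $\pm1$ level change along each edge of $\cK$ are used — and it yields uniqueness and the containment $\cK\subseteq\{y:y\ge x\}$ cleanly as corollaries of the minimum-level characterization, at the modest extra cost of the preliminary observation that $\cK\cap\level_{t_0}$ is a singleton (itself a one-line application of Lemma \ref{lm:K10}). Both routes are of comparable length and depth.
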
 

Note that Lemma \ref{lm:K12} does not say that a subtree has a single leaf.  Both $x$ and $x-e_i$ can be leaves of a subtree when the edge $(x-e_i, x)$ is not present in $\G{\cap[\zeta, \eta]}$.

\medskip

%Under the jump condition \eqref{cond:jumpcond} we can 
For the remainder of this section assume the jump condition \eqref{cond:jumpcond},   in order to 
give a sharper description of the subtrees of $\G{\cap[\zeta, \eta]}$.  
 Let $\cifset_{[\zeta, \eta]}=\{z\in\Z^2: \cid(T_z\w)\in{[\zeta, \eta]}\}$.  By Theorem \ref{thm:shocks}\eqref{thm:shocks.c}, $z\in\cifset_{[\zeta, \eta]}$ if and only if $z+\etstar$ is a branch point of the instability graph $\shockG{[\zeta,\eta]}$.    It follows then that both $z\pm\exstar$ are also  $[\zeta,\eta]$-instability points.  
 
   Assume for the moment that $\cifset_{[\zeta, \eta]}\ne\varnothing$.  By Theorem \ref{th:V1}, under the jump condition \eqref{cond:jumpcond} this is equivalent to $[\zeta, \eta]\cap\,\aUset\ne\varnothing$. 
 
The graph $\G{\cap[\zeta, \eta]}$ has no outgoing up or right edges from a point  $z\in\cifset_{[\zeta, \eta]}$ because geodesics split: 
 $\geo{}{z}{\cid(T_z\w)-}$ and   $\{\geo{}{z}{\xi\pm}:\zeta\preceq \xi\prec\cid(T_z\w)\}$ take the $ e_2$-step at $z$, while  $\geo{}{z}{\cid(T_z\w)+}$ and  $\{\geo{}{z}{\xi\pm}:  \cid(T_z\w)\prec \xi\preceq\eta\}$ take the $ e_1$-step at $z$.   For each $z\in\cifset_{[\zeta, \eta]}$, let  the tree $\cK(z)$ consist of all directed  paths in $\G{\cap[\zeta, \eta]}$ that terminate at $z$. $\cK(z)$ can consist of $z$ alone.  
 
These  properties come  from previously established facts: 
 \begin{itemize} 
\item   Each $x\in\Z^2\setminus\cifset_{[\zeta, \eta]}$ lies in a unique $\cK(z)$ determined by  following  the common path  of the geodesics $\{\geo{}{x}{\xi\sig}:\xi\in[\zeta,\eta], \,\sigg\in\{-,+\}\}$ until  the first point $z$ at which a split happens.  A split must happen eventually  because for any $u\in\cifset_{[\zeta,\eta]}$ the two geodesics  $\geo{}{u}{\cid(T_u\w)\pm}$ separate immediately at $u$, while by Theorem  \ref{thm:+-coal}    the geodesic $\geo{}{x}{\cid(T_u\w)\sig}$  
 coalesces with $\geo{}{u}{\cid(T_u\w)\sig}$  for both $\sigg\in\{-,+\}$. 
 
 \item If $\zeta\prec\eta$ then each tree $\cK(z)$ is finite.  Same  holds also  for the case $\zeta=\eta=\xi$ under the no bi-infinite geodesics condition \eqref{cond:biinf}.   This follows from Theorem \ref{thm:shocks}\eqref{thm:shocks.d} because the $[\zeta,\eta]$-instability points   $z\pm\exstar$  that flank $z$ have a common descendant $u^*$  in the graph $\shockG{[\zeta,\eta]}$. The two directed paths  of $\shockG{[\zeta,\eta]}$ that connect $z+\etstar$ to $u^*$ surround $\cK(z)$.  
 
 \end{itemize}  
 
 The final theorem of this section decomposes $\G{\cap[\zeta, \eta]}$ into its connected components. 

\begin{theorem}\label{thm:jumpG}  Assume the jump condition \eqref{cond:jumpcond}. 
\begin{enumerate}[label={\rm(\alph*)}, ref={\rm\alph*}] \itemsep=3pt 
\item\label{thm:jumpG.a} $\G{\cap[\zeta, \eta]}$ is a single tree if and only if $[\zeta, \eta]\cap\,\aUset=\varnothing$. 
\item\label{thm:jumpG.b} If $[\zeta, \eta]\cap\,\aUset\ne\varnothing$, 
the  connected components of $\G{\cap[\zeta, \eta]}$ are the trees  
$\{\cK(z):  z\in\cifset_{[\zeta, \eta]}\}$. 
\end{enumerate}
\end{theorem}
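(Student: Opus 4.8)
The plan is to reduce both parts to bookkeeping already in place: the identification (Proposition~\ref{lm:K9}) of the connected components of $\G{\cap[\zeta, \eta]}$ with the equivalence classes of $\zesim$, the definition \eqref{df:aUset} of $\aUset$, and the facts established just above the theorem — namely that each $x\in\Z^2\setminus\cifset_{[\zeta, \eta]}$ lies in a unique $\cK(z)$, that each $\cK(z)$ is the union of the directed paths of $\G{\cap[\zeta, \eta]}$ terminating at $z$, and that $\G{\cap[\zeta, \eta]}$ has no outgoing edge from any $z\in\cifset_{[\zeta, \eta]}$. Throughout, $\G{\cap[\zeta, \eta]}$ is a forest on vertex set $\Z^2$.

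For part~\eqref{thm:jumpG.a}: $\G{\cap[\zeta, \eta]}$ is a single tree iff it has exactly one connected component, iff (Proposition~\ref{lm:K9}) the relation $\zesim$ has a single class, iff $\supp\mu_{x,y}\cap[\zeta, \eta]=\varnothing$ for all $x,y\in\Z^2$, iff (by \eqref{df:aUset}) $[\zeta, \eta]\cap\aUset=\varnothing$.

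For part~\eqref{thm:jumpG.b}: assume $[\zeta, \eta]\cap\aUset\ne\varnothing$, so $\cifset_{[\zeta, \eta]}\ne\varnothing$ by Theorem~\ref{th:V1} under the jump condition \eqref{cond:jumpcond}. First I would check that $\{\cK(z): z\in\cifset_{[\zeta, \eta]}\}$ is a partition of $\Z^2$: every $x\notin\cifset_{[\zeta, \eta]}$ lies in a unique $\cK(z)$; every $z\in\cifset_{[\zeta, \eta]}$ lies in $\cK(z)$; and since $z$ has no outgoing edge in $\G{\cap[\zeta, \eta]}$ it can occur on a directed path of $\G{\cap[\zeta, \eta]}$ only as the terminal vertex, so $z\notin\cK(z')$ for $z'\ne z$ (which also makes $z\mapsto\cK(z)$ injective and shows $z$ is the unique point of $\cifset_{[\zeta, \eta]}$ in $\cK(z)$). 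Next I would show each $\cK(z)$ is a connected component. Connectedness is immediate since every directed path defining $\cK(z)$ ends at $z$, and then $\cK(z)$, as a connected subgraph of a forest, is a tree. To see $\cK(z)$ is a union of connected components it suffices that no edge of $\G{\cap[\zeta, \eta]}$ has exactly one endpoint in $\cK(z)$: if $(a,a+e_i)$ is an edge of $\G{\cap[\zeta, \eta]}$ with $a+e_i\in\cK(z)$, prepending it to the directed path from $a+e_i$ to $z$ gives $a\in\cK(z)$; and if instead $a\in\cK(z)$ with $(a,a+e_i)\in\G{\cap[\zeta, \eta]}$, then $a\ne z$ (as $z$ has no outgoing edge), hence $a\notin\cifset_{[\zeta, \eta]}$, so $a$ has exactly one outgoing edge in $\G{\cap[\zeta, \eta]}$, which must be $(a,a+e_i)$ and is the first step of the nontrivial directed path witnessing $a\in\cK(z)$, whence $a+e_i\in\cK(z)$. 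Combining these, each $\cK(z)$ is a single connected component, and since the $\cK(z)$ exhaust $\Z^2$ they are exactly the connected components.

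The one substantive input beyond assembling earlier results is the claim used above that every $a\notin\cifset_{[\zeta, \eta]}$ has exactly one outgoing edge in $\G{\cap[\zeta, \eta]}$, and this is where the geometry enters. An edge $(a,a+e_i)$ lies in $\G{\cap[\zeta, \eta]}$ iff $\geo{}{a}{\xi\sig}$ takes the $e_i$-step at $a$ for every $(\xi,\sigg)\in[\zeta, \eta]\times\{-,+\}$, so $a$ has at most one such edge. By the monotonicity \eqref{mono}, $\B{\xi\sig}(a,a+e_1)-\B{\xi\sig}(a,a+e_2)$ is monotone as $(\xi,\sigg)$ traverses $[\zeta, \eta]\times\{-,+\}$ in the order compatible with \eqref{mono}, so the $e_1$-versus-$e_2$ decision of $\geo{}{a}{\xi\sig}$ at $a$ changes at most once; by the characterization \eqref{cid} of the competition interface direction it does not change within $[\zeta, \eta]$ unless $\cid(T_a\w)\in[\zeta, \eta]$, i.e.\ unless $a\in\cifset_{[\zeta, \eta]}$ (equivalently, by Theorem~\ref{thm:shocks}\eqref{thm:shocks.c}, unless $a+\etstar$ is a branch point of $\shockG{[\zeta,\eta]}$). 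Hence for $a\notin\cifset_{[\zeta, \eta]}$ all the geodesics $\geo{}{a}{\xi\sig}$ take the same step at $a$, giving the single common outgoing edge. I expect this monotonicity-plus-competition-interface bookkeeping to be the only real (and still minor) obstacle; the rest is packaging of the facts recalled in the first paragraph.
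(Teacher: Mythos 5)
Your proof is correct. For part \eqref{thm:jumpG.a}, you take a genuinely different route from the paper: you reduce the claim directly to Proposition~\ref{lm:K9} (equivalence classes of $\zesim$ $=$ connected components of $\G{\cap[\zeta, \eta]}$) plus the definition \eqref{df:aUset} of $\aUset$. The paper instead argues the forward implication by showing the graphs $\G{\xi\sig}$ all coincide on $[\zeta,\eta]$ when $[\zeta,\eta]\cap\aUset=\varnothing$ and then invokes Theorem~\ref{thm:+-coal} to get a tree, and the converse via disjoint geodesics through Theorem~\ref{thm:nonint}. Your reduction is shorter and more modular; it also makes visible that part \eqref{thm:jumpG.a} does not itself lean on the jump condition (the hypothesis enters only through the construction of the $\cK(z)$ used in part \eqref{thm:jumpG.b}). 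For part \eqref{thm:jumpG.b}, your argument is essentially the paper's: show $\{\cK(z)\}$ partitions $\Z^2$ and that no edge of $\G{\cap[\zeta,\eta]}$ straddles two classes. You have merely made explicit the step the paper states compactly — that an $a\notin\cifset_{[\zeta,\eta]}$ has a unique outgoing edge in $\G{\cap[\zeta,\eta]}$, which forces the edge out of $a$ to be the first step of the defining path of $\cK(z)$. Your monotonicity-plus-\eqref{cid} justification of that claim is the same observation the paper records in the discussion immediately preceding the theorem, so nothing is missing.
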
 %\medskip

We finish by reminding the reader that all the hypotheses and hence all the conclusions hold in  the case of i.i.d.\ exponential weights. 
The results of Section \ref{sec:Busflow}  are proved in Section  \ref{sec:Busflowpf}.
%Section \ref{sec:LPPDS}  are proved in Section  \ref{sec:webpf}.

% !TEX root = GeoWebPaper.tex

\section{Statistics of instability points in the exponential model}\label{sec:exp}  
Under condition \eqref{exp-assump}, i.e.~when the weights are exponentially distributed, we derive explicit statistics of the instability graphs. 
For $\xi\in\ri\Uset$, $k\in\Z$, and $\sigg\in\{-,+\}$, abbreviate $\depa^{\xi\sig}_k=\B{\xi\sig}\bigl(ke_1,(k+1)e_1\bigr)$ and write $\depa^\xi_k$ when there is no $\pm$ distinction.
For  $\zeta\preceq\eta$ in $\ri\Uset$ 
let 
\[ \dotsm<\tau^{\zeta, \eta}(-1)<0\le \tau^{\zeta, \eta}(0)<\tau^{\zeta, \eta}(1)<\dotsm\]  be the ordered  indices such that 
\be\label{78-45} 
\depa^{\zeta-}_k>\depa^{\eta+}_k\quad\text{if and only if}\quad k\in\{\tau^{\zeta, \eta}(i):i\in\Z\}.   
\ee
If $\depa^{\zeta-}_k>\depa^{\eta+}_k$ happens for only finitely many indices $k$, then some $\tau^{\zeta, \eta}(i)$ are set equal to $-\infty$ or $\infty$. 

By Theorem \ref{th:exp1}, under condition \eqref{exp-assump}, \eqref{78-45}  is equivalent to 
\[\coal{\zeta-}(k e_1, (k+1) e_1) \neq \coal{\eta+}(k e_1,(k+1)e_1).\] 
It is worth keeping this geometric implication of \eqref{78-45} in mind in this section to provide some context for the results that follow. 

It will be convenient in what follows to parametrize directions in $\ri\Uset$ through the increasing  bijection
 \be\label{u-a}    \zeta=\zeta(\alpha)=\biggl(    \frac{\alpha^2}{(1-\alpha)^2+\alpha^2}\,,  \frac{(1-\alpha)^2}{(1-\alpha)^2+\alpha^2}\biggr)
 \ \Longleftrightarrow \ 
 \alpha= \alpha(\zeta)= \frac{\sqrt{\zeta\cdot e_1}}{\sqrt{\zeta\cdot e_1}+ \sqrt{1-\zeta\cdot e_1}}
  \ee
 between $\zeta\in\ri\Uset$ and $\alpha\in(0,1)$. Recall   the {\it Catalan numbers}   $C_n=\frac{1}{n+1}{2n\choose n}$ for $n\geq 0$.  By \eqref{B4} from Appendix \ref{a:bus},  the conditioning event  in the theorem below has probability $\P(\depa^\zeta_0>\depa^\eta_0)= \frac{\alpha(\eta)-\alpha(\zeta)}{\alpha(\eta)}$. 
Since $\zeta\prec\eta$ are fixed, with probability one no $\pm$ distinction appears   in  the Busemann functions. 
 
 \begin{theorem}\label{th:palm1} 
Assume \eqref{exp-assump}.
Fix $\zeta\prec\eta$ in $\ri\Uset$. 
Conditional on $\depa^\zeta_0>\depa^\eta_0$,  \[\bigl\{\tau^{\zeta, \eta}(i+1)-\tau^{\zeta, \eta}(i), \depa^\zeta_{\tau^{\zeta, \eta}(i)}-\depa^\eta_{\tau^{\zeta, \eta}(i)}: i\in\Z\bigr\}\]
is an  i.i.d.\ sequence with marginal distribution 
\be\label{78-90} \begin{aligned} 
&\P\bigl(\tau^{\zeta, \eta}(i+1)-\tau^{\zeta, \eta}(i)=n, \, \depa^\zeta_{\tau^{\zeta, \eta}(i)}-\depa^\eta_{\tau^{\zeta, \eta}(i)}>r\,\big\vert\,\depa^\zeta_0>\depa^\eta_0\,\bigr) \\
&\qquad \qquad 
= C_{n-1}\,  \frac{\alpha(\zeta)^{n-1}\alpha(\eta)^n}{\bigl(\alpha(\zeta)+\alpha(\eta)\bigr)^{2n-1}}\,e^{-\alpha(\zeta) r}
% \\  &\qquad\qquad = \frac1n\binom{2n-2}{n-1}\,  \frac{\alpha^{n-1}\beta^n}{(\beta+\alpha)^{2n-1}}e^{-\alpha r}
\,,\quad \forall \, i\in\Z, \, n\in\N,\, r\in\R_+. 
\end{aligned} \ee
\end{theorem}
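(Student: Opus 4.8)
The strategy is to work along the $e_1$-axis, read off a renewal (regeneration) structure from the explicit joint law of the two-direction Busemann increments in the exponential model, and then compute the gap law, where the Catalan numbers will appear. Along that axis I set $\alpha=\alpha(\zeta)$, $\beta=\alpha(\eta)$, so $\alpha<\beta$ by the increasing bijection \eqref{u-a}, write $\depa^\zeta_k,\depa^\eta_k$ for $\B{\zeta}(ke_1,(k+1)e_1),\B{\eta}(ke_1,(k+1)e_1)$ (a.s.\ no $\pm$ distinction occurs at the deterministic directions $\zeta,\eta$, simultaneously over all $k$), and put $D_k:=\depa^\zeta_k-\depa^\eta_k$, which is $\ge 0$ by \eqref{mono}. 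Then the index set in \eqref{78-45} is $R:=\{k\in\Z:D_k>0\}=\{\tau^{\zeta,\eta}(i):i\in\Z\}$ and the conditioning event is $\{0\in R\}$. The main input is the explicit stationary joint law of $\{(\depa^\zeta_k,\depa^\eta_k):k\in\Z\}$ recorded in Appendix \ref{a:bus}, from the same computation that yields \eqref{B4}: it realizes the pair process through an explicit stationary recursion --- equivalently, a stationary Markov chain in the index $k$ --- built from i.i.d.\ exponential weights and an independent auxiliary process, with i.i.d.\ $\mathrm{Exp}(\alpha)$ and i.i.d.\ $\mathrm{Exp}(\beta)$ marginals, and with $\P(0\in R)=\P(\depa^\zeta_0>\depa^\eta_0)=\tfrac{\beta-\alpha}{\beta}$; by stationarity this is the intensity of the point process $R$, and by ergodicity of the shift on the weights it is also the a.s.\ asymptotic density of $R$ on $\Z$.

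First I would prove a regeneration property: under the Palm measure $\P(\,\cdot\mid 0\in R)$ the chain restarts at every point of $R$, i.e.\ conditionally on its state at $\tau^{\zeta,\eta}(i)$ the segment indexed by $\{\tau^{\zeta,\eta}(i),\dots,\tau^{\zeta,\eta}(i+1)\}$ is independent of the past with a law not depending on $i$. This needs only the Markov property together with the memorylessness of the exponential ingredients: conditioning on $\{D_k>0\}$ does not consume the exponential that carries the excess $D_k$, so the chain, observed at a point of $R$, is refreshed. Consequently $\{(\tau^{\zeta,\eta}(i+1)-\tau^{\zeta,\eta}(i),\,D_{\tau^{\zeta,\eta}(i)}):i\in\Z\}$ is i.i.d.\ under the conditioning. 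Reading off the joint law once more, the conditional law of $D_k$ given $\{D_k>0\}$, and even given the full location set $R$, is $\mathrm{Exp}(\alpha)$; this simultaneously yields the factor $e^{-\alpha(\zeta)r}$ in \eqref{78-90} and the independence of the excess from the gap sequence.

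Next I would compute $p_n:=\P\bigl(\tau^{\zeta,\eta}(1)-\tau^{\zeta,\eta}(0)=n\mid 0\in R\bigr)$. From a regenerated state with $D_0>0$ this is the event $\{D_1=\dots=D_{n-1}=0,\ D_n>0\}$; expanding over the one-step transitions of the chain produces the first-step recursion $F(s)=qs+pF(s)^2$ for the generating function $F(s):=\sum_{n\ge1}p_ns^n$, with $p:=\tfrac{\alpha}{\alpha+\beta}$, $q:=\tfrac{\beta}{\alpha+\beta}=1-p$; solving gives the Catalan generating function $F(s)=\tfrac{1}{2p}\bigl(1-\sqrt{1-4pqs}\,\bigr)$, hence $p_n=q\,C_{n-1}(pq)^{n-1}=C_{n-1}\,\alpha(\zeta)^{n-1}\alpha(\eta)^{n}\big/(\alpha(\zeta)+\alpha(\eta))^{2n-1}$. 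More combinatorially, the configuration over the $n-1$ intermediate sites is a length-$2(n-1)$ Dyck-type excursion of a nearest-neighbour walk embedded in the chain's transitions, with up/down probabilities $p,q$: there are $C_{n-1}$ such excursions, each of weight $(pq)^{n-1}$, times the boundary weight $q$ for re-entering $R$ at step $n$. This embedded walk is symmetric exactly in the degenerate limit $\alpha(\zeta)=\alpha(\eta)$, and formally specializing to $p=q=\tfrac12$ turns $p_n$ into the first-return probability of simple symmetric random walk at time $2(n-1)$ --- the promised connection. As consistency checks, $F(1)=1$ and $F'(1)=\beta/(\beta-\alpha)$, so the mean gap is the reciprocal of the intensity $\tfrac{\beta-\alpha}{\beta}$; both reduce to $\sqrt{1-4pq}=(\beta-\alpha)/(\beta+\alpha)$ and agree with the first paragraph.

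Finally, the i.i.d.\ structure with gap/excess independence (second paragraph), the $\mathrm{Exp}(\alpha(\zeta))$ excess law (second paragraph) and the gap law (third paragraph) combine to give \eqref{78-90}. The main obstacle is the combinatorial content of the third paragraph: extracting, from the explicit joint Busemann law of Appendix \ref{a:bus}, the recursion $F=qs+pF^2$ --- equivalently, the embedded Dyck-path description with the correct per-step weight $pq=\alpha(\zeta)\alpha(\eta)\big/(\alpha(\zeta)+\alpha(\eta))^2$. The regeneration and the exponential excess should be routine consequences of the Markov/memoryless structure once that law is in hand, but verifying that the excursions away from $R$ are counted by Catalan numbers with these weights --- the step that makes the random-walk connection precise --- is where the real work lies.
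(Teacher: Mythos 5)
Your approach is the same in outline as the paper's --- work on the $e_1$-axis, invoke the explicit exponential joint law of $(\depa^\zeta_k,\depa^\eta_k)$ from Appendix \ref{a:bus}, read off a renewal structure, and identify the Catalan gap law --- but it diverges on the two technical steps, and it is precisely there that a real gap appears. The paper first passes from the queueing chain to the clean random-walk representation $D_k \overset{d}{=} W_k^+=\bigl(\inf_{n>k}S_n-S_k\bigr)^+$ (Corollary \ref{cor:B-q2}), then obtains both the i.i.d.\ structure and the Catalan marginal from the last-exit-time analysis of $S$ (Theorem \ref{th:78-70}, built on Proposition \ref{pr:exp9} and the quoted ladder law \eqref{expH1L1}). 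You instead claim a direct regeneration of the queueing chain at points of $R=\{k:D_k>0\}$ and derive the gap law from a first-step generating function recursion. The recursion $F=qs+pF^2$ is algebraically correct --- it is exactly the identity satisfied by the generating function of $p_n=C_{n-1}p^{n-1}q^n$ with $p=\alpha/(\alpha+\beta),\ q=\beta/(\alpha+\beta)$ --- and your sanity checks are right. But the probabilistic justification is where the work is, and it is not routine.

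The subtlety you gloss over is the \emph{direction} of the conditioning. In the random-walk picture, $\{D_k>0\}=\{W_k>0\}=\{S_k<\inf_{n>k}S_n\}$ is a future-measurable event: $k$ being a strict forward record minimum of $S$. Equivalently, in the queueing picture the sojourn process $J_k$ satisfies a recursion $J_k=f(J_{k+1},Y_k,I_k)$ running backward in the lattice index, and $\{D_0>0\}=\{I_0>J_1\}$ depends on the "future" $(J_m)_{m\ge 1}$. So "the chain, observed at a point of $R$, is refreshed" cannot be read off the forward Markov property together with memorylessness alone: conditioning on $\{0\in R\}$ tilts the entire forward half of the process. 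The paper handles this by a duality/time-reversal argument (Proposition \ref{pr:exp9} couples the last-exit epochs of $S$ with the strict ascending ladder epochs of the reversed walk $\tilde S_k=-S_{-k}$, and Theorem \ref{th:78-70} extends to two-sided i.i.d.\ via a further independence decoupling of $(S_n)_{n\ge 1}$ from $(S_n)_{n\le -1}$). This time-reversal step is exactly what turns your heuristic regeneration into a theorem, and without it the claim that "$\{(\tau^{\zeta,\eta}(i+1)-\tau^{\zeta,\eta}(i),D_{\tau^{\zeta,\eta}(i)}):i\in\Z\}$ is i.i.d.'' is unproved. Likewise, the ``embedded Dyck-path of a nearest-neighbour walk'' you gesture at is really the ladder-epoch structure of the two-sided exponential walk (the fact that, conditionally on the sign, each step is exponential with the appropriate rate, so a geometric number of sub-crossings appear), and making this precise is essentially re-deriving \eqref{expH1L1}, which the paper instead quotes from \cite{Fan-Sep-18-}. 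In short: right formulas, right intuition, but the regeneration and the recursion both require the last-exit/ladder duality machinery that you filed under "routine."
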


Abbreviate $\tau^\xi(i)=\tau^{\xi, \xi}(i)$. Our next goal is to describe the joint distribution of processes $(\{\tau^\xi(i):i\in\Z\}, \, \depa^{\xi-}_{\tau^{\xi}(i)}-\depa^{\xi+}_{\tau^{\xi}(i)})$ of locations and sizes of jumps in direction $\xi$, conditional on $\{\B{\xi-}_0 > \B{\xi+}_0\}$.   However, for a fixed $\xi$,    $\B{\xi+}=\B{\xi-}$ almost surely and so this conditioning has to be understood in the Palm sense.  This is natural for conditioning on a jump of a point process at a particular location. 

In the theorem below, Lebesgue measure on $\Uset$ refers to one-dimensional Lebesgue measure (length of a line segment). 
The Lebesgue-almost every qualifier is in the theorem because the Palm kernel is defined only up to Lebesgue-null sets of the points $\xi$.  We denote Palm conditioning with two vertical lines $||$ to distinguish it from ordinary conditioning.  The  definition of the Palm conditioning used in \eqref{rw113} below  appears in equation \eqref{eq:palmdef} at the end of Section \ref{sub:palm}. For references, see \cite{Kal-83, Kal-17}.

\begin{theorem}\label{th:B-palm1}  
Assume \eqref{exp-assump}.
For Lebesgue-almost every $\xi\in\ri\Uset$,  under the Palm kernel, conditional on $\B{\xi-}_0 > \B{\xi+}_0$,
 $\{\tau^\xi(i+1)-\tau^\xi(i), \depa^{\xi-}_{\tau^{\xi}(i)}-\depa^{\xi+}_{\tau^{\xi}(i)} : i\in\Z\}$ is an  i.i.d.\ sequence with marginal distribution 
\be\label{rw113} \begin{aligned} 
&\bbP\bigl\{\tau^\xi(i+1)-\tau^\xi(i) =n, \depa^{\xi-}_{\tau^{\xi}(i)}-\depa^{\xi+}_{\tau^{\xi}(i)}>r\,||\, \B{\xi-}_0 > \B{\xi+}_0\bigr\} \\
&\qquad\qquad\qquad
= C_{n-1}\,  \frac1{2^{2n-1}}\, e^{-\alpha(\xi)r},\qquad \forall\, i\in\Z, \, n\in\N,\, r\in\R_+.  
\end{aligned} \ee
\end{theorem}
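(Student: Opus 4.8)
The plan is to read Theorem~\ref{th:B-palm1} as the degenerate limit $\zeta\nearrow\xi,\ \eta\searrow\xi$ of the honest conditional laws of Theorem~\ref{th:palm1}, interpreted through the Palm definition \eqref{eq:palmdef}. The relevant point process is $N:=\supp\mu_{0,e_1}$, the set of jump directions of $\xi\mapsto\depa^\xi_0=\B{\xi}(0,e_1)$. Under \eqref{exp-assump}, condition \eqref{cond:jumpcond} holds and, by \cite[Theorem~3.4]{Fan-Sep-18-} and Section~\ref{sub:palm}, $N$ is a locally finite simple point process on $\ri\Uset$ whose intensity measure is absolutely continuous with a continuous density; the same holds for the box process $\supp\mu_{-Le_1,(L+1)e_1}=\bigcup_{|k|\le L}\supp\mu_{ke_1,(k+1)e_1}$. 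By the computation following \eqref{u-a}, $\bbP(N\cap[\zeta,\eta]\ne\varnothing)=\bbP(\depa^{\zeta}_0>\depa^{\eta}_0)=\frac{\alpha(\eta)-\alpha(\zeta)}{\alpha(\eta)}$, which is precisely the conditioning probability of Theorem~\ref{th:palm1} and is of order $\eta-\zeta$ as $\zeta,\eta\to\xi$.

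\textbf{Reducing the Palm kernel to a shrinking-interval limit.} Mark each $\nu\in N$ by the $e_1$-axis jump data at direction $\nu$, namely $\theta_\nu=\bigl(\{k\in\Z:\depa^{\nu-}_k>\depa^{\nu+}_k\},\,\{\depa^{\nu-}_k-\depa^{\nu+}_k\}_{k\in\Z}\bigr)$; the law of $\theta_\xi$ under $\bbP(\,\cdot\,||\,\B{\xi-}_0>\B{\xi+}_0)$ is exactly what \eqref{rw113} describes. By \eqref{eq:palmdef} and the Campbell--Mecke formula, for Lebesgue-a.e.\ $\xi$ and every bounded functional $h$ of $\theta$ depending only on the coordinates $|k|\le L$,
\begin{equation*}
\bbE\bigl[\,h(\theta)\,||\,\B{\xi-}_0>\B{\xi+}_0\,\bigr]=\lim_{\zeta\nearrow\xi,\ \eta\searrow\xi}\ \frac{1}{\bbP(N\cap[\zeta,\eta]\ne\varnothing)}\,\bbE\Bigl[\ \sum_{\nu\in N\cap[\zeta,\eta]}h(\theta_\nu)\ \Bigr].
\end{equation*}
Off an event of probability $o(\eta-\zeta)$ one has $\#(N\cap[\zeta,\eta])\le 1$, and on that event the mark $\theta_\nu$ at the unique point $\nu\in[\zeta,\eta]$ agrees, on the window $|k|\le L$, with the \emph{coarse-grained} configuration $\Theta_{[\zeta,\eta]}=\bigl(\{|k|\le L:\depa^{\zeta-}_k>\depa^{\eta+}_k\},\,\{\depa^{\zeta-}_k-\depa^{\eta+}_k\}\bigr)$ of Theorem~\ref{th:palm1}; a discrepancy on the window forces either some window edge to carry two jumps in $[\zeta,\eta]$ or two distinct jump directions to lie in $[\zeta,\eta]$ among the $2L+1$ window edges, and both events are contained in $\{\#(\supp\mu_{-Le_1,(L+1)e_1}\cap[\zeta,\eta])\ge 2\}$, of probability $O((\eta-\zeta)^2)$. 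Consequently the displayed limit equals $\lim_{\zeta\nearrow\xi,\ \eta\searrow\xi}\bbE\bigl[\,h(\Theta_{[\zeta,\eta]})\ \big|\ \depa^\zeta_0>\depa^\eta_0\,\bigr]$, which Theorem~\ref{th:palm1} evaluates explicitly.

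\textbf{The degenerate limit in Theorem~\ref{th:palm1}.} As $\zeta,\eta\to\xi$, continuity of $\alpha(\cdot)$ in \eqref{u-a} gives $\alpha(\zeta),\alpha(\eta)\to\alpha(\xi)$, so the i.i.d.\ marginal \eqref{78-90} converges coordinatewise to $C_{n-1}\,\frac{\alpha(\xi)^{2n-1}}{(2\alpha(\xi))^{2n-1}}\,e^{-\alpha(\xi)r}=\frac{C_{n-1}}{2^{2n-1}}\,e^{-\alpha(\xi)r}$, while the coarse jump sizes $\depa^{\zeta-}_k-\depa^{\eta+}_k$ converge to the true sizes $\depa^{\xi-}_k-\depa^{\xi+}_k$ by the one-sided limits \eqref{Busemann-limits}. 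Since a product (i.i.d.) law is stable under convergence of its single-coordinate marginal, and the law of $\theta$ is determined by its finite-window marginals, combining the last two displays yields \eqref{rw113}. (The weight $\frac{C_{n-1}}{2^{2n-1}}=\frac{1}{2n-1}\binom{2n}{n}2^{-2n}$ is the distribution of the first return time to the origin of simple symmetric random walk, which is the announced connection, and the factor $e^{-\alpha(\xi)r}$ records that $\B{\xi-}_0-\B{\xi+}_0$ is $\mathrm{Exp}(\alpha(\xi))$ under the Palm kernel.)

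\textbf{Where the difficulty lies.} The essential point is the coarse-versus-exact comparison in the reduction step: one must show that, conditionally on a Busemann jump at index $0$ lying in a tiny interval around $\xi$, with overwhelming probability no \emph{second, distinct} jump direction in that interval perturbs the jump pattern on a fixed window of $e_1$-axis edges. This is a pair-correlation estimate for the point process $\supp\mu_{-Le_1,(L+1)e_1}$, and it is exactly here that the explicit inhomogeneous-Poisson description of \cite[Theorem~3.4]{Fan-Sep-18-} (recalled in Section~\ref{sub:palm}) is needed; under only the abstract jump condition \eqref{cond:jumpcond} this reduction would be unavailable.
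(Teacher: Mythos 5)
Your proposed route is genuinely different from the paper's. The paper never attempts a direct coarse-versus-exact comparison of marks: instead it proves \eqref{rw113} through two one-sided arguments, an upper bound obtained from Kallenberg's shrinking-set theorem combined with the monotonicity $H^{\zeta,\eta}\subset H^{\zeta',\eta'}$ for $[\zeta',\eta']\subset[\zeta,\eta]$, and a lower bound obtained by disintegrating $\int_{]\zeta,\eta]}Q_{(0,\xi)}(H^\xi)\,\Bmm_0(d\xi)$ over a partition and estimating the Riemann sum with the perturbation Lemma \ref{lm:palm10}. The monotone structure of the events $H^{\zeta,\eta}$, not a pair-correlation bound, is what allows the squeeze.

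The gap in your argument is precisely the pair-correlation estimate you flag at the end, and you overstate what \cite[Theorem~3.4]{Fan-Sep-18-} provides. That result, as recalled in Section \ref{sub:palm}, describes the \emph{marginal} law of $(\Bppa_k,\Bpp_k)$ on a single horizontal edge $k$: it says each $\Bpp_k$ is marginally Poisson with intensity $\Bmm$. It does not give the joint second factorial moment of the superposition $\supp\mu_{-Le_1,(L+1)e_1}=\bigcup_{|k|\le L}\supp\mu_{ke_1,(k+1)e_1}$. For your reduction you need
\begin{equation*}
\bbP\bigl\{\#\bigl(\textstyle\bigcup_{|k|\le L}\supp\mu_{ke_1,(k+1)e_1}\cap[\zeta,\eta]\bigr)\ge 2\bigr\}=o(\eta-\zeta),
\end{equation*}
and this does not follow from the single-edge Poisson marginals: the diagonal contributions (two points of a single $\supp\mu_{ke_1,(k+1)e_1}$) are indeed $O((\eta-\zeta)^2)$ by the Poisson property, but for $j\ne k$ the cross term $\bbE\bigl[\#(\supp\mu_{je_1,(j+1)e_1}\cap[\zeta,\eta])\cdot\#(\supp\mu_{ke_1,(k+1)e_1}\cap[\zeta,\eta])\bigr]$ is only controlled, via Cauchy--Schwarz, as $O(\eta-\zeta)$, because the second moment of a Poisson count on a shrinking interval is of first order in the interval length. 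Subtracting the shared (diagonal) points, which contribute a single point to the union, does not obviously close this gap without a second factorial moment density bound for the joint process, and no such bound is cited or proved. Your outline would become a proof if you established that cross-edge pair-correlation density, but as written this is an unfilled hole; it is precisely the obstacle the paper's upper/lower bound strategy is designed to avoid.

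A secondary, smaller point: the shrinking-interval identity you write mixes the Campbell--Mecke normalization $\int_{[\zeta,\eta]}\Bmm_0(d\xi)$ with the probability $\bbP(N\cap[\zeta,\eta]\ne\varnothing)$. For the one-edge process $N=\supp\mu_{0,e_1}$ these agree to first order because that process \emph{is} Poisson, so this particular replacement is fine; but it is worth being explicit, since the two normalizations are not interchangeable for the multi-edge union whose pair correlation you would also need.
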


%\smallskip

Equation \eqref{rw113}  connects the Palm distribution of the locations of jumps of the Busemann process with the zero set of simple symmetric random  walk (SSRW). 
 Let $\RW_n$ denote a two-sided SSRW, that is, $\RW_0=0$ and $\RW_n-\RW_m=\sum_{i=m+1}^n \ZRW_i$ for all $m<n$ in $\Z$ where $\{\ZRW_i\}_{i\in\Z}$ are  i.i.d.\ with $P(\ZRW_i=\pm1) =1/2$.
Set $\rho_n = \one_{\{S_{2n} = 0\}}$ and let $\bfP$ be the  distribution of $\rho = \{\rho_n\}_{n\in\Z}$  on the sequence  space  $\{0,1\}^\Z$.  That is, $\bfP$ is the law of the zero set of simple symmetric random walk  sampled at even times.  The classical  inter-arrival distribution of this renewal process is (eqn.~III.3(3.7) on p.~78 of Feller \cite{Fel-68})
\be\label{rw117}  
\bfP( \rho_1=0, \dotsc, \rho_{n-1}=0, \rho_n=1) 
= C_{n-1}\,  \frac1{2^{2n-1}}.  \ee
Comparison of \eqref{rw113} and \eqref{rw117} reveals that for Lebesgue-almost every $\xi$, the Palm distribution of the locations of $\xi$-instability points on a line is the same as the law of the zero set of SSRW sampled at even times. 
(We record this fact precisely as Lemma \ref{lem:palmrw}.) The next result applies this to show that any translation invariant event which holds with probability one for the zero set of SSRW holds for all of the instability graphs simultaneously almost surely.

 \begin{theorem} \label{thm:ladder-xi}  
 Assume \eqref{exp-assump}.
 Suppose $A$ is a translation-invariant Borel subset of $\{0,1\}^\Z$ that  satisfies  $\bfP(A)=1$. 
Then 
\be\label{rw125}
\P\bigl\{ \forall\xi\in\aUset: \bigl( \one\{\B{\xi-}_\ell > \B{\xi+}_\ell\} : \ell \in \bbZ\bigr)\in A\bigr\}=1.
\ee
 \end{theorem}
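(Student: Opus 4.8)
The plan is to reduce Theorem \ref{thm:ladder-xi} to the single-direction Palm statement of Theorem \ref{th:B-palm1} combined with the fact that $\aUset$ is countable (under the exponential assumption), using a Fubini-type argument to pass from a Lebesgue-a.e.\ statement in $\xi$ to a statement holding simultaneously at every $\xi\in\aUset$. First I would record the elementary observation that, by Theorem \ref{th:V1}\eqref{th:V1.a} and the jump process condition, $\aUset=\{\cid(T_x\w):x\in\Z^2\}$ is a countable random set, and for each $\xi\in\aUset$ the sequence $\bigl(\one\{\B{\xi-}_\ell>\B{\xi+}_\ell\}:\ell\in\Z\bigr)$ is exactly the indicator sequence whose support is $\{\tau^\xi(i):i\in\Z\}$ — the locations of $\xi$-instability points on the $e_1$-axis. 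Theorem \ref{th:B-palm1} (equivalently Lemma \ref{lem:palmrw}, which I would cite) says that for Lebesgue-a.e.\ $\xi\in\ri\Uset$, under the Palm kernel conditioned on $\B{\xi-}_0>\B{\xi+}_0$, this sequence has the same law as the zero set of SSRW at even times, i.e.\ the law $\bfP$ on $\{0,1\}^\Z$.

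The core step is then a \emph{disintegration/exchange-of-order} argument. Since $A$ is translation invariant with $\bfP(A)=1$, for Lebesgue-a.e.\ $\xi$ the Palm-conditioned law of the instability sequence assigns $A$ probability one. I would invoke the structure of the Palm kernel as defined in \eqref{eq:palmdef}: for a nonnegative test function, integrating over $\xi$ against Lebesgue measure the Palm expectation of an event equals (up to the total intensity) the expectation under $\P$ of a sum over instability points. Concretely, consider the quantity
\[
\bbE\Bigl[\,\sum_{x\in\Z^2}\one\{\cid(T_x\w)\in\ri\Uset\}\,\one\{x^*=x+\etstar \text{ on the }e_1\text{-axis}\}\,\one\bigl\{(\one\{\B{\cid(T_x\w)-}_\ell>\B{\cid(T_x\w)+}_\ell\}:\ell)\notin A\bigr\}\Bigr],
\]
or a spatially averaged version thereof over a box; by the Palm formula this equals an integral over $\xi$ of the Palm probability that the instability sequence lies outside $A$, which is zero for Lebesgue-a.e.\ $\xi$ and hence integrates to zero. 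Because the summand is nonnegative, the sum itself is zero $\P$-a.s., which forces the instability sequence at \emph{every} $\xi\in\aUset$ intersecting the $e_1$-axis to lie in $A$. To handle \emph{all} $\xi\in\aUset$ (not merely those with an instability point on a fixed line): every $\xi\in\aUset$ has infinitely many instability points by Theorem \ref{thm:shock1}, but more simply, translation-invariance of $A$ lets me read off the sequence along any horizontal line $\{ke_1+me_2:k\in\Z\}$; applying the same Palm computation with the origin replaced by $me_2$ (using the shift-covariance \eqref{cov-prop} and $T$-invariance of $\P$) shows the sequence along line $m$ lies in $A$ for every $m$, and by translation-invariance of $A$ this is the same condition, so a single countable union over $m\in\Z$ of null sets suffices.

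A cleaner packaging of the same idea, which I would probably prefer to write up: fix a translation-invariant $A$ with $\bfP(A)=1$ and its complement $A^c$, a null translation-invariant set. Define the random set $N(\w)=\{\xi\in\aUset:(\one\{\B{\xi-}_\ell>\B{\xi+}_\ell\}:\ell\in\Z)\in A^c\}$. I want $\P(N\ne\varnothing)=0$. Since $\aUset=\{\cid(T_x\w):x\in\Z^2\}$, it suffices to show that for each fixed $x$, $\P(\cid(T_x\w)\in N)=0$, and by shift-invariance of $\P$ and the covariance relations it suffices to take $x=0$, i.e.\ to show $\P\bigl(\cid(\w)\in\aUset,\ (\one\{\B{\cid(\w)-}_\ell>\B{\cid(\w)+}_\ell\}:\ell)\in A^c\bigr)=0$. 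This is precisely a statement about the Palm distribution of the Busemann jump process at its jump point through the origin, and it follows by integrating the a.e.-in-$\xi$ conclusion of Theorem \ref{th:B-palm1}/Lemma \ref{lem:palmrw} against the (finite, by \eqref{B4}) intensity measure of $\xi$-jumps on the $e_1$-axis and using that $\bfP(A^c)=0$.

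The main obstacle I anticipate is \textbf{making the Palm-kernel bookkeeping rigorous}: Theorem \ref{th:B-palm1} gives information only for Lebesgue-a.e.\ $\xi$, so one must be careful that the "bad" set of directions never carries an actual instability point, which requires that the random measure $\xi\mapsto$ (counting measure of instability points at direction $\xi$) be absolutely continuous in a suitable sense relative to the Palm disintegration — this is exactly what the Poisson-process structure of $\supp\mu_{x,x+e_1}$ (from \cite[Theorem 3.4]{Fan-Sep-18-}, invoked in Section \ref{sub:palm}) provides, since a Poisson process a.s.\ places no point in any fixed Lebesgue-null set, and here the null set $\{$directions where the conclusion fails$\}$ is deterministic. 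The second, more routine, obstacle is translating between "instability points on the $e_1$-axis" and "instability points on an arbitrary horizontal line," which is handled by the shift-covariance \eqref{cov-prop} together with $T$-invariance of $\P$, plus the translation-invariance hypothesis on $A$; I would state this as a short lemma and not belabor it.
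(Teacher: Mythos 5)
Your overall strategy matches the paper's: use the Palm/Campbell disintegration to upgrade the Lebesgue-a.e.\ conclusion of Theorem~\ref{th:B-palm1} to a statement holding simultaneously at every direction actually charged by the instability point process. However, the execution as written has two genuine gaps.

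First, the ``cleaner packaging'' does not close. You reduce to showing $\P(\cid(\w)\in N(\w))=0$ and then assert this ``is precisely a statement about the Palm distribution\dots at its jump point through the origin,'' citing the intensity from \eqref{B4} as finite. Two problems: (i) the event $\{\cid(\w)=\xi\}$ requires $\xi$ to be a jump of the Busemann process on \emph{both} edges $(0,e_1)$ and $(0,e_2)$, which is strictly stronger than the Palm conditioning $\Bpp\{(0,\xi)\}=1$ underlying \eqref{eq:palmdef} and Lemma~\ref{lem:palmrw}; there is no Palm formula in the paper for the competition-interface point process, only for $\Bpp$, so the ``integrate the a.e.\ statement'' step is not justified without an additional argument linking the two. (ii) The intensity measure $\Bmm$ of $\Bpp_0$ given by \eqref{Bmm} is \emph{not} finite on $\ri\Uset$ (it diverges near $e_2$); equation \eqref{B4} is a probability, not an intensity, and does not rescue this. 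This non-finiteness is exactly why the paper restricts the Campbell integral to compact sets $[-N,N]\times[\zeta,\eta]$ before comparing with the total mass; your write-up omits this restriction, and without it the equality ``$\E\int f\,d\Bpp=\E\int g\,d\Bpp$'' cannot be used to force $f=g$ a.s.

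Second, your handling of ``all $\xi\in\aUset$'' is off. The theorem concerns the sequence $\depa^{\xi\sig}_\ell=\B{\xi\sig}(\ell e_1,(\ell+1)e_1)$ specifically on the $e_1$-axis. Translation-invariance of $A$ moves you along a fixed line; it does \emph{not} let you ``read off the sequence along any horizontal line'' and deduce the statement for the $e_1$-axis, and a union over lines does not produce the conclusion about line $0$. If some $\xi\in\aUset$ had no jump on the $e_1$-axis, the corresponding sequence in \eqref{rw125} would be identically zero, and the zero sequence need not be in $A$ (e.g.\ $A=\{$sequences with infinitely many ones$\}$). What is needed, and what the paper uses, is Lemma~\ref{lm:downright} applied to $x_i=ie_1$: every $\xi\in\aUset$ has at least one jump on the $e_1$-axis. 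Your ``main obstacle'' remark about Poisson processes avoiding a fixed null set is also not quite the crux --- it controls \emph{which} $\xi$ can be charged, but does not by itself relate the realized jump configuration to the Palm kernel; the finite Campbell-integral equality is what does that.
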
 
 
From \eqref{rw125} and known facts about random walk, we can derive corollaries. 
From \cite[equation (10.8)]{Rev-05}, we deduce that 
	\begin{align}
	\P\Bigl\{\forall\xi\in\aUset:\varlimsup_{n\to\infty}\frac{\sum_{i=0}^{n} \one\{\B{\xi-}_i > \B{\xi+}_i\}}{\sqrt{8n\log\log n}}=1\Bigr\}=1.
	\end{align}
From \cite[Theorem 11.1]{Rev-05} we also find that for a nonincreasing $\delta_n$,
	\begin{align}\label{red-LB}
	\P\Bigl\{\forall\xi\in\aUset:n^{-1/2}\sum_{i=0}^{n}\one\{\B{\xi-}_i > \B{\xi+}_i\}\ge\delta_n\quad\text{for all sufficiently large $n$}\Bigr\}=1 
	\end{align}
if $\sum_n\delta_n/n<\infty$ and 
	\begin{align}
	\P\Bigl\{\forall\xi\in\aUset:n^{-1/2}\sum_{i=0}^{n}\one\{\B{\xi-}_i > \B{\xi+}_i\}\le\delta_n\quad\text{infinitely often}\Bigr\}=1
	\end{align}
otherwise. 
Similar statements hold for the sums $\sum_{i=-n}^0$. This implies that for $\P$-almost every $\w$ and any $\xi\in\aUset$, the number of horizontal edges $(ke_1,(k+1)e_1)$ with $\xi\in\supp\mu_{ke_1,(k+1)e_1}$ and $-n\le k\le n$ is of order $n^{1/2}$. It suggests the number of such horizontal edges (and thus also vertical edges and $\xi$-instability points) in an $n\times n$ box should be of order $n^{3/2}$.  The next theorem gives an  upper bound.  The lower bound is left  for future work.

\begin{theorem}\label{thm:density-UB}
Assume \eqref{exp-assump} and fix $i\in\{1,2\}$.
Then for any $\zeta\in\ri\Uset$
		\begin{align*}%\label{claim2-UB}
		\P\Bigl\{\exists n_0:\forall \xi\in[\zeta,e_2[\,,\forall n\ge n_0:\sum_{x\in\lzb 0,n\rzb^2}\one\bigl\{\xi \in \supp\mu_{x,x+e_i}\bigr\}\le2n^{3/2}\sqrt{\log n}\Bigr\}=1.
		\end{align*}
The same holds when $\lzb0,n\rzb^2$ is replaced by any of $\lzb-n,0\rzb^2$, $\lzb0,n\rzb\times\lzb-n,0\rzb$, or $\lzb-n,0\rzb\times\lzb0,n\rzb$.
\end{theorem}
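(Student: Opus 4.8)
The plan is to upgrade the one-dimensional estimate \eqref{red-LB}-type control (via the random walk comparison Theorem \ref{thm:ladder-xi}) into a two-dimensional box count by summing over the $n+1$ horizontal (or vertical) lines that meet the box $[0,n]^2$, and then to remove the $\xi$-dependence by a monotonicity-in-$\xi$ argument so that a single direction near $e_2$ controls the whole interval $[\zeta,e_2[$. First I would fix $i$, say $i=1$, and for each integer level $\ell$ with $0\le\ell\le n$ consider the horizontal segment $\{(k,\ell): 0\le k\le n\}$. The count $\sum_{x\in[0,n]^2}\one\{\xi\in\supp\mu_{x,x+e_1}\}$ is then $\sum_{\ell=0}^n \#\{k\in[0,n]: \xi\in\supp\mu_{(k,\ell),(k+1,\ell)}\}$. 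By translation invariance (covariance \eqref{cov-prop} of the Busemann process) and the fact, from Theorem \ref{thm:ladder-xi} and \eqref{rw113}/\eqref{rw117}, that for Lebesgue-a.e.\ $\xi$ the indicator sequence $(\one\{\B{\xi-}_k>\B{\xi+}_k\})_k$ along a line has the law of the zero set of SSRW at even times, each such line count is $O(\sqrt n\,\sqrt{\log n})$ uniformly; summing $n+1$ lines gives $O(n^{3/2}\sqrt{\log n})$. The main quantitative input is a uniform-in-$\xi$, uniform-in-$\ell$, eventually-almost-sure bound of the form $\#\{k\in[0,n]:\xi\in\supp\mu_{(k,\ell),(k+1,\ell)}\}\le c\sqrt{n\log n}$; this should follow from the law-of-iterated-logarithm upper bound for the local time of SSRW at zero (the number of returns to $0$ by time $2n$ is a.s.\ $O(\sqrt{n\log\log n})$, hence certainly $\le \sqrt{n\log n}$ eventually), combined with a union bound over the $O(n)$ relevant lines and a Borel--Cantelli argument over dyadic $n$; the extra $\sqrt{\log n}$ over the LIL rate gives enough room for the union bound.

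The genuinely delicate point is handling the supremum over all $\xi\in[\zeta,e_2[$ simultaneously, since Theorem \ref{thm:ladder-xi} is stated for each fixed $\xi\in\aUset$ on a single event, but $\aUset$ is a random countable set and we need a bound valid for the uncountable interval $[\zeta,e_2[$. Here I would exploit monotonicity in the direction: by \eqref{mono}, as $\xi$ increases toward $e_2$, $\B{\xi\sigg}(x,x+e_1)$ is nonincreasing, so the jump sets $\{k:\xi\in\supp\mu_{(k,\ell),(k+1,\ell)}\}$ interlace in a controlled way, and the total number of distinct horizontal edges on a given line $[0,n]$ that carry \emph{some} mass in the whole interval $[\zeta,e_2[$ — i.e.\ $\#\{k:\supp\mu_{(k,\ell),(k+1,\ell)}\cap[\zeta,e_2[\,\ne\varnothing\}$ — is exactly governed by the pair of extreme Busemann functions $\B{\zeta-}$ and $\B{(e_2)+}$ (cf.\ the reasoning that $\dS{[\zeta,\eta]}=\dG{\zeta-}\cup\dG{\eta+}$ in Section \ref{sec:web}). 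More precisely, $\xi\in\supp\mu_{(k,\ell),(k+1,\ell)}$ for some $\xi\in[\zeta,e_2[$ iff $\depa^{\zeta-}_{(k,\ell)\text{-line}} > \lim_{\xi\nearrow e_2}\depa^{\xi+}$, and the count of such $k$ over $[0,n]$ again has the zero-set-of-SSRW law by the analogue of Theorem \ref{th:palm1}/\ref{thm:ladder-xi} applied to the pair $(\zeta, e_2)$ (letting $\eta\nearrow e_2$). So the single "thick interval" count dominates every individual $\xi\in[\zeta,e_2[$ pointwise, and it is controlled by the same SSRW estimate. This reduces the uncountable supremum to a single bi-directional statement.

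With that reduction in hand, the remaining steps are routine. I would: (i) invoke the version of Theorem \ref{thm:ladder-xi} for the pair $[\zeta,e_2[$ (or a limiting argument from Theorem \ref{th:palm1} as $\eta\to e_2$, using that \eqref{78-45}-type quantities converge monotonically) to get that on each line the "thick" count is distributed as $\sum_{j\le n}\rho_j$ for the SSRW zero-set process $\rho$; (ii) apply the LIL upper bound $\limsup_n (\sum_{j\le n}\rho_j)/\sqrt{n\log\log n}=$ const a.s., so that for each line $\sum_{j\le n}\rho_j \le \sqrt{n\log n}$ for all large $n$, with the "large $n$" threshold being a line-dependent random variable $N_\ell$; (iii) by stationarity in $\ell$ and a union bound over $0\le\ell\le n$ together with Borel--Cantelli along $n=2^m$, upgrade to: there is a.s.\ a finite $n_0$ such that for all $n\ge n_0$ and all $0\le\ell\le n$, the line count is $\le\sqrt{n\log n}$; (iv) sum over the $n+1$ lines to get the box bound $(n+1)\sqrt{n\log n}\le 2n^{3/2}\sqrt{\log n}$ for $n$ large. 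The cases $i=2$ and the other three corner boxes follow by the lattice symmetry $e_1\leftrightarrow e_2$ (which swaps rows and columns) and reflection symmetry of the model, together with the fact that all the above arguments are insensitive to the orientation of the box. The main obstacle, as indicated, is step (i)-plus-the-monotonicity reduction: making rigorous that the uncountable supremum over $\xi\in[\zeta,e_2[$ is dominated by the single "thick interval" process, and that the latter still has (or is stochastically dominated by something with) the SSRW-zero-set marginal so that the LIL bound applies.
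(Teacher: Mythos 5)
There is a genuine gap in your monotonicity reduction. You propose to dominate the uncountable supremum $\sup_{\xi\in[\zeta,e_2[}\sum_k\one\{\xi\in\supp\mu_{(k,\ell),(k+1,\ell)}\}$ by the single ``thick interval'' count $\#\{k:\supp\mu_{(k,\ell),(k+1,\ell)}\cap[\zeta,e_2[\,\ne\varnothing\}$ and then argue this is again distributed like the zero set of SSRW. That is false. For a fixed nondegenerate interval $[\zeta,\eta]$, equation \eqref{B4} gives $\P(\depa^\zeta_k>\depa^\eta_k)=\frac{\alpha(\eta)-\alpha(\zeta)}{\alpha(\eta)}>0$, and Theorem~\ref{th:palm1} shows the inter-arrival distribution has an extra factor $\bigl(\tfrac{\alpha(\zeta)\alpha(\eta)}{((\alpha(\zeta)+\alpha(\eta))/2)^2}\bigr)^{n-1}<1$ compared to \eqref{rw117}, so it has geometric tails and finite mean. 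By renewal theory the thick count per line is therefore $\Theta(n)$, not $O(\sqrt{n})$ (see also Lemma~\ref{lm:den}, which gives positive density $\kappa_j(\zeta,\eta)>0$). Only the \emph{pointwise} count at a single $\xi$ has the heavy-tailed Catalan inter-arrival law of \eqref{rw113} that yields the $\sqrt{n}$ scaling; thickening the interval destroys this. Feeding a linearly growing per-line count into your argument gives $\Theta(n^2)$ for the box, far exceeding $n^{3/2}\sqrt{\log n}$.

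The paper's proof takes a fundamentally different route that is designed to avoid exactly this pitfall: rather than dominate the supremum over $\xi$ by a thick interval, it converts the supremum over $\xi$ into an integral against the jump point process $\Bpp_k(d\xi)$. Concretely, Lemma~\ref{lm:UB-aux} bounds $\P\{\exists\xi\in[\zeta,e_1[\,:\sum_{i=0}^n u_i(\xi)>2\delta n+1\}$ by $\E\bigl[\sum_{k=0}^n\int\one\{\xi\in[\zeta,e_1[\,\}\one\{\sum_i u_i(\xi)>2\delta n+1\}\,\Bpp_k(d\xi)\bigr]$, and then applies the Palm disintegration \eqref{79-50-50}: the integrand becomes $Q_{(k,\xi)}\{\cdots\}$, which by \eqref{rw113} is the probability that a sum of $\ce{\delta n}$ i.i.d.\ Catalan inter-arrivals is $\le n$; a Chernoff bound using the Catalan generating function gives exponential decay $\bigl(\tfrac{(1-\delta/2)^{2-\delta}}{(1-\delta)^{1-\delta}}\bigr)^n\approx e^{-n\delta^2/4}$, and the $\xi$-integral contributes only the finite mass $\log\alpha(\zeta)^{-1}$. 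This gives a quantitative tail bound robust to the union bound over $n+1$ starting points and Borel--Cantelli in $n$, whereas the LIL you invoke is an a.s.\ statement with no rate. Thus both the mechanism for handling the supremum over $\xi$ and the mechanism for handling the union bound over lines differ essentially from what you propose, and the first difference is load-bearing.
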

\medskip

This completes the presentation of the main results.   After a list of open problems, the remaining sections cover   the proofs.
The results of Section \ref{sec:exp}  are proved in Section  \ref{sec:exp-pf}.

% !TEX root = GeoWebPaper.tex

\section{Open problems}\label{sec:prob}

%Equivalence of jump condition and V is cids. 

The list below contains some immediate open questions raised by the results of this paper. 
\begin{enumerate} %[label={\rm(\alph*)}, ref={\rm\alph*}] 
\itemsep=3pt 

\item Find tail estimates for the coalescence points $\coal{\xi}(x,y)$. 

\item
Theorem \ref{thm:Vcid}\eqref{thm:Vcid.2} showed that the jump process condition \eqref{cond:jumpcond} implies that $\aUset=\{\cid(T_x\w): x\in\Z^2\}$.  Is this implication an equivalence?  

\item 
Prove the jump process condition \eqref{cond:jumpcond}  for any  model other than the exactly solvable exponential and geometric cases.  

\item 
Does the web of instability have a scaling limit?  

\item 
Does the web of instability, with branching and coalescing  in exceptional directions, have any analogue in stochastic equations in continuous space and/or continuous time?    

 \item 
 Extend the statistics of instability points in the exponential model beyond a single line on the lattice. 
 
 \end{enumerate}  
% !TEX root = GeoWebPaper.tex

\section{Busemann measures: proofs} \label{sec:bus}
The rest of the paper relies  on Appendix \ref{app:busgeo} where prior results from the literature are collected. The reader may wish to look through that appendix before proceeding; 
in particular, we will work on the $T$-invariant full-measure event $\Omega_0$ constructed in  \eqref{Om0}.\smallskip
%Throughout this section, we pick and fix $\w$ from the $T$-invariant full-measure event $\Omega_0$ which is constructed in equation \eqref{Om0} of Appendix \ref{app:busgeo}.\smallskip
%, although we will impose further conditions on $\w$ in results which are contingent on hypotheses stronger than Assumption \ref{main-assump}. \smallskip

%Recall %the forests $\G{\xi\pm}$  defined in Section \ref{sec:web} and
Fix a countable dense set $\Udense\subset\Diff$ of points of differentiability of the shape function $\gpp$ (recall \eqref{sh-th}).   These  play a role in the definition of the event $\Omega_0$ in \eqref{Om0}. 
Recall the definition \eqref{df:coal-z} of the %coalescence level $\ell^{\xi\sig}(x,y)$  and 
coalescence point $\coal{\xi\sig}(x,y)$.  %\eqref{df:coal-l} and  
%We also define 
%the {\it coalescence level}  of the $\geo{}{x}{\xi\sig}$ and $\geo{}{y}{\xi\sig}$ geodesics as 
%	\[\ell^{\xi\sig}(x,y)=\inf\{x\cdot\et: x \in \geo{}{x}{\xi\sig}\cap \geo{}{y}{\xi\sig}\}\]
%	 for each   $\xi \in \ri \Uset$ and sign $\sigg\in\{-,+\}$,  
%with the convention that the infimum of an empty set is $\infty$. 
When $\coal{\xi\sig}(x,y) \in \bbZ^2$, equation \eqref{eq:buspass} leads to the following identity, which is fundamental to the analysis that follows:
\begin{align} \label{eq:busdiff}
\B{\xi\sig}(x,y) &= G\bigl(x,\coal{\xi\sig}(x,y)\bigr) - G\bigl(y,\coal{\xi\sig}(x,y)\bigr) = \sum_{i=k}^{n-1} \w_{\geo{i}{x}{\xi\sig}} - \sum_{i=\ell}^{n-1} \w_{\geo{i}{y}{\xi\sig}},
\end{align}
where $k=x\cdot\et$, $\ell=y\cdot\et$, and $n=\coal{\xi\sig}(x,y)\cdot\et$. By Theorem \ref{thm1}\eqref{thm1:coal}, for all $\w \in \Omega_0$, all $\xi\in\Udense$,  and all $x,y \in \bbZ^2$,  both $\coal{\xi+}(x,y)$ and $\coal{\xi-}(x,y)$ are in $\bbZ^2$.
%\textcolor{red}{Should we include Lemma 2.7 from the notes file? It seems like just an observation at this point. We haven't used it anywhere... Maybe if we include it, it should be in a section about the properties of the dual graphs [along with mass transport, equivalence classes, etc]}

We begin with  results linking analytic properties of the Busemann process and coalescence points.% Recall the full measure event $\Omega_0$ defined in \eqref{Om0}.

\begin{proposition} \label{pr:supp1}
%Let $\Omega_0$ be the full measure event from Theorem \ref{thm1}.
%There exists a $T$-invariant event $\Omega_0 \subset \Omega$ with $\bbP(\Omega_0) = 1$ and such that 
For all $\w \in \Omega_0$, for any $\zeta \prec \eta$ in $\ri\Uset$, and any $x,y \in \bbZ^2$, the following statements are equivalent. 
\begin{enumerate}  [label={\rm(\roman*)}, ref={\rm\roman*}]   \itemsep=3pt 
\item\label{eq100} $\abs{\mu_{x,y}}(\,]\zeta,\eta[\,) = 0 $.
\item\label{eq101} $\B{\zeta+}(x,y) = \B{\eta-}(x,y)$ and $\coal{\zeta+}(x,y),\coal{\eta-}(x,y) \in \bbZ^2$.
\item\label{eq102} $\coal{\zeta+}(x,y) = \coal{\eta-}(x,y) \in \bbZ^2$.
%\item\label{eq103} For any $\xi\in\,]\zeta,\eta[$, any $\pi \in \{\geo{}{x}{\xi+},\geo{}{x}{\xi-} : \xi \in \, ]\zeta,\eta[\,\}$, and any $\geod{}' \in \{\geo{}{y}{\xi+},\geo{}{y}{\xi-} : \xi \in \, ]\zeta,\eta[\, \}$, %$\geod{}\cap \geod{}' \neq \varnothing$ and 
%$\geod{}$ and $\geod{}'$ coalesce. %\textcolor{red}{(the $\pm$ also match until that coalescence point)}
\item\label{eq103} There exists  $z \in \bbZ^2$ such that the following holds. For any $\pi \in \{\geo{}{x}{\xi\sig} : \xi \in \, ]\zeta,\eta[\,, \sigg\in\{-,+\}\}$ and any $\geod{}' \in \{\geo{}{y}{\xi\sig} : \xi \in \, ]\zeta,\eta[\,, \sigg\in\{-,+\} \}$, $\geod{}\cap \geod{}' \neq \varnothing$ and $z$ is the first point where $\pi$ and $\pi'$ intersect: $z\cdot\et=\min\{z'\cdot\et:z'\in\geod{}\cap\geod{}'\}$.
% = \argmin\{x \cdot \et: x \in  \geod{} \cap \geod{}'\}$.
\end{enumerate}
\end{proposition}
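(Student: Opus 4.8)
The plan is to establish the four equivalences by proving a cycle of implications, say \eqref{eq100} $\Rightarrow$ \eqref{eq101} $\Rightarrow$ \eqref{eq102} $\Rightarrow$ \eqref{eq103} $\Rightarrow$ \eqref{eq100}, using the fundamental identity \eqref{eq:busdiff} as the main bridge between the analytic and geometric sides. The key structural input is that on $\Omega_0$ all the coalescence points $\coal{\xi\pm}(x,y)$ for $\xi$ in the countable dense set $\Udense$ lie in $\Z^2$, together with the monotonicity \eqref{mono} and one-sided continuity \eqref{Busemann-limits}, \eqref{pmgeolim} of the Busemann process and the associated geodesics.

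For \eqref{eq100} $\Rightarrow$ \eqref{eq101}: vanishing of $\abs{\mu_{x,y}}$ on the open interval $]\zeta,\eta[$ means $\xi\mapsto\B{\xi\sig}(x,y)$ is constant on $]\zeta,\eta[$; combined with the left/right-limit formulas \eqref{Busemann-limits} this gives $\B{\zeta+}(x,y)=\B{\eta-}(x,y)$ (the common value on the open interval). To get that the two coalescence points are in $\Z^2$, I would pick a point $\xi_0\in\Udense\cap\,]\zeta,\eta[$ (possible since $\Udense$ is dense), for which $\coal{\xi_0\pm}(x,y)\in\Z^2$ on $\Omega_0$, and then push this to the endpoints. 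Since $\B{\xi\sig}(x,x+e_i)$ is constant across $[\xi_0,\eta[$ for both $i$, the local rule \eqref{d:bgeo} forces the geodesics $\geo{}{x}{\xi\sig}$ and $\geo{}{y}{\xi\sig}$ to be the \emph{same path} for all $\xi\in[\xi_0,\eta[$, hence by \eqref{pmgeolim} also equal to $\geo{}{x}{\eta-}$ and $\geo{}{y}{\eta-}$; therefore $\coal{\eta-}(x,y)=\coal{\xi_0\sig}(x,y)\in\Z^2$, and symmetrically for $\coal{\zeta+}(x,y)$. (A small point: I must check the local rule really does pin down the whole infinite geodesic, which it does because recovery of the increments $\w_x$ via \eqref{rec-prop2} and the cocycle property make the comparison $\B{\xi\sig}(z,z+e_1)$ vs.\ $\B{\xi\sig}(z,z+e_2)$ at every vertex $z$ on the path depend only on the Busemann increments, which are locally constant in $\xi$.) This last argument essentially gives \eqref{eq103} directly as well, with $z$ the common coalescence point.

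For \eqref{eq101} $\Rightarrow$ \eqref{eq102}: with both coalescence points in $\Z^2$, apply \eqref{eq:busdiff} at $\zeta+$ and at $\eta-$ and subtract; using $\B{\zeta+}(x,y)=\B{\eta-}(x,y)$ and the ordering \eqref{mono} of the increments (so the path of $\geo{}{x}{\eta-}$ lies weakly to the right of that of $\geo{}{x}{\zeta+}$, and similarly for $y$), I would argue that equality of the Busemann differences forces equality of the two coalescence points — the difference of the two representations in \eqref{eq:busdiff} would otherwise be a nonzero sum of weights over a nonempty lattice region, which has probability zero of vanishing. This is the step I expect to be the main obstacle: one has to be careful that the weight-sum identity genuinely rules out distinct coalescence points, rather than merely constraining them, and to handle it cleanly it is probably best to route through \eqref{eq103} — i.e., show equality of the \emph{paths} on the relevant finite segments rather than just equality of passage-time differences — so the argument becomes combinatorial (two up-right paths from $x$ agreeing endpoint-to-endpoint are the unique point-to-point geodesic) rather than relying on a null-set argument. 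The remaining implications \eqref{eq102} $\Rightarrow$ \eqref{eq103} (immediate from the definition of coalescence point and uniqueness of finite geodesics) and \eqref{eq103} $\Rightarrow$ \eqref{eq100} (a common coalescence point $z$ for all $\xi\in\,]\zeta,\eta[$ forces $\B{\xi\sig}(x,y)=G(x,z)-G(y,z)$ constant on the interval, hence $\abs{\mu_{x,y}}(\,]\zeta,\eta[\,)=0$) are then straightforward.
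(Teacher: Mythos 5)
Your overall scheme (a cycle of implications, with \eqref{eq:busdiff} as the bridge) matches the paper's proof, and your treatments of \eqref{eq102}$\Rightarrow$\eqref{eq103} and \eqref{eq103}$\Rightarrow$\eqref{eq100} are essentially correct. But the first implication, \eqref{eq100}$\Rightarrow$\eqref{eq101}, has a genuine gap. You argue that because $\B{\xi\sig}(x,y)$ is constant on $]\zeta,\eta[$, the increments $\B{\xi\sig}(z,z+e_i)$ at \emph{every} vertex $z$ along the geodesic are also locally constant in $\xi$, so the local rule \eqref{d:bgeo} pins down the whole path. That step does not follow: the hypothesis $\abs{\mu_{x,y}}(\,]\zeta,\eta[\,)=0$ only controls the single pair $(x,y)$, and the measures $\mu_{z,z+e_i}$ at intermediate lattice sites $z$ on the path can perfectly well have mass in $]\zeta,\eta[$. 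So you cannot conclude the geodesics are locally constant in $\xi$ by this route. The paper instead picks $\xi\in\Udense\cap\,]\zeta,\eta[$ (where the coalescence point is known to lie in $\Z^2$), writes $\B{\xi}(x,y)$ as a difference of weight sums via \eqref{eq:busdiff}, and invokes condition \eqref{paths.2} (no nontrivial integer-coefficient combination of distinct weights vanishes, which holds deterministically on $\Omega_0$) to force the coalescence points — and hence the finite path segments — to be the same for all such $\xi$. Then it passes to the endpoints via \eqref{coal-lim}.

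On \eqref{eq101}$\Rightarrow$\eqref{eq102}, your instinct that one should obtain \emph{path} equality rather than rely on a probabilistic null-set argument is the right instinct, and the paper does exactly that — but it does it directly, not by rerouting through \eqref{eq103}: it applies \eqref{eq:busdiff} at $\zeta+$ and $\eta-$, subtracts, and again uses \eqref{paths.2} on $\Omega_0$. Because \eqref{paths.2} is baked into the event $\Omega_0$, this is a deterministic combinatorial statement rather than a null-set argument, which is exactly the clean version you were looking for. The upshot is that the step you flag as the ``main obstacle'' is in fact clean once you let \eqref{paths.2} do the work, while the step you treat as routine (\eqref{eq100}$\Rightarrow$\eqref{eq101}) is where your argument actually breaks.
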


\begin{proof}  
%Take $\Omega_0$ as in Theorem \ref{thm1} and fix $\w\in\Omega_0$.
\eqref{eq100}$\implies$\eqref{eq101}. 
Under \eqref{eq100}   the functions $\xi \mapsto \B{\xi\sig}_{x,y}$ match for $\sigg\in\{-,+\}$ and are constant on the open interval $]\zeta,\eta[$.   $\B{\zeta+}(x,y) = \B{\eta-}(x,y)$ follows by taking limits $\xi \searrow \zeta$ and $\xi \nearrow \eta$. 

Since on $]\zeta,\eta[\,\cap\,\Udense$, $\xi \mapsto \B{\xi}_{x,y}$ is  constant  and  $\coal{\xi}(x,y)\in\Z^2$  (Theorem \ref{thm1}\eqref{thm1:coal}),   \eqref{eq:busdiff} and condition  \eqref{paths.2}  imply 
that $\coal{\xi}(x,y)$ is constant in $\Z^2$   for all  $\xi\in\,]\zeta,\eta[\,\cap\,\Udense$. 
Since $\Udense$ is dense in $]\zeta,\eta[\,$,  limits   
 \eqref{coal-lim} as $\xi \searrow \zeta$ and $\xi \nearrow \eta$ imply
%that $\coal{\xi\sig}(x,y)$ is constant for all $\xi\in\,]\zeta,\eta[$ and $\sigg\in\{-,+\}$.  Limits  $\xi \searrow \zeta$ and $\xi \nearrow \eta$ imply
 that $\coal{\zeta+}(x,y),\coal{\eta-}(x,y) \in \bbZ^2$.

% the fact that $\Udense$ is dense in $\ri\Uset$,  imply that $\coal{\xi\pm}(x,y)\in\Z^2$ for all $\xi\in\,]\zeta,\eta[$.
%\timonote{Should we add that the function on the left in \eqref{eq:busdiff} cannot be constant across $\xi\in \,]\zeta,\eta[$ unless $\coal{\xi\sig}(x,y)$ is constant. Otherwise how do we know that $\coal{\zeta+}(x,y),\coal{\eta-}(x,y) \in \bbZ^2$? It's a little weird that then we already get also \eqref{eq100}$\implies$\eqref{eq102}.} 
%Claim \eqref{eq101} follows by taking $\xi \searrow \zeta$ and $\xi \nearrow \eta$.

\smallskip

 \eqref{eq101}$\implies$\eqref{eq102}. 
Set $k=x \cdot \et $, $\ell=y \cdot \et $.   With  both $\coal{\zeta+}(x,y)$ and $\coal{\eta-}(x,y)$ in $\bbZ^2$, we also set  $m=\coal{\zeta+}(x,y)\cdot\et$, and $n=\coal{\eta-}(x,y)\cdot\et$. 
By \eqref{eq:busdiff}, 
\begin{align*}
\B{\zeta+}_{x,y} &= G\bigl(x,\coal{\zeta+}(x,y)\bigr) - G\bigl(y,\coal{\zeta+}(x,y)\bigr) = \sum_{i=k}^{m-1} \w_{\geo{i}{x}{\zeta+}} - \sum_{i=\ell}^{m-1} \w_{\geo{i}{y}{\zeta+}} 
\\
\text{and}\quad 
\B{\eta-}_{x,y} &=  G\bigl(x,\coal{\eta-}(x,y)\bigr) - G\bigl(y,\coal{\eta-}(x,y)\bigr) = \sum_{i=k}^{n-1} \w_{\geo{i}{x}{\eta-}} - \sum_{i=\ell}^{n-1} \w_{\geo{i}{y}{\eta-}}.
\end{align*}
By condition  \eqref{paths.2},  the vanishing of   $\B{\zeta+}_{x,y} - \B{\eta-}_{x,y}$    forces $m=n$, $\geo{k,m}{x}{\zeta+} = \geo{k,m}{x}{\eta-}$, and $\geo{\ell,m}{y}{\zeta+} = \geo{\ell,m}{y}{\eta-}$, and hence in particular $\coal{\zeta+}(x,y)=\coal{\eta-}(x,y)$.  

%Timo-proof:  From $\B{\zeta+}_{x,y}-\B{\eta-}_{x,y}=0$ cancel all weights that can be cancelled.  Then we can assume that the two paths from $x$ separate immediately, and so do the two paths from $y$.   

\smallskip 
 
\eqref{eq102}$\implies$\eqref{eq103}.
%Suppose that $\coal{\zeta+}(x,y) = \coal{\eta-}(x,y) \in \bbZ^2
With $m=\coal{\zeta+}(x,y)\cdot\et=\coal{\eta-}(x,y)\cdot\et$, 
  uniqueness of finite geodesics implies  $\geo{k,m}{x}{\zeta+} = \geo{k,m}{x}{\eta-}$ and $\geo{\ell,m}{y}{\zeta+} = \geo{\ell,m}{y}{\eta-}$. Then monotonicity  \eqref{path-ordering} gives   $\geo{k,m}{x}{\zeta+} = \geo{k,m}{x}{\xi\sig} = \geo{k,m}{x}{\eta-}$ and  $\geo{\ell,m}{y}{\zeta+} = \geo{\ell,m}{y}{\xi\sig} = \geo{\ell,m}{y}{\eta-}$ for all $\xi \in\, ]\zeta,\eta[$ and $\sigg\in\{-,+\}$. 
The point $z$ is  $\geo{m}{x}{\zeta+}=\geo{m}{y}{\zeta+}=\geo{m}{x}{\eta-}=\geo{m}{y}{\eta-}$.   

\smallskip
 
 \eqref{eq103}$\implies$\eqref{eq100}.
Let $m=z \cdot \et$. It follows from uniqueness of finite geodesics that all of the paths $\geo{k,m}{x}{\xi\pm}$ must be the same, for all $\xi\in\,]\zeta,\eta[$, and similarly all of the paths $\geo{\ell,m}{y}{\xi\pm}$ must be the same. Sending $\xi \searrow \zeta$ and $\xi \nearrow \eta$, we obtain that for all $\xi \in \, ]\zeta,\eta[$, $\geo{k,m}{x}{\zeta+} = \geo{k,m}{x}{\xi \pm} = \geo{k,m}{x}{\eta-}$ and $\geo{\ell,m}{y}{\zeta+} = \geo{\ell,m}{y}{\xi \pm} = \geo{\ell,m}{y}{\eta-}$.  
Recall that \eqref{eq:busdiff} applies for any $\xi\in\Udense$.
Thus, the functions  $\xi \mapsto \B{\xi\pm}_{x,y}$ match and are constant, when restricted to the dense set $\Udense\, \cap\, ]\zeta,\eta[$. Combining this with the left-continuity of $\xi \mapsto \B{\xi-}_{x,y}$ and the right-continuity of $\xi \mapsto \B{\xi+}_{x,y}$, we see that the functions $\xi \mapsto \B{\xi\pm}_{x,y}$ match and are constant on $]\zeta,\eta[$. 
This implies \eqref{eq100}.
%This implies that $|\mu_{x,y}|(\,]\zeta,\eta[)=0.$
\end{proof}

Proposition \ref{pr:supp1} has a counterpart in terms of fixed directions lying in the support of $\mu_{x,y}$.
\begin{proposition} \label{pr:supp2}
%There is a $T$-invariant event $\Omega_0$ with $\bbP(\Omega_0)=1$ and such that 
%Let $\Omega_0$ be the full measure event from Proposition \ref{pr:supp1}.
For all $\w \in \Omega_0$ and all $x,y \in \bbZ^2$, the following are equivalent:
\begin{enumerate} [label={\rm(\roman*)}, ref={\rm\roman*}]   \itemsep=3pt 
\item\label{eq110}  $\xi \not\in \supp{\mu_{x,y}}$.
\item\label{eq111} $\coal{\xi-}(x,y) = \coal{\xi+}(x,y) \in \bbZ^2$.
\item\label{eq112} $\B{\xi-}(x,y) = \B{\xi+}(x,y)$ and $\coal{\xi-}(x,y), \coal{\xi+}(x,y) \in \bbZ^2$. 
\end{enumerate}
\end{proposition}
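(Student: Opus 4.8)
The statement is a deterministic one on the full-measure event $\Omega_0$, and my plan is to deduce it from the open-interval version already established in Proposition \ref{pr:supp1}, the only new ingredient being the passage to one-sided limits at the fixed direction $\xi$. The first observation I would record is that, by the definition \eqref{Bmeas} and the compatibility of the Lebesgue--Stieltjes measures $\mu_{x,y}^K$, one has $\xi\notin\supp\mu_{x,y}$ if and only if there exist $\zeta\prec\xi\prec\eta$ in $\ri\Uset$ with $\abs{\mu_{x,y}}(\,]\zeta,\eta[\,)=0$; by Proposition \ref{pr:supp1} (equivalence of \eqref{eq100} and \eqref{eq102}) this in turn holds iff there exist such $\zeta\prec\xi\prec\eta$ with $\coal{\zeta+}(x,y)=\coal{\eta-}(x,y)\in\Z^2$. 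Thus the whole content of \eqref{eq110}$\Leftrightarrow$\eqref{eq111} is that this interval condition is equivalent to the pointwise identity $\coal{\xi-}(x,y)=\coal{\xi+}(x,y)\in\Z^2$.

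\textbf{The equivalence \eqref{eq110}$\Leftrightarrow$\eqref{eq111}.} For the forward direction I would take $\zeta\prec\xi\prec\eta$ as furnished above, with $\coal{\zeta+}(x,y)=\coal{\eta-}(x,y)=z\in\Z^2$, and invoke Proposition \ref{pr:supp1}\eqref{eq103}: this same point $z$ is the first intersection point of every geodesic in $\{\geo{}{x}{\xi'\sigg}:\xi'\in\,]\zeta,\eta[\,,\ \sigg\in\{-,+\}\}$ with every geodesic in $\{\geo{}{y}{\xi'\sigg}:\xi'\in\,]\zeta,\eta[\,,\ \sigg\in\{-,+\}\}$; evaluating at $\xi'=\xi$ with each sign gives $\coal{\xi-}(x,y)=\coal{\xi+}(x,y)=z\in\Z^2$, which is \eqref{eq111}. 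For the converse, suppose $\coal{\xi-}(x,y)=\coal{\xi+}(x,y)=z\in\Z^2$. By the one-sided continuity \eqref{coal-lim} (and its left-sided analogue) I have $\coal{\eta\sigg}(x,y)\to z$ as $\eta\searrow\xi$ and $\coal{\zeta\sigg}(x,y)\to z$ as $\zeta\nearrow\xi$; since these take values in the discrete set $\Z^2$ and converge to a finite limit, they are eventually equal to $z$, so I may pick $\zeta\prec\xi\prec\eta$ in $\ri\Uset$ with $\coal{\zeta+}(x,y)=z=\coal{\eta-}(x,y)$. Proposition \ref{pr:supp1} (implication \eqref{eq102}$\Rightarrow$\eqref{eq100}) then gives $\abs{\mu_{x,y}}(\,]\zeta,\eta[\,)=0$, and since $\xi\in\,]\zeta,\eta[$ this yields $\xi\notin\supp\mu_{x,y}$.

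\textbf{The equivalence \eqref{eq111}$\Leftrightarrow$\eqref{eq112}.} If \eqref{eq111} holds with common value $z$ and $m=z\cdot\et$, $k=x\cdot\et$, $\ell=y\cdot\et$, then $\geo{k,m}{x}{\xi-}$ and $\geo{k,m}{x}{\xi+}$ are both point-to-point geodesics from $x$ to $z$, hence equal by uniqueness of finite geodesics, and likewise $\geo{\ell,m}{y}{\xi-}=\geo{\ell,m}{y}{\xi+}$; feeding these into \eqref{eq:busdiff} for each sign gives $\B{\xi-}(x,y)=\B{\xi+}(x,y)$, so \eqref{eq112} holds. Conversely, assuming \eqref{eq112} and setting $m_{\pm}=\coal{\xi\pm}(x,y)\cdot\et$, formula \eqref{eq:busdiff} expresses $\B{\xi-}(x,y)$ and $\B{\xi+}(x,y)$ as the respective differences of weight sums along the segments of $\geo{}{x}{\xi-},\geo{}{y}{\xi-}$ up to level $m_-$ and of $\geo{}{x}{\xi+},\geo{}{y}{\xi+}$ up to level $m_+$; then exactly as in the step \eqref{eq101}$\Rightarrow$\eqref{eq102} of Proposition \ref{pr:supp1}, the equality of the two Busemann values together with condition \eqref{paths.2} forces $m_-=m_+$ and equality of the two pairs of segments, whence $\coal{\xi-}(x,y)=\coal{\xi+}(x,y)\in\Z^2$, i.e.\ \eqref{eq111}. (The degenerate case $x=y$ is trivial: $\mu_{x,x}\equiv0$, $\B{\xi\pm}(x,x)=0$, $\coal{\xi\pm}(x,x)=x$.)

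\textbf{Expected main difficulty.} Conceptually the argument is light, being essentially bookkeeping on top of Proposition \ref{pr:supp1}. The one point that needs genuine care is the reduction from the interval statements to the pointwise statements at $\xi$: it relies on the one-sided continuity of coalescence points \eqref{coal-lim} (equivalently of the Busemann geodesics, \eqref{pmgeolim}) together with the elementary fact that a $\Z^2\cup\{\infty\}$-valued sequence that converges to a finite limit is eventually constant, and on making sure the interval $]\zeta,\eta[$ produced in the reverse direction of \eqref{eq110}$\Leftrightarrow$\eqref{eq111} genuinely has $\xi$ in its interior (which is automatic from choosing $\zeta\prec\xi\prec\eta$). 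The invocation of condition \eqref{paths.2} in the last implication is the same as in Proposition \ref{pr:supp1} and carries no new subtlety.
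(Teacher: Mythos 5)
Your proof is correct and follows essentially the same route as the paper's: it leans on Proposition \ref{pr:supp1}, the one-sided continuity \eqref{coal-lim}/\eqref{pmgeolim}, the identity \eqref{eq:busdiff}, and the no-cancellation condition \eqref{paths.2}. The only cosmetic difference is organizational: the paper argues the cycle (i)$\Rightarrow$(ii)$\Rightarrow$(iii)$\Rightarrow$(i) and routes everything through explicit approximating sequences $\zeta_n,\eta_n\in\Udense$, whereas you prove the two equivalences (i)$\Leftrightarrow$(ii) and (ii)$\Leftrightarrow$(iii) directly and apply \eqref{eq:busdiff} at $\xi\pm$ itself (legitimate once $\coal{\xi\pm}(x,y)\in\Z^2$ is in hand), which makes the (ii)$\Leftrightarrow$(iii) step a bit more direct.
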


\begin{proof}
Let $x \cdot \et = k$ and $y \cdot \et = \ell$. Take sequences $ \zeta_n, \eta_n \in \Udense$ with $\zeta_n \nearrow \xi$ and $\eta_n \searrow \xi$. Since $\zeta_n, \eta_n \in \Udense$ we have $\coal{\zeta_n}(x,y),\coal{\eta_n}(x,y) \in \bbZ^2$ for all $n$.   Furthermore,   $\B{\zeta_n}(x,y) \to  \B{\xi-}(x,y)$ and $\B{\eta_n}(x,y) \to  \B{\xi+}(x,y)$ as $n\to\infty$. 
 
 \smallskip 
 
 \eqref{eq110}$\implies$\eqref{eq111}.
  If $\xi \notin \supp{\mu_{x,y}}$, then  $\zeta \mapsto \B{\zeta\pm}(x,y)$ is constant on some neighborhood of $\xi$.   Then   \eqref{eq100}$\implies$\eqref{eq102} from  Proposition \ref{pr:supp1} gives \eqref{eq111}.  

\smallskip
   
\eqref{eq111}$\implies$\eqref{eq112}.
% Assume \eqref{eq111}. 
 Let $m=\coal{\xi-}(x,y)\cdot\et=\coal{\xi+}(x,y)\cdot\et$. Then  uniqueness of finite geodesics implies that  $\geo{k,m}{x}{\xi-} = \geo{k,m}{x}{\xi+}$ and $\geo{\ell,m}{y}{\xi-} = \geo{\ell,m}{y}{\xi+}$.
 %$m$ be such  that   
% $\geo{m}{x}{\xi-} = \geo{m}{y}{\xi-}=\coal{\xi-}(x,y) = \coal{\xi+}(x,y)= \geo{m}{x}{\xi+} =\geo{m}{y}{\xi+}$.
 % Now suppose that $\coal{\xi+}(x,y) = \coal{\xi-}(x,y) \in \bbZ^2$.  
  \eqref{pmgeolim} implies that for sufficiently large $n$,  $\geo{k,m}{x}{\xi-} = \geo{k,m}{x}{\zeta_n}$, $\geo{k,m}{x}{\xi+} = \geo{k,m}{x}{\eta_n}$, $\geo{\ell,m}{y}{\xi-} = \geo{\ell,m}{y}{\zeta_n}$, and $\geo{\ell,m}{y}{\xi+} = \geo{\ell,m}{y}{\eta_n}$.  For these large $n$, 
\begin{align*}
\B{\zeta_n}(x,y) &= G\bigl(x,\coal{\zeta_n}(x,y)\bigr) - G\bigl(y,\coal{\zeta_n}(x,y)\bigr) \\
&= \sum_{i=k}^{m-1} \w_{\geo{i}{x}{\zeta_n}} - \sum_{i=\ell}^{m-1} \w_{\geo{i}{y}{\zeta_n}} 
=\sum_{i=k}^{m-1} \w_{\geo{i}{x}{\xi-}} - \sum_{i=\ell}^{m-1} \w_{\geo{i}{y}{\xi-}}\\
&= \sum_{i=k}^{m-1} \w_{\geo{i}{x}{\xi+}} - \sum_{i=\ell}^{m-1} \w_{\geo{i}{y}{\xi+}}
= \sum_{i=k}^{m-1} \w_{\geo{i}{x}{\eta_n}} - \sum_{i=\ell}^{m-1} \w_{x_i^{y,\eta_n}}\\
&=  G\bigl(x,\coal{\eta_n}(x,y)\bigr) - G\bigl(y,\coal{\eta_n}(x,y)\bigr) =\B{\eta_n}(x,y).
\end{align*}
%The sums in the middle, along $\geo{}{x}{\xi\pm}$, do not depend on $n$. 
Taking $n\to\infty$ gives
$\B{\xi+}(x,y)=\B{\xi-}(x,y)$. Claim \eqref{eq112} is proved.
 
 \smallskip 
 
 \eqref{eq112}$\implies$\eqref{eq110}.
 The assumption $\coal{\xi-}(x,y), \coal{\xi+}(x,y) \in \bbZ^2$ allows us to use \eqref{eq:busdiff}.  This  
  and convergence of geodesics \eqref{pmgeolim} implies that  $\B{\zeta_n}(x,y) = \B{\xi-}(x,y)= \B{\xi+}(x,y)=\B{\eta_n}(x,y)$ for sufficiently large $n$.  The equivalence between \eqref{eq101} and \eqref{eq100} in Proposition \ref{pr:supp1} implies that for such $n$, both processes are constant on the interval $]\zeta_n,\eta_n[\,$. Therefore $\xi \notin \supp{\mu_{x,y}}$.
\end{proof}

With these results in hand, we next turn to the proofs of our main results.
%tool in what follows, Theorem \ref{thm:nonint}.

\begin{proof}[Proof of Theorem \ref{thm:nonint}]
%Take $\Omega_0$ to be the event in Proposition \ref{pr:supp2} 
Fix $\w\in\Omega_0$, $x,y\in\Z^2$, and $\xi\in\ri\Uset$.
Suppose that \eqref{thm:nonint.a} does not hold, i.e.\ $\xi \notin \supp{\mu_{x,y}}$. 
By Proposition \ref{pr:supp2}, we have $\coal{\xi-}(x,y) = \coal{\xi+}(x,y)\in\Z^2$, in which case both $\geo{}{x}{\xi-} \cap \geo{}{y}{\xi+}$ and $\geo{}{x}{\xi+} \cap \geo{}{y}{\xi-}$ include this common point and thus
\eqref{thm:nonint.b} is false.
This proves that \eqref{thm:nonint.b} implies \eqref{thm:nonint.a}.

Now, suppose that $\xi \in \supp{\mu_{x,y}}$ and that $\geo{}{x}{\xi-} \cap \geo{}{y}{\xi+} \neq \varnothing$ and $\geo{}{x}{\xi+} \cap \geo{}{y}{\xi-} \neq \varnothing$.   Without loss of generality assume that $x \cdot \et = k \leq m=y \cdot \et $. Let $z_1$ denote the first point at which $\geo{}{x}{\xi-}$ and $\geo{}{y}{\xi+}$ meet and call $z_2$ the first point at which $\geo{}{x}{\xi+}$ and $\geo{}{y}{\xi-}$ meet. Let $\ell_1 = z_1 \cdot \et$ and $\ell_2 = z_2 \cdot \et$. We denote by $u$ the leftmost (i.e.\ smallest $e_1$ coordinates) of the three points $\geo{m}{x}{\xi+},y,\geo{m}{x}{\xi-}$ and by $v$ the rightmost of these three points. Note that if $u=v$, then $\coal{\xi+}(x,y) = \coal{\xi-}(x,y)=y$, which would imply that  $\xi \notin \supp{\mu_{x,y}}$.  Thus   $u\ne v$ and there are  two cases: either $y \in \{u,v\}$ or not. We show a contradiction in both cases.

First, we work out the case $y=v$, with the case of $y=u$ being similar. See the left picture in Figure \ref{fig:nonint} for an illustration. 
In this case we have, for all $n \geq m$, $\geo{n}{x}{\xi-} \preceq \geo{n}{x}{\xi+}\preceq \geo{n}{y}{\xi+}$ and $\geo{n}{x}{\xi-} \preceq \geo{n}{y}{\xi-}\preceq \geo{n}{y}{\xi+}$. In words, $\geo{}{y}{\xi+}$ is the rightmost geodesic and $\geo{}{x}{\xi-}$ is the leftmost geodesic among the four geodesics $\geo{}{x}{\xi\pm}$, $\geo{}{y}{\xi\pm}$. By the path ordering \eqref{path-ordering} and planarity,  $z_1$ must lie on all four geodesics. Then by the uniqueness of finite geodesics  $\geo{k,\ell_1}{x}{\xi+} = \geo{k,\ell_1}{x}{\xi-}$ and   $\geo{m,\ell_1}{y}{\xi+} = \geo{m,\ell_1}{y}{\xi-}$. It follows that $z_1 =\coal{\xi+}(x,y) = \coal{\xi-}(x,y)$, contradicting $\xi \in \supp{\mu_{x,y}}$.

 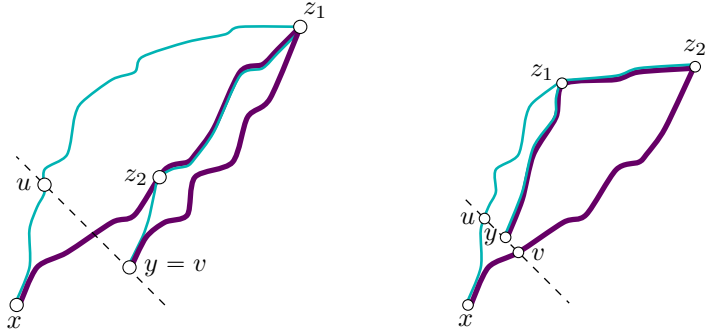
\begin{figure}[h]
 	\begin{center}
 		 \begin{tikzpicture}[scale=1,shorten >=1pt,>={Latex[length=3.5mm,width=2mm]}]
	 		\begin{scope}
			\draw[color=sussexg, line width=1pt]plot[smooth] coordinates{(0,0)(0.2,0.5)(0.2,1)(.3,1.3)(0.4,1.5)(0.4,1.8)(0.7,2)(0.8,2.5)(0.9,2.7)(1.3,3)(1.6,3.1)(1.7,3.3)(2.3,3.5)(2.8,3.6)(3,3.7)(3.8,3.7)};
			\draw[color=sussexp, line width=2pt]plot[smooth] coordinates{(0.07,0)(0.3,0.5)(0.7,.7)(1.3,1.1)(1.6,1.2)(2,1.8)(2.3,1.9)(2.4,2)(2.65,2.35)(3,3.1)(3.3,3.2)(3.5,3.4)(3.8,3.7)};
			\draw[color=sussexg, line width=1pt]plot[smooth] coordinates{(1.5,0.5)(1.7,0.9)(1.8,1.2)(1.9,1.7)};%(2,2.5)(2.2,2.7)(2.3,3.5)};
			\draw[color=sussexg, line width=1pt]plot[smooth] coordinates{(1.9,1.7)(2.1,1.808)(2.3,1.9-0.05)(2.4,2-0.05)(2.65,2.35-0.05)(3,3.1-0.05)(3.3,3.2-0.05)(3.5,3.4-0.05)(3.8,3.7-0.05)};
			\draw[color=sussexp, line width=2pt]plot[smooth] coordinates{(1.55,0.5)(1.75,0.9)(2,1.1)(2.3,1.2)(2.4,1.7)%(2.7,1.8)(2.8,2.6)};
			(2.9,1.9)(3.1,2.5)(3.4,2.75)(3.8,3.7)};

			\draw[black,dashed,line width=0.5pt]plot(2,0)--(0,2);
			\draw[black,fill=white](0.03,0) circle(2.5pt);
			\draw[black,fill=white](1.53,0.5) circle(2.5pt);
			%\draw[black,fill=white](2.3,3.5) circle(2.5pt);
			\draw[black,fill=white](3.8,3.7) circle(2.5pt);
			%\draw[black,fill=white](2.77,2.6) circle(2.5pt);
			\draw[black,fill=white](1.93,1.7) circle(2.5pt);
			\draw[black,fill=white](0.4,1.6) circle(2.5pt);

			\draw(0,-.03)node[below]{$x$};
			\draw(1.6,0.5)node[right]{$y=v$};
			\draw(0.35,1.6)node[left]{$u$};
			%\draw(2.8,2.6)node[right]{$z^{\xi+}(x,y)$};
			%\draw(2.25,3.7)node[left]{$z^{\xi-}(x,y)$};
			\draw(4,3.7)node[above]{$z_1$};
			\draw(1.9,1.7)node[left]{$z_2$};
			\end{scope}
			
	 		\begin{scope}[shift={(6,0)}]
			\draw[color=sussexg, line width=1pt]plot[smooth] coordinates{(0,0)(0.2,0.5)(0.2,1)(.3,1.3)(0.4,1.5)(0.4,1.8)(0.7,2)(0.8,2.5)(0.9,2.7)(1.3,3)}; %(1.6,3.4)(1.7,3.5)(1.9,3.7)};
			\draw[color=sussexp, line width=2pt]plot[smooth] coordinates{(0.07,0)(0.3,0.5)(0.7,.7)(1.3,1.1)(1.6,1.2)(2,1.8)(2.3,1.9)(2.4,2)(2.65,2.35)(3.05,3.2)};%(3.3,3.2)(3.5,3.4)(3.8,3.7)};
			%\draw[color=sussexg, line width=1pt]plot[smooth] coordinates{(0.5,0.9)(0.7,1.4)(.7,1.9)(0.75,2.15)};
			\draw[color=sussexp, line width=2pt]plot[smooth] coordinates{(0.55,0.9)(0.75,1.4)(.9,2)(1.2,2.5)(1.3,2.94)(2,3)(2.3,3.1)(3.05,3.15)};%(3.26,3.19)};
			\draw[color=sussexg, line width=1pt]plot[smooth] coordinates{(0.53,0.9+.05)(0.75,1.4+.1)(.9,2+.1)(1.18,2.5)(1.2,2.75)(1.25,2.94+0.01)};
			\draw[color=sussexg, line width=1pt]plot[smooth] coordinates{(1.3,2.94+.05)(2,3+.05)(2.3,3.1+.05)(3.05,3.2)};

			\draw[black,dashed,line width=0.5pt]plot(0,1.4)--(1.4,0);
			\draw[black,fill=white](0.03,0) circle(2pt);
			%\draw[black,fill=white](0.7,2) circle(2pt);
			\draw[black,fill=white](0.53,0.9) circle(2pt);
			\draw[black,fill=white](1.28,2.95) circle(2pt);
			\draw[black,fill=white](3.05,3.17) circle(2pt);
			\draw[black,fill=white](0.25,1.15) circle(2pt);
			\draw[black,fill=white](0.7,0.7) circle(2pt);

			\draw(0,0)node[below]{$x$};
			\draw(0.5+0.05,0.9)node[left]{$y$};
			\draw(0.25,1.15)node[left]{$u$};
			\draw(0.75,0.68)node[right]{$v$};
			%\draw(3.05,2.9)node[right]{$z^{\xi+}(x,y)$};
			\draw(3.05,3.2)node[above]{$z_2$};
			%\draw(0.6,2.1)node[left]{$z^{\xi-}(x,y)$};
			\draw(1.3,3.1)node[left]{$z_1$};
			\end{scope}
		\end{tikzpicture}
	\end{center}
 	\caption{\small Proof of Theorem \ref{thm:nonint}. $\xi+$ geodesics are in purple with medium thickness. $\xi-$ geodesics are in green and thin.}
 \label{fig:nonint}
 \end{figure}

If $y \notin \{u,v\}$, then  $u = \geo{m}{x}{\xi-}\prec y\prec  v = \geo{m}{x}{\xi+}$ (right picture in Figure \ref{fig:nonint}). The geodesics  $\geo{}{x}{\xi+}$ and $\geo{}{x}{\xi-}$ have already split and so cannot meet again by the uniqueness of finite geodesics.   For all $n \geq m$, $\geo{n}{x}{\xi-}\preceq \geo{n}{y}{\xi-}\preceq \geo{n}{y}{\xi+} \preceq \geo{n}{x}{\xi+}$.   Due to this ordering, the meeting of  $\geo{n}{x}{\xi-}$ and  $\geo{n}{y}{\xi+}$   at $z_1$ implies that  $\geo{n}{x}{\xi-}$ and  $\geo{n}{y}{\xi-}$ coalesce at or before $z_1$.   By the uniqueness of finite geodesics again,  $\geo{}{y}{\xi-}$ and $ \geo{}{y}{\xi+}$ agree from $y$ to $z_1$.    The same reasoning applies to $z_2$ and gives that   $\geo{}{y}{\xi-}$ and $ \geo{}{y}{\xi+}$ agree from $y$ to $z_2$ and that  $\geo{}{y}{\xi+}$ and $\geo{}{x}{\xi+}$  coalesce at $z_2$.  Thus now    $\geo{}{y}{\xi-}$ and $ \geo{}{y}{\xi+}$ agree from $y$ through both $z_1$ and $z_2$.     The coalescence of $\geo{}{x}{\xi-}$ with $\geo{}{y}{\xi-}$ and the coalescence of $\geo{}{y}{\xi+}$ with $\geo{}{x}{\xi+}$ then  force $\geo{}{x}{\xi-}$ and $\geo{}{x}{\xi+}$ to  meet again, contradicting what was said above.
 We have now shown that \eqref{thm:nonint.a} implies \eqref{thm:nonint.b}.\smallskip

\eqref{thm:nonint.b} implies \eqref{thm:nonint.c} by the directedness in Theorem \ref{thm1}\eqref{thm1:exist}.
It remains to prove the reverse implication under the regularity condition \eqref{g-reg}.  
Without loss of generality we can assume that $x\cdot\et\le y\cdot\et=k$. If $\geod{k}^x\prec y$, then the extremality of the geodesics $\geo{}{\abullet}{\xi\pm}$ in Theorem \ref{thm:extreme}  and the fact that
$\geod{}^x\cap\geod{}^y=\varnothing$ imply that $\geo{}{x}{\xi-} \cap \geo{}{y}{\xi+} = \varnothing$. Similarly, if $\geod{k}^x\succ y$, then we get $\geo{}{x}{\xi+} \cap \geo{}{y}{\xi-} = \varnothing$. 
%Theorem \ref{thm:nonint} then implies that $\xi\in\supp{\mu_{x,y}}$.
\end{proof}

\begin{proof}[Proof of Theorem \ref{thm:1path}] 
The equivalence  \eqref{eq100}$\iff$\eqref{eq103} of Proposition \ref{pr:supp1}, together with the uniqueness of finite geodesics, gives Theorem \ref{thm:1path}. 
\end{proof} 

%We now turn to the proof of Lemma \ref{lm:isolated}, which says that the support of $\mu_{x,y}$ is nowhere dense.

\begin{proof}[Proof of Lemma \ref{lm:isolated}]
For $\xi \in \Udense$, almost surely 
$\coal{\xi+}(x,y)=\coal{\xi-}(x,y)=\coal{\xi}(x,y)\in\Z^2$.  Proposition \ref{pr:supp2} implies that $\xi$   lies in the complement of the closed set $\supp{\mu_{x,y}}$.  
%
%
%Theorem \ref{thm1}\eqref{thm1:coal} says that for any $\xi\in\ri\Uset$, any $\w\in\Omega_\xi^3$, and any $x,y\in\Z^2$ we have 
%$\coal{\xi\pm}(x,y)\in\Z^2$. Together with \eqref{B+=B-:temp} we get that 
%%Theorem \ref{thm:Bus}\eqref{thm:cocyexist:g} implies that if $\xi\in\Diff$, then for any $\w\in\Omega_\xi$ there is in fact 
%%no $\pm$ distinction.
%%Together, these two facts imply that 
%for any $\w\in\Omega_0$,  $\xi\in\Udense$, and $x,y\in\Z^2$, 
%$\coal{\xi+}(x,y) = \coal{\xi-}(x,y)\in \bbZ^2$. 
%%Proposition \ref{pr:supp2} implies that $\xi \notin \supp{\mu_{x,y}}$.  
%Now, let $x \cdot \et = k$, $y \cdot \et = \ell$, and $\coal{\xi}(x,y) \cdot \et = m$. Take sequences $\zeta_n,\eta_n \in \Udense$ with $\zeta_n \nearrow \xi$ and $\eta_n \searrow \xi$.  By \eqref{pmgeolim}, for $n$ sufficiently large, we have $\geo{k,m}{x}{\zeta_n\pm} = \geo{k,m}{x}{\xi-}$ and $\geo{\ell,m}{y}{\eta_n} = \geo{\ell,m}{y}{\xi+}$. Proposition \ref{pr:supp1} then says that for such an $n$, $]\zeta_n,\xi[$ and $]\xi,\eta_n[$ do not intersect $\supp{\mu_{x,y}}$.
\end{proof}

Next, we prove Theorem \ref{thm:supp-tri} about the relation between the coalescence points and properties of the support of Busemann measures.

%\begin{theorem}\label{th:isopt}
%%Let $\Omega_0$ be the full measure event in Proposition \ref{pr:supp2}.
%%There exists a $T$-invariant Borel set $\Omega_0 \subset \Omega$ with $\bbP(\Omega_0) = 1$ so that 
%For all $\w \in \Omega_0$, all $x,y \in \bbZ^2$, and $\xi \in \ri \Uset$, the following are equivalent.
%\begin{enumerate} [label={\rm(\alph*)}, ref={\rm\alph*}]   \itemsep=3pt 
%\item\label{prop:isopt:1} $\coal{\xi+}(x,y), \coal{\xi-}(x,y) \in \bbZ^2$ and $\coal{\xi+}(x,y) \neq \coal{\xi-}(x,y)$.
%\item\label{prop:isopt:2} $\xi$ is an isolated point of $\supp{\mu_{x,y}}$.
% \end{enumerate}
%\end{theorem}
%

\begin{proof}[Proof of Theorem \ref{thm:supp-tri}]
Take $\w\in\Omega_0$.
Equivalence \eqref{thm:supp-tri:1} follows from Proposition \ref{pr:supp2}. 
Equivalence \eqref{thm:supp-tri:2} follows from the  equivalences in \eqref{thm:supp-tri:1} and \eqref{thm:supp-tri:3}.
%isolated implies not left nor right limit which implies both coalescence points are on the lattice. but then being in the support implies the points have to be different.   for other direction, different coalescence points implies \xi is in the spport and and both on the lattice implies isolated on both sides. 

The two equivalences of   \eqref{thm:supp-tri:3}  are proved the same way. 
We prove the first equivalence  in this form:  $\exists \eta\succ\xi$ such that $\abs{\mu_{x,y}}(\,]\xi, \eta[\,)=0$    $\iff$ $\coal{\xi+}(x,y)\in \bbZ^2$. 

 The implication $\implies$  is contained  in \eqref{eq100}$\implies$\eqref{eq101} of Proposition \ref{pr:supp1}.    

To prove $\rimplies$, let $k=x \cdot \et $ and $\ell=y \cdot \et$,  suppose $\coal{\xi+}(x,y)\in \bbZ^2$, and let $m = \coal{\xi+}(x,y) \cdot \et$.   Take a sequence $\eta_n \in \Udense$ with $\eta_n \searrow \xi$ as $n\to\infty$.   For sufficiently large $n$,   $\geo{k,m}{x}{\xi+} = \geo{k,m}{x}{\eta_n}$ and  $\geo{\ell,m}{y}{\xi+} = \geo{\ell,m}{y}{\eta_n}$, and hence $\coal{\eta_n}(x,y)=\coal{\xi+}(x,y)$.
Implication  \eqref{eq102}$\implies$\eqref{eq100} of Proposition \ref{pr:supp1} gives 
$\abs{\mu_{x,y}}(\,]\xi, \eta_n[\,)=0$.  
%%%%%%    OLD PROOF.  DELETE IF NOT NEEDED?  %%%%%%%
%Since $\zeta_n,\eta_n \in \Udense$, we know that $\coal{\zeta_n}(x,y),\coal{\eta_n}(x,y) \in \bbZ^2$. Let $\coal{\zeta_n}(x,y) \cdot \et = m_n$ and let $\coal{\eta_n}(x,y) \cdot \et = r_n$.  It follows from Proposition \ref{pr:supp1} and the convergence of geodesics that for $n$ sufficiently large, $\geo{k,m_n}{x}{\zeta_n} = \geo{k,m_n}{x}{\xi-}$, $\geo{\ell,m_n}{y}{\zeta_n} = \geo{\ell,m_n}{y}{\xi-}$, $\geo{k,m_n}{x}{\eta_n} = \geo{k,m_n}{x}{\xi+}$, and $\geo{\ell,m_n}{y}{\eta_n} = \geo{\ell,m_n}{y}{\xi+}$. 
%But then since $\xi \in \supp{\mu_{x,y}}$, the equivalence of \eqref{eq110} and \eqref{eq111} in Proposition \ref{pr:supp2}, and the just proved fact that $\coal{\xi+}(x,y) \in \bbZ^2$ imply $\coal{\xi+}(x,y) \neq \coal{\xi-}(x,y)$.
\end{proof}

When the jump process condition \eqref{cond:jumpcond} holds,  call the event in the statement of that condition $\Omega_0^5$. 
As noted when it was introduced, Theorem \ref{thm:+-coal}, which gives the equivalence between  \eqref{cond:jumpcond} and coalescence of $\xi\sigg$ geodesics, is essentially an immediate consequence of Theorem \ref{thm:supp-tri}.

\begin{proof}[Proof of Theorem \ref{thm:+-coal}]
Assume the jump process condition \eqref{cond:jumpcond}. Fix $\w\in\Omega_0\cap\Omega_0^5$, $x,y\in\Z^2$, and $\xi\in\ri\Uset$. 
If $\xi\not\in\supp{\mu_{x,y}}$, then Proposition \ref{pr:supp2} says that $\coal{\xi-}(x,y)=\coal{\xi+}(x,y)\in\Z^2$. In particular, $\geo{}{x}{\xi+}$ coalesces with $\geo{}{y}{\xi+}$ and $\geo{}{x}{\xi-}$ coalesces with $\geo{}{y}{\xi-}$. If, on the other hand, $\xi\in\supp{\mu_{x,y}}$, then it is an isolated point and now Theorem \ref{thm:supp-tri} says that $\coal{\xi\pm}(x,y)\in\Z^2$ (although now the two points are not equal). 
Again, $\geo{}{x}{\xi\pm}$ coalesces with $\geo{}{y}{\xi\pm}$, respectively. 
Statement  \eqref{thm:+-coal.b} is proved.

Now, assume \eqref{thm:+-coal.b} holds and let $\Omega_0^6$ be a full measure event on which the statement \eqref{thm:+-coal.b} holds. Let $\w\in\Omega_0\cap\Omega_0^6$, 
$x,y\in\Z^2$, and $\xi\in\supp{\mu_{x,y}}$. The fact that $\geo{}{x}{\xi\pm}$ and $\geo{}{y}{\xi\pm}$ coalesce, respectively, says that $\coal{\xi\pm}(x,y)\in\Z^2$. Since we assumed  $\xi\in\supp{\mu_{x,y}}$, Proposition \ref{pr:supp2} implies that the two coalescence  points $\coal{\xi\pm}(x,y)$  are not equal.
   Theorem \ref{thm:supp-tri} implies that $\xi$ is isolated.
\end{proof}

The proof of Lemma \ref{lm:downright} is delayed to the end of Section \ref{shockgraphs:pfs}.
When  the jump process condition  \eqref{cond:jumpcond} holds, define 
	\begin{align}\label{Omjump}
	\Omjump=\Omega_0\cap\Omega_0^5.  %\bigcap_{x\in\Z^2} T_x(\Omega_0^5\cap\Omega_0^6)$.
	\end{align}

\begin{proof}[Proof of Theorem \ref{thm:Vcid}]
Part \eqref{thm:Vcid.1}.  Take $\w\in\Omega_0$.
Let $x \cdot \et = k$ and   $\xi = \cid(T_x\w)$. Take  $T_x\w$ in place of $\w$ in \eqref{cid},  let  $\zeta\to\cid(T_x\w)$, and use \eqref{cov-prop}, \eqref{coc-prop}, and \eqref{Busemann-limits},   to get $\B{\xi-}(x,x+e_2)\le\B{\xi-}(x,x+e_1)$ and $\B{\xi+}(x,x+e_1)\le\B{\xi+}(x,x+e_2)$. 
Then by definition $\geo{k}{x}{\xi+}  = \geo{k}{x}{\xi-} =x$,  $\geo{k+1}{x}{\xi+} = x+  e_1$, and $\geo{k+1}{x}{\xi-} =  x + e_2$.  Therefore we cannot have $\coal{\xi+}(x,x+e_i) = \coal{\xi-}(x,x+e_i) \in \bbZ^2$ for either $i \in \{1,2\}$ by uniqueness of finite geodesics. By Proposition \ref{pr:supp2},   $\xi \in \supp \mu_{x,x+e_1}\cap \supp \mu_{x,x+e_2}$.  

For the converse, for $\zeta\prec\cid(T_x\w)\prec\eta $ we have  $\B{\zeta\pm}(x,x+e_2)=\w_x=\B{\eta\pm}(x,x+e_1) $.  Thus 
\[  
  \supp\mu_{x,x+e_1} \subset \,] e_2\,, \cid(T_x\w)] \quad\text{and}\quad
\supp \mu_{x,x+e_2} \subset [\cid(T_x\w), e_1[ .  \] 
Consequently,  $\supp \mu_{x,x+e_1} \cap \supp\mu_{x,x+e_2} \subset \{\cid(T_x\w)\}$.
 
\medskip 

Part \eqref{thm:Vcid.2}.   
 It remains to show  $\aUset\subset\{\cid(T_x\w):x\in\Z^2\}$. 
Assume the jump process condition \eqref{cond:jumpcond}
and that $\w\in\Omjump$.
Suppose $\zeta\in\supp\mu_{x,y}$. 
%\addmath{is it clear what is being said here?}
By Theorem \ref{thm:supp-tri}\eqref{thm:supp-tri:2} the coalescence points  $\coal{\zeta\pm}(x,y)$ are distinct lattice points.    Hence the  geodesics $\geo{}{x}{\zeta+}$ and $\geo{}{x}{\zeta-}$  separate at some  point $z$ where then  $\cid(T_z\w)=\zeta$.  
%Same is true for the  geodesics $\geo{}{y}{\zeta+}$ and $\geo{}{y}{\zeta-}$. 
%%%%%%%%%     OLDER VERSION OF PROOF  %%%%%%
%If it happens that $\geo{}{x}{\xi+}=\geo{}{x}{\xi-}$ for all $x\in\Z^2$, then Theorem \ref{thm:+-coal} would imply that all these geodesics coalesce and thus for any $x,y\in\Z^2$, $\coal{\xi+}(x,y)=\coal{\xi-}(x,y)\in\Z^2$. Proposition \ref{pr:supp2} then says that $\xi\not\in\supp{\mu_{x,y}}$. We thus see that if $\xi\in\aUset$, then
%there must exist an $x\in\Z^2$ such that $\geo{}{x}{\xi\pm}$ eventually separate, say at a point $z\in\Z^2$. 
%Then Theorem \ref{thm:cif}\eqref{thm:cif:(ii)} gives $\xi=\cid(T_z\w)$ and thus $\aUset\subset\{\cid(T_x\w):x\in\Z^2\}$
%and part \eqref{th:V1.1} is proved. 
%%%%%%%%%%%%%%%%%%%%%%%%%%  OLD  %%%%%%%%
\end{proof} 	

The next results relate   $\aUset$ to regularity properties of the shape function $\gpp$.  

% The event $\Omreg$ is defined in Lemma \ref{lm:B+>B-} in Appendix \ref{a:aux}.  

%We turn to the proof of Theorem \ref{th:V1}, which identifies some properties of $\aUset$ and in particular shows that under the jump process condition \eqref{cond:jumpcond}, these exceptional directions are all directions of competition interfaces.

\begin{lemma}\label{lm:V1}
The following holds for   all $\w\in\Omega_0$:  
for all  $\zeta\prec\eta$ in $\ri\Uset$,    $\,]\zeta,\eta[\,\cap\,\aUset\ne\varnothing$ if and only if $\nabla\gpp(\zeta+)\ne\nabla\gpp(\eta-)$. 
\end{lemma}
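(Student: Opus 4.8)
\textbf{Proof plan for Lemma \ref{lm:V1}.}

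The statement relates membership of $\aUset$ in an open interval of directions to a strict gap in the one-sided gradients of $\gpp$ at the endpoints. The natural strategy is to reduce everything to the edges $(x,x+e_1)$ and $(x,x+e_2)$ through $x=0$ and to use the expectation formula \eqref{E[h(B)]}, which expresses $\E[\B{\xi\sigg}(0,e_i)] = \nabla\gpp(\xi\sigg)\cdot e_i$, together with the monotonicity \eqref{mono}. First I would observe that by Lemma \ref{lm:downright}, applied for instance to the horizontal bi-infinite path $x_i = ie_1$, we have $\aUset = \bigcup_{i\in\Z}\supp\mu_{ie_1,(i+1)e_1}$, and by the covariance property \eqref{cov-prop} each $\supp\mu_{ie_1,(i+1)e_1}$ is a translate of $\supp\mu_{0,e_1}$ under $T_{ie_1}$. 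So it suffices to understand when $\,]\zeta,\eta[\,\cap\supp\mu_{0,e_1}\ne\varnothing$, but because $\aUset$ is a shift-invariant union we must phrase the conclusion correctly: the claim is about $\aUset$, not about a single support.

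The key computation is the following. By \eqref{Bmeas} and \eqref{E[h(B)]}, for $\zeta\prec\eta$ in $\ri\Uset$,
\[
\E\bigl[\abs{\mu_{0,e_1}}(\,]\zeta,\eta[\,)\bigr] \;\ge\; \bigl|\E[\B{\eta-}(0,e_1)] - \E[\B{\zeta+}(0,e_1)]\bigr| \;=\;\nabla\gpp(\zeta+)\cdot e_1 - \nabla\gpp(\eta-)\cdot e_1,
\]
using monotonicity \eqref{mono} to drop the absolute value and to see the right-hand side is nonnegative; in fact since $\xi\mapsto\B{\xi\sigg}(0,e_1)$ is monotone, $\abs{\mu_{0,e_1}}(\,]\zeta,\eta[\,) = \B{\zeta+}(0,e_1) - \B{\eta-}(0,e_1)$ exactly, so the inequality above is an equality in expectation. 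Therefore: if $\nabla\gpp(\zeta+)\cdot e_1 > \nabla\gpp(\eta-)\cdot e_1$, then $\E[\abs{\mu_{0,e_1}}(\,]\zeta,\eta[\,)] > 0$, so with positive probability $\,]\zeta,\eta[\,\cap\supp\mu_{0,e_1}\ne\varnothing$; but by Theorem \ref{th:V1}\eqref{th:V1.a} (or directly from \eqref{th:V1} combined with the $0/1$ structure of these events) and the shift-invariance of $\aUset$, the event $\{\,]\zeta,\eta[\,\cap\aUset\ne\varnothing\}$ has probability $0$ or $1$; here, since we are on the fixed full-measure event $\Omega_0$ and the statement is deterministic on $\Omega_0$, I would instead argue directly: on $\Omega_0$, if for every $\w$ one had $\,]\zeta,\eta[\,\cap\aUset=\varnothing$ then in particular $\abs{\mu_{0,e_1}}(\,]\zeta,\eta[\,)=0$ a.s., contradicting positive expectation. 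Hence $\,]\zeta,\eta[\,\cap\aUset\ne\varnothing$ on $\Omega_0$. Note that whether the gradient gap is in the $e_1$ or $e_2$ component is immaterial: concavity and homogeneity force $\nabla\gpp(\zeta+)\cdot e_1 > \nabla\gpp(\eta-)\cdot e_1$ if and only if $\nabla\gpp(\zeta+)\ne\nabla\gpp(\eta-)$, since along $\Uset$ a change in one component forces the opposite change in the other.

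For the converse, suppose $\nabla\gpp(\zeta+) = \nabla\gpp(\eta-)$. By concavity this forces $\gpp$ to be affine on $[\zeta,\eta]$ with a single common gradient, i.e.\ $\nabla\gpp(\xi+) = \nabla\gpp(\xi-) = \nabla\gpp(\zeta+)$ for every $\xi\in\,]\zeta,\eta[\,$, so every such $\xi$ lies in $\Diff$. By Theorem \ref{th:V1}\eqref{th:V1.a}, $\P(\xi\in\aUset)=0$ for each such $\xi$; taking a countable dense subset of $\,]\zeta,\eta[\,$, the support $\supp\mu_{x,y}$ (a closed set) contains no point of this dense set with probability one, hence $\,]\zeta,\eta[\,\cap\supp\mu_{x,y}=\varnothing$ for all $x,y$ on a full-measure event, so $\,]\zeta,\eta[\,\cap\aUset=\varnothing$. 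To keep the statement on the single event $\Omega_0$ one should check that $\Omega_0$ already encodes this (it does, via the facts feeding Theorem \ref{th:V1} and Lemma \ref{lm:isolated} that are used to build $\Omega_0$); otherwise one enlarges $\Omega_0$ by this further null set, which is harmless.

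\textbf{Main obstacle.} The delicate point is bookkeeping the full-measure events: the lemma is phrased deterministically ``for all $\w\in\Omega_0$,'' so both directions must be deduced from properties already baked into $\Omega_0$ rather than proved afresh with their own exceptional null sets. The forward direction needs the quantitative identity $\abs{\mu_{0,e_1}}(\,]\zeta,\eta[\,) = \B{\zeta+}(0,e_1)-\B{\eta-}(0,e_1)$ combined with \eqref{E[h(B)]} and a uniformity-in-$(\zeta,\eta)$ argument (via a countable dense set of rational endpoints and monotonicity) to upgrade ``positive expectation at fixed $\zeta,\eta$'' to a statement holding simultaneously for all $\zeta\prec\eta$ on $\Omega_0$; the converse needs the already-established Theorem \ref{th:V1}\eqref{th:V1.a} together with closedness of supports to handle all $\xi$ in the interval at once. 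Neither step is deep, but getting the quantifiers and the single-event formulation exactly right is where care is required.
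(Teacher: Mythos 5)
Your proposal takes a genuinely different route from the paper, but both directions of your argument have gaps.

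\textbf{Forward direction.} The identity $\abs{\mu_{0,e_1}}(\,]\zeta,\eta[\,) = \B{\zeta+}(0,e_1)-\B{\eta-}(0,e_1)$ together with \eqref{E[h(B)]} does give $\E\bigl[\abs{\mu_{0,e_1}}(\,]\zeta,\eta[\,)\bigr] = \bigl(\nabla\gpp(\zeta+)-\nabla\gpp(\eta-)\bigr)\cdot e_1 > 0$, but from this you can only conclude that $\{\abs{\mu_{0,e_1}}(\,]\zeta,\eta[\,)>0\}$ has \emph{positive} probability. Your ``direct'' contradiction --- that if $\,]\zeta,\eta[\,\cap\aUset=\varnothing$ for every $\w\in\Omega_0$ then the expectation would vanish --- only shows the conclusion is not false on \emph{all} of $\Omega_0$; it does not establish the conclusion for each fixed $\w\in\Omega_0$, which is what the lemma asserts. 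Upgrading positive probability to probability one requires a zero-one law, hence ergodicity of $\P$ under the shifts on the extended Busemann space; Remark~\ref{rk:ergodicity} indicates this is available under the regularity condition \eqref{g-reg}, which is not assumed in Lemma~\ref{lm:V1}. Even granting ergodicity, you would still have to shrink $\Omega_0$ to lie inside every relevant probability-one event, and since $\zeta\prec\eta$ ranges over an uncountable family this needs an extra density-plus-monotonicity reduction. The paper avoids all of this with a \emph{deterministic} input already built into $\Omega_0\subset\Omega_0^4$: Theorem~\ref{thm:cif}\eqref{thm:cif.c} produces, for every $\w\in\Omega_0^4$, a site $y$ with $\cid(T_y\w)\in\,]\zeta,\eta[\,$ whenever $\nabla\gpp(\zeta+)\ne\nabla\gpp(\eta-)$, and Theorem~\ref{thm:Vcid}\eqref{thm:Vcid.1} places that direction in $\aUset$.

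\textbf{Converse direction.} The step ``$\supp\mu_{x,y}$ is closed and contains no point of a countable dense subset of $\,]\zeta,\eta[\,$, hence it is disjoint from $\,]\zeta,\eta[\,$'' is a topological non sequitur: a closed set can avoid a dense set while meeting the interval (a singleton not in the dense set, for instance). Controlling countably many directions via Theorem~\ref{th:V1}\eqref{th:V1.a} is therefore not enough, since the interval contains uncountably many candidate exceptional directions. The paper instead invokes Theorem~\ref{thm:Bus}\eqref{thm:cocyexist:foo}: concavity forces $\nabla\gpp(\xi\sigg)$ to be constant for $\xi\in\,]\zeta,\eta[\,$ and both signs $\sigg$, and that property then asserts that $\xi\mapsto\B{\xi\sig}(x,y,\w)$ is \emph{literally constant} on $\,]\zeta,\eta[\,$ for every $\w\in\Omega$, a deterministic statement controlling the whole uncountable interval at once. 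Combined with geodesic coalescence (Theorem~\ref{thm1}\eqref{thm1:coal} applied at a point of $\Udense\cap\,]\zeta,\eta[\,$, which is built into $\Omega_0$) and Theorem~\ref{thm:supp-tri}\eqref{thm:supp-tri:1}, this yields $\supp\mu_{x,y}\cap\,]\zeta,\eta[\,=\varnothing$ for every $\w\in\Omega_0$.
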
 

%%%%%%%%   Old version %%%%%%%
%\begin{lemma}\label{lm:V1}
%The following hold for   all $\w\in\Omega_0$. %$\P$-almost every $\w$.
%\begin{enumerate}[label={\rm(\alph*)}, ref={\rm\alph*}] \itemsep=3pt 
%\item\label{lm:V1.0} 
%Let $\zeta\prec\eta$ in $\ri\Uset$.  Then $\,]\zeta,\eta[\,\cap\,\aUset\ne\varnothing$ if and only if $\nabla\gpp(\zeta+)\ne\nabla\gpp(\eta-)$. 
%  \item\label{lm:V1.0'} 
%Let $\zeta\prec\eta$ in $\ri\Uset$.  If $\nabla\gpp(\zeta-)=\nabla\gpp(\eta-)$, then $[\zeta,\eta[\,\cap\,\aUset=\varnothing$. Similarly, if 
%$\nabla\gpp(\zeta+)=\nabla\gpp(\eta+)$, then $]\zeta,\eta]\cap\,\aUset=\varnothing$. 
%Under  the regularity condition \eqref{g-reg}  these implications are  equivalences on the full-probability event   $\Omreg\subset\Omega_0$.
% \end{enumerate}
%\end{lemma} 

\begin{proof} 

%Part \eqref{lm:V1.0}.
If $\nabla\gpp(\zeta+)\ne\nabla\gpp(\eta-)$,  Theorem \ref{thm:cif}\eqref{thm:cif.c} says   that $]\zeta,\eta[$ contains some $\cid(T_x\w)$, which  by Theorem \ref{thm:Vcid}\eqref{thm:Vcid.1}  is a member of $\aUset$. 

%Assume first that $\nabla\gpp(\zeta+)\ne\nabla\gpp(\eta-)$. Note that Theorems \ref{thm:cif}\eqref{thm:cif:(ii)} and \ref{thm:nonint} imply that  $\{\cid(T_x\w):x\in\Z^2\}\subset\aUset$. This and Theorem \ref{thm:cif}\eqref{thm:cif.c} imply that $]\zeta,\eta[\,\subset\aUset$.

If $\nabla\gpp(\zeta+)=\nabla\gpp(\eta-)$, then by concavity,   $\nabla\gpp(\zeta+)=\nabla\gpp(\eta-)=\nabla\gpp(\xi\sig)$ for all $\xi\in\,]\zeta,\eta[$ and $\sigg\in\{-,+\}$.  By  Theorem \ref{thm:Bus}\eqref{thm:cocyexist:foo},    $\B{\xi\sig}(x,y,\w)$ is constant over $\xi\in\,]\zeta,\eta[$ and  $\sigg\in\{-,+\}$, for any $x,y\in\Z^2$ and $\w\in\Omega$.  Consequently,  for  any given $x$,  the geodesics  $\geo{}{x}{\xi\sig}$ match.    By Theorem \ref{thm1}\eqref{thm1:coal}, all these geodesics  coalesce on the event $\Omega_0$.   Hence  the coalescence points $\coal{\xi\sig}(x,y)$ also match.  By Theorem \ref{thm:supp-tri}\eqref{thm:supp-tri:1}, no point $\xi\in\,]\zeta,\eta[$ is a member of $\aUset$. 
\end{proof}

\begin{proof}[Proof of Theorem \ref{th:V1}]
Part \eqref{th:V1.a}. Let $\xi \in \Diff$.
Theorem \ref{thm1}\eqref{thm1:coal} says that almost surely $\coal{\xi\pm}(x,x+e_i)\in\Z^2$   for    $x\in\Z^2$ and $i\in\{1,2\}$. 
Theorem \ref{thm:Bus}\eqref{thm:cocyexist:g} says that there is  no $\pm$ distinction.
Hence $\mathbb{P}(\coal{\xi-}(x,x+e_i)=\coal{\xi+}(x,x+e_i)\in\Z^2)=1$ and therefore $\bbP(\xi \in \supp \mu_{x,x+e_i})=0$ by Proposition \ref{pr:supp2}.  A union bound implies that $\bbP(\exists x\in\Z^2,i\in\{1,2\}:\xi \in \supp \mu_{x,x+e_i})=0$. 
%\addmath{do we want to spell this out?  it's commented out for now}
%If $\xi\not\in\supp\mu_{x,x+e_i}$ for all $x$ and all $i$, then $\xi\not\in\supp{\mu_{x,y}}$ for any $x,y$ because $\abs{\mu_{x,y}}$ is a sum of measures of the form $\abs{\mu_{z,z+e_i}}$.
The cocycle property \eqref{coc-prop} implies then that $\bbP(\xi \in \aUset)=0$.

Let $\xi \in\ri\Uset\setminus\Diff$.  Theorem \ref{thm:cif}\eqref{thm:cif.c1} implies that $\xi$ is among $\{\cid(T_x\w): x\in\Z^2\}$, and these lie in $\aUset$ by   Theorem \ref{thm:Vcid}\eqref{thm:Vcid.1}. 

%Conversely, suppose that $\xi \notin \Diff$.
%By homogeneity $\gpp(\xi)=\xi\cdot\nabla\gpp(\xi\pm)$. This  forces  $\nabla\gpp(\xi-)\cdot e_i\ne\nabla\gpp(\xi+)\cdot e_i$ for both $i\in\{1,2\}$. 
%By \eqref{E[h(B)]}, $\bbE[\B{\xi+}(x,x+e_i)] \neq \bbE[\B{\xi-}(x,x+e_i)]$ and
%	\[\bbP(\xi\in\aUset)\ge\bbP(\xi \in \supp \mu_{x,x+e_i})\ge\bbP\bigl(\B{\xi+}(x,x+e_i) \neq \B{\xi-}(x,x+e_i)\bigr) > 0.\]

\medskip 

Part \eqref{th:V1.b}. 
 By definition,  the complement of $\fUset$ is the   union of the (at most countably many)  maximal open intervals $\,]\zeta,\eta[\,$ such that $\nabla\gpp(\zeta+)=\nabla\gpp(\eta-)$ and their differentiable endpoints.  Lemma \ref{lm:V1} together with  part \eqref{th:V1.a} proved above imply that 
$\P(\aUset\cap\fUset^c\ne\varnothing)=0$.  Hence almost surely, $\{\cid(T_x\w): x\in\Z^2\}\subset\aUset\subset\fUset$. 

% By definition,  $\fUset$ is the complement in $\ri\Uset$ of the union 
%\begin{align*}
%\{ \, ]\zeta,\eta[\,:  \nabla\gpp(\zeta+)=\nabla\gpp(\eta-)\} 
%\cup  \{  [\zeta,\eta[\,:  \nabla\gpp(\zeta-)=\nabla\gpp(\eta-)\}
%\cup  \{  \,]\zeta,\eta]:  \nabla\gpp(\zeta+)=\nabla\gpp(\eta+)\}  
%\end{align*} 
%that can be expressed in terms   of  an at most countable family of   nonempty  intervals.   
%Lemma \ref{lm:V1} implies $\P(\aUset\subset\fUset)=1$.
 
For the density claim it is enough to prove that  $\{\cid(T_x\w): x\in\Z^2\}$ is dense in $\fUset$. 
Suppose first that   $\xi\in\fUset\cap\Diff$.  Then $\xi$ is not on a closed linear segment of $\gpp$, and hence for any $\zeta\prec\xi\prec\eta$ we have $\nabla\gpp(\zeta+)\ne\nabla\gpp(\eta-)$.  By Theorem \ref{thm:cif}\eqref{thm:cif.c} the open interval $\,]\zeta,\eta[\,$ contains a value $\cid(T_x\w)$.  
 The other case is $\xi\in\fUset\setminus\Diff$. Then  $\xi\in\{\cid(T_x\w): x\in\Z^2\}$  by Theorem \ref{thm:cif}\eqref{thm:cif.c1}. 
\end{proof}

The next proof, of Theorem \ref{th:V2},  identifies $\Uset \backslash \aUset$ in terms of directions in which (Busemann) geodesic uniqueness holds.

\begin{proof}[Proof of Theorem \ref{th:V2}]
Part \eqref{th:V2.a}. 
Fix $\w\in\Omega_0$ and $\xi\in\ri\Uset$.
Suppose first that there exists an $x \in \bbZ^2$ with the property that $\geo{}{x}{\xi+} \neq \geo{}{x}{\xi-}$.  These geodesics separate at some point $z$  where then $\xi=\cid(T_z\w)\in\aUset$. 
%  Since the two geodesics cannot meet after having split,  Theorem \ref{thm:nonint} implies $\xi\in\supp\mu_{z+e_1,z+e_2}\subset\aUset$.
%Then 
%	\[\w_z=\B{\xi+}(z,z+e_1)\le\B{\xi+}(z,z+e_2)\quad\text{and}\quad\w_z=\B{\xi-}(z,z+e_2)\le\B{\xi+}(z,z+e_1).\]
%%if \xi\prec\cid\circ T_z, we woud have \B{\xi\pm}(z,z+e_1)>\B{\xi\pm}(z,z+e_2)
%%so \xi\suceq\cid\circ T_z.  similarly, \xi\preceq\cid\circ T_z
%This and \eqref{cid} imply that $\xi = \cid(T_z \w)$.
%Theorems \ref{thm1}\eqref{thm1:exist}, \ref{thm:cif}\eqref{thm:cif:(ii)}, and \ref{thm:nonint} imply that $\cid(T_x\w)\in\supp{\mu_{x+e_1,x+e_2}}\subset\aUset$.  
%Thus, $\xi \in \aUset$. 
If, on the other hand, $\geo{}{x}{\xi+} = \geo{}{x}{\xi-}$ for all $x \in \bbZ^2$, but there exist $x$ and $y$ for which $\geo{}{x}{\xi}$ and $\geo{}{y}{\xi}$ do not coalesce, then   Proposition \ref{pr:supp2}  implies that $\xi \in \supp{\mu_{x,y}} \subset \aUset$.  

Conversely, suppose $\xi \in \aUset$ and let $x,y$ be such that $\xi \in \supp{\mu_{x,y}}$. Then by Theorem \ref{thm:nonint}, possibly after interchanging the roles of $x$ and $y$, we have $\geo{}{x}{\xi+} \cap \geo{}{y}{\xi-} = \varnothing$. In particular, these two geodesics do not coalesce. Part \eqref{th:V2.a} is proved.

%\smallskip 

Part \eqref{th:V2.c}.  Assume the jump process condition \eqref{cond:jumpcond} and let $\w\in\Omjump$. Suppose that 
$\geo{}{x}{\xi+}=\geo{}{x}{\xi-}=\geo{}{x}{\xi}$.   By Theorem \ref{thm:+-coal},  
  $\geo{}{y}{\xi\sig}$ coalesces with $\geo{}{x}{\xi}$ for all   $y\in\Z^2$ and both signs $\sigg\in\{-,+\}$.    By the uniqueness of finite geodesics $\geo{}{y}{\xi+}=\geo{}{y}{\xi-}$.  Now  all these geodesics coalesce.  Part \eqref{th:V2.a} implies  $\xi\notin\aUset$.  
% That $\xi\notin\aUset$ implies  $\geo{}{x}{\xi+}=\geo{}{x}{\xi-}$ follows from the already proved part \eqref{th:V2.a}. 
% \eqref{th:V2.c} holds.

Parts \eqref{th:V2.b} and \eqref{th:V2.d}  follow  from  \eqref{th:V2.a} and \eqref{th:V2.c}, respectively, because  under the regularity condition \eqref{g-reg}, Theorem \ref{thm:extreme} implies that the uniqueness of a $\Uset_\xi$-directed geodesic out of $x$ is equivalent to  $\geo{}{x}{\xi+}=\geo{}{x}{\xi-}$. 
\end{proof}

The next lemma completes the proof of Theorem \ref{th:capst}.  Recall the event $\Omega_0$ defined in \eqref{Om0}.

\begin{lemma}\label{lm:capst-aux}
Assume  the regularity condition \eqref{g-reg}. If $\w\in\Omega_0$ and $\xi \in \aUset$, then $\Uset_\xi = \{\xi\}$.
\end{lemma}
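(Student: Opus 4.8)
The plan is to argue by contradiction, using the regularity condition \eqref{g-reg} together with the machinery already developed. Suppose $\w\in\Omega_0$, $\xi\in\aUset$, but $\Uset_\xi\ne\{\xi\}$, so that $\xi$ lies on a nondegenerate linear segment of $\gpp$. Let $\ximin\preceq\ximax$ be the endpoints of $\Uset_\xi$, so that $\ximin\prec\ximax$. The key point of the regularity condition \eqref{g-reg} is that $\gpp$ is differentiable at the endpoints of its linear segments, so $\ximin,\ximax\in\Diff$. I will distinguish whether $\xi$ is itself an endpoint. If $\ximin\prec\xi\prec\ximax$, then $\xi$ is in the relative interior of the segment, hence $\gpp$ is differentiable at $\xi$ as well, with $\nabla\gpp(\xi-)=\nabla\gpp(\xi+)=\nabla\gpp(\ximin+)=\nabla\gpp(\ximax-)$; in general $\xi\in\{\ximin,\ximax\}$ is possible too, but in all cases one gets $\nabla\gpp(\ximin+)=\nabla\gpp(\ximax-)$ because the whole closed interval $[\ximin,\ximax]$ carries a single slope.

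The main idea is then to invoke Lemma \ref{lm:V1}: since $\nabla\gpp(\ximin+)=\nabla\gpp(\ximax-)$, for all $\w\in\Omega_0$ we have $\,]\ximin,\ximax[\,\cap\,\aUset=\varnothing$. So if $\xi\in\,]\ximin,\ximax[$, this already contradicts $\xi\in\aUset$. It remains to handle the possibility that $\xi$ is an endpoint, say $\xi=\ximin$ (the case $\xi=\ximax$ being symmetric). Here I use that $\ximin\in\Diff$, so there is no $\pm$ distinction in the Busemann functions at $\ximin$ by Theorem \ref{thm:Bus}\eqref{thm:cocyexist:g}. Moreover, since $\gpp$ is differentiable at $\ximin$ and $\ximin$ is the left endpoint of a linear segment, $\nabla\gpp$ is constant on $[\ximin,\ximax]$ and, by concavity, $\nabla\gpp(\zeta+)=\nabla\gpp(\ximin-)=\nabla\gpp(\ximin)$ for all $\zeta$ in a suitable left-neighborhood is false, but rather: differentiability at $\ximin$ together with it being on a closed linear segment means that actually $\ximin$ lies on the closed segment $[\ximin,\ximax]$, and one can apply Lemma \ref{lm:V1} to a slightly enlarged interval. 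Precisely, because $\ximin\in\Diff$ and is the endpoint of the linear segment $[\ximin,\ximax]$ but differentiability forces $\nabla\gpp(\ximin-)=\nabla\gpp(\ximin+)=\nabla\gpp(\ximax-)$, the function $\xi\mapsto\B{\xi\sig}(x,y)$ is continuous at $\ximin$ from the right (taking values into $[\ximin,\ximax]$), and by part \eqref{th:V1.a} of Theorem \ref{th:V1}, $\P(\ximin\in\aUset)=0$. However $\aUset$ is a fixed random set defined on $\Omega_0$, and we need the pointwise statement, not just almost sure; the cleaner route is: apply Lemma \ref{lm:V1} with $\zeta=\ximin$ and any $\eta\in\,]\ximin,\ximax[$ to conclude $]\ximin,\eta[\,\cap\,\aUset=\varnothing$, and separately argue $\ximin\notin\supp\mu_{x,y}$ for all $x,y$ by using that $\coal{\ximin-}(x,y)=\coal{\ximin+}(x,y)$ via $\ximin\in\Diff$ and Theorem \ref{thm1}\eqref{thm1:coal}, then Proposition \ref{pr:supp2}.

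So the cleanest structure of the proof: (1) observe $\xi$ on a nondegenerate segment forces, via \eqref{g-reg}, that $\nabla\gpp$ is constant on a closed interval $[\zeta_0,\eta_0]\subset\ri\Uset$ with $\zeta_0\preceq\xi\preceq\eta_0$ and $\zeta_0\prec\eta_0$, and that $\zeta_0,\eta_0\in\Diff$; (2) by Theorem \ref{thm:Bus}\eqref{thm:cocyexist:foo}, $\B{\xi'\sig}(x,y,\w)$ is constant over $\xi'\in[\zeta_0,\eta_0]$ and $\sigg\in\{-,+\}$ for every $\w\in\Omega$ and $x,y\in\Z^2$ — this directly gives $\xi'\mapsto\B{\xi'\sig}(x,y)$ constant on all of $[\zeta_0,\eta_0]$, including the endpoints, hence $\xi\notin\supp\mu_{x,y}$ for every $x,y$; (3) conclude $\xi\notin\aUset$, contradicting the hypothesis. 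This makes step (2) the engine of the proof and avoids all subtleties about endpoints versus interior points, since Theorem \ref{thm:Bus}\eqref{thm:cocyexist:foo} applies on the closed interval. The main obstacle I anticipate is confirming that the cited parametrization-of-Busemann-process result (constancy of $\B{\xi\sig}$ across a segment of constant slope) is available as a deterministic, pointwise statement on $\Omega_0$ (or all of $\Omega$) rather than merely almost surely — this is exactly what is needed here and is precisely the content used in the proof of Lemma \ref{lm:V1}, so it should be citable verbatim. Given that, the proof is short:

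\begin{proof}
Suppose, for a contradiction, that $\w\in\Omega_0$, $\xi\in\aUset$, but $\Uset_\xi\ne\{\xi\}$. Then $\xi$ lies on a nondegenerate linear segment $[\ximin,\ximax]$ of $\gpp$ with $\ximin\prec\ximax$. Pick $\zeta_0,\eta_0\in\ri\Uset$ with $\ximin\preceq\zeta_0\prec\eta_0\preceq\ximax$ and $\zeta_0\preceq\xi\preceq\eta_0$; then $\nabla\gpp(\zeta_0+)=\nabla\gpp(\eta_0-)$, and in fact $\nabla\gpp(\xi'\sigg)=\nabla\gpp(\zeta_0+)$ for all $\xi'\in[\zeta_0,\eta_0]$ and $\sigg\in\{-,+\}$ by concavity. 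By Theorem \ref{thm:Bus}\eqref{thm:cocyexist:foo}, for any $x,y\in\Z^2$ the map $\xi'\mapsto\B{\xi'\sig}(x,y,\w)$ is constant over $\xi'\in[\zeta_0,\eta_0]$ and $\sigg\in\{-,+\}$. Hence for any $\zeta\prec\eta$ in $[\zeta_0,\eta_0]$ we have $\B{\eta+}_{x,y}-\B{\zeta+}_{x,y}=0$ and $\B{\eta-}_{x,y}-\B{\zeta-}_{x,y}=0$, so by \eqref{Bmeas}, $\mu_{x,y}^{[\zeta_0,\eta_0]}\equiv 0$, whence $\xi\notin\supp\mu_{x,y}$. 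Since $x,y\in\Z^2$ were arbitrary, $\xi\notin\aUset$ by \eqref{df:aUset}, a contradiction. Therefore $\Uset_\xi=\{\xi\}$.
\end{proof}
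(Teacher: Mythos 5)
Your final proof has a genuine gap at the endpoint case. After picking $\zeta_0,\eta_0$ with $\ximin\preceq\zeta_0\prec\eta_0\preceq\ximax$ and $\zeta_0\preceq\xi\preceq\eta_0$, you establish that $\mu_{x,y}^{[\zeta_0,\eta_0]}\equiv 0$ and conclude ``whence $\xi\notin\supp\mu_{x,y}$''. That conclusion is correct when $\xi$ lies in the \emph{open} interval $]\zeta_0,\eta_0[$, since the complement of the support is the largest \emph{open} set of zero $|\mu_{x,y}|$-measure, and $]\zeta_0,\eta_0[$ is such a set containing $\xi$. But if $\xi=\ximin$ you are forced to take $\zeta_0=\xi$ (and symmetrically if $\xi=\ximax$): constancy of $\B{\bbullet\sig}_{x,y}$ on $[\xi,\ximax]$ tells you nothing about the measure just to the \emph{left} of $\xi$, so $\xi$ could still be an accumulation point of $\supp\mu_{x,y}$ from the left. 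Under \eqref{g-reg} the endpoint scenario $\xi\in\{\ximin,\ximax\}$ is not excluded, so your proof is incomplete precisely where you claimed the ``cleaner structure'' would avoid endpoint subtleties.

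Your earlier discussion actually identified the right ingredient but then misapplied it. You suggested handling $\xi=\ximin$ by showing $\coal{\ximin-}(x,y)=\coal{\ximin+}(x,y)$ via $\ximin\in\Diff$ and Theorem \ref{thm1}\eqref{thm1:coal}. But that theorem only applies on the $\xi$-dependent event $\Omega_{\ximin}^3$, and $\Omega_0$ is only guaranteed to sit inside $\Omega_\zeta^3$ for $\zeta\in\Udense$ or $\zeta\notin\Diff$; since $\ximin\in\Diff$ need not lie in $\Udense$, you cannot invoke coalescence at $\ximin$ directly. This is exactly why the paper's proof instead picks $\zeta\in\Udense\cap\Uset_\xi$, which exists since $\Uset_\xi$ is a nondegenerate interval and $\Udense$ is dense: then $\Omega_0\subset\Omega_\zeta^3$, so Theorem \ref{thm1}\eqref{thm1:cont} gives $\coal{\zeta-}(x,y)=\coal{\zeta+}(x,y)\in\Z^2$; next, Theorem \ref{thm:Bus}\eqref{thm:cocyexist:foo} gives $\B{\zeta\pm}=\B{\xi\pm}$, which via the no-ties property (available since $\Omega_0\subset\Omega_\zeta^1$) forces $\geo{}{\abullet}{\xi\pm}=\geo{}{\abullet}{\zeta}$, hence $\coal{\xi\pm}(x,y)=\coal{\zeta}(x,y)\in\Z^2$; finally Theorem \ref{thm:supp-tri}\eqref{thm:supp-tri:1} yields $\xi\notin\supp\mu_{x,y}$. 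The coalescence step is essential: constancy of the Busemann process alone is not sufficient, as Proposition \ref{pr:supp2} makes clear by requiring both $\B{\xi-}=\B{\xi+}$ \emph{and} $\coal{\xi\pm}\in\Z^2$, and the coalescence must be imported from a direction $\zeta\in\Udense$ where it is known to hold on $\Omega_0$.
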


\begin{proof}
%Suppose that $\Uset_\xi \neq \{\xi\}$. Recall the dense set of differentiability directions $\Udense$ introduced just before \eqref{Om0} and the definition of $\Omega_0$ in that equation. Because $\Uset_\xi$ is a line segment in $\Uset$, the intersection $\Udense \cap \Uset_\xi$ is not empty. Theorem \ref{thm:Bus}\eqref{thm:cocyexist:foo} implies that for all $x,y \in \bbZ^2$ and all directions $\zeta,\eta \in \Uset_\xi, \B{\zeta+}(x,y) = \B{\eta-}(x,y)$. For $\zeta \in \Udense \cap \Uset_\xi$, we have $\Omega_\zeta^3 \subset \Omega_0$, where $\Omega_\zeta^3$ is the event introduced in Theorem \ref{thm1}.  Theorem \ref{thm1}\eqref{thm1:cont} then implies that $\coal{\xi+}(x,y) = \coal{\xi-}(x,y)\in\Z^2$ for each pair $x,y$. Theorem \ref{thm:supp-tri}\eqref{thm:supp-tri:1} now shows that $\xi \notin \aUset$.
Take $\w\in\Omega_0$ and
suppose  $\Uset_\xi \neq \{\xi\}$. Recall the dense set of differentiability directions $\Udense$ introduced just before \eqref{Om0}. Because $\Uset_\xi$ is a line segment in $\Uset$, there exists a $\zeta\in\Udense \cap \Uset_\xi$.
By its definition, $\Omega_0\subset\Omega_\zeta^3$, where $\Omega_\zeta^3$ was introduced in Theorem \ref{thm1}. 
Theorem \ref{thm1}\eqref{thm1:cont} then implies that $\coal{\zeta-}(x,y) = \coal{\zeta+}(x,y)\in\Z^2$ for each pair $x,y$. 
Since $\zeta,\xi\in\Uset_\xi$ and we assumed \eqref{g-reg}, Theorem \ref{thm:Bus}\eqref{thm:cocyexist:foo} implies that for all $x,y \in \bbZ^2$, $\B{\zeta-}(x,y) = \B{\zeta+}(x,y)=\B{\xi-}(x,y)=\B{\xi+}(x,y)$. 
Consequently, $\coal{\xi-}(x,y) = \coal{\xi+}(x,y)\in\Z^2$ for all $x,y\in\Z^2$ and Theorem \ref{thm:supp-tri}\eqref{thm:supp-tri:1} shows that $\xi \notin \aUset$.
\end{proof}

\begin{proof}[Proof of Theorem \ref{th:exp1}]
Part  \eqref{th:exp1.a}. Theorem \ref{thm-Coupier} implies  that for $\xi\in\aUset$, $\geo{}{x}{\xi-}$ and $\geo{}{x}{\xi+}$  are the only $\xi$-directed geodesics out of $x$.  

Part  \eqref{th:exp1.b}.  Consider any  three  geodesics with the same asymptotic direction $\xi\in\ri\Uset$.  If $\xi\in(\ri\Uset)\setminus\aUset$ then by Theorem \ref{th:capst}\eqref{th:capst.c} all three coalesce.   If $\xi\in\aUset$ then by part  \eqref{th:exp1.a} 
  at least  two of these three geodesics must have the same sign $+$ or $-$.  By Theorem \ref{th:capst}\eqref{th:capst.d}  these two coalesce.   
  
Part \eqref{th:exp1.c}. Consider a sequence $v_n$ as in the first part of the statement and call $k = x \cdot \ehat$. From an arbitrary subsequence, extract a further subsequence $n_\ell$ so that $\geo{k,\infty}{x}{v_{n_\ell}}$ converges to a semi-infinite geodesic $\pi_{k,\infty}$  vertex-by-vertex. Let $\zeta \prec \xi \prec \eta$. Using the fact that $v_n/n \to \xi$ and directedness of $\geo{}{x}{\zeta+}$ and $\geo{}{x}{\eta-}$, for all sufficiently large $n$, we must have $\geo{n}{x}{\zeta+} \prec v_n \prec \geo{n}{x}{\eta-}$.  By uniqueness of finite geodesics, we must then have  $\geo{m}{x}{\zeta+} \prec \geo{m}{x}{v_n} \prec \geo{m}{x}{\eta-}$ for all $m \geq x \cdot \ehat$ and all such $n$. It then follows by sending $\zeta,\eta \to \xi$ that $\pi$ must be $\xi$-directed. Therefore, by part \eqref{th:exp1.a}, $\pi \in \{\geo{}{x}{\xi+}, \geo{}{x}{\xi-}\}$. Let $r = \spt{\xi}{x} \cdot \ehat$ and let $n_\ell$ be sufficiently large that $\geo{k,r+1}{x}{v_{n_\ell}} = \pi_{k,r+1}$. By definition of the competition interface, since $v_{n_\ell} \prec \varphi_{n_\ell}^{\spt{\xi}{x}}$ we must have $\pi_{r+1} =\spt{\xi}{x} + e_2$, which identifies $\pi$ as $\geo{}{x}{\xi-}$.  As the subsequence was arbitrary,  the result follows. The second claim is similar.

Part \eqref{th:exp1:d}. By Theorem \ref{thm1}\eqref{thm1:direct}, any semi-infinite geodesic emanating from $x$ is $\xi$-directed for some $\xi \in \Uset$. Combining part \eqref{th:exp1.a} and Theorem \ref{th:capst}\eqref{th:capst.c}, the only claim which remains to be shown is that $\geo{}{x}{e_i}$ is the only $e_i$-directed geodesic. This comes from Lemma \ref{lem:notriv}.   %CHANGED HERE
\end{proof}

\section{Webs of instability: proofs} \label{sec:webpf}

Recall again event $\Omega_0$ constructed in  \eqref{Om0} and fix $\w\in\Omega_0$ throughout this section.  Unless otherwise indicated, an assumption of the form  $\zeta\preceq\eta$ includes the case $\zeta=\eta=\xi$. 

\subsection{Instability points and graphs}\label{shockgraphs:pfs}
%Fix $\w\in\Omega_0$ throughout the whole section.

\begin{proof}[Proof of Lemma \ref{lem:shockequiv}]
Suppose  $x^*$ is a  $[\zeta,\eta]$-instability point. Then there exists a direction $\xi\in[\zeta,\eta]\cap\supp\mu_{x+e_1,x+e_2}$, which by Theorem \ref{thm:nonint} implies $\geo{}{x+e_1}{\xi-}\cap\geo{}{x+e_2}{\xi+}=\varnothing$.  Then the ordering of geodesics implies $\geo{}{x+e_1}{\zeta-}\cap\geo{}{x+e_2}{\eta+}=\varnothing$.  %The case of a $]\zeta,\eta[$-instability point is similar.

%For the other direction suppose $x^*$ is not a $]\zeta,\eta[$-instability point. Then $\mu_{x+e_1,x+e_2}(\,]\zeta,\eta[\,)=0$ and by  the equivalence of \eqref{eq100} and \eqref{eq102} in Proposition \ref{pr:supp1} this implies that $\geo{}{x+e_1}{\zeta+}\cap\geo{}{x+e_2}{\eta-}\ne\varnothing$. 

If $x^*$ is not a $[\zeta,\eta]$-instability point, then combining Propositions \ref{pr:supp1} and \ref{pr:supp2} 
we have that 
	\[z=\coal{\zeta-}(x+e_1,x+e_2)=\coal{\zeta+}(x+e_1,x+e_2)=\coal{\eta-}(x+e_1,x+e_2)=\coal{\eta+}(x+e_1,x+e_2)\in\Z^2,\]
$\geo{}{x+e_1}{\zeta\pm}$ and $\geo{}{x+e_1}{\eta\pm}$ all match until $z$ is reached, and $\geo{}{x+e_2}{\zeta\pm}$ $\geo{}{x+e_2}{\eta\pm}$ also all match until $z$ is reached. 
In particular, $z\in\geo{}{x+e_1}{\zeta-}\cap\geo{}{x+e_2}{\eta+}$.
\end{proof}

The following is immediate from the definitions and monotonicity.

\begin{lemma}\label{no-cross}
Let $\zeta\preceq\eta$ in $\ri\Uset$. A directed path  in $\dS{[\zeta,\eta]}$ can never cross a directed path in $\G{\eta+}$ from right to left {\rm (}i.e.~along a dual edge in the $-e_1$ direction{\rm )} nor a directed path in $\G{\zeta-}$ from above to below {\rm (}i.e.~along a dual edge in the $-e_2$ direction{\rm )}. 
\end{lemma}

The next lemma  characterizes the ancestors of an instability point in the graph $\dG{\cup[\zeta,\eta]}$.  

\begin{lemma}\label{lm:an1}
Let $\zeta\preceq\eta$ in $\ri\Uset$ and $x^*\in\shock{[\zeta,\eta]}$.  The following statements \eqref{an110} and \eqref{an111} are equivalent for any point $y^*\in\Z^{2*}$. 
\begin{enumerate} [label={\rm(\roman*)}, ref={\rm\roman*}]   \itemsep=3pt 
\item\label{an110}    There is a directed path from $y^*$ to $x^*$ in the graph $\dG{\cup[\zeta,\eta]}$. 
\item\label{an111}    $y^*\ge x^*$ and $y^*$ is between the two geodesics $\geo{}{x^*+\exstar}{\zeta-}$ and $\geo{}{x^*-\exstar}{\eta+}$, embedded as paths on $\R^2$.
%\item\label{an112}  
\end{enumerate}
 \end{lemma}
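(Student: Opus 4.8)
The plan is to prove the equivalence by unwinding the definition of $\dG{\cup[\zeta,\eta]} = \dG{\zeta-}\cup\dG{\eta+}$ and exploiting the duality between dual-lattice edges and primal geodesic graphs, together with planarity. Recall from Section~\ref{sec:web} that $x^*$ points to $x^*-e_2$ in $\dG{\cup[\zeta,\eta]}$ iff $x-\etstar$ points to $x+\exstar$ in $\G{\zeta-}$ (i.e.\ the $\zeta-$ geodesic from $x-\etstar$ takes an $e_1$-step), and $x^*$ points to $x^*-e_1$ iff $x^*-\etstar$ points to $x^*-\exstar$ in $\G{\eta+}$. Writing $x^* = x+\etstar$, the two neighbors of $x^*$ on the primal lattice relevant here are $x^*+\exstar = x+e_2$ and $x^*-\exstar = x+e_1$; thus $\dG{\cup[\zeta,\eta]}$-edges out of $x^*$ are "dual" to splitting behavior of $\G{\zeta-}$ along the lower boundary and of $\G{\eta+}$ along the upper boundary. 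The key geometric input is Lemma~\ref{no-cross}: a directed path of $\dG{\cup[\zeta,\eta]}$ can never cross a $\G{\eta+}$ path from right to left, nor a $\G{\zeta-}$ path from above to below. Since $\dG{\cup[\zeta,\eta]}$ edges point only in the $-e_1$ and $-e_2$ directions, any ancestor $y^*$ of $x^*$ automatically satisfies $y^*\ge x^*$ coordinatewise.

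For the implication \eqref{an110}$\implies$\eqref{an111}: given a directed path $y^*=w^*_0\to w^*_1\to\cdots\to w^*_N=x^*$ in $\dG{\cup[\zeta,\eta]}$, the monotonicity of steps gives $y^*\ge x^*$. For the betweenness, I would argue that the path cannot cross the geodesic $\geo{}{x^*+\exstar}{\zeta-} = \geo{}{x+e_2}{\zeta-}$ from above to below, by Lemma~\ref{no-cross}; since the dual path terminates at $x^*$, which sits just "above-right" of the step of $\G{\zeta-}$ at $x+e_2$ (more precisely, $x^*$ lies on the correct side of that geodesic's embedded path), the path stays weakly on the side of $\geo{}{x+e_2}{\zeta-}$ that contains $x^*$. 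Symmetrically, by the other half of Lemma~\ref{no-cross}, the dual path cannot cross $\geo{}{x^*-\exstar}{\eta+}=\geo{}{x+e_1}{\eta+}$ from right to left, so it stays weakly on the side of that geodesic containing $x^*$. Being simultaneously on both sides is precisely being "between" the two geodesics. One must be careful about the base case, i.e.\ that $x^*$ itself lies between $\geo{}{x+e_2}{\zeta-}$ and $\geo{}{x+e_1}{\eta+}$; this follows from the fact that $x^*$ is a $[\zeta,\eta]$-instability point, hence by Lemma~\ref{lem:shockequiv} $\geo{}{x+e_2}{\zeta-}\cap\geo{}{x+e_1}{\eta+}=\varnothing$, so the two geodesics emanate from $x+e_2$ and $x+e_1$ on opposite sides of the dual point $x^*$ and never close up, placing $x^*$ strictly between them.

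For the converse \eqref{an111}$\implies$\eqref{an110}: suppose $y^*\ge x^*$ lies between $\geo{}{x+e_2}{\zeta-}$ and $\geo{}{x+e_1}{\eta+}$. I would build the directed path from $y^*$ greedily: at each dual vertex $w^*$ in the region strictly between the two geodesics, at least one of the two edges $(w^*,w^*-e_1)$, $(w^*,w^*-e_2)$ of $\dG{\cup[\zeta,\eta]}$ keeps us in the (closed) region between the geodesics — this is again forced by Lemma~\ref{no-cross}, since an edge leaving the region would have to cross one of the two boundary geodesics in a forbidden direction. Following such edges produces a directed path in $\dG{\cup[\zeta,\eta]}$ that decreases $w^*\cdot\et$ by one at each step and remains trapped between the two geodesics; since those two geodesics both pass through the neighborhood of $x^*$ and the region between them "pinches" down to $x^*$ at level $x^*\cdot\et$ (they meet nowhere below, but the only dual point of that region at level $x^*\cdot\et$ reachable by monotone decreasing steps from $y^*\ge x^*$ is $x^*$ itself), the path must arrive at $x^*$. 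This last pinching argument — showing that the greedy descent cannot exit the strip at the sides and is funneled exactly into $x^*$ — is where one needs to combine planarity with the explicit geometry of how the two geodesics flank $x^*$.

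The main obstacle I anticipate is making the informal notions "between two embedded lattice paths," "on the side of," and "pinches down to $x^*$" fully rigorous, especially keeping straight the half-integer shifts relating $\Z^2$ and $\Z^{2*}$ and the precise correspondence between a dual edge and the primal step it does or does not cross. The cleanest route is probably to fix, once and for all, the convention that the embedded path $\geo{}{x+e_2}{\zeta-}$ together with $\geo{}{x+e_1}{\eta+}$ bounds a (possibly infinite) region $R$ of $\R^2$ with $x^*$ on its boundary in a controlled way, then to phrase both directions as: ($\dG{\cup[\zeta,\eta]}$-ancestors of $x^*$) $=$ (dual points in $R$ whose monotone descent lands on $x^*$), and verify using Lemma~\ref{no-cross} that $\dG{\cup[\zeta,\eta]}$-edges neither enter nor leave $R$ except through $x^*$. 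Everything else is bookkeeping with the step directions $-e_1,-e_2$ and the coordinatewise order.
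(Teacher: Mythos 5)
Your treatment of \eqref{an110}$\implies$\eqref{an111} is sound and essentially identical to the paper's: once you observe that $x^*$ itself lies between the two (disjoint, by Lemma~\ref{lem:shockequiv}) geodesics and invoke Lemma~\ref{no-cross}, the direction follows, with $y^*\ge x^*$ coming from the step set $\{-e_1,-e_2\}$.

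The converse has a genuine gap, precisely at the sentence ``at least one of the two edges $(w^*,w^*-e_1)$, $(w^*,w^*-e_2)$ of $\dG{\cup[\zeta,\eta]}$ keeps us in the (closed) region \dots this is again forced by Lemma~\ref{no-cross}, since an edge leaving the region would have to cross one of the two boundary geodesics in a forbidden direction.'' That is not what Lemma~\ref{no-cross} gives you. The lemma forbids a $\dS{[\zeta,\eta]}$ path from crossing $\G{\eta+}$ from right to left and $\G{\zeta-}$ from above to below; it does \emph{not} forbid crossing $\G{\zeta-}$ from right to left (via a $-e_1$ dual step through a $\zeta-$ $e_2$-step) nor crossing $\G{\eta+}$ from above to below (via a $-e_2$ dual step through an $\eta+$ $e_1$-step). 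Those crossings are exactly the ones by which a dual edge leaves the strip between $\geo{}{x^*+\exstar}{\zeta-}$ and $\geo{}{x^*-\exstar}{\eta+}$, and they really do occur; indeed if they never occurred there would be no branching in the instability graph. So Lemma~\ref{no-cross} alone does not supply the invariant you need for the greedy descent.

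What saves the argument is a small duality computation, which is what the paper's proof does. Write $y=y^*-\etstar$. Suppose $(y^*,y^*-e_1)\in\dS{[\zeta,\eta]}$ but $y^*-e_1$ lies outside the region; then the left boundary geodesic must use the primal edge $(y,y+e_2)$ it crosses, i.e.\ $y$ takes the $e_2$-step in $\G{\zeta-}$. By duality this is exactly the condition that $(y^*,y^*-e_2)\in\dG{\zeta-}\subset\dS{[\zeta,\eta]}$, so the \emph{other} outgoing dual edge exists, and it stays in the region because the two boundary geodesics are disjoint (Lemma~\ref{lem:shockequiv}) and $y^*\neq x^*$ forces them to still flank the level of $y^*-e_2$. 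The symmetric case for $(y^*,y^*-e_2)$ exiting on the right uses $\G{\eta+}$ in the same way. A further remark: the paper finesses your ``pinching'' concern entirely by running an induction on $\abs{y^*-x^*}_1$ rather than a forward greedy descent; this avoids having to prove separately that the descent terminates at $x^*$, which, as you yourself flag, is the least rigorous part of your sketch.
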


\begin{proof} \eqref{an110}$\implies$\eqref{an111}.   By  Lemma \ref{no-cross} no directed path in $\dS{[\zeta,\eta]}$ can go from $y^*$ to $x^*$ unless
$y^*$ lies between $\geo{}{x^*+\exstar}{\zeta-}$ and $\geo{}{x^*-\exstar}{\eta+}$.

\medskip 

\eqref{an111}$\implies$\eqref{an110}.   We prove this by induction on $\abs{y^*-x^*}_1$.  The claim is trivial if $y^*=x^*$. Suppose $y^*\ge x^*$ is such that $y^*\neq x^*$  and $y^*$  is between $\geo{}{x^*+\exstar}{\zeta-}$ and $\geo{}{x^*-\exstar}{\eta+}$. If $y^*$ points to  both $y^*-e_1$ and $y^*-e_2$ in $\dS{[\zeta,\eta]}$, then since $y^*-e_i$ is between the two geodesics for at least one $i\in\{1,2\}$, the induction hypothesis implies that there is a directed path from $y^*$ to $x^*$ through this $y^*-e_i$. 

   Suppose next that  $y^*$ points to $y^*-e_1$ in $\dS{[\zeta,\eta]}$ but $y^*-e_1$  is not between the two geodesics. Then, on the one hand, $y^*-e_2$ must be between the geodesics and the induction hypothesis implies that there is a path from $y^*-e_2$ to $x^*$.
 On the other hand, $y^*-\etstar$ must point to $y^*+\exstar$ in $\G{\zeta-}$ to prevent $y^*-e_1$ from falling between the two geodesics.   
This implies that $y^*$ points to $y^*-e_2$ in $\dS{[\zeta,\eta]}$. Now we have a path from $y^*$ to $x^*$ through $y^*-e_2$.
  See the left plot in Figure \ref{fig:lineage}. The case when $y^*$ points to $y^*-e_2$ and the latter is not between the two geodesics is similar.
\end{proof}

The next lemma characterizes  $[\zeta,\eta]$-instability points as the {\it endpoints} of semi-infinite directed paths in $\dG{\cup[\zeta,\eta]}$.  Furthermore, such paths consist entirely of instability points.  

\begin{lemma} \label{lm:ans}  Let $\zeta\preceq\eta$ in $\ri\Uset$.
\begin{enumerate}   [label={\rm(\alph*)}, ref={\rm\alph*}]   \itemsep=3pt 
\item\label{ans110}   Let $\{x^*_k\}_{k\ge m}$ be any semi-infinite path on $\Z^{2*}$ such that  $x^*_{k+1}$ points to $x^*_k$ in $\dG{\cup[\zeta,\eta]}$  for each $k\ge m$.  
Then 
 $\{x^*_k\}_{k\ge m}\subset\shock{[\zeta,\eta]}$ and  as $k\to\infty$,   the limit points of $x^*_k/k$    lie in the interval $[\zetamin,\etamax]$. 
\item\label{ans111}  
Let  $x^*\in\Z^{2*}$ and $m=x^*\cdot\et$.  Then $x^*\in\shock{[\zeta,\eta]}$ if and only if there is a path $\{x^*_k\}_{k\ge m}$ on $\Z^{2*}$ such that $x^*_m=x^*$ and  for each $k\ge m$, $x^*_k\cdot\et=k$ and   $x^*_{k+1}$ points to $x^*_k$ in $\dG{\cup[\zeta,\eta]}$.   When this happens, the path $\{x^*_k\}_{k\ge m}$ satisfies part \eqref{ans110} above.  
 \end{enumerate} 
\end{lemma}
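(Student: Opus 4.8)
\textbf{Plan for the proof of Lemma \ref{lm:ans}.}

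The plan is to deduce both parts from Lemma \ref{lm:an1} together with the basic structural facts about the dual graph $\dG{\cup[\zeta,\eta]}=\dS{[\zeta,\eta]}$: namely that from each dual vertex there is at least one outgoing backward ($e_1$ or $e_2$) edge, and that $\dS{[\zeta,\eta]}=\dG{\zeta-}\cup\dG{\eta+}$ with the explicit description of which edge is present recorded in Section \ref{sec:web}. First I would prove part \eqref{ans110}. Given a semi-infinite backward path $\{x^*_k\}_{k\ge m}$ in $\dG{\cup[\zeta,\eta]}$, fix any $k\ge m$ and any $j>k$; then $x^*_j$ is an ancestor of $x^*_k$ in $\dG{\cup[\zeta,\eta]}$ (by concatenating the directed edges), so by the $\eqref{an110}\Rightarrow\eqref{an111}$ direction of Lemma \ref{lm:an1} — which requires $x^*_k\in\shock{[\zeta,\eta]}$ to even be stated, so the logic must be run the other way — I instead use the $\eqref{an111}\Rightarrow\eqref{an110}$ direction in reverse by arguing directly from Lemma \ref{no-cross}: a directed path of $\dS{[\zeta,\eta]}$ leaving $x^*_k$ cannot cross $\G{\eta+}$ from right to left nor $\G{\zeta-}$ from above to below, hence the forward tail $\{x^*_j:j\ge k\}$ of the path stays trapped in the region between $\geo{}{x^*_k+\exstar}{\zeta-}$ and $\geo{}{x^*_k-\exstar}{\eta+}$ (viewed on $\R^2$). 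Since the two Busemann geodesics bounding that region are $\Uset_{\zeta-}$- and $\Uset_{\eta+}$-directed respectively (by the directedness statement in the list following \eqref{eq:buspass}, or Theorem \ref{thm1}), the path is squeezed between two paths whose asymptotic slopes correspond to the endpoints of $[\zetamin,\etamax]$, giving the claimed limit-point statement for $x^*_k/k$. To see $x^*_k\in\shock{[\zeta,\eta]}$: the forward tail being trapped strictly between $\geo{}{x^*_k+\exstar}{\zeta-}$ and $\geo{}{x^*_k-\exstar}{\eta+}$ forces these two geodesics themselves to be disjoint near level $m$ (otherwise they would coalesce and there would be no room for the path), i.e.\ $\geo{}{x^*_k+\exstar}{\zeta-}\cap\geo{}{x^*_k-\exstar}{\eta+}=\varnothing$, which by Lemma \ref{lem:shockequiv} (applied with $x=x^*_k-\etstar$) says precisely $x^*_k\in\shock{[\zeta,\eta]}$.

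For part \eqref{ans111}, the ``if'' direction is immediate from \eqref{ans110}. For ``only if'', suppose $x^*\in\shock{[\zeta,\eta]}$; I build the desired path $\{x^*_k\}_{k\ge m}$ inductively. Set $x^*_m=x^*$. Given $x^*_k$ which I will maintain lies in $\shock{[\zeta,\eta]}$, by Lemma \ref{lem:shockequiv} the geodesics $\geo{}{x^*_k+\exstar}{\zeta-}$ and $\geo{}{x^*_k-\exstar}{\eta+}$ are disjoint. The region strictly between them at level $k-1$ contains at least one dual vertex, and the point is to choose $x^*_{k+1}\in\{x^*_k-e_1,x^*_k-e_2\}$ which (a) is an edge of $\dG{\cup[\zeta,\eta]}$, and (b) again lies in $\shock{[\zeta,\eta]}$, i.e.\ has its own pair of bounding geodesics disjoint. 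I would argue: at least one of $x^*_k-e_1$, $x^*_k-e_2$ lies in the open region between $\geo{}{x^*_k+\exstar}{\zeta-}$ and $\geo{}{x^*_k-\exstar}{\eta+}$; for that choice the bounding geodesics $\geo{}{x^*_{k+1}+\exstar}{\zeta-}$ and $\geo{}{x^*_{k+1}-\exstar}{\eta+}$ are sandwiched inside the corresponding geodesics of $x^*_k$ (by the ordering \eqref{path-ordering}, since $x^*_{k+1}\pm\exstar$ are below and between $x^*_k\pm\exstar$), hence are themselves disjoint, so $x^*_{k+1}\in\shock{[\zeta,\eta]}$ by Lemma \ref{lem:shockequiv}; and the corresponding primal edge not crossing $\G{\zeta-}$ or $\G{\eta+}$ is exactly the statement that $x^*_k$ points to $x^*_{k+1}$ in $\dS{[\zeta,\eta]}=\dG{\cup[\zeta,\eta]}$. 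The induction produces the full path, and the final sentence (``this path satisfies \eqref{ans110}'') is then automatic since the path is by construction a backward path in $\dG{\cup[\zeta,\eta]}$.

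\textbf{Main obstacle.} The delicate point is the inductive step in part \eqref{ans111}: I must be sure that among $x^*_k-e_1$ and $x^*_k-e_2$, the vertex that lies between the two bounding geodesics is also the one to which $x^*_k$ actually points in $\dG{\cup[\zeta,\eta]}$ — these are a priori two different selection criteria, and one has to check they are compatible. This is where the explicit description $\dS{[\zeta,\eta]}=\dG{\zeta-}\cup\dG{\eta+}$ from Section \ref{sec:web} is essential: $x^*_k$ points to $x^*_k-e_2$ iff the primal edge at $x^*_k-\etstar$ goes right in $\G{\zeta-}$, and points to $x^*_k-e_1$ iff it goes up in $\G{\eta+}$; tracing through, the edge present is exactly the one whose endpoint is not cut off by $\geo{}{x^*_k+\exstar}{\zeta-}$ (resp.\ $\geo{}{x^*_k-\exstar}{\eta+}$), so the two criteria do coincide. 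A secondary subtlety is handling the boundary case $\zeta=\eta=\xi$ uniformly, but since Lemma \ref{lm:an1} and Lemma \ref{lem:shockequiv} are already stated to include it, no separate argument is needed. This is essentially the same bookkeeping already carried out in the proof of Lemma \ref{lm:an1}, so the work is routine once that lemma is in hand.
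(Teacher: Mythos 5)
Your part \eqref{ans110} is essentially the paper's: trap the forward tail of the path between the two bounding geodesics $\geo{}{x^*_k+\exstar}{\zeta-}$ and $\geo{}{x^*_k-\exstar}{\eta+}$ via Lemma \ref{no-cross}, then deduce directedness from that of Busemann geodesics and membership in $\shock{[\zeta,\eta]}$ from the separation this forces. That part is sound.

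For part \eqref{ans111} your route is genuinely different from the paper's and it has a real gap. First, a direction error: since the lemma requires $x^*_{k+1}$ to point to $x^*_k$, you need $x^*_{k+1}\in\{x^*_k+e_1,\,x^*_k+e_2\}$, not $x^*_k-e_i$; that is, you must construct the path by going to \emph{ancestors} (higher levels). Taking steps down to $x^*_k-e_i$ is the easy descendant direction, which is already handled in Step~2 of the proof of Theorem \ref{thm:shock1}; the content of part \eqref{ans111} is precisely that every instability point has an instability \emph{ancestor} at each level. Second and more seriously, assuming you correct the sign, your ``sandwiching'' argument for the inductive step fails when $x^*_k$ has incoming edges from both $x^*_k+e_1$ and $x^*_k+e_2$ (i.e.\ $x^*_k$ is a coalescence point). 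In that situation, if you try $x^*_{k+1}=x^*_k+e_1$, the new bounding pair becomes $\geo{}{x+\et}{\zeta-}$ and $\geo{}{x+2e_1}{\eta+}$ (writing $x=x^*_k-\etstar$); the second is a tail of $\geo{}{x+e_1}{\eta+}$, but the first is \emph{not} a tail of $\geo{}{x+e_2}{\zeta-}$, since that geodesic went to $x+2e_2$ rather than $x+\et$. So the new pair is not sandwiched between the old one, and disjointness does not follow from the induction hypothesis. Indeed one of the two candidates may fail to be an instability point; what is true is that at least one must succeed, but establishing this requires a separate contradiction argument (if $\geo{}{x+\et}{\zeta-}$ met $\geo{}{x+2e_1}{\eta+}$ and $\geo{}{x+2e_2}{\zeta-}$ met $\geo{}{x+\et}{\eta+}$, one could push the intersections through the nested ordering to make $\geo{}{x+e_2}{\zeta-}$ meet $\geo{}{x+e_1}{\eta+}$), and your selection rule ``lies strictly between the old geodesics'' does not discriminate between the two candidates in this case. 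The ``main obstacle'' you flag (compatibility of the two selection criteria) is not where the difficulty actually lies.

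The paper sidesteps all this cleanly: for each $k>m$ it picks any dual point $y^*_k$ strictly between $\geo{}{x^*+\exstar}{\zeta-}$ and $\geo{}{x^*-\exstar}{\eta+}$ at level $k$, applies Lemma \ref{lm:an1} to obtain a finite directed path in $\dG{\cup[\zeta,\eta]}$ from $y^*_k$ down to $x^*$, and then extracts a convergent subsequence of these finite paths by compactness. The semi-infinite limiting path automatically satisfies the hypotheses of part \eqref{ans110}, which then establishes that it consists entirely of instability points — so no inductive verification of membership in $\shock{[\zeta,\eta]}$ is needed at all. Your constructive approach can be made to work, but only after a careful case split at coalescence points that your current write-up does not carry out.
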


\begin{proof} Part \eqref{ans110}.  For each $k$, Lemma \ref{no-cross} implies that the geodesics 
$\geo{}{x^*_k+\exstar}{\zeta-}$ and $\geo{}{x^*_k-\exstar}{\eta+}$ are disjoint because they remain forever separated by the path $\{x^*_k\}_{k\ge m}$. 
Since the backward path $\{x^*_k\}_{k\ge m}$   is sandwiched between the geodesics $\geo{}{x^*_m+\exstar}{\zeta-}$ and $\geo{}{x^*_m-\exstar}{\eta+}$, Theorem \ref{thm1}\eqref{thm1:exist} implies that as $k\to\infty$ the limit points of $x^*_k/k$   
lie in the interval $[\zetamin,\etamax]$. 
\medskip 

Part \eqref{ans111}.   The {\it if} claim follows from part \eqref{ans110} of the lemma.  To prove the {\it only if} claim, suppose  $x^*\in\shock{[\zeta,\eta]}$.  Then the geodesics 
$\geo{}{x^*+\exstar}{\zeta-}$ and $\geo{}{x^*-\exstar}{\eta+}$ are disjoint. At every level $k>x^*\cdot\et$ we can choose a  point $y^*_k$ between the geodesics 
$\geo{}{x^*+\exstar}{\zeta-}$ and $\geo{}{x^*-\exstar}{\eta+}$, that is, a point $y^*_k\in\Z^{2*}$ such that $y^*\cdot\et=k$ and $\geo{k}{x^*+\exstar}{\zeta-}\prec y^*_k\prec\geo{k}{x^*-\exstar}{\eta+}$.  By Lemma \ref{lm:an1} there is a directed path in $\dG{\cup[\zeta,\eta]}$ from  each  $y^*_k$ to $x^*$.  Along some subsequence these directed paths converge to a semi-infinite directed path to $x^*$. 
\end{proof}

\begin{proof}[Proof of Theorem \ref{thm:shock1}] 
%Since $\shockG{\xi}\subset\shockG{[\zeta,\eta]}$ if $\xi\in[\zeta,\eta]$, it is enough to prove statements (\ref{thm:shocks.0}-\ref{thm:shocks.c}) and \eqref{thm:shocks.e} for the case of $\xi\in[\zeta,\eta]\cap\,\aUset$. For the other two statements one has to distinguish the two cases $\zeta\prec\eta$ and $\zeta=\eta$.\medskip

%{\it Part \eqref{thm:shocks.0}.} 
{\bf Step 1.}  
We show that $\shock{\xi}\neq\varnothing$ for 
any  $\xi\in\aUset$.  Since   $\xi\in\supp{\mu_{z,y}}$  for some $z,y\in\Z^2$,  the cocycle property \eqref{coc-prop} implies that  
  $\xi\in\supp{\mu_{x,x+e_i}}$ for some nearest-neighbor edge $(x,x+e_i)$.   Since $\mu_{x+e_1, x+e_2}=\mu_{x+e_1, x}+\mu_{x,x+e_2}$ is a sum of two positive measures there can be no cancellation, and hence    $\xi\in\supp{\mu_{x+e_1,x+e_2}}$ and thereby $x+\etstar\in\shock{\xi}$.  
 
 %%%%  OLD  %%%%%
% By the cocycle property \eqref{coc-prop}, 
%	\[\B{\xi\pm}(x,y)=\sum_{i=0}^{n-1}\B{\xi\pm}(x_i,x_{i+1})\]
%for any nearest-neighbor path $x_{0,n}$ from $x_0=x$ to $x_n=y$. Then  
%$\xi\in\supp\mu_{x_i,x_{i+1}}$ for some $i$. 
%By the monotonicity \eqref{mono} and the cocycle property \eqref{coc-prop}, 
%	\begin{align}\label{mu=|mu|}
%	\supp{\mu_{u+e_1,u+e_2}} =\supp\mu_{u,u+e_1}\cup\supp\mu_{u,u+e_2}\quad\text{for any }u\in\Z^2.
%	\end{align} 
%	The orientation of the edge  $(x_i,x_{i+1})$ gives two cases: either $x_{i+1}\in\{x_i+e_1,x_i+e_2\}$ and then 
%  $x_i+\etstar\in\shock{\xi}$,  or    $x_{i}\in\{x_{i+1}+e_1,x_{i+1}+e_2\}$ and then  $x_{i+1}+\etstar\in\shock{\xi}$ otherwise. Since $\shockG{\xi}\subset\shockG{[\zeta,\eta]}$. 
%%part \eqref{thm:shocks.0} is proved.

\medskip
	
%{\it Part \eqref{thm:shocks.a}.} 
{\bf Step 2.}   We show that every  edge of $\dS{[\zeta,\eta]}$ that emanates from a point of $\shock{[\zeta,\eta]}$ is an edge of  $\shockG{[\zeta,\eta]}$.
Take $x^*\in\shock{[\zeta,\eta]}$.
Then $\geo{}{x^*+\exstar}{\zeta-}$ and $\geo{}{x^*-\exstar}{\eta+}$ are disjoint.
Suppose $x^*$ points to  $x^*-e_1$ in $\dS{[\zeta,\eta]}$. Then $x^*-\etstar$ points to  $x^*-\exstar$ in $\G{\eta+}$.  The geodesic 
$\geo{}{x^*-\etstar}{\eta+}$ takes first  an $e_1$ step and then follows $\geo{}{x^*-\exstar}{\eta+}$. Since $\geo{}{x^*-e_1+\exstar}{\zeta-}$ must 
always stay to the left of $\geo{}{x^*+\exstar}{\zeta-}$, it is prevented from touching $\geo{}{x^*-\etstar}{\eta+}=\geo{}{x^*-e_1-\exstar}{\eta+}$ and we see that $x^*-e_1\in\shock{[\zeta,\eta]}$. See the right plot in Figure \ref{fig:lineage}. The case when $x^*$ points to  $x^*-e_2$ in $\dS{[\zeta,\eta]}$ is similar.
%This proves part \eqref{thm:shocks.a}.

\medskip  

%{\it Part \eqref{thm:shocks.b}.} 
{\bf Step 3.}  We conclude the proof.  Combining Lemma \ref{lm:ans}\eqref{ans110} with Step 2 implies that every bi-infinite directed path of the graph $\dG{\cup[\zeta,\eta]}$ is in fact a directed path of the graph $\shockG{[\zeta,\eta]}$.   

Conversely, let $x^*\in\shock{[\zeta,\eta]}$.   Lemma \ref{lm:ans} together with Step 2  implies that $x^*$ is the endpoint of a semi-infinite directed path in  $\shockG{[\zeta,\eta]}$ which is inherited from $\dG{\cup[\zeta,\eta]}$.    Step 2 implies that by following the edges of $\dG{\cup[\zeta,\eta]}$  from $x^*$ creates  an infinite down-left directed path   in the graph $\dG{\cup[\zeta,\eta]}$, and this path is a directed path also in $\shockG{[\zeta,\eta]}$.   In other words, every instability point $x^*\in\shock{[\zeta,\eta]}$ lies on a bi-infinite directed path of the graph $\shockG{[\zeta,\eta]}$ that was inherited from $\dG{\cup[\zeta,\eta]}$. 

The  $[\zetamin,\etamax]$-directedness of these paths comes from Lemma \ref{lm:ans}\eqref{ans110}. 
\end{proof} 

\begin{figure}%[h]
 	\begin{center}
 		 \begin{tikzpicture}[scale=1,shorten >=1pt,>={Latex[length=3.5mm,width=2mm]}]
		\begin{scope}
			\draw[->,>={Latex[length=3.5mm,width=3mm]},line width=3pt, color=nicosred](4.5,2.5)--(3.4,2.5);
			\draw[line width=3pt, color=nicosred](4,2.5)--(3.5,2.5);
			\draw[->,>={Latex[length=3.5mm,width=3mm]},line width=3pt, color=nicosred](4.5,2.5)--(4.5,1.6);
			\draw[line width=3pt, color=nicosred](4.5,2)--(4.5,1.5);
			\draw[nicosred,fill=nicosred](4.5,2.5) circle(3pt);
			\draw[nicosred,fill=nicosred](3.5,2.5) circle(3pt);
			\draw[nicosred,fill=nicosred](4.5,1.5) circle(3pt);
			\draw[nicosred,fill=nicosred](1/2,1/2) circle(3pt);
		 	\draw[->,line width=1pt, color=sussexg](4.1,2)--(4.1,3.1);
		 	\draw[line width=1pt, color=sussexg](4.1,2.5)--(4.1,3.2);
			\draw[color=sussexg, line width=1pt,->]plot[smooth] coordinates{(0,1)(0.5,1.3)(1.1,1.55)};
			\draw[color=sussexg, line width=1pt]plot[smooth] coordinates{(0.95,1.5)(1.5,1.6)(1.8,1.9)(4.1,2)};
			\draw[color=sussexp, line width=2pt,->]plot[smooth] coordinates{(1,0)(1.5,0)(2,0.3)(2.5,0.5)(4.5,0.6)(5.1,0.8)(5.7,1.2)};			
			
			\draw[black,fill=white](4.1,2) circle(3pt);
			\draw[black,fill=white](4.1,3.1) circle(3pt);
			\draw[black,fill=white](1,0) circle(3pt);
			\draw[black,fill=white](0,1) circle(3pt);

			\draw(4.5+.1,2.5)node[right]{$y^*$};
			\draw(0.5,0.5)node[left]{$x^*$};
			\draw(3.5,2.5)node[left]{$y^*-e_1$};
			\draw(4.6,1.5)node[right]{$y^*-e_2$};
			\draw(4,3.1)node[above]{$y^*+\exstar$};
			\draw(3.3,2)node[below]{$y^*-\etstar$};
		\end{scope}

		\begin{scope}[scale=0.8,shift={(12,0)}]
			\draw[nicosred,fill=nicosred](1/2,1/2) circle(3pt);
			\draw[nicosred,fill=nicosred](-1/2,1/2) circle(3pt);
			\draw[->,>={Latex[length=3.5mm,width=3mm]},line width=3pt, color=nicosred](1/2,1/2)--(-.3,1/2);
			\draw[line width=3pt, color=nicosred](0,1/2)--(-1/2,1/2);
			\draw(1/2+0.05,1/2+0.1)node[right]{$x^*$};
			%\draw(-1/2,1/2+0.1)node[above]{$x^*-e_1$};
		 	\draw[->,line width=2pt, color=sussexp](0,0)--(0.85,0);
		 	\draw[line width=2pt, color=sussexp](0.5,0)--(1,0);
			\draw[color=sussexp, line width=2pt,->]plot[smooth] coordinates{(1,0)(1.5,0)(2,0.5)(2.5,0.75)};			
			\draw[color=sussexg, line width=1pt,->]plot[smooth] coordinates{(0,1)(0.5,1.4)(1.1,1.5)(1.5,1.9)(2,2.1)};
			\draw[color=sussexg, line width=1pt,->]plot[smooth] coordinates{(-1,1)(-0.5,1.1)(0.1,1.7)(0.7,1.8)(1,2)(1.5,2.3)};			
			\draw[black,fill=white](0,0) circle(3pt);
			\draw[black,fill=white](1,0) circle(3pt);
			\draw[black,fill=white](0,1) circle(3pt);
			\draw[black,fill=white](-1,1) circle(3pt);
			\draw(-1-0.05,1+0.1)node[left]{$x^*-e_1+\exstar$};
			\draw(-1/2-0.05,1/2)node[left]{$x^*-e_1$};
			\draw(1.5+0.05,0)node[below]{$x^*-\exstar$};
			\draw(-1/2,0)node[below]{$x^*-\etstar$};
			\draw(0.1,1)node[right]{$x^*+\exstar$};
		\end{scope}

		\end{tikzpicture}
 	 \end{center}
 	\caption{\small  The proofs of Lemma \ref{lm:an1} (left) and  Theorem \ref{thm:shock1}  (right). $\zeta-$ geodesics are in green and thin. $\eta+$ geodesics are in purple with medium thickness. Directed edges in $\dS{[\zeta,\eta]}$ are in red/thick.  White circles are points in $\Z^2$ while points in $\Z^{2*}$ are filled in (red).}
 \label{fig:lineage}
 \end{figure}
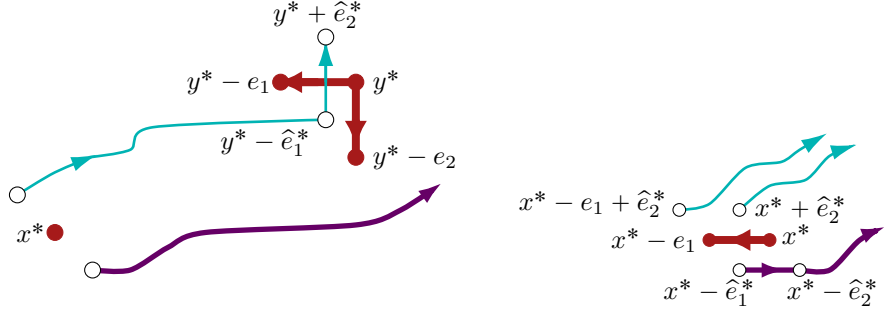
 
\begin{proof}[Proof of Theorem \ref{thm:shocks}] 

{\it Part \eqref{thm:shocks.c}.}  Let $x=x^*-\etstar$. If $x^*$ is a branch point in $\shockG{[\zeta,\eta]}$, then $\geo{}{x}{\zeta-}$ goes from $x$ to $x+e_2$ and $\geo{}{x}{\eta+}$ goes from $x$ to $x+e_1$, which is equivalent to 
$\B{\zeta-}(x+e_1,x+e_2)\le0\le\B{\eta+}(x+e_1,x+e_2)$, which in turn is equivalent to $\cid(T_x\w)\in[\zeta,\eta]$. 

Conversely, suppose $\cid(T_x\w)\in[\zeta,\eta]$. Reversing the above equivalences we see that 
$x^*\in\shock{[\zeta,\eta]}$ and points to both $x^*-e_1$ and $x^*-e_2$ in $\dS{[\zeta,\eta]}$. 
By Step 2 of the proof of Theorem  \ref{thm:shock1} above these edges are in  $\shockG{[\zeta,\eta]}$,  and hence $x^*$ is a branch point. 
\medskip

{\it Part \eqref{thm:shocks.d}}. Start with the case $\zeta\prec\eta$.
Let $\xi\in[\zeta,\eta]\cap\,\Udense$. 
 Then $\Omega_{\xi}^3\subset\Omega_0$ and parts \eqref{thm1:coal} and \eqref{thm1:biinf} of Theorem \ref{thm1} imply that $\G{\xi}$ is a tree that does not contain any bi-infinite up-right paths. 
 (Recall that for $\xi\in\Udense$ there is no $\pm$ distinction.)
This implies that $\dG{\xi}$ is a tree as well, i.e.\ all down-left paths of $\dG{\xi}$ coalesce.
 %If $\dG{\zeta-}$ were made of multiple trees, then we can consider two distinct subtrees and there would be a path $x_{-\infty,n}$ in $\G{\zeta-}$ that separates the two.
%But then one can continue the path in $\G{\zeta-}$ to form a bi-infinite path, which then would be a bi-infinite geodesic. 
%However, in the exponential case we now know there are no bi-infinite geodesics \cite{Bas-Hof-Sly-18-}. 
%\addmath{should we alert the reader to the fact that $x^*$ and $y^*$ may not be in $\shockG{\xi}$, but that this does not matter? T: Maybe OK.}
Since $\dG{\xi}\subset\dG{\cup[\zeta,\eta]}$, one can follow the edges e.g.\ in $\dG{\xi}$ starting from $x^*$ and from $y^*$ to get to a coalescence point $z^*$ that will then be a descendant of both points in $\shockG{[\zeta,\eta]}$. 
The same argument can be repeated if $\zeta=\eta=\xi\in\aUset$ when condition \eqref{cond:biinf} holds, since then both $\dG{\xi\pm}$ are trees.
Claim \eqref{thm:shocks.d} is proved.\medskip

{\it Part \eqref{thm:shocks.e}}. Observe that for any $x^*,y^*\in\shock{[\zeta,\eta]}$, Theorem \ref{thm:+-coal} says that 
under the jump process condition \eqref{cond:jumpcond}, if $\w\in\Omjump$ (defined in \eqref{Omjump}), 
then the geodesics $\geo{}{x^*+\exstar}{\zeta-}$ and $\geo{}{y^*+\exstar}{\zeta-}$ coalesce, as do  $\geo{}{x^*-\exstar}{\eta+}$ and $\geo{}{y^*-\exstar}{\eta+}$.  
By Lemma \ref{lm:an1}, any point in $\shock{[\zeta,\eta]}$ that is between the two $+$ and $-$ coalesced geodesics is an ancestor to both $x^*$ and $y^*$. Such a point exists. For example, 
take a point $z$ on $\geo{}{x^*+\exstar}{\zeta-}$ above the coalescence levels, in other words, such that  $z\cdot\et\ge(\coal{\zeta-}(x^*+\exstar,y^*+\exstar)\cdot\et)\vee(\coal{\eta+}(x^*-\exstar,y^*-\exstar)\cdot\et)$.  
Since $\geo{}{z}{\eta+}$ coalesces with 
$\geo{}{x^*-\exstar}{\eta+}$, which does not touch $\geo{}{z}{\zeta-}$ (because this latter  is part of $\geo{}{x^*+\exstar}{\zeta-}$), $\geo{}{z}{\eta+}$ must separate from $\geo{}{z}{\zeta-}$ at some point $z'$.
The dual point $z'+\etstar$ is then in $\shock{[\zeta,\eta]}$ and is an ancestor to both $x^*$ and $y^*$. 
Part \eqref{thm:shocks.e} is proved.\medskip

{\it Part \eqref{thm:shocks.f}}.   The assumption is that $\zeta\prec\eta$ and $]\zeta,\eta[\,\cap\aUset\ne\varnothing$.   By Lemma \ref{lm:V1},  $\nabla\gpp(\zeta+)\ne\nabla\gpp(\eta-)$.   For any $z\in\Z^2$,  Theorem \ref{thm:cif}\eqref{thm:cif.c} gives a strictly increasing sequence $z<z_1<z_2<\dotsm$     such that 
  $\cid(T_{z_k}\w)\in\,]\zeta,\eta[$ for each $k$.   Then by \eqref{cid}, $\B{\zeta+}(z_k+e_1,z_k+e_2)<0<\B{\eta-}(z_k+e_1,z_k+e_2)$, which implies that $z^*_k=z_k+\etstar$ is a $[\zeta,\eta]$-instability point.
 Each such point is a branch point in $\shockG{[\zeta,\eta]}$ because $z_k$ points to $z_k+e_2$ in $\G{\zeta+}$, and hence also in $\G{\zeta-}$, and to $z_k+e_1$ in $\G{\eta-}$, and hence also in $\G{\eta+}$. 
 
 %Thus, there are infinitely many branch points in $\shockG{[\zeta,\eta]}$. 
%
%Now, assume $\nabla\gpp(\zeta+)=\nabla\gpp(\eta-)$.  The fact that $[\zeta,\eta]\cap\aUset\ne\varnothing$ and Theorem \ref{th:V1}\eqref{th:V1.0'}  imply that $\{\zeta,\eta\}\cap\aUset\setminus\Diff\ne\varnothing$. Suppose $\zeta$ is in this intersection.
%\textcolor{red}{TBC}
%

The proof of the existence of infinitely many coalescence points in $\shockG{[\zeta,\eta]}$ follows from this and the first claim in part \eqref{thm:shocks.d} in a way similar to the proof below for the case of $\shockG{\xi}$ (but without the need for any extra conditions) and is therefore omitted.\medskip

%\addmath{The proof of \eqref{thm:shocks.g} needs streamlining! maybe can use the flow of measure results...}
{\it Part \eqref{thm:shocks.g}.}    Fix $\xi\in\aUset$ for the duration of the proof. 
Assume the jump process condition \eqref{cond:jumpcond}.
% and take $\w\in\Omjump$ and $\xi\in\aUset$. Then 
By Theorem \ref{thm:nonint} there exist $x,y\in\Z^2$ such that $\geo{}{x}{\xi-}\cap\geo{}{y}{\xi+}=\varnothing$ and then Theorem \ref{thm:+-coal} says that for any $z\in\Z^2$, the two geodesics $\geo{}{z}{\xi\pm}$ must separate at some point $z_1$ (in order to coalesce with $\geo{}{x}{\xi-}$ and $\geo{}{y}{\xi+}$, respectively). Uniqueness of finite geodesics implies that $\geo{}{z_1+e_i}{\xi\pm}$, $i\in\{1,2\}$, cannot touch. Thus, $z_1+\etstar\in\shock{\xi}$. 
Now define inductively $z_{n+1}$ to be the point where the geodesics $\geo{}{z_n+\et}{\xi\pm}$ separate.    Then for each $n$,  $z_{n+1}>z_n$ coordinatewise and  $z^*_n=z_n+\etstar$ is a point in $\shock{\xi}$.

Next, assume both  the jump process condition \eqref{cond:jumpcond} and the no bi-infinite geodesic condition \eqref{cond:biinf}. 
We prove the second claim of  part \eqref{thm:shocks.g} about infinitely many  coalescence points by mapping branch points injectively  into coalescence points as follows.

Given a branch point $x^*$,  let $\pi^*$ and $\bar\pi^*$ be the two innermost down-left paths out of $x^*$ along the directed graph $\shockG{\xi}$, defined by these rules: 
\be\label{aux783} \begin{aligned} 
 &\text{(i) $\pi^*$ starts with edge $(x^*, x^*-e_1)$,  follows the arrows of $\shockG{\xi}$, 
 and at vertices}\\ 
 &\text{where both $-e_1$ and $-e_2$ steps are allowed, it takes the  $-e_2$ step; }\\
   &\text{(ii) $\bar\pi^*$ starts with edge $(x^*, x^*-e_2)$,  follows the arrows of $\shockG{\xi}$,  and whenever}\\  
   &\text{both steps are available takes the  $-e_1$ step. }
  \end{aligned}\ee 
% Any  other two paths in this graph that emanate from $x^*$ and start with steps $-e_1$ and $-e_2$ must sandwich $\pi^*$ and $\bar\pi^*$.  

By part \eqref{thm:shocks.d}, $x^*-e_1$ and $x^*-e_2$ have a common descendant (this is where assumption \eqref{cond:biinf} is used).  By planarity,  the   paths $\pi^*$ and $\bar\pi^*$ must then meet at some point after $x^*$.    Let $z^*$  be their first common point after $x^*$, that is, the  point   $z^*\in(\pi^*\cap\bar\pi^*)\setminus\{x^*\}$  that maximizes $z^*\cdot\et$. This $z^*$ is the coalescence point that branch point $x^*$ is mapped to.  

We argue that the map $x^*\mapsto z^*$ thus defined  is one-to-one.   Two observations that help:  
\begin{itemize} 
\item   There cannot be any $\shock{\xi}$-points strictly inside the region bounded by $\pi^*$ and $\bar\pi^*$ between $x^*$ and  $z^*$.  By Theorem \ref{thm:shock1} such a point would lie on an  $\shockG{\xi}$ path that contradicts the choice of $\pi^*$ and $\bar\pi^*$ as the innermost paths between $x^*$ to $z^*$. 
\item  The last step that $\pi^*$ takes to reach $z^*$ is $-e_2$ and the last step of $\bar\pi^*$   is $-e_1$.   Otherwise $\pi^*$ and $\bar\pi^*$ would have met before $z^*$.  
\end{itemize} 

 Suppose another branch point $y^*\in\shock{\xi}$ distinct from $x^*$  maps to the same coalescence point $z^*$. Let the innermost paths  from $y^*$ to $z^*$ be $\gamma^*$ and $\bar\gamma^*$, defined by the same rules \eqref{aux783} but with $x^*$ replaced by $y^*$.      As observed, $\gamma^*$ and $\bar\gamma^*$  cannot  enter the region strictly between $\pi^*$ and $\bar\pi^*$.

\begin{figure}[h]
 	\begin{center}
 		 \begin{tikzpicture}[scale=0.7]
		 
		 \begin{scope}
		 	\draw[line width=1pt](0,0)--(1,0)--(1,1)--(3,1)--(3,2)--(4,2);
			\draw[middlearrow={stealth},line width=1pt](4,4)--(4,2);
			\draw[middlearrow={stealth},line width=1pt] (0,2) -- (0,0);			
		 	\draw[line width=1pt](0,2)--(1,2)--(1,3);
			\draw[middlearrow={stealth},line width=1pt] (3,3) -- (1,3);			
		 	\draw[line width=1pt](3,3)--(3,4);
			\draw[middlearrow={stealth},line width=1pt](4,4)--(3,4);
			\draw[fill=black](0,0) circle(4pt);
			\draw[fill=black](3,3) circle(4pt);
			\draw[fill=black](4,4) circle(4pt);
			\draw(0.3,2)node[above]{$\pi^*$};
			\draw(1,.3)node[right]{$\bar\pi^*$};
			\draw(2.7,3)node[above]{$x^*$};
			\draw(4.1,4)node[right]{$y^*$};
			\draw(-0.2,-0.1)node[below]{$z^*$};
			\draw(3.5,4)node[above]{$\gamma^*$};
			\draw(2.95,2.5)node[right]{$\gamma^*$};
			\draw(4,3)node[right]{$\bar\gamma^*$};
			\draw[middlearrow={stealth},line width=1pt] (3,3) -- (3,2);	
		\end{scope}
		
		\begin{scope}[shift={(7,0)}]
		 	\draw[line width=1pt](0,0)--(1,0)--(1,1)--(3,1)--(3,2)--(4,2);
			\draw[middlearrow={stealth},line width=1pt](4,4)--(4,2);
			\draw[middlearrow={stealth},line width=1pt] (0,2) -- (0,0);			
		 	\draw[line width=1pt](0,2)--(1,2)--(1,3)--(3,3);
			\draw[middlearrow={stealth},line width=1pt](4,4)--(2,4);
			\draw[middlearrow={stealth},line width=1pt](2,4)--(2,3);
			\draw[fill=black](0,0) circle(4pt);
			\draw[fill=black](3,3) circle(4pt);
			\draw[fill=black](4,4) circle(4pt);
			\draw(0.3,2)node[above]{$\pi^*$};
			\draw(1,.3)node[right]{$\bar\pi^*$};
			\draw(2.9,3)node[above]{$x^*$};
			\draw(4.1,4)node[right]{$y^*$};
			\draw(-0.2,-0.1)node[below]{$z^*$};
			\draw(2.8,4)node[above]{$\gamma^*$};
			\draw(4,3)node[right]{$\bar\gamma^*$};
			\draw[line width=1pt](3,3) -- (3,2);	
		\end{scope}

		\end{tikzpicture}
 	 \end{center}
 	\caption{\small  Illustration of the proof that the map $x^*\mapsto z^*$ is one-to-one.}
 \label{fig:thm:shocks.g}
 \end{figure}
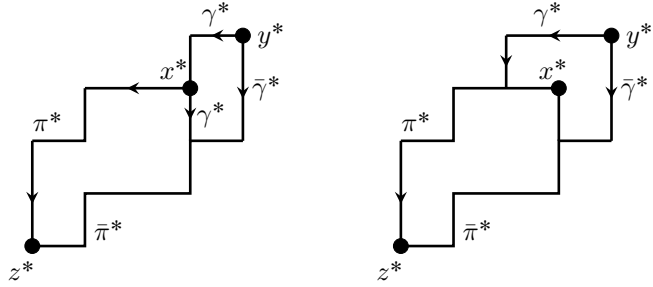

  Since   $\gamma^*$ uses the edge $(z^*+e_2, z^*)$, it must coalesce at some point with $\pi^*$.
 Point  $x^*$ itself cannot  lie on $\gamma^*$ because otherwise   \eqref{aux783} forces   $\gamma^*$ to take the edge $(x^*, x^*-e_2)$ and $\gamma^*$ cannot   follow $\pi^*$ to $z^*$.  This scenario is depicted by the left drawing in Figure \ref{fig:thm:shocks.g}.
 Thus $\gamma^*$ meets $\pi^*$ after $x^*$, at which point rule \eqref{aux783} forces them to  coalesce (right drawing in Figure \ref{fig:thm:shocks.g}).  
 
  Similarly,  $x^*$ cannot lie on $\bar\gamma^*$, and  $\bar\gamma^*$ meets $\bar\pi^*$ after $x^*$ at which point these coalesce (right drawing in Figure \ref{fig:thm:shocks.g}).
  
 Paths from $y^*$ cannot meet both $\pi^*$ and $\bar\pi^*$ while avoiding $x^*$ unless   $y^*>x^*$   holds coordinatewise.    
  It follows now  that $x^*$ must lie strictly inside the region bounded by $\gamma^*$ and $\bar\gamma^*$ between $y^*$ and  $z^*$, as illustrated by 
  the right drawing in Figure \ref{fig:thm:shocks.g}.
  But we already ruled out such a possibility.  These contradictions show that the map is one-to-one.

%%%%%%   old version of 1-1 proof  %%%%%%%
%This and planarity imply that if there were another point $y^*\in\shock{\xi}$ that mapped to the same coalescence point $z^*$, then there must exist a point $u^*\in\shock{\xi}$ strictly inside the region between the segments of the paths $\pi^*$ and $\bar\pi^*$ that start at $x^*$ and end at $z^*$. 
%Theorem \ref{thm:shock1} implies then the existence of a semi-infinite path that follows the directed edges of $\shockG{\xi}$ and ends at $u^*$. But planarity forces such a path to  cross $\pi^*$  
%or $\bar\pi^*$ at some point $v^*\ne x^*$.  But this would contradict the construction of the paths $\pi^*$ and $\bar\pi^*$. For example, if $v^*\in\pi^*$ then we would get that $v^*$ is a parent to both $v^*-e_1$ and $v^*-e_2$ (because $v^*$ is a point where two paths in $\shockG{\xi}$ crossed) and yet $\pi^*$ to a $-e_1$ step out of $v^*$ (because the path from $v^*$ to $u^*$ had to take a $-e_2$ step since $u^*$ is between $\pi^*$ and $\bar\pi^*$). This is contrary to the construction of $\pi^*$, which said that since both steps are available, it should take a $-e_2$ step.  
%%%%%%  end old version %%%%%%%%
%The upshot of the above paragraph is that our mapping assigns a distinct coalescence point to each branch point.  

Since we already proved that under the jump process condition \eqref{cond:jumpcond} there are infinitely many branch points
in $\shockG{\xi}$, it now follows that there are also infinitely many coalescence points and part \eqref{thm:shocks.g} is proved.
%
%Under the assumption that the weights are exponentially distributed, the last claim in part \eqref{thm:shocks.f} follows from 
%Lemma \ref{LIL}, which we prove in Section \ref{sec:exp-pf}.  Indeed, applying \eqref{red-LB} with $\delta_n=n^{-1/4}$ gives us that for $\P$-almost every $\w$, for any $\xi\in\aUset$ and $\ell\in\Z$, there exist infinitely many $\xi$-shock points $x$ with $x\cdot e_2=\ell$. By part \eqref{thm:shocks.d}, any two such points have a common descendant. Consequently, there is a coalescence point below every horizontal level. Part \eqref{thm:shocks.f} is proved.
\end{proof}

\begin{proof}[Proof of the claim in Remark \ref{rk:erg2}] 
%Only parts \eqref{thm:shocks.0}, \eqref{thm:shocks.d}, and  \eqref{thm:shocks.f}  need extra arguments, under the assumptions that $\w\in\Omreg$, $\zeta\preceq\eta$, and at least one of the two, say $\zeta$, is not in $\Diff$.\smallskip
%
%Lemma \ref{lm:B+>B-} implies the existence of an $x\in\Z^2$ such that $x^*\in\shock{\zeta}\subset\shock{[\zeta,\eta]}$,
%finishing the proof of part \eqref{thm:shocks.0}.
%For part \eqref{thm:shocks.d} take $\xi'=\zeta\not\in\Diff$.  Then we still have $\Omega_{\xi'}^3\subset\Omega_0$ and the existing argument works word for word.
%Lastly, Lemma \ref{lm:B+>B-} implies the existence of infinitely many points $x$ with $\cid(T_x\w)=\zeta$, each of which is a branch point in $\shockG{\zeta}$ and hence in $\shockG{[\zeta,\eta]}$.
%This finishes the proof of part \eqref{thm:shocks.f}.
It suffices to consider the case where $]\zeta,\eta[\,\cap\aUset=\varnothing$ but $\{\zeta,\eta\}\cap\aUset\ne\varnothing$. 
%Parts \eqref{lm:V1.0} and \eqref{lm:V1.0'} of Lemma \ref{lm:V1} imply then that 
By Theorem \ref{th:V1}\eqref{th:V1.a}, the differentiable endpoints of the (countably many) linear segments of $\gpp$ are all outside $\aUset$.
By Theorem \ref{th:V1}\eqref{th:V1.b} we know $]\zeta,\eta[$ must be inside a linear segment. 
Thus, it must be the case that $\{\zeta,\eta\}\cap\aUset\setminus\Diff\ne\varnothing$. 
Suppose, without loss of generality, that $\zeta$ is in this intersection. 
Then Theorem \ref{thm:cif}\eqref{thm:cif.c1} 
%Lemma \ref{lm:B+>B-} 
implies the existence of infinitely many $x\in\Z^2$ with $\cid(T_x\w)=\zeta\in\aUset$ and part Theorem \ref{thm:shocks}\eqref{thm:shocks.c} says that the corresponding dual points $x^*$ are all branch points in $\shockG{\zeta}\subset\shockG{[\zeta,\eta]}$. The claim about coalescence points follows from the just proved infinite number of branch points, combined with the first claim in part \eqref{thm:shocks.d}, similarly to the way the claim is proved in Theorem \ref{thm:shocks}\eqref{thm:shocks.g}.
\end{proof}

In words, the next result says that there are no semi-infinite horizontal or vertical paths in any of the instability graphs $\shock{[\zeta,\eta]}$. The idea behind the proof is that the existence of such a path would force the existence of a semi-infinite horizontal or vertical path in one of the geodesic graphs $\G{\xi \sig}$ for some $\sig \in  \{+,-\}$ and $\xi \in \ri \Uset$. This is ruled out by the law of large numbers behavior of the Busemann functions.

\begin{lemma}\label{hor-red}
For any $\w\in\Omega_0$, $\zeta\preceq\eta$, and $i\in\{1,2\}$, there does not exist an $x^*\in\shock{[\zeta,\eta]}$ such that $x^*-ne_i\in\ans{[\zeta,\eta]}{x^*-(n+1)e_i}$ for all $n\in\Z_+$ and nor does there exist an $x^*\in\shock{[\zeta,\eta]}$ such that $x^*+(n+1)e_i\in \ans{[\zeta,\eta]}{x^*+ne_i}$ for all $n\in\Z_+$.
\end{lemma}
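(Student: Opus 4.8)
The plan is to reduce the hypothesis to a concrete statement about the directed graph $\shockG{[\zeta,\eta]}$, dispose of the ``backward'' case quickly, and then confront the ``forward'' case, which is where the real work lies. First, note that because every edge of $\shockG{[\zeta,\eta]}$ (being an edge of $\dS{[\zeta,\eta]}$) decreases the $\et$-coordinate by exactly one, and $(x^*-ne_i)\cdot\et-(x^*-(n+1)e_i)\cdot\et=1$, the directed path witnessing $x^*-ne_i\in\ans{[\zeta,\eta]}{x^*-(n+1)e_i}$ is a single edge. Hence the first scenario in the lemma says precisely that the forward (south-west) ray of $\shockG{[\zeta,\eta]}$ out of $x^*$ runs purely in direction $-e_i$, with every vertex $x^*-ne_i$, $n\in\Z_+$, in $\shock{[\zeta,\eta]}$. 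Symmetrically, the second scenario says that $x^*,x^*+e_i,x^*+2e_i,\dots$ is a north-east directed path of $\shockG{[\zeta,\eta]}\subseteq\dG{\cup[\zeta,\eta]}$ with $x^*+(n+1)e_i$ pointing to $x^*+ne_i$.

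The second scenario is then immediate: applying Lemma \ref{lm:ans}\eqref{ans110} to this north-east path forces the limit points of $(x^*+ne_i)/n=x^*/n+e_i\to e_i$ to lie in $[\zetamin,\etamax]\subset\ri\Uset$, which is false since $e_i\notin\ri\Uset$. (One could equally quote the $[\zetamin,\etamax]$-directedness statement of Theorem \ref{thm:shock1}.) For the first scenario I will take $i=1$, the case $i=2$ being symmetric under swapping $e_1\leftrightarrow e_2$. Write $x=x^*-\etstar$ and $y_n=x-ne_1$, so that $x^*-ne_1=y_n+\etstar$ corresponds to the antidiagonal edge $(y_n+e_1,y_n+e_2)$. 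Using the duality rule defining $\dS{[\zeta,\eta]}=\dG{\zeta-}\cup\dG{\eta+}$ together with the monotonicity \eqref{mono} of the Busemann functions, the presence of the $-e_1$ edge out of $y_n+\etstar$ forces $\geo{}{y_n}{\eta+}$ to take the $e_1$-step at $y_n$, for every $n\ge0$; iterating the local rule \eqref{d:bgeo}, $\geo{}{y_n}{\eta+}$ therefore contains the horizontal segment $y_n,y_{n-1},\dots,y_0,y_0+e_1$ and thereafter follows $\geo{}{x+e_1}{\eta+}$. On the other hand, Lemma \ref{lem:shockequiv} applied to $x^*-ne_1\in\shock{[\zeta,\eta]}$ gives $\geo{}{y_n+e_2}{\zeta-}\cap\geo{}{y_n+e_1}{\eta+}=\varnothing$. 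Since $\geo{}{y_n+e_2}{\zeta-}$ starts at height $x\cdot e_2+1$ and only moves up and to the right, it is automatically disjoint from the horizontal run (which sits at height $x\cdot e_2$), so one concludes $\geo{}{y_n+e_2}{\zeta-}\cap\geo{}{x+e_1}{\eta+}=\varnothing$ for every $n\ge0$.

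Thus a single $\Uset_{\eta+}$-directed semi-infinite geodesic, $\geo{}{x+e_1}{\eta+}$, is disjoint from the entire family $\{\geo{}{x-ne_1+e_2}{\zeta-}\}_{n\ge0}$ of $\Uset_{\zeta-}$-directed geodesics whose roots march off to $-\infty$ along a fixed horizontal line. The remaining step — and this is the main obstacle — is to extract a contradiction from this configuration using the directedness and coalescence structure of geodesics (Theorem \ref{thm1} and the geodesic ordering \eqref{path-ordering}): the forward ray of $\shockG{[\zeta,\eta]}$ out of $x^*$ is the \emph{leftmost} forward path, so running straight west forever it must hug the family of $\zeta-$ geodesics and trap one of them between itself and $\geo{}{x+e_1}{\eta+}$ in a way that prevents it from realizing an asymptotic direction in $\Uset_{\zeta-}\subset\ri\Uset$. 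Unlike the backward case, the forward orientation of a shock path is not covered by the directedness assertion of Theorem \ref{thm:shock1}, so this contradiction has to be squeezed out of the planar geometry of the two bounding geodesics; the reduction, the $+e_i$ case, and the translation into disjoint geodesics are routine, but pinning down exactly how the purely horizontal forward ray violates $\Uset_{\zeta-}$-directedness of the trapped geodesic is where the argument must do its work. Finally, the case $\zeta=\eta=\xi$ is included throughout with no change, since Lemmas \ref{lm:ans} and \ref{lem:shockequiv} and the duality apply verbatim.
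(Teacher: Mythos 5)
Your reduction is correct: every edge of $\shockG{[\zeta,\eta]}$ drops the $\et$-level by exactly one, so the first hypothesis says the forward (south-west) ray out of $x^*$ runs purely along $-e_i$, and the second says $x^*+\Z_+e_i$ is a north-east directed path in $\dG{\cup[\zeta,\eta]}$. Your dispatch of the second scenario via Lemma \ref{lm:ans}\eqref{ans110} is clean and correct (the limit direction $e_i$ cannot lie in $[\zetamin,\etamax]\subset\ri\Uset$), and it is in fact tidier than the paper's ``the other type can be treated similarly.'' You also correctly extract, from the purely horizontal forward ray, that $\geo{}{y_n}{\eta+}$ takes $n$ consecutive $e_1$-steps before reaching $y_0+e_1$.

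But at that point you abandon the horizontal run and pivot to Lemma \ref{lem:shockequiv} and disjoint $\zeta-$ and $\eta+$ geodesics, and you acknowledge you cannot close that route. The issue is that planarity and coalescence are not the right tools here: the $\zeta-$ geodesics rooted at $y_n+e_2$ can all escape to the northwest without ever being ``trapped,'' so there is no topological contradiction to be had from that picture. The paper's proof extracts the contradiction directly from the horizontal run you already established, via a law-of-large-numbers argument rather than a planarity one. Since $\geo{}{y_n}{\eta+}$ takes the $e_1$-step at $y_n = x^*-ne_1-\etstar$, the recovery property \eqref{rec-prop2} forces $\B{\eta+}(x^*-ne_1-\etstar, x^*-(n-1)e_1-\etstar)=\w_{x^*-ne_1-\etstar}$ for every $n\in\N$; telescoping via the cocycle property \eqref{coc-prop} and using monotonicity \eqref{mono} against any $\eta_m\in\Udense$ with $\eta_m\searrow\eta$ gives
\[
\sum_{k=1}^n\w_{x^*-ke_1-\etstar}=\B{\eta+}(x^*-ne_1-\etstar,x^*-\etstar)\ge\B{\eta_m}(x^*-ne_1-\etstar,x^*-\etstar).
\]
Dividing by $n$ and letting $n\to\infty$, the left side converges to $\E[\w_0]$ by the ergodic theorem and the right side to $e_1\cdot\nabla\gpp(\eta_m)$ by the cocycle shape theorem \eqref{eq:erg-coc}; letting $m\to\infty$ yields $\E[\w_0]\ge e_1\cdot\nabla\gpp(\eta+)$, which is impossible for $\eta\in\ri\Uset$ by the boundary blow-up of $\gpp$ (\cite[Theorem 2.4]{Mar-04}). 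That is the missing mechanism; your setup is right up to and including the horizontal run, but the final step needs recovery plus the ergodic and shape theorems, not geodesic trapping.
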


\begin{proof}
We prove the result for $i=1$, $i=2$ being similar. We also only work with paths of the first type. The other type can be treated similarly.

The existence of a path of the first type, with $i=1$, implies that $x^*-ne_1-\etstar$ points to  $x^*-(n-1)e_1-\etstar$ in $\G{\eta+}$ 
for all $n\in\Z_+$. But this implies that $\B{\eta+}(x^*-ne_1-\etstar,x^*-(n-1)e_1-\etstar)=\w_{x^*-ne_1-\etstar}$, for all $n\in\Z_+$. Take any sequence $\eta_m\in\Udense$ such that $\eta_m\searrow\eta$.
Then \eqref{mono} and \eqref{coc-prop} imply that
	\[\sum_{k=1}^n\w_{x^*-ke_1-\etstar}=\B{\eta+}(x^*-ne_1-\etstar,x^*-\etstar)\ge\B{\eta_m}(x^*-ne_1-\etstar,x^*-\etstar).\]
Divide by $n$ and apply the ergodic theorem on the left-hand side and \eqref{eq:erg-coc} on the right-hand side to get $\E[\w_0]\ge e_1\cdot\nabla\gpp(\eta_m)$ for all $m$.
Take $m\to\infty$ to get $\E[\w_0]\ge e_1\cdot\nabla\gpp(\eta+)$. It follows from Martin's estimate of the asymptotic behavior of the shape function near the boundary of $\Uset$, \cite[Theorem 2.4]{Mar-04}, along with concavity that this cannot happen.
\end{proof}

\begin{proof}[Proof of Lemma \ref{lm:downright}]
A general step of the path can be decomposed as 
$x_{i+1}-x_i=\sum_k (y_{k+1}-y_k)$ where each $y_{k+1}-y_k\in\{e_1, -e_2\}$.  Then each $\mu_{y_k, y_{k+1}}$ is a negative measure, and consequently $\supp\mu_{x_i, x_{i+1}}=\bigcup_k \supp\mu_{y_k, y_{k+1}}$.  Thus we may assume   that the path satisfies $x_{i+1}-x_i\in\{e_1, -e_2\}$ for all $i$.  

One direction is clear:
	$\bigcup_{i\in\Z} \supp\mu_{x_i,x_{i+1}}\subset\aUset$. 
	
For the other direction, take $\xi\in\aUset$.  By  Theorem \ref{thm:shock1},   there is a bi-infinite up-right reverse-directed  path $x^*_{-\infty,\infty}$   through $x^*$ in $\shockG{\xi}$ with  increments in $\{e_1,e_2\}$. By Lemma \ref{hor-red} this path must cross any down-right lattice path $x_{-\infty,\infty}$. This means that there exists an $i\in\Z$ such that either $x_{i+1}-x_i=e_1$ and $x_i+\etstar$ points to  $x_i-\exstar$ in $\shockG{\xi}$, i.e.\ $x_i$ points to $x_i+e_2$ in $\G{\xi-}$, or $x_{i+1}-x_i=-e_2$ and $x_i-\exstar$ points to  $x_i-\etstar$ in $\shockG{\xi}$, i.e.\ $x_i$ points to $x_i+e_1$ in $\G{\xi+}$. 
In the former case, $\geo{}{x_i}{\xi-}$ goes from $x_i$ to $x_i+e_2$ and from there 
it never touches $\geo{}{x_{i+1}}{\xi+}=\geo{}{x_i+e_1}{\xi+}$, since $x_i+\etstar\in\shock{xi}$. Consequently, in this case Theorem \ref{thm:nonint} says that $\xi\in\supp\mu_{x_i,x_{i+1}}$. The other case is similar and again gives $\xi\in\supp\mu_{x_i,x_{i+1}}$.  This proves Lemma \ref{lm:downright}.
\end{proof}

\subsection{Density of instability points on the lattice}
For $\zeta\preceq\eta$ in $\ri\Uset$, $x\in\Z^2$, and $i\in\{1,2\}$ %\note{C: We should move all of this down to the proofs section.}
let 
\be\label{df:rho5} \rho_x^i(\zeta,\eta)=\one\bigl\{[\zeta,\eta]\cap\supp{\mu_{x,x+e_i}}\neq\varnothing\bigr\}.\ee 
We write $\rho_x^i(\xi)$ for $\rho_x^i(\xi,\xi)$. 
By definition, $x+\etstar\in\shock{[\zeta,\eta]}$ if and only if $\rho_x^1(\zeta,\eta)$ and $\rho_x^2(\zeta,\eta)$ are not both $0$.
By Lemma \ref{lm:cross} below,  $\rho_x^i(\zeta,\eta)=1$ is equivalent to $x+\etstar$ pointing to $x+\etstar-e_{3-i}$ in $\shockG{[\zeta,\eta]}$. 
Also, $\rho_x^1(\xi)=\rho_x^2(\xi)=1$ if and only if $\cid(T_x\w)=\xi$.
Let
	\[\kappa_i(\zeta,\eta)=\P\{\rho_0^i(\zeta,\eta)=1\}.\]
Since $\supp{\mu_{x,x+e_i}}$ is by definition closed,  $\kappa_i$ is left-continuous in $\zeta$ and right-continuous in $\eta$. 
Furthermore, by Theorem \ref{th:V1} $\kappa_i$ is continuous in each argument at points of differentiability of $\gpp$. 
%F: we just said that the thing is left-continuous in $\zeta$ and right-continuous in $\eta$. For continuity we need the other two statements: right-continuous in $\zeta$ and left-continuous in $\eta$.  The thing is monotone in both variables and thus continuity just follows from not assigning mass points, i.e.\ from Theorem \ref{th:V1}.}  
Again, we write $\kappa_i(\xi)$ for $\kappa_i(\xi,\xi)$.
%A fact that is needed to see this is that the support is a closed set. So any accumulation point stays in the support.
We thus have
	\[\kappa_i(\xi)=\lim_{\zeta\nearrow\xi,\eta\searrow\xi}\kappa_i(\zeta,\eta).\]
By Theorem \ref{th:V1}, $\xi \in \Diff$ if and only if $\kappa_i(\xi)=0$ for any (and hence both) $i\in\{1,2\}$.
Let 
	\[\kappa_{12}(\zeta,\eta)=\P\{\rho_0^1(\zeta,\eta)=\rho_0^2(\zeta,\eta)=1\}=\P\{\cid\in[\zeta,\eta]\}\]
and write $\kappa_{12}(\xi)$ for $\kappa_{12}(\xi,\xi)$. The last equality above follows because if $\cid\notin[\zeta,\eta]$, then by recovery \eqref{rec-prop2} and by the Busemann characterization \eqref{cid} of $\cid$, one of the processes $\xi\mapsto\B{\xi\pm}(0,e_i)$ for $i\in\{1,2\}$ is constant for $\xi\in[\zeta,\eta]$. 

% the Busemann process is constant on $[\zeta,\eta]$ for one of the two edges.

The next result essentially follows from the ergodic theorem and 
gives the density of horizontal and vertical edges, instability points, branch points, and coalescence points. 

%Recall the full measure event $\Omreg$ introduced in Lemma \ref{lm:B+>B-}.

\begin{lemma}\label{lm:den}
Assume the regularity condition \eqref{g-reg}.
There exists a $T$-invariant event $\Omega_0'\subset\Omega_0$ with $\P(\Omega_0')=1$ and 
such that for all $\w\in\Omega+0'$, $i,j\in\{1,2\}$, $a,b,a',b'\in\{0,1\}$ with $(b-a)(b'-a')\ne0$, and for all $\zeta\preceq\eta$ in $\ri\Uset$, we have
	\begin{align}
		\begin{split}
		\label{den:edge}
		&\lim_{n\to\infty}\frac1{\abs{b-a}n}\sum_{k=-an}^{bn}\rho_{ke_i}^j(\zeta,\eta)\\
		&\qquad\qquad=\lim_{n\to\infty}\frac1{\abs{(b-a)(b'-a')}n^2}\sum_{x\in[-an,bn]\times[-a'n,b'n]}\!\!\!\!\!\!\!\!\!\!\!\!\!\rho_x^j(\zeta,\eta)=\kappa_j(\zeta,\eta),
		\end{split}\\
		\begin{split}
		\label{den:red}
		&\lim_{n\to\infty}\frac1{\abs{b-a}n}\sum_{k=-an}^{bn}\one\{ke_i+\etstar\in\shock{[\zeta,\eta]}\}\\
		&\qquad\qquad=\lim_{n\to\infty}\frac1{(b-a)(b'-a')n^2}\sum_{x\in[-an,bn]\times[-a'n,b'n]}\!\!\!\!\!\!\!\!\!\!\!\!\!\one\{x+\etstar\in\shock{[\zeta,\eta]}\}\\
		&\qquad\qquad=\kappa_1(\zeta,\eta)+\kappa_2(\zeta,\eta)-\kappa_{12}(\zeta,\eta),
		\end{split}\\
		\begin{split}
		\label{den:cid}
		&\lim_{n\to\infty}\frac1{\abs{b-a}n}\sum_{k=-an}^{bn}\rho_{ke_i}^1(\zeta,\eta)\rho_{ke_i}^2(\zeta,\eta)\\
		&\qquad\qquad=\lim_{n\to\infty}\frac1{\abs{(b-a)(b'-a')}n^2}\sum_{x\in[-an,bn]\times[-a'n,b'n]}\!\!\!\!\!\!\!\!\!\!\!\!\!\rho_x^1(\zeta,\eta)\rho_x^2(\zeta,\eta)=\kappa_{12}(\zeta,\eta),	
		\end{split}\\
		&\text{and}\notag\\
		\begin{split}
		\label{den:coal}
		&\lim_{n\to\infty}\frac1{\abs{b-a}n}\sum_{k=-an}^{bn}\rho_{ke_i}^1(\zeta,\eta)\rho_{(k+1)e_i-e_{3-i}}^2(\zeta,\eta)\\
		&\qquad\qquad=\lim_{n\to\infty}\frac1{\abs{(b-a)(b'-a')}n^2}\sum_{x\in[-an,bn]\times[-a'n,b'n]}\!\!\!\!\!\!\!\!\!\!\!\!\!\rho_{x-e_1}^1(\zeta,\eta)\rho_{x-e_2}^2(\zeta,\eta)=\kappa_{12}(\zeta,\eta).
		\end{split}
%		&\lim_{n\to\infty}\frac1{(b-a)n}\sum_{k=-an}^{bn}\one\{ke_i+\etstar\in\shock{[\zeta,\eta]}\}
%		=\lim_{n\to\infty}\frac1{[(b-a)n]^2}\sum_{x\in[-an,bn]^2}\one\{x+\etstar\in\shock{\zeta,\eta}\}
%		=\kappa_1(\zeta,\eta)+\kappa_2(\zeta,\eta)-\kappa_{12}(\zeta,\eta),\label{den:red}\\
%		&\lim_{n\to\infty}\frac1{(b-a)n}\sum_{k=-an}^{bn}\rho_{ke_i}^j(\xi)=\lim_{n\to\infty}\frac1{[(b-a)n]^2}\sum_{x\in[-an,bn]^2}\rho_x^j(\xi)=\kappa^j(\xi).\label{den:zero}
               	\end{align}
All of the above limits are positive if and only if $\nabla\gpp(\zeta+)\ne\nabla\gpp(\eta-)$.
\end{lemma}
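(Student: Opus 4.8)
\textbf{Proof proposal for Lemma \ref{lm:den}.}

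The plan is to reduce every one of \eqref{den:edge}--\eqref{den:coal} to a single application of Birkhoff's ergodic theorem for the $\Z^2$-action $T$, and then to handle the ``simultaneously for all $\zeta\preceq\eta$'' clause by a monotonicity-and-countability argument. First I would fix a countable dense set $\sD_1\subset\Diff$ and observe that for fixed $\zeta\preceq\eta$, each of the four functions appearing inside the sums---$\rho_0^j(\zeta,\eta)$, $\one\{\etstar\in\shock{[\zeta,\eta]}\}=\one\{\rho_0^1(\zeta,\eta)\vee\rho_0^2(\zeta,\eta)=1\}$, $\rho_0^1(\zeta,\eta)\rho_0^2(\zeta,\eta)$, and $\rho_{-e_1}^1(\zeta,\eta)\rho_{-e_2}^2(\zeta,\eta)$---is a bounded $\sF$-measurable function of $\w$, and by the covariance property \eqref{cov-prop} the summand at $x$ (or $ke_i$) is the same function evaluated at $T_x\w$. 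Since $\P$ is $T$-ergodic, Birkhoff's theorem along the rectangles $[-an,bn]\times[-a'n,b'n]$ and along the line segments $\{-an,\dots,bn\}e_i$ both converge $\P$-a.s.\ to the expectation of the summand. Computing those expectations: $\E[\rho_0^j(\zeta,\eta)]=\kappa_j(\zeta,\eta)$ by definition; $\E[\one\{\rho_0^1\vee\rho_0^2=1\}]=\kappa_1(\zeta,\eta)+\kappa_2(\zeta,\eta)-\kappa_{12}(\zeta,\eta)$ by inclusion--exclusion; $\E[\rho_0^1(\zeta,\eta)\rho_0^2(\zeta,\eta)]=\kappa_{12}(\zeta,\eta)=\P\{\cid\in[\zeta,\eta]\}$ as recorded above the lemma; and for \eqref{den:coal}, I would note $\rho_{-e_1}^1(\zeta,\eta)\rho_{-e_2}^2(\zeta,\eta)=1$ exactly when $\cid(T_{-e_1}\w)\in[\zeta,\eta]$ \emph{and} $\cid(T_{-e_2}\w)\in[\zeta,\eta]$; but by Theorem \ref{thm:shocks}\eqref{thm:shocks.c} and the structure of branch points (a branch point at $x+\etstar$ forces its two flanking edges $(x-e_1,x)$ vertical in $\G{\zeta-}$ and $(x-e_2,x)$ horizontal in $\G{\eta+}$, i.e.\ $\rho_{x-e_1}^2=\rho_{x-e_2}^1=0$ there)---more directly, one checks via the recovery property \eqref{rec-prop2} and the ordering \eqref{mono} that $\rho_{x-e_1}^1(\zeta,\eta)=\rho_{x-e_2}^2(\zeta,\eta)=1$ holds iff $x+\etstar$ is a coalescence point in $\shockG{[\zeta,\eta]}$, which is iff $x^*+e_1$ and $x^*+e_2$ both point to $x^*$, which by tracing definitions is equivalent to $\cid(T_x\w)\in[\zeta,\eta]$; hence the expectation is again $\kappa_{12}(\zeta,\eta)$. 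This establishes each identity for a fixed pair $(\zeta,\eta)$ on a full-measure event $\Omega_{\zeta,\eta}$.

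Next I would upgrade to the uniform-in-$(\zeta,\eta)$ statement. The indicators $\rho_x^j(\zeta,\eta)$ are monotone in the interval: $[\zeta,\eta]\subset[\zeta',\eta']$ implies $\rho_x^j(\zeta,\eta)\le\rho_x^j(\zeta',\eta')$. Take $\Omega_0'$ to be the intersection of $\Omega_0$ with the countably many events $\Omega_{\zeta,\eta}$ over rational-endpoint pairs $\zeta\preceq\eta$ in $\ri\Uset$ (using the parametrization \eqref{u-a} to make ``rational'' precise), together with the full-measure event on which the one-dimensional ergodic averages of $\w_0$ converge (needed nowhere here but harmless) and on which $\kappa_i,\kappa_{12}$ are realized as the stated expectations. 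For a general $[\zeta,\eta]$, sandwich it between rational intervals $[\zeta^-,\eta^-]\subset[\zeta,\eta]\subset[\zeta^+,\eta^+]$; the corresponding Cesàro averages are sandwiched, so their liminf and limsup lie between $\kappa_\bullet(\zeta^-,\eta^-)$ and $\kappa_\bullet(\zeta^+,\eta^+)$. Letting the rational intervals shrink to $[\zeta,\eta]$ and using the one-sided continuity of $\kappa_i$ and $\kappa_{12}$ (left-continuous in the left endpoint, right-continuous in the right endpoint, as noted above the lemma, and continuous at differentiability points by Theorem \ref{th:V1}) pins both liminf and limsup to the common value $\kappa_\bullet(\zeta,\eta)$. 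This also covers the diagonal case $\zeta=\eta=\xi$, where the sandwiching rational intervals are $[\zeta_n,\eta_n]$ with $\zeta_n\nearrow\xi\nearrow\eta_n$ and the limit $\kappa_i(\xi)=\lim\kappa_i(\zeta_n,\eta_n)$ is exactly the identity recorded before the lemma.

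Finally, the positivity dichotomy: all the displayed limits are positive iff $\kappa_1(\zeta,\eta)>0$ (equivalently $\kappa_2(\zeta,\eta)>0$, equivalently $\kappa_{12}(\zeta,\eta)>0$, since $\kappa_{12}\le\kappa_i\le\kappa_1+\kappa_2$ and $\kappa_1(\zeta,\eta)=\kappa_2(\zeta,\eta)$ is forced by the symmetry $e_1\leftrightarrow e_2$ of $\gpp$ together with Theorem \ref{thm:Vcid}). By Lemma \ref{lm:V1}, $]\zeta,\eta[\,\cap\aUset\ne\varnothing$ iff $\nabla\gpp(\zeta+)\ne\nabla\gpp(\eta-)$; and by Theorem \ref{th:V1}\eqref{th:V1.a} together with the definition $\kappa_{12}(\zeta,\eta)=\P\{\cid\in[\zeta,\eta]\}$ and Theorem \ref{thm:cif}, $\kappa_{12}(\zeta,\eta)>0$ precisely when there is positive probability that some competition interface direction falls in $[\zeta,\eta]$, which holds iff $\nabla\gpp(\zeta+)\ne\nabla\gpp(\eta-)$. (In the degenerate case $[\zeta,\eta]=\{\xi\}$, $\kappa_{12}(\xi)>0$ iff $\xi\notin\Diff$, and $\nabla\gpp(\xi+)\ne\nabla\gpp(\xi-)$ says the same thing.) I expect the only real subtlety to be the interchange of limits in the sandwiching step---making sure the rational-endpoint intervals can be chosen to approach $[\zeta,\eta]$ in a way compatible with the one-sided semicontinuity of $\kappa_i$, which is where Theorem \ref{th:V1}'s continuity-at-differentiability-points statement does the work when an endpoint happens to lie in $\Diff$; everything else is a routine ergodic-theorem bookkeeping.
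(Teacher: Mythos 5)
Your overall strategy --- Birkhoff ergodic theorem (valid here because under \eqref{g-reg} the Busemann process is a measurable function of the i.i.d.\ weights, so $\P$ is $T$-ergodic, cf.\ Remark \ref{rk:ergodicity}), followed by a density-and-sandwiching argument using monotonicity and one-sided continuity of $\kappa_j$ --- is exactly the paper's structure, and the treatment of \eqref{den:edge}, \eqref{den:red}, \eqref{den:cid} is sound. Two comments on that part: for the sandwiching step, the paper's countable family is $\Udense\cup\bigl((\ri\Uset)\setminus\Diff\bigr)$ rather than rational pairs, precisely because $\kappa_j$ is \emph{not} continuous from the inside at a kink endpoint $\zeta\notin\Diff$; your wording ``when an endpoint happens to lie in $\Diff$'' glosses over the complementary case, which has to be included directly in the countable set rather than approximated. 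Also, the claim ``$\kappa_1(\zeta,\eta)=\kappa_2(\zeta,\eta)$ is forced by the symmetry $e_1\leftrightarrow e_2$'' is false (the reflection maps $[\zeta,\eta]$ to a \emph{different} interval) but is also unnecessary: positivity already follows from Lemma \ref{lm:V1} together with $\kappa_{12}\le\kappa_i$.

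The genuine gap is in \eqref{den:coal}. You assert (in two variants) a pointwise identification of the coalescence-point event with a competition-interface event: first that $\rho_{-e_1}^1\rho_{-e_2}^2=1$ exactly when $\cid(T_{-e_1}\w)\in[\zeta,\eta]$ and $\cid(T_{-e_2}\w)\in[\zeta,\eta]$; then that it is equivalent to $\cid(T_x\w)\in[\zeta,\eta]$. Both are wrong. Since $\supp\mu_{x,x+e_1}$ is in general a large set, not the singleton $\{\cid(T_x\w)\}$ (Theorem \ref{thm:Vcid}\eqref{thm:Vcid.1} only says the \emph{intersection} $\supp\mu_{x,x+e_1}\cap\supp\mu_{x,x+e_2}$ is that singleton), $\rho_x^1(\zeta,\eta)=1$ does not imply $\cid(T_x\w)\in[\zeta,\eta]$. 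More importantly, the condition $\cid(T_x\w)\in[\zeta,\eta]$ characterizes \emph{branch} points at $x+\etstar$ (Theorem \ref{thm:shocks}\eqref{thm:shocks.c}), whereas $\rho_{x-e_1}^1\rho_{x-e_2}^2=1$ is the \emph{coalescence}-point condition at $x-\etstar$; these are distinct pointwise events. The paper does not claim pointwise equality. Instead it proves that the two events have the same \emph{probability} by an indirect argument: every instability point has at least one incoming and at least one outgoing edge, so by inclusion--exclusion $\P\{-\etstar\in\shock{[\zeta,\eta]}\}=\P\{\rho_{-e_1}^1=1\}+\P\{\rho_{-e_2}^2=1\}-\P\{\rho_{-e_1}^1=\rho_{-e_2}^2=1\}$ and $\P\{\etstar\in\shock{[\zeta,\eta]}\}=\P\{\rho_0^1=1\}+\P\{\rho_0^2=1\}-\P\{\rho_0^1=\rho_0^2=1\}$; translation invariance forces the first three terms in each display to agree, hence $\P\{\rho_{-e_1}^1=\rho_{-e_2}^2=1\}=\kappa_{12}(\zeta,\eta)$. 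This shift-invariance trick is the essential step you are missing --- the identity you want to use does not hold $\omega$-wise, only in distribution.
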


%We compute these quantities explicitly in the case of exponential weights in Corollary \ref{cor:explicit} below.

 \begin{proof}%[Proof of Lemma \ref{lm:den}] 
As explained in Remark \ref{rk:ergodicity}, under the regularity condition \eqref{g-reg}, the Busemann process is a measurable function of $\{\w_x:x\in\Z^2\}$. Thus, by the ergodic theorem,  there exists a $T$-invariant event 
$\Omega_0'\subset\Omega_0$ with $\P(\Omega_0')=1$ and such that for $\w\in\Omega_0'$ the limits (\ref{den:edge}-\ref{den:coal}) hold for
 all $\zeta,\eta\in\Udense\cup\bigl((\ri\Uset)\setminus\Diff\bigr)$.
 
 To justify the equality of the limit in \eqref{den:coal} with the one in \eqref{den:cid} observe that since every instability point must have at least one descendant and at least one ancestor, we have
	\[\P\{-\etstar\in\shock{[\zeta,\eta]}\}=\P\{\rho_{-e_1}^1(\zeta,\eta)=1\}+\P\{\rho_{-e_2}^2(\zeta,\eta)=1\}-\P\{\rho_{-e_1}^1(\zeta,\eta)=\rho_{-e_2}^2(\zeta,\eta)=1\}\]
and
	\[\P\{\etstar\in\shock{[\zeta,\eta]}\}=\P\{\rho_0^1(\zeta,\eta)=1\}+\P\{\rho_0^2(\zeta,\eta)=1\}-\P\{\rho_0^1(\zeta,\eta)=\rho_0^2(\zeta,\eta)=1\}.\]
By shift invariance, the first three probabilities in the first display match the corresponding three probabilities in the second display.  Thus, 
	\[\P\{\rho_{-e_1}^1(\zeta,\eta)=\rho_{-e_2}^2(\zeta,\eta)=1\}=\kappa_{12}(\zeta,\eta).\]

%Let $\cA$ be a dense countable subset of $\ri\Uset$ that contains 
%the (countably many) points $\xi\in\ri\Uset$ where $\gpp$ is not differentiable.
%Let $\Omega_0$ be the full-measure event on which these claims hold for all $\zeta\preceq\eta$ in $\cA$. 
We now prove the first limit in \eqref{den:edge}, the rest of the limits in the statement of the lemma being similar.
Take $\w\in\Omega_0'$ and any $\zeta\prec\eta$ in $\ri\Uset$. Suppose first $\gpp$ is differentiable at both $\zeta$ and $\eta$.
Take sequences $\zeta_m'\prec\zeta\prec\zeta_m\prec\eta_m\prec\eta\prec\eta_m'$ with $\zeta_m', \zeta_m, \eta_m, \eta_m' \in \Udense$ and use monotonicity and the continuity of $\kappa_j$ to get
	\begin{align*}
%	\kappa_j(\zeta,\eta)=\kappa_j(\zeta-,\eta+)
	\kappa_j(\zeta_m,\eta_m)
	&=\lim_{n\to\infty}\frac1{(b-a)n}\sum_{k=-an}^{bn}\rho_{ke_i}^j(\zeta_m,\eta_m)\\
	&\le\varliminf_{n\to\infty}\frac1{(b-a)n}\sum_{k=-an}^{bn}\rho_{ke_i}^j(\zeta,\eta)
	\le \varlimsup_{n\to\infty}\frac1{(b-a)n}\sum_{k=-an}^{bn}\rho_{ke_i}^j(\zeta,\eta)\\
	&\le \lim_{n\to\infty}\frac1{(b-a)n}\sum_{k=-an}^{bn}\rho_{ke_i}^j(\zeta_m',\eta_m')
	=\kappa_j(\zeta_m',\eta_m').
	\end{align*}
Taking $m\to\infty$ and using continuity of $\kappa_j$ at $\zeta$ and $\eta$ 
gives that the above liminf and limsup are equal to $\kappa_j(\zeta,\eta)$.
The same proof works if $\zeta=\eta$ is a point of differentiability of $\gpp$. In this case, we can use $0$ as a lower bound and for the upper bound we have $\kappa_j(\zeta)=\kappa_j(\eta)=0$. 	

Next, suppose $\zeta$ is a point of non-differentiability of $\gpp$, but $\eta$ is still a point of differentiability.
We can repeat the same argument as above, but this time only using the sequences $\eta_m$ and $\eta'_m$ and the intervals $[\zeta,\eta_m]$ and $[\zeta,\eta_m']$ for the upper and lower bounds, 
because $\zeta$ has been included in the set $\Udense\cup\bigl((\ri\Uset)\setminus\Diff\bigr)$. 
A similar argument works if $\zeta$ is a point of differentiability but $\eta$ is not. When $\gpp$ is not differentiable at both $\zeta$ and $\eta$, the claimed limits follow from the choice of $\Omega_0'$.
\end{proof}

\begin{proof}[Proof of Proposition \ref{prop:trichotomy}]
The claim follows from Lemma \ref{lm:den} 
%(a) follows from the definition of $\aUset$. (b) and (c) follow from $\kappa(\xi)=0$ iff $\xi\in\Diff$.
\end{proof}

\subsection{Flow of Busemann measures}\label{sec:Busflowpf}

\begin{proof}[Proof of Theorem \ref{th:flowS}]
The vertex set of $\cB^*_{[\zeta, \eta]}$ is by definition the same as that of $\shockG{[\zeta,\eta]}$. That the edges also agree follows from Lemma \ref{lm:cross} below. 
\end{proof}

\begin{lemma}\label{lm:cross}
For $i\in\{1,2\}$, $[\zeta,\eta]\cap\supp{\mu_{x,x+e_i}}\neq\varnothing$  if and only if 
$(x+\etstar, x+\etstar-e_{3-i})$ is a directed  edge in the graph $\shockG{[\zeta,\eta]}$.  
%$x+\etstar\in\shock{[\zeta,\eta]}$ and $x+\etstar\in\shock{[\zeta,\eta]}$ points to $x+\etstar-e_{3-i}$ in $\dS{[\zeta,\eta]}$. 
\end{lemma}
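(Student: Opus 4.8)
\textbf{Plan for the proof of Lemma \ref{lm:cross}.} The statement is a duality bookkeeping fact, so the plan is to unwind the definitions on both sides and match them. Fix $\w\in\Omega_0$, $x\in\Z^2$, $i\in\{1,2\}$, and write $x^*=x+\etstar$. Recall from the construction in Section \ref{sec:web} that an edge $(x^*,x^*-e_j)$ lies in $\dS{[\zeta,\eta]}$ precisely when an associated nearest-neighbor edge of the primal lattice is \emph{not} in the relevant geodesic graph; more usefully, from the explicit description of $\dS{[\zeta,\eta]}=\dG{\zeta-}\cup\dG{\eta+}$ given there, $x^*$ points to $x^*-e_2$ in $\dS{[\zeta,\eta]}$ iff $x-\etstar$ points to $x+\exstar$ (i.e.\ takes the $e_2$-step) in $\G{\zeta-}$, and $x^*$ points to $x^*-e_1$ iff $x^*-\etstar$ points to $x^*-\exstar$ (takes the $e_1$-step) in $\G{\eta+}$. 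Re-indexing, the edge $(x^*,x^*-e_{3-i})$ of $\dS{[\zeta,\eta]}$ corresponds to the statement that one of $\geo{}{x}{\zeta-}$, $\geo{}{x}{\eta+}$ takes the $e_i$-step at $x$, i.e.\ either $\B{\zeta-}(x,x+e_i)<\B{\zeta-}(x,x+e_{3-i})$ (with the appropriate tie-breaking for $\G{\zeta-}$) or $\B{\eta+}(x,x+e_i)<\B{\eta+}(x,x+e_{3-i})$; equivalently, using recovery \eqref{rec-prop2}, that $\B{\zeta-}(x,x+e_i)$ or $\B{\eta+}(x,x+e_i)$ exceeds $\w_x$ is the wrong direction — the correct reformulation is that $\B{\xi\sig}(x,x+e_{3-i})=\w_x<\B{\xi\sig}(x,x+e_i)$ fails to hold for at least one of $(\sig,\xi)\in\{(-,\zeta),(+,\eta)\}$.

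\textbf{The core computation.} The key is to express $[\zeta,\eta]\cap\supp\mu_{x,x+e_i}\ne\varnothing$ in terms of the boundary values $\B{\zeta-}(x,x+e_i)$ and $\B{\eta+}(x,x+e_i)$. By the defining relations \eqref{Bmeas} for the Lebesgue--Stieltjes measure and the definition of $\supp\mu_{x,x+e_i}$ as the closure obtained from the total-variation measures on compact subintervals, together with the monotonicity \eqref{mono} which makes $\xi\mapsto\B{\xi\sig}(x,x+e_i)$ monotone, one has: $[\zeta,\eta]\cap\supp\mu_{x,x+e_i}\ne\varnothing$ iff $\xi\mapsto\B{\xi\pm}(x,x+e_i)$ is \emph{not} constant on a neighborhood of the whole interval $[\zeta,\eta]$, which (again by monotonicity and the one-sided continuity \eqref{Busemann-limits}) is equivalent to $\B{\zeta-}(x,x+e_i)\ne\B{\eta+}(x,x+e_i)$. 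Now invoke recovery \eqref{rec-prop2}: $\min\{\B{\xi\sig}(x,x+e_1),\B{\xi\sig}(x,x+e_2)\}=\w_x$ for every $\xi,\sig$. By monotonicity, as $\xi$ increases, $\B{\xi\sig}(x,x+e_1)$ decreases and $\B{\xi\sig}(x,x+e_2)$ increases; so $\B{\zeta-}(x,x+e_1)\ge\B{\eta+}(x,x+e_1)$ with $\w_x$ the common floor. Distinguishing the cases by which coordinate achieves the minimum at $\zeta-$ and at $\eta+$, a short case analysis shows $\B{\zeta-}(x,x+e_i)\ne\B{\eta+}(x,x+e_i)$ holds iff at least one of the geodesics $\geo{}{x}{\zeta-}$, $\geo{}{x}{\eta+}$ takes the $e_i$-step at $x$, which is exactly the condition that $(x^*,x^*-e_{3-i})$ is an edge of $\dS{[\zeta,\eta]}$. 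Finally, Step 2 of the proof of Theorem \ref{thm:shock1} (every edge of $\dS{[\zeta,\eta]}$ emanating from a point of $\shock{[\zeta,\eta]}$ is an edge of $\shockG{[\zeta,\eta]}$) upgrades this to membership in $\shockG{[\zeta,\eta]}$, once we note that the presence of such an edge forces $x^*\in\shock{[\zeta,\eta]}$ (which holds because $\rho^1_x$ or $\rho^2_x$ is then nonzero, i.e.\ $[\zeta,\eta]$ meets $\supp\mu_{x,x+e_1}$ or $\supp\mu_{x,x+e_2}$).

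\textbf{Degenerate case $\zeta=\eta=\xi$.} The same argument applies verbatim: $\xi\in\supp\mu_{x,x+e_i}$ iff $\B{\xi-}(x,x+e_i)\ne\B{\xi+}(x,x+e_i)$ (by Proposition \ref{pr:supp2} combined with \eqref{Bmeas}, or directly from the definition of support of a Lebesgue--Stieltjes measure for a monotone function), and the recovery identity plus monotonicity in $\sig$ (via \eqref{mono}) gives the geometric reformulation, with $\dG{\xi-}\cup\dG{\xi+}$ in place of $\dG{\zeta-}\cup\dG{\eta+}$.

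\textbf{Expected main obstacle.} The only genuinely delicate point is the equivalence ``$[\zeta,\eta]\cap\supp\mu_{x,x+e_i}\ne\varnothing$ iff $\B{\zeta-}(x,x+e_i)\ne\B{\eta+}(x,x+e_i)$.'' The implication ``$\ne\Rightarrow$ intersection nonempty'' is immediate from \eqref{Bmeas} applied on $[\zeta',\eta']\supset[\zeta,\eta]$. For the converse one must rule out the possibility that $\mu_{x,x+e_i}$ charges $\{\zeta\}$ or $\{\eta\}$ while the values $\B{\zeta-}$ and $\B{\eta+}$ still coincide — but since $\B{\zeta-}(x,x+e_i)$ uses the left limit and $\B{\eta+}(x,x+e_i)$ the right limit, any atom inside the \emph{closed} interval $[\zeta,\eta]$ (including at the endpoints) is captured by the difference $\B{\eta+}-\B{\zeta-}$, so monotonicity closes the gap cleanly. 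I expect this to be a one-paragraph verification using \eqref{mono}, \eqref{Busemann-limits}, and \eqref{Bmeas}, with no real hidden difficulty.
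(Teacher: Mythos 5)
Your key step — the claimed equivalence ``$[\zeta,\eta]\cap\supp\mu_{x,x+e_i}\ne\varnothing$ if and only if $\B{\zeta-}(x,x+e_i)\ne\B{\eta+}(x,x+e_i)$'' — is not correct, and this is a genuine gap, not a detail. The difference $\B{\eta+}(x,x+e_i)-\B{\zeta-}(x,x+e_i)$ equals the signed mass $\mu_{x,x+e_i}\bigl([\zeta,\eta]\bigr)$, so your equivalence would assert that $[\zeta,\eta]$ meets the support if and only if $[\zeta,\eta]$ itself carries positive total variation. These are not the same thing: a boundary point, say $\zeta$, can lie in $\supp\mu_{x,x+e_i}$ because it is an accumulation point of mass from the \emph{left} of $\zeta$, while $\abs{\mu_{x,x+e_i}}\bigl([\zeta,\eta]\bigr)=0$ so that $\B{\zeta-}(x,x+e_i)=\B{\eta+}(x,x+e_i)$. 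Your ``Expected main obstacle'' paragraph addresses only the possibility of an atom at $\zeta$ or $\eta$, which is indeed captured by the difference $\B{\eta+}-\B{\zeta-}$; it does not address the case where an endpoint is a non-atomic limit of the support from outside $[\zeta,\eta]$. Since the lemma is stated and proved without the jump process condition \eqref{cond:jumpcond} — which is precisely what would rule out such accumulation — this scenario cannot be dismissed. (And in that scenario the lemma is still true: one can check directly that the relevant dual edge lies in $\shockG{[\zeta,\eta]}$, so the conclusion is fine; only your route to it is broken.)

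This is exactly where the paper's proof does the real work, and it goes around the problem rather than through it. For the converse implication the paper does not attempt to extract the support condition from the two boundary values $\B{\zeta-}$ and $\B{\eta+}$. Instead it fixes arbitrary $\zeta'\prec\zeta$ and $\eta'\succ\eta$, observes via path ordering that $\geo{}{x}{\zeta'+}$ and $\geo{}{x+e_1}{\eta'-}$ are disjoint so that the coalescence points $\coal{\zeta'+}(x,x+e_1)$ and $\coal{\eta'-}(x,x+e_1)$ cannot agree in $\Z^2$, applies Proposition \ref{pr:supp1} to conclude $\abs{\mu_{x,x+e_1}}(\,]\zeta',\eta'[\,)>0$, and then lets $\zeta'\to\zeta$, $\eta'\to\eta$ and uses that the support is closed. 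That limiting argument over strictly larger open intervals is the essential device that handles the accumulation-point case, and your proof has no substitute for it. (I am setting aside the index bookkeeping in your first paragraph, which appears to swap $e_i$ and $e_{3-i}$ at one point — the edge $(x^*,x^*-e_{3-i})$ of $\dS{[\zeta,\eta]}$ corresponds to $\geo{}{x}{\zeta-}$ or $\geo{}{x}{\eta+}$ taking the $e_{3-i}$ step, not the $e_i$ step — since that is repairable and not the central issue.)
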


\begin{proof}
We argue the case of $i=1$.
Assume first that $[\zeta,\eta]\cap\supp{\mu_{x,x+e_1}}\neq\varnothing$. 
From $\mu_{x+e_1, x+e_2}=\mu_{x+e_1, x}+\mu_{x,x+e_2}$ and $\mu_{x-\ex, x}=\mu_{x-\ex, x+e_1}+\mu_{x+e_1, x}$ (sums of positive measures) we see that both $x+\etstar, x+\etstar-e_{2}\in\shock{[\zeta, \eta]}$. 

Suppose $\xi\in[\zeta,\eta]\cap\supp{\mu_{x,x+e_1}}$.  By Theorem \ref{thm:nonint}, $x$ must point to $x+e_2$ in $\G{\xi-}$,   which forces the same   in $\G{\zeta-}$.  
Thus    $x+\etstar$ points to $x+\etstar-e_2$ in $\dG{\zeta-}$ and hence also in $\dS{[\zeta,\eta]}$.

Conversely, if $x+\etstar\in\shock{[\zeta,\eta]}$ then $\geo{}{x+e_2}{\zeta-}$ and $\geo{}{x+e_1}{\eta+}$ do not intersect. If furthermore $x+\etstar$ points to $x+\etstar-e_2$ in $\dS{[\zeta,\eta]}$, then $x$ points to $x+e_2$ in $\G{\zeta-}$ and hence $\geo{}{x}{\zeta-}$ joins $\geo{}{x+e_2}{\zeta-}$ and does not intersect $\geo{}{x+e_1}{\eta+}$.   
 
 Let $\zeta'\prec\zeta$ and $\eta'\succ\eta$.  By geodesic ordering \eqref{path-ordering}, $\geo{}{x}{\zeta'+}$ and $\geo{}{x+e_1}{\eta'-}$ are disjoint.  In particular, the coalescence points $\coal{\zeta'+}(x,x+e_1)$ and $\coal{\eta'-}(x,x+e_1)$ cannot coincide on $\Z^2$.  By Proposition \ref{pr:supp1}, $]\zeta', \eta'[$  intersects  $\supp{\mu_{x,x+e_1}}$. Since this holds  for every choice of $]\zeta', \eta'[\,\supset [\zeta,\eta]$, it follows that also 
  $[\zeta,\eta]$ intersects $\supp{\mu_{x,x+e_1}}$.
\end{proof}

\begin{proof}[Proof of Proposition \ref{lm:K9}]
Suppose $x\zesim y$.    Since  $\supp\mu_{x,y}$ is a closed subset of $\ri\Uset$  and $[\zeta, \eta]$  a compact set,   we can find $\zeta'\prec\zeta$ and $\eta'\succ\eta$ such that  $\abs{\mu_{x,y}}(\,]\zeta', \eta'[\,)=0$.   Then by Proposition \ref{pr:supp1}, there exists $z\in\Z^2$ such that all geodesics $\geo{}{x}{\xi\sig}$ and $\geo{}{y}{\xi\sig}$ for $\xi\in[\zeta, \eta]$ and $\sigg\in\{-,+\}$ meet at $z$.  Thus  $x$ and $y$ are in the same subtree of the graph $\G{\cap[\zeta, \eta]}$. 

Conversely, suppose $x$ and $y$ are two distinct points  in the same subtree $\cK$ of the graph $\G{\cap[\zeta, \eta]}$.   In this tree the following holds. 
\be\label{aux775} \begin{aligned} 
&\text{In $\cK$ there is a point $z$ and a path $\pi$ from $x$ to $z$  and a path $\pi'$ from $y$ to $z$ }\\[-2pt] 
&\text{such that $z$ is the first common point of $\pi$ and $\pi'$. For each $\xi\in[\zeta, \eta]$ and }\\[-2pt]
&\text{both signs  $\sigg\in\{-,+\}$, all the geodesics $\geo{}{x}{\xi\sig}$ follow $\pi$  from $x$ to $z$,  }\\[-2pt]
&\text{and all the geodesics $\geo{}{y}{\xi\sig}$  follow $\pi'$  from $y$ to $z$.} 
\end{aligned}\ee
 % For each $\xi\in[\zeta, \eta]$ and both signs  $\sig\in\{-,+\}$,  all the geodesics $\geo{}{x}{\xi\sig}$ follow $\pi$  from $x$ to $z$,  and all the geodesics $\geo{}{y}{\xi\sig}$  follow $\pi'$  from $y$ to $z$.  
  Consequently each  $\xi\in[\zeta, \eta]$ satisfies $\coal{\xi-}(x,y)=\coal{\xi+}(x,y)=z$.  By Proposition \ref{pr:supp2} each  $\xi\in[\zeta, \eta]$ lies outside $\supp\mu_{x,y}$. 
\end{proof}

\begin{figure}[h]
 	\begin{center}
 		 \begin{tikzpicture}[scale=1]
		 
		 	\draw(0,0)--(1.5,0)--(1.5,1.5)--(0,1.5)--(0,0);
		 	\draw[line width=1.5pt](0,1.5)--(0.5,1.5)--(0.5,1.7)--(0.9,1.7)--(0.9,1.9)--(1.4,1.9)--(1.4,2.1)--(1.8,2.1)--(1.8,2.3)--(2.2,2.3)--(2.2,2.5)--(2.7,2.5);
			\draw[fill=black](0,1.5) circle(2pt);
			\draw(0,1.5)node[left]{$x$};
			\draw(0.9,1.9)node[above]{$\pi$};
			\draw[line width=1.5pt](1.5,0)--(1.5,0.6)--(1.8,0.6)--(1.8,1)--(2.1,1)--(2.1,1.6)--(2.4,1.6)--(2.4,2.1)--(2.7,2.1)--(2.7,2.5);
			\draw[fill=black](1.5,0) circle(2pt);
			\draw(1.5,0)node[right]{$y$};
			\draw(2.1,1.3)node[right]{$\pi'$};
			\draw[line width=1.5pt](0.4,0.8)--(0.8,0.8)--(0.8,1.2)--(1.2,1.2)--(1.2,1.7)--(1.6,1.7)--(1.6,2.1);
			\draw[fill=black](0.4,0.8) circle(2pt);
			\draw(0.38,0.8)node[below]{$u$};
			\draw(1.2,1.25)node[below]{$\pi''$};
			\draw[fill=black](2.7,2.5) circle(2pt);
			\draw(2.85,2.5)node[above]{$z$};
		
		\end{tikzpicture}
 	 \end{center}
 	\caption{\small  Proof of Lemma \ref{lm:K10}.}
 \label{fig:K10}
 \end{figure}

\begin{proof}[Proof of Lemma \ref{lm:K10}]
The hypotheses imply that, by switching $x$ and $y$ around if necessary, $x\cdot e_1\le y\cdot e_1$ and $x\cdot e_2\ge y\cdot e_2$.    Let $z, \pi, \pi'$ be as in \eqref{aux775}.  
 %Let  $\pi$ be the path from $x$ to $z$ followed by all the geodesics $\{\geo{}{x}{\xi\pm}: \xi\in[\zeta, \eta]\}$, and similarly let  $\pi'$ be the path from $y$ to $z$ followed by all the geodesics  $\{\geo{}{y}{\xi\pm}: \xi\in[\zeta, \eta]\}$.  
Let $u$ be any point of  $\lzb x\wedge y, x\vee y\rzb$.    By planarity, 
%\timonote{Picture? Better words? Something like: since $u_2\le x_2$ and $u_1\ge x_1$, the geodesic from $u$ stays weakly below the one from $x$?}  
 each geodesic $\geo{}{u}{\xi\sig}$ for $ \xi\in[\zeta, \eta]$ and  $\sigg\in\{-,+\}$  must eventually intersect $\pi$ or $\pi'$ and then follow this to $z$.   See Figure \ref{fig:K10}.
 By  uniqueness of finite geodesics,  all  these geodesics  $\geo{}{u}{\xi\sig}$  follow the same path $\pi''$ from $u$ to $z$.     Thus $\pi''$ is part of the graph $\G{\cap[\zeta, \eta]}$, and since it comes together  with $\pi$ and $\pi'$ at $z$,  $\pi''$ is part of the same  subtree $\cK$.  
\end{proof}

\begin{proof} [Proof of Lemma \ref{lm:K12}] 
Suppose $x$ is such a vertex but  $\cK\subset\{y: y\ge x\}$ fails. 
We claim that then there necessarily exists a vertex $y\in\cK$ such that $x$ and $y$ satisfy the hypotheses of Lemma \ref{lm:K10} and one of $\{x- e_1, x- e_2\}$ lies in  $\lzb x\wedge y, x\vee y\rzb$. This leads to a contradiction. 

To verify the claim,  
pick $y\in\cK$ such that $y\ge x$ fails.  If $y<x$ also fails,  there are two possible cases: 
\begin{enumerate}     [label={\rm(\roman*)}, ref={\rm(\roman*)}]   \itemsep=3pt  
\item    $y\cdot e_1< x\cdot e_1$ and $y\cdot e_2\ge  x\cdot e_2$, in which case  $x- e_1\in\lzb x\wedge y, x\vee y\rzb\subset\cK$;
\item  $y\cdot e_1\ge x\cdot e_1$ and $y\cdot e_2< x\cdot e_2$, in which case  $x- e_2\in\lzb x\wedge y, x\vee y\rzb\subset\cK$.  
\end{enumerate} 

If $y<x$ does not fail, follow the geodesics $\{\geo{}{y}{\xi\pm}: \xi\in[\zeta, \eta]\}$ until they hit the level  $\level_{x\cdot\et}$ at some point $y'$.    The assumption that neither $x- e_1$ nor $x- e_2$ lies in $\cK$ implies that $y'\ne x$.  Thus $y'$ is a point of  $\cK$ that fails both   $y'\ge x$ and  $y'<x$.  Replace $y$ with $y'$ and apply the previous argument. 

We have shown that the existence of $x\in\cK$ such that $\{x- e_1, x- e_2\}\cap\cK=\varnothing$ implies that $\cK\subset\{y: y\ge x\}$.     That such $x$ must be unique follows since $x$ lies outside $\{y: y\ge x'\}$ for any $x'\ne x$ that satisfies $x'\ge x$. 
 
Assuming that $\inf\{ t\in\Z: \cK\cap\level_t\ne\varnothing\}>-\infty$,  pick $x \in\cK$ to minimize the level $x\cdot\et$.  
\end{proof}

\begin{proof}[Proof of Theorem \ref{thm:jumpG}] 
{\it Part \eqref{thm:jumpG.a}.}  If $[\zeta, \eta]\cap\,\aUset=\varnothing$ then the interval  $[\zeta, \eta]$  is  strictly on one    side of $\cid(T_x\w)$ at every $x$.   Hence the graphs  $\{ \G{\xi\sig}: \xi\in[\zeta, \eta], \sigg\in\{-,+\}\}$ are all identical.  This common graph is a tree by Theorem \ref{thm:+-coal}. 

Conversely, if   $\xi\in[\zeta, \eta]\cap\,\aUset$,  then there exist $x,y$ such that $\xi\in\supp{\mu_{x,y}}$ and by Theorem \ref{thm:nonint} there are disjoint geodesics in $\G{\cap[\zeta, \eta]}$. 
\medskip

{\it Part \eqref{thm:jumpG.b}.}
It follows from what was already said that $\{\cK(z):  z\in\cifset^{[\zeta, \eta]}\}$ are disjoint subtrees  of $\G{\cap[\zeta, \eta]}$ and their vertex sets cover $\Z^2$.   Suppose  $(x,x+ e_i)$ is an edge in $\G{\cap[\zeta, \eta]}$.  Then all geodesics $\{\geo{}{x}{\xi\sig}: \xi\in[\zeta, \eta], \sigg\in\{-,+\}\}$ go through this edge.  Thus this edge must be an edge of the tree $\cK(z)$ that contains both $x$ and $x+ e_i$.    Hence each edge of $\G{\cap[\zeta, \eta]}$ is an edge of one of the trees $\cK(z)$, and no such edge can connect two  trees $\cK(z)$ and $\cK(z')$ for distinct $z$ and $z'$. 
\end{proof} 

% !TEX root = GeoWebPaper.tex

\section{Instability points in the exponential model: proofs}\label{sec:exp-pf}

We turn  to the proof of the results in Section \ref{sec:exp}, beginning with a discussion of Palm kernels, which are needed in order to prove Theorems \ref{th:B-palm1} and \ref{thm:ladder-xi}.
\subsection{Palm kernels} \label{sub:palm}

Let $\cM_{\Z\times\ri\Uset}$ denote the space of locally bounded positive  Borel measures  on the locally compact space  $\Z\times\ri\Uset$.   Consider  $\Z\times\ri\Uset$  as the disjoint union of copies of $\ri\Uset$, one copy for each horizontal edge $(ke_1, (k+1)e_1)$ on the $x$-axis. Recall that $\B{\xi\sig}_k=\B{\xi\sig}(ke_1,(k+1)e_1)$.
We define  two random measures $\Bppa$ and $\Bpp$  on  $\Z\times\ri\Uset$ in terms of the Busemann functions $\xi\mapsto\B{\xi\pm}_k$ attached to   these  edges.

%\addmath{maybe we can go lighter on the notation? e.g.\ no need for the $\{k\}\times$ and just consider $\nu_k$ as a measure on $\ri\Uset$ directly?}
On each subset $\{k\}\times\ri\Uset$ of $\Z\times\ri\Uset$ we (slightly abuse notation and) define the measure $\Bppa_k$ by 
 \[  \Bppa_k\bigl(\{k\}\times ]\zeta, \eta]\,\bigr) = \Bppa_k\bigl(\,]\zeta,\eta]\, \bigr)=\B{\zeta+}_k-\B{\eta+}_k\] 
 for $\zeta\prec\eta$ in $\ri\Uset$. 
In terms of definition \eqref{Bmeas},   $\Bppa_k=\mu_{(k+1)e_1, ke_1}$ is  a positive measure due to monotonicity \eqref{mono}. 
 %Since the particular copy of $\ri\Uset$ is uniquely specified by the subscript of $\Bppa_k$, we can  also consider $\Bppa_k$ as a measure on $\ri\Uset$:  $\Bppa_k\,]\zeta, \eta]=\Bppa_k(\{k\}\times ]\zeta, \eta])$.  
 On $\Z\times\ri\Uset$, define the measure   $\Bppa=\sum_k\Bppa_k$.  In other words, for Borel sets $A_k\subset\ri\Uset$,  
$\Bppa\bigl( \,\bigcup_k \{k\}\times A_k\bigr)=\sum_k \Bppa_k(A_k)$.

Let $\Bpp_k$ denote the simple point process on  $\{k\}\times\ri\Uset$ that records   the locations of the jumps of the Busemann function $\xi\mapsto\B{\xi\pm}_k$:  for Borel $A\subset\ri\Uset$,
 \[  \Bpp_k(\{k\}\times A) = \Bpp_k(A) = \sum_{\xi\in A} \ind\{\B{\xi-}_k>  \B{\xi+}_k\}.  \]

We describe the probability distributions of the component measures $\Bppa_k$ and $\Bpp_k$, given in Theorem 3.4 of \cite{Fan-Sep-20}. 
%In terms of the variable $\alpha$ on the open unit interval $(0,1)$ given in \eqref{u-a}, 
Marginally, for each $k$, $\Bpp_k$ is a Poisson point process on $\ri\Uset$  with intensity measure
\be\label{Bmm}
  \Bmm\bigl(\,]\zeta, \eta]\,\bigr) = \Bmm_k\bigl(\,]\zeta, \eta]\,\bigr)= \E\bigl[ \Bpp_k(\,]\zeta, \eta]\,)\bigr]= \int_{\alpha(\zeta)}^{\alpha(\eta)}  \frac{ds}s = \log \frac{\alpha(\eta)}{\alpha(\zeta)}. 
  \ee
  In particular,  almost every realization of $\Bpp_k$ satisfies  $\Bpp_k[\zeta, \eta]<\infty$
for all $\zeta\prec\eta$ in $\ri\Uset$. 

 Create a marked Poisson process by attaching an independent Exp$(\alpha(\xi))$-distributed weight $Y_{\xi}$ to each point $\xi$ in the support of $\Bpp_k$.  Then   the distribution of $\Bppa_k$ is that of the purely atomic measure defined by 
\be\label{Bppa4}   \Bppa_k\bigl(\,]\zeta, \eta]\,\bigr) = \sum_{\xi\in\ri\Uset:\,\Bpp_k(\xi)=1}  Y_{\xi} \,\ind_{]\zeta, \eta]}(\xi) \quad \text{for }  \zeta\prec\eta\text{ in } \ri\Uset. \ee
The random variable $\Bppa_k(\,]\zeta, \eta]\,)$ has distribution $\text{Ber}(1-\frac{\alpha(\zeta)}{\alpha(\eta)})\otimes\text{Exp}(\alpha(\zeta))$ (product of a Bernoulli and an independent exponential) and 
expectation 
\be\label{Bmma}
%  \Bmm(\zeta, \eta] = \Bmm_k(\zeta,\eta]= 
  \E\bigl[ \Bppa_k\bigl(\,]\zeta, \eta]\,\bigr)\bigr]=  \frac1{\alpha(\zeta)} - \frac1{\alpha(\eta)}\,. 
  \ee

Note the following technical point.  The jumps of $\B{\xi\pm}_k$ concentrate at $e_2$ and $\B{e_2-}_k=\infty$.  To define $\Bppa$ and $\Bpp$ as locally finite measures, the standard Euclidean topology of  $\ri\Uset$ has to be metrized so that $]e_2, \eta]$ is an unbounded set for any $\eta\succ e_2$.   This point makes no difference to our calculations and we already encountered this same issue around  definition \eqref{Bmeas} of the Busemann measures.  
%since in any case we consider these measures only on compact subsets of $\ri\Uset$. 
With this convention we can regard   $\Bpp=\sum_k\Bpp_k$ as a simple point process on $\Z\times\ri\Uset$ with mean measure $\widetilde{\Bmm}= (\text{counting  measure on $\bbZ$})\otimes \lambda$.  

For $(k,\xi)\in \Z \times \ri \Uset$, let $Q_{(k,\xi)}$ be the Palm kernel of $\Bppa$ with respect to $\Bpp$.  That is, $Q_{(k,\xi)}$ is the stochastic kernel from $ \Z \times \ri \Uset$ into $\cM_{\Z \times \ri \Uset}$  that gives the  distribution of $\Bppa$,  conditional on $\Bpp$ having a point at $(k,\xi)$, understood in the Palm sense.   Rigorously, the kernel is defined by disintegrating the Campbell measure of the pair $(\Bpp,\Bppa)$ with respect to the mean measure $\widetilde{\Bmm}$ of $\Bpp$ (this is developed in Section 6.1 in \cite{Kal-17}):
%for any compact $A\subset\ri\Uset$ and a Borel set   $M\subset\cM_\cX$, 
%\be\label{79-50} 
%\E [ \Bpp(\{k\}\times A)\,\ind_M(\Bppa)]   = \int_{\ri\Uset}\!\!\!\ind_A(\xi) \, Q_{(k,\xi)}(M)\,\Bmm_k(d\xi).   
%\ee
%Standard approximation arguments give then that 
for any nonnegative  Borel  function $f:(\Z \times \ri \Uset) \times\cM_{\Z \times \ri \Uset}\to\R_+$,
\begin{align}\label{79-50-50} 
\E\Bigl[ \int_{\Z \times \ri \Uset} \!\!\!f(k,\xi,\Bppa)\, \Bpp(dk \otimes d\xi)\Bigr] &= \int_{\Z\times\ri\Uset}  \int_{\cM_{\Z\times\ri\Uset}} f(k,\xi,\Bppavar) \,Q_{(k,\xi)}(d\Bppavar)\, \widetilde{\Bmm}(dk \otimes d\xi).
%&= \sum_{k\in \bbZ} \int_{\ri\Uset}\int_{\cM_\cX}\!\!\!f((k,\xi),\Bppavar)\,Q_{(k,\xi)}(d\Bppavar) \,\Bmm_k(d\xi). \notag
\end{align}
%For  $\xi\in\ri\Uset$,  denote the jump at $\xi$  of the  Busemann function attached to  interval $(ke_1, (k+1)e_1)$ by 
%\[  \Delta^\xi_k= \B{\xi-}_{ke_1, (k+1)e_1} -\B{\xi+}_{ke_1, (k+1)e_1}\]

Now we consider the indices $\tau^\xi(i)=\tau^{\xi,\xi}(i)$ of jumps at $\xi$,  defined in \eqref{78-45}.  
%  let $\dotsm<\tau^\xi(-1)<\tau^\xi(0)=0<\tau^\xi(1)<\dotsm$ be the indices such that $\Bpp(k, \xi)=1$ if and only if $k\in\{\tau^\xi(i):i\in\Z\}$.  Equivalently, 
%$\Delta^\xi_k>0$ 
In terms of the random measures introduced above, for $(k,\xi)\in \Z \times \ri \Uset$, 
% and the functions $\rho_k^1$  introduced in \eqref{df:rho5}, 
\[   
%\Bppa_k\{\xi\}>0 \; \iff \; \Bpp_k\{\xi\}=1 \; \iff \; 
\Bppa\{(k,\xi)\}>0 \; \iff \; \Bpp\{(k,\xi)\}=1 \; \iff \; 
\B{\xi-}_k> \B{\xi+}_k   %\;\iff \; \rho_k^1(\xi)=1
\;\iff \;k\in\{\tau^\xi(i):i\in\Z\}. \]

%$\Bppa_k\{\xi\}>0$ $\iff$  $\Bpp_k\{\xi\}=1$ $\iff$  $\B{\xi-}_k> \B{\xi+}_k$ $\iff$ $\rho_k^1(\xi)=1$  $\iff$ $k\in\{\tau^\xi(i):i\in\Z\}$. 
 We condition on the event $\{\Bpp(0,\xi)=1\}$, in other words, consider the distribution of 
  $\{\tau^\xi(i)\}$ under $Q_{(0, \xi)}$. For this  to be well-defined, we  define these functions also on the space $\cM_{\Z \times \ri \Uset}$ in the obvious way:  for  $\nu\in\cM_{\Z \times \ri \Uset}$, the $\Z\cup\{\pm\infty\}$-valued functions $\tau^\xi(i)=\tau^\xi(i,\nu)$ are defined by the order requirement  
\[ \dotsm<\tau^\xi(-1,\nu)<0\le \tau^\xi(0,\nu)<\tau^\xi(1,\nu)<\dotsm\]  and the condition 
%$\depa^\zeta_k>\depa^\eta_k$ if and only if $k\in\{\tau^{\zeta, \eta}(i):i\in\Z\}$.   
\[ %\be\label{78-45-nu} 
\text{for $k\in\Z$,} \quad  \nu\{(k,\xi)\}>0\quad\text{if and only if}\quad k\in\{\tau^\xi(i,\nu):i\in\Z\}.   
\] %\ee
Since $\Bppa$ is $\P$-almost surely a purely atomic measure, it follows from general theory that  $Q_{(0, \xi)}$ is also supported on such measures. Furthermore, the conditioning itself forces   $Q_{(0, \xi)}\{\nu: \tau^\xi(0,\nu)=0\}=1$.   Thus   the random integer points $\tau^\xi(i,\nu)$ are not all trivially $\pm\infty$ under  $Q_{(0, \xi)}$. Connecting back to the notation of Section \ref{sec:exp}, for each $k \in \bbZ$, $\xi \in \ri \Uset$, each finite $A \subset \bbZ$  and $n_i \in \bbZ_+, r_i \in \bbR_+$ with $i \in A$, the Palm kernel introduced in that section is defined  by
\begin{align}
&\bbP\bigl\{ \tau^\xi(i+1)-\tau^\xi(i) =n_i, \depa^{\xi-}_{\tau^{\xi}(i)}-\depa^{\xi+}_{\tau^{\xi}(i)}>r_i  \; \forall   i \in A \,||\, \B{\xi-}_k > \B{\xi+}_k\bigr\} \label{eq:palmdef} \\
&\qquad=Q_{(k, \xi)}\bigl\{ \nu: \tau^\xi(i+1,\nu)-\tau^\xi(i,\nu)=n_i, \,\nu\{(\tau^\xi(i,\nu),\xi)\}> r_i   \; \forall   i \in A\bigr\}. \notag
\end{align}

\subsection{Statistics of instability points}
We   turn to the proofs of the theorems of Section \ref{sec:exp}. These proofs make use of results from  Appendices \ref{a:bus} and \ref{a:RW}.

\begin{proof}[Proof of Theorem \ref{th:palm1}]
 By Corollary \ref{cor:B-q2},   the process $\{ \depa^\zeta_k-\depa^\eta_k\}_{k\in\Z}$ has the same distribution as $\{W^+_k\}_{k\in\Z}$ defined in \eqref{Wk}.   An application of the appropriate  mapping to these sequences produces the sequence 
 $\bigl\{\depa^\zeta_0-\depa^\eta_0, \, \tau^{\zeta, \eta}(i+1)-\tau^{\zeta, \eta}(i), \depa^\zeta_{\tau^{\zeta, \eta}(i)}-\depa^\eta_{\tau^{\zeta, \eta}(i)}: i\in\Z\bigr\}$ 
 that appears in  Theorem \ref{th:palm1} 
 and the sequence 
 $\{W^+_0, \, \sigma_{i+1}-\sigma_i, \, W^+_{\sigma_i}: i\in\Z\}$ that appears in  Theorem \ref{th:78-70}.  Hence these sequences also have identical distribution.   ($W^+_{\sigma_i}=W_{\sigma_i}$ by \eqref{78-65}.) 
The distributions remain equal when these sequences are conditioned on the positive probability events $ \depa^\zeta_0-\depa^\eta_0>0$ and $W^+_0>0$.  
 \end{proof}

It will be convenient to have notation for the conditional joint distribution that appears in \eqref{78-90} in Theorem \ref{th:palm1}. 
For $0<\alpha\le\beta\le 1$ define   probability distributions  $q^{\alpha, \beta}$   on the product space   $\Z^\Z\times[0,\infty)^\Z$ as follows.  Denote the  generic variables on this product space  by $(\{\tau_i\}_{i\in\Z},  \{\Delta_k\}_{k\in\Z})$ with $\tau_i\in\Z$ and $0\le\Delta_k<\infty$.     Given an integer $L>0$, integers 
$n_{-L}<\dotsm < n_{-2}< n_{-1}<n_0=0<n_1<n_2< \dotsm<n_L$,  
%$n_{-L}<n_{-L+1}<\dotsm<n_0=0<n_1<\dotsm<n_L$ 
and positive reals $r_{-L}, \dotsc, r_L$,  abbreviate $b_i=n_{i+1}-n_i$.  The measure  $q^{\alpha, \beta}$ is defined by 
\be\label{qab4}\begin{aligned}  
&q^{\alpha, \beta}\bigl\{\tau_{i}=n_i \text{ and }  \Delta_{n_i}> r_i \text{ for } i\in\lzb-L,L\rzb, \,  \Delta_k=0 
\text{ for } k\in\lzb n_{-L},n_L\rzb\setminus\{ n_j\}_{j\in\lzb-L,L\rzb}  \bigr\}\\
&=\biggl( \; \prod_{i=-L}^{L-1} C_{b_i-1}\,  \frac{\alpha^{b_i-1}\beta^{b_i}}{(\alpha+\beta)^{2b_i-1}}\biggr) \cdot 
 \biggl( \; \prod_{i=-L}^{L} e^{-\alpha r_i}\biggr).  
\end{aligned} \ee
To paraphrase the definition,  the following holds under  $q^{\alpha, \beta}$:  $\tau_0=0$,   $\Delta_k=0$ for $k\notin\{\tau_i\}_{i\in\Z}$, and  the variables   $\{\tau_{i+1}-\tau_{i}, \Delta_{\tau_i}\}_{i\in\Z}$   are mutually independent with marginal distribution 
\be\label{qab6} 
q^{\alpha, \beta}\{\tau_{i+1}-\tau_{i}=n, \Delta_{\tau_i}>r\}=C_{n-1}\,  \frac{\alpha^{n-1}\beta^n}{(\alpha+\beta)^{2n-1}} e^{-\alpha r}\quad \text{ for } i\in\Z, \,n\ge 1,\, r\ge0. 
\ee
 Abbreviate   $q^\alpha=q^{\alpha,\alpha}$   which has marginal  $q^\alpha\{\tau_{i+1}-\tau_{i}=n, \Delta_{\tau_i}>r\}=C_{n-1}(\tfrac12)^{2n-1} e^{-\alpha r}$.   As 
$\beta\to\alpha$,  $q^{\alpha, \beta}$ converges weakly to $q^\alpha$. 

%We derive a simple  tail bound for $q^{\alpha, \beta}$.  There exists an absolute  constant $c_0$ such that  the following bound holds for all $\alpha, \beta\in(0,\infty)$ and all $n\ge 2$:  
%\be\label{qab9} \begin{aligned}
%q^{\alpha, \beta}\{\tau_{i+1}-\tau_{i}\ge n\}  &= \sum_{k=n}^\infty C_{k-1}\,  \frac{\alpha^{k-1}\beta^k}{(\alpha+\beta)^{2k-1}}
%\le  \sum_{k=n}^\infty C_{k-1}\,  \frac{(1-\frac1k)^{k-1}}{(1-\frac1{2k})^{2k-1}} \cdot \biggl(\frac12\biggr)^{2k-1}\\
%&\le c_0 \sum_{k=n}^\infty C_{k-1}\,   \biggl(\frac12\biggr)^{2k-1}. 
%\end{aligned}\ee 
%The first inequality  above came from  \eqref{a:t498} in Appendix \ref{a:aux}. 
%\timonote{The need for this seems to have disappeared? Don't remember what it was in the first place...} 

Theorem \ref{th:palm1} can now be restated by saying that,  conditional on
$\depa^\zeta_0>\depa^\eta_0$,    the variables   \[\bigl(\{\tau^{\zeta, \eta}(i)\}_{i\in\Z}, \,  \{\depa^\zeta_k-\depa^\eta_{k}\}_{k\in\Z}\bigr)\]
  have joint   distribution $q^{\alpha(\zeta), \alpha(\eta)}$. 
Consequently, for a measurable set $A\subset\Z^\Z\times[0,\infty)^\Z$, 
\be\label{78-92} \begin{aligned}
&\P\bigl[  \depa^\zeta_0>\depa^\eta_0, \, \bigl(\{\tau^{\zeta, \eta}(i)\}_{i\in\Z}, \,  \{\depa^\zeta_k-\depa^\eta_{k}\}_{k\in\Z}\bigr) \in A  \bigr]\\[4pt] 
&=\P\bigl(  \depa^\zeta_0>\depa^\eta_0\bigr) \, \P\bigl[ \bigl(\{\tau^{\zeta, \eta}(i)\}_{i\in\Z}, \,  \{\depa^\zeta_k-\depa^\eta_{k}\}_{k\in\Z}\bigr) \in A  \,\big\vert\, \depa^\zeta_0>\depa^\eta_0\,\bigr] 
 \\
&=  \frac{\alpha(\eta)-\alpha(\zeta)}{\alpha(\eta)} \cdot   q^{\alpha(\zeta), \alpha(\eta)}(A). 
%
%&\P\bigl\{  \depa^2_0>\depa^1_0, \, \tau(i)-\tau(i-1)=k_i\ \text{ for } i\in\lzb -L+1, L\rzb  \bigr\}\\[4pt] 
%&=\P\bigl\{  \depa^2_0>\depa^1_0\bigr\} \, \P\bigl\{  \tau(i)-\tau(i-1)=k_i\ \text{ for } i\in\lzb -L+1, L\rzb  \,\big\vert\, \depa^2_0>\depa^1_0\bigr \}
% \\
%&=  \frac{\beta-\alpha}{\beta} \cdot  \prod_{i=-L+1}^L q^{\alpha, \beta}\{\tau(i)-\tau(i-1)=k_i\}. 
\end{aligned} \ee
The first probability on the last line came from \eqref{B4} and the second from Theorem \ref{th:palm1}.

\begin{proof}[Proof of Theorem \ref{th:B-palm1}]
 
Define $\Z\cup\{\pm\infty\}$-valued ordered indices 
   $\dotsm<\tau^{\zeta, \eta}_{-1}<0\le \tau^{\zeta, \eta}_0<\tau^{\zeta, \eta}_1<\dotsm$   as measurable  functions of a locally finite measure  $\nu\in\cM_{\Z \times \ri \Uset}$ by  the rule  
\be\label{78-105}   \nu(\{k\}\times[\zeta,\eta])>0
\ \iff  \ k\in\{\tau^{\zeta, \eta}_i:i\in\Z\}. \ee
If  $\nu(\{k\}\times[\zeta,\eta])>0$ does not hold for infinitely many $k>0$ then  $\tau^{\zeta, \eta}_i=\infty$ for large enough $i$, and analogously  for 
$k<0$.    Definition \eqref{78-105}  applied  to the random measure   $\Bppa=\sum_k\Bppa_k$  reproduces  \eqref{78-45}.  

Fix integers   $K, N\in\N$ and  $ \ell_{-N}\le \dotsm \le  \ell_{-1}\le \ell_0=0 \le \ell_1\le \dotsm\le \ell_N$   and strictly positive reals $r_{-K}, \dotsc, r_K$. 
  Define the event 
\be\label{Bze8} \begin{aligned} 
  H^{\zeta, \eta}=H(\zeta, \eta) = % \{\nu: \tau^{\zeta, \eta}_0=0\,\} &\cap
   &\bigcap_{1\le i\le N}\bigl\{ \nu:  \tau^{\zeta, \eta}_{-i}\le \ell_{-i} \text{ and }  \tau^{\zeta, \eta}_{i}\ge\ell_i \bigr\}
   \\
   %%%%  Finite needed? 
 %\bigl\{ \nu: \,-\infty<\tau^{\zeta, \eta}_{-i}\le \ell_{-i} \text{ and } \ell_i\le \tau^{\zeta, \eta}_{i}<\infty \bigr\}\\  
   &\qquad 
   \cap
   \bigcap_{-K\le k\le K}\bigl\{ \nu:  \nu(\{k\}\times]\zeta, \eta]) < r_k\bigr\} 
\end{aligned}  \ee 
 on the space $\cM_{\Z \times \ri \Uset}$. %  of locally finite  measures $\nu$ on ${\Z \times \ri \Uset}$.   
  Note the monotonicity 
 \be\label{Bze-mon}
 H^{\zeta, \eta}\subset H^{\zeta', \eta'}\quad\text{ for } \ [\zeta', \eta']\subset [\zeta, \eta].
 \ee 
 Abbreviate $H^\xi=H^{\xi, \xi}$.     Recall the measures $q^{\alpha, \beta}$ defined in \eqref{qab4}. 
The analogous event under the measures $q^{\alpha, \beta}$ on the space $\Z^\Z\times[0,\infty)^\Z$  is denoted  by 
\be\label{Bze18} \begin{aligned} 
 H_q =\bigl\{ (\{\tau_i\}_{i\in\Z}, \{\Delta_k\}_{k\in\Z})\in \Z^\Z\times[0,\infty)^\Z :   \ &\tau_{-i}\le \ell_{-i} \text{ and } \tau_{i}\ge\ell_i \text{ for } i\in\lzb 1,N\rzb, \,  \\
    &\quad   \Delta_k< r_k\text{ for } k\in\lzb-K,K\rzb\bigr\}. 
 \end{aligned}\ee
 
 Fix $\zeta\prec\eta$ in $\ri\Uset$. % with   parameters   $\alpha(\zeta)<\alpha(\eta)$. 
  We prove the theorem by showing that  
 \be\label{Bze4} \begin{aligned} 
 Q_{(0,\xi)}(H^\xi)  =q^{\alpha(\xi)}(H_q) 
%\\ &=q^{\alpha(\xi)}\bigl\{\tau_{-i}\le \ell_{-i} \text{ and } \tau_{i}\ge\ell_i \text{ for } i\in[N], \,  \Delta_k< r_k\text{ for } i\in\lzb-K,K\rzb\bigr\} \\[4pt] 
 \qquad\text{for Lebesgue-almost every $\xi\in\,]\zeta, \eta]$.} 
% \qquad\text{for Lebesgue-almost every $\xi\in\ri\Uset$.}  
\end{aligned}  \ee
This equality comes from separate arguments for upper and lower bounds.  

\bigskip 

{\it Upper bound proof.}  
Define a sequence of nested partitions $\zeta=\zeta^n_0\prec\zeta^n_1\prec\dotsm\prec \zeta^n_n=\eta$.  For each $n$ and $\xi\in\,]\zeta, \eta]$,  let $]\zeta^n(\xi), \eta^n(\xi)]$ denote the unique interval $]\zeta^n_i, \zeta^n_{i+1}]$ that contains $\xi$.  Assume that, as $n\nearrow\infty$,   the mesh size $\max_i\abs{\zeta^n_{i+1}-\zeta^n_i} \to 0$. Consequently, for each $\xi\in\,]\zeta, \eta]$,  the intervals $]\zeta^n(\xi), \eta^n(\xi)]$ decrease to the singleton $\{\xi\}$.

%stated as Theorem \ref{th:palm100}  in Appendix \ref{a:palm},  
The key step of this upper bound proof is that  for all $m$ and $i$ and Lebesgue-a.e.\ $\xi\in\,]\zeta, \eta]$, 
\be\label{79-43}\begin{aligned}  &Q_{(0, \xi)}(H^{\zeta^m_i, \zeta^m_{i+1}}) 
=  \lim_{n\to\infty}     \P\bigl\{ \Bppa \in H^{\zeta^m_i, \zeta^m_{i+1}} \,\big\vert\, \Bpp_{0}(\,]\zeta^n(\xi), \eta^n(\xi)]\,)\ge 1 \bigr\} .  
\end{aligned} \ee 
This limit is a special case of Theorem 6.32(iii) in Kallenberg \cite{Kal-17}, for the simple point process $\Bpp$ and  the sets $B_n=\{0\}\times(\zeta^n(\xi), \eta^n(\xi)]\searrow\{(0,\xi)\}$. 
  The proof given for  Theorem 12.8  in \cite{Kal-83} can also be used to establish this limit.    Theorem 12.8  of  \cite{Kal-83}  by itself is not quite adequate because we use the Palm kernel for the measure $\Bppa$ which is not the same as $\Bpp$.

 If we take $\xi\in \,]\zeta^m_i, \zeta^m_{i+1}]$, then for   $n\ge m$,   
 $]\zeta^n(\xi), \eta^n(\xi)]\subset \,]\zeta^m(\xi), \eta^m(\xi)]=\,]\zeta^m_i, \zeta^m_{i+1}]$.  Considering all $\xi$ in the union $]\zeta, \eta]=\bigcup_i \,]\zeta^m_i, \zeta^m_{i+1}]$, we have that  for any fixed $m$ and Lebesgue-a.e.\ $\xi\in\,]\zeta, \eta]$, 
 \begin{align*}
 Q_{(0, \xi)}(H^{\zeta^m(\xi), \eta^m(\xi)}) &=
  \lim_{n\to\infty} \P\bigl\{  \Bppa \in H^{\zeta^m(\xi), \eta^m(\xi)} \,\big\vert\, \Bpp_{0}(\,]\zeta^n(\xi), \eta^n(\xi)]\,)\ge 1 \bigr\}  \\[3pt] 
% & \lim_{n\to\infty} \P\bigl\{  \Bpp \in H^{\zeta^m_i, \zeta^m_{i+1}} \,\big\vert\, \Bpp_{0}(\zeta^n(\xi), \eta^n(\xi)]\ge 1 \bigr\}  \\[3pt] 
&  \le  
   \varliminf_{n\to\infty} \P\bigl\{  \Bppa \in H^{\zeta^n(\xi), \eta^n(\xi)} \,\big\vert\, \Bpp_{0}(\,]\zeta^n(\xi), \eta^n(\xi)]\,)\ge 1 \bigr\} .
   \end{align*} 
The inequality is due to \eqref{Bze-mon}.    
   
 Interpreting    \eqref{78-92}  in terms of the random measures $\Bppa$ and $\Bpp$ and referring to \eqref{Bze8} and  \eqref{Bze18} gives the identity 
\[ \P\bigl\{  \Bppa \in H^{\zeta^n(\xi), \eta^n(\xi)} \,\big\vert\, \Bpp_{0}(\,]\zeta^n(\xi), \eta^n(\xi)]\,)\ge 1 \bigr\}
=  q^{\alpha(\zeta^n(\xi)), \alpha(\eta^n(\xi))}(H_q).  \]
   %Equation  \eqref{78-92}  shows that  under the conditioned distribution   $\P\{\,\Bppa\in\bbullet\, \vert\, \Bpp_0(\zeta^n(\xi),\eta^n(\xi)] \ge 1\}$   the joint law  of $\{\tau^{\zeta^n(\xi), \eta^n(\xi)}_i\}_{i\in\Z}$ and  $\{\Bppa(\{k\}\times[\zeta^n(\xi), \eta^n(\xi)])\}_{k\in\Z}$ is  $q^{\alpha(\zeta^n(\xi)), \alpha(\eta^n(\xi))}$.  
%\timonote{Some caveats needed about open versus closed intervals?} 
 As $(\zeta^n(\xi), \eta^n(\xi)]\searrow\{\xi\}$, the   parameters converge:  $\alpha(\zeta^n(\xi)), \alpha(\eta^n(\xi))\to\alpha(\xi)$. 
Consequently the distribution $q^{\alpha(\zeta^n(\xi)), \,\alpha(\eta^n(\xi))}$
 converges to   $q^{\alpha(\xi)}$. 
Hence
 \begin{align*}
  \lim_{n\to\infty} \P\bigl\{  \Bppa \in H^{\zeta^n(\xi), \eta^n(\xi)} \,\big\vert\, \Bpp_{0}(\,]\zeta^n(\xi), \eta^n(\xi)]\,)\ge 1 \bigr\} 
  =q^{\alpha(\xi)}(H_q). 
%   \\  &\qquad 
%= q^{\alpha(\xi)}\bigl\{\tau_{-i}\le \ell_{-i} \text{ and } \tau_{i}\ge\ell_i \text{ for } i\in[N], \,  \Delta_i\le r_i\text{ for } i\in\lzb-N,N\rzb\bigr\}. 
\end{align*}  
 In summary, we have for all $m$   and Lebesgue-a.e.\ $\xi\in \,]\zeta, \eta]$, 
\be\label{79-46} \nn \begin{aligned} 
&Q_{(0, \xi)}(H^{\zeta^m(\xi), \eta^m(\xi)})  \le q^{\alpha(\xi)}(H_q). 
%\\ &\quad 
%\le  q^{\alpha(\xi)}\bigl\{\tau_{-i}\le \ell_{-i} \text{ and } \tau_{i}\ge\ell_i \text{ for } i\in[N],\,  \Delta_i\le r_i\text{ for } i\in\lzb-N,N\rzb\bigr\}. 
\end{aligned} \ee
Let $m\nearrow\infty$ so that $H^{\zeta^m(\xi), \eta^m(\xi)}\nearrow H^\xi$, to obtain the upper  bound  
\be\label{79-47} \begin{aligned} 
&Q_{(0, \xi)}(H^\xi)   \le q^{\alpha(\xi)}(H_q) 
%\\
%%&=Q_{(0, \xi)}(H^{\zeta^m_i, \zeta^m_{i+1}})\\[3pt] 
%%&
%&\quad
%\le  q^{\alpha(\xi)}\bigl\{\tau_{-i}\le \ell_{-i} \text{ and } \tau_{i}\ge\ell_i \text{ for } i\in[N],  \,  \Delta_i\le r_i\text{ for } i\in\lzb-N,N\rzb\bigr\}. 
\end{aligned} \ee
for Lebesgue-a.e.\ $\xi\in \,]\zeta, \eta]$.  
 
\bigskip 

{\it Lower bound proof.} 
%Fix $\zeta\prec\eta$ in $(e_2, e_1)$  with  parameters   $\underline\alpha=\alpha(\zeta)<\alpha(\eta)=\wb\alpha$.  
Let  $\zeta=\zeta_0\prec\zeta_1\prec\dotsm\prec \zeta_\ell=\eta$ be a  partition of the interval $[\zeta, \eta]$ and set  $\alpha_j=\alpha(\zeta_j)$. 

 In order to get an estimate below,  let $\mvec=(m_i)_{1\le\abs i\le N}$ be a $2N$-vector of 
  integers such that  $m_i<\ell_i$ for $-N\le i\le -1$ and $m_i>\ell_i$ for $1\le i\le N$. Define the  subset  $H^{\mvec}_q$ of $H_q$ from \eqref{Bze18} by truncating the coordinates $\tau_i$: 
\be\label{Bze20} \begin{aligned} 
 H^{\mvec}_q =\bigl\{ (\{\tau_i\}_{i\in\Z}, \{\Delta_k\}_{k\in\Z})\in \Z^\Z\times[0,\infty)^\Z :   \ &m_{-i}\le \tau_{-i}\le \ell_{-i} \text{ and }  \ell_i \le \tau_{i}\le m_i   \\
    &  \text{ for } i\in\lzb 1,N\rzb, \, \Delta_k< r_k\text{ for } k\in\lzb-K,K\rzb\bigr\}. 
 \end{aligned}\ee
%Let $\xi\in (\zeta, \eta)$. 

On the last line in the following computation, $c_1$ is a constant that depends on the parameters $\alpha(\zeta)$ and $\alpha(\eta)$ and on the quantities in \eqref{Bze20}:
\begin{align*}
&\int_{]\zeta,\eta]}  Q_{(0,\xi)}(H^\xi)\,\Bmm_0(d\xi)  = \sum_{j=0}^{\ell-1} \int_{]\zeta_j,\zeta_{j+1}]}  Q_{(0,\xi)}(H^\xi)\,\Bmm_0(d\xi) \\[4pt] 
&\ge \sum_{j=0}^{\ell-1} \int_{]\zeta_j,\zeta_{j+1}]}  Q_{(0,\xi)}(H^{\zeta_j,\zeta_{j+1}})\,\Bmm_0(d\xi)
%\\ &
=  \sum_{j=0}^{\ell-1} \E\bigl[  \Bpp_0(\,]\zeta_j,\zeta_{j+1}]\,)  \cdot \ind_{H^{\zeta_j,\zeta_{j+1}}}(\Bppa)  \bigr]  
 \\[4pt] 
 &\ge \sum_{j=0}^{\ell-1} 
 \P\bigl\{   \Bpp_0(\,]\zeta_j,\zeta_{j+1}]\,) \ge 1,\,   \Bppa\in H^{\zeta_j,\zeta_{j+1}}  \bigr\} \\
 &= \sum_{j=0}^{\ell-1}  \frac{\alpha_{j+1}-\alpha_j}{\alpha_{j+1}} \,q^{\alpha_j, \alpha_{j+1}}(H_q) 
 %\{ \tau_{-i}\le \ell_{-i} \text{ and } \tau_{i}\ge\ell_i \; \forall i\in\lzb1,N\rzb\}   \\&
\; \ge \; \sum_{j=0}^{\ell-1}  \frac{\alpha_{j+1}-\alpha_j}{\alpha_{j+1}} \,q^{\alpha_j, \alpha_{j+1}}(H^{\mvec}_q) 
 %\{ m_i\le \tau_{-i}\le \ell_{-i} \text{ and } \ell_i\le \tau_{i}\le m_i \; \forall i\in\lzb1,N\rzb\}   
 \\ &
 \ge \sum_{j=0}^{\ell-1}  \frac{\alpha_{j+1}-\alpha_j}{\alpha_{j+1}} \, q^{\alpha_{j+1}} (H^{\mvec}_q) \cdot \bigl(1-c_1(\alpha_{j+1}-\alpha_j)\bigr) .  
 %\{ m_i\le \tau_{-i}\le \ell_{-i} \text{ and } \ell_i\le \tau_{i}\le m_i \; \forall i\in\lzb1,N\rzb\}.
\end{align*} 
The steps above come as follows.  The second equality uses the characterization \eqref{79-50-50}  of  the kernel $Q_{(0,\xi)}$. 
The third equality is from \eqref{78-92}.    The second last inequality  is from $H^{\mvec}_q\subset H_q$.    The last inequality is from Lemma \ref{lm:palm10} below, which is valid once the mesh size $\max(\alpha_{j+1}-\alpha_j)$ is small enough relative to the numbers $\{m_i, \ell_i\}$.  

The function $\alpha\mapsto q^\alpha (H^{\mvec}_q)$ is continuous in the Riemann sum approximation on the last line of the calculation above. 
Let $\max(\alpha_{j+1}-\alpha_j)\to 0$ to obtain the inequality
 \begin{align*}
&\int_{]\zeta,\eta]}  Q_{(0,\xi)}(H^\xi) \,\Bmm_0(d\xi)
\ge   \int_{\alpha(\zeta)}^{\alpha(\eta)}  q^\alpha (H^{\mvec}_q) \,\frac{d\alpha}\alpha 
= \int_{]\zeta,\eta]}   q^{\alpha(\xi)}  (H^{\mvec}_q)\,\Bmm_0(d\xi). 
\end{align*} 
Let $m_i\searrow-\infty$ for $-N\le i\le -1$ and $m_i\nearrow\infty$ for $1\le i\le N$.  
The above turns into 
\be\label{79-104} \begin{aligned}
\int_{]\zeta,\eta]}  Q_{(0,\xi)}(H^\xi)\,\Bmm_0(d\xi) 
\ge 
 \int_{]\zeta,\eta]}   q^{\alpha(\xi)}  (H_q)\,\Bmm_0(d\xi). 
\end{aligned}  \ee

\medskip

The upper bound \eqref{79-47} and the lower bound \eqref{79-104} together imply the conclusion \eqref{Bze4}. 
 \end{proof} 
 
 The proof of Theorem \ref{th:B-palm1} is complete once we verify the auxiliary lemma used in the calculation above. 
 
\begin{lemma} \label{lm:palm10} 
Let the event $H^{\mvec}_q$ be as defined in  \eqref{Bze20}. 
% Fix a positive integer $N$ and integers $m_i<\ell_i<0$ for $-N\le i\le 1$ and $m_i>\ell_i>0$ for $1\le i\le N$.  
 Fix $0<\underline\alpha<\wb\alpha<1$.  Then there exist constants  $\e, c_1\in(0,\infty)$    such that 
\begin{align*}
q^{\alpha, \beta}(H^{\mvec}_q) \ge q^{\beta}(H^{\mvec}_q) \cdot \bigl(1-c_1(\beta-\alpha)\bigr) 
%&q^{\alpha, \beta}\{ m_i\le \tau_{-i}\le \ell_{-i} \text{ and } \ell_i\le \tau_{i}\le m_i \; \forall i\in\lzb1,N\rzb\} \\
%&\ge \bar q \{ m_i\le \tau_{-i}\le \ell_{-i} \text{ and } \ell_i\le \tau_{i}\le m_i \; \forall i\in\lzb1,N\rzb\}
\end{align*}
for  all $\alpha,\beta\in[\underline\alpha,\wb\alpha]$ such that $\alpha\le\beta\le\alpha+\e$.    The constants  $\e, c_1\in(0,\infty)$   depend on $\underline\alpha$,  $\wb\alpha$, and  the parameters $\ell_i, m_i$ and $r_k$  in \eqref{Bze20}. 
\end{lemma}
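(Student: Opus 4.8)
\textbf{Proof plan for Lemma \ref{lm:palm10}.}

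The plan is to compare the two product-form laws $q^{\alpha,\beta}$ and $q^\beta$ on the explicit finite-dimensional event $H^{\mvec}_q$. Because of the truncation built into \eqref{Bze20}, the event $H^{\mvec}_q$ depends only on finitely many coordinates: the increments $\tau_{i+1}-\tau_i$ for $i$ in a fixed finite range $\lzb -M,M\rzb$ determined by $\mvec$, together with the jump sizes $\Delta_{\tau_i}$ for those $i$. Under $q^{\alpha,\beta}$ these are independent with the explicit marginal \eqref{qab6}, and under $q^\beta=q^{\beta,\beta}$ likewise with $\alpha$ replaced by $\beta$ in that formula. So the first step is to write $q^{\alpha,\beta}(H^{\mvec}_q)$ as a finite sum over admissible increment vectors $(b_i)$ and a finite product of factors $C_{b_i-1}\,\alpha^{b_i-1}\beta^{b_i}(\alpha+\beta)^{-(2b_i-1)}$ times $\prod_i(1-e^{-\alpha r_i})$, with the analogous expression for $q^\beta(H^{\mvec}_q)$, and to observe that the number of admissible $(b_i)$ is finite and bounded in terms of the $m_i,\ell_i$ only.

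The second step is the term-by-term comparison. For each fixed admissible increment vector, I would bound the ratio of the $q^{\alpha,\beta}$-weight to the $q^\beta$-weight from below. Writing $\beta=\alpha+\delta$ with $\delta=\beta-\alpha\ge 0$, each factor $\dfrac{\alpha^{b_i-1}\beta^{b_i}}{(\alpha+\beta)^{2b_i-1}}$ versus $\dfrac{\beta^{b_i-1}\beta^{b_i}}{(2\beta)^{2b_i-1}}$ differs only through replacing some $\beta$'s by $\alpha$'s; since $\alpha/\beta\ge 1-\delta/\underline\alpha$ and all the exponents $b_i-1$ are at most a constant $M'$ depending on $\mvec$, and $(2\beta)/(\alpha+\beta)=1+\delta/(\alpha+\beta)\ge 1$ only helps, the ratio of $q^{\alpha,\beta}$-factor to $q^\beta$-factor is at least $(1-\delta/\underline\alpha)^{M'}\ge 1-M'\delta/\underline\alpha$ once $\delta$ is small. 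Similarly $1-e^{-\alpha r_i}\ge (1-e^{-\beta r_i})(1 - c\,\delta)$ for a constant $c$ depending on $\underline\alpha$ and the $r_i$, by the mean value theorem applied to $\alpha\mapsto 1-e^{-\alpha r_i}$ and the uniform lower bound $1-e^{-\underline\alpha r_i}>0$. Multiplying the finitely many such factor bounds gives, for each admissible vector, a weight ratio $\ge 1-c_0\delta$ with $c_0$ uniform over $[\underline\alpha,\wb\alpha]$ and over the (finitely many) admissible vectors; summing and factoring out $1-c_0\delta$ yields $q^{\alpha,\beta}(H^{\mvec}_q)\ge (1-c_0\delta)\,q^\beta(H^{\mvec}_q)$, which is the claim with $c_1=c_0$ and $\e$ chosen small enough that all the above linearizations and the bound $1-c_0\e>0$ hold.

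The main obstacle is purely bookkeeping: one must be careful that $H^{\mvec}_q$ really is a cylinder event on a \emph{finite} and \emph{$\delta$-independent} set of coordinates, so that the comparison constants $\e,c_1$ can be taken uniform in $\alpha,\beta$. This is exactly why the truncation by $\mvec$ was introduced in \eqref{Bze20} — without it the index $\tau_N$ could be arbitrarily large and the number of admissible increment vectors infinite, so no uniform linear bound would be available. Once the finiteness is in hand, every estimate is an elementary one-variable inequality: $(1-t)^k\ge 1-kt$, $e^{-\alpha r}\le e^{-\beta r}+ \beta' r\delta$ for $\alpha\le\beta$, and boundedness of $r_k$ away from $0$, uniform over the compact parameter range $[\underline\alpha,\wb\alpha]$. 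I would present the argument by first isolating the cylinder structure, then doing the factor-by-factor ratio bound, then summing — keeping all constants symbolic since the paper only needs their existence.
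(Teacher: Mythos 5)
Your proposal is correct and follows essentially the same route as the paper: both decompose $H^{\mvec}_q$ into a finite sum over admissible increment vectors, then compare the $q^{\alpha,\beta}$- and $q^\beta$-weights factor by factor, handling the exponential factors $1-e^{-\alpha r_k}$ by a mean-value/linearization bound exactly as you propose. The one difference is in how the $\tau$-increment factors are handled: you bound the ratio $\left(\frac{\alpha}{\beta}\right)^{b_i-1}\left(\frac{2\beta}{\alpha+\beta}\right)^{2b_i-1}\ge 1-M'\delta/\underline\alpha$ by a crude linearization (absorbing a small loss into $c_1$), whereas the paper invokes the auxiliary Lemma \ref{a:lm:t500} to show this ratio is in fact $\ge 1$ outright once $\beta-\alpha$ is below a threshold depending on $\max b_i$, so only the exponential factors contribute to $c_1$; both routes yield the stated bound with constants of the required form, and your version avoids the separate calculus lemma at the modest price of a slightly larger $c_1$.
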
 

\begin{proof} Let 
\begin{align*} 
\cA=  \bigl\{ &\pvec=(p_i)_{-N\le i\le N} \in\Z^{2N+1} :   p_0=0, \, p_i<p_j \text{ for } i<j, \\
&\qquad \qquad\qquad
m_{-i}\le p_{-i} \le \ell_{-i} \text{ and } \ell_i\le p_{i}\le m_i \; \forall i\in\lzb1,N\rzb\bigr\}
\end{align*}
be the relevant  finite set of    integer-valued $(2N+1)$-vectors for  the decomposition below.   For each  $\pvec\in\cA$ let  $\cK(\pvec)=\{p_i: i\in\lzb-N,N\rzb,  p_i\in \lzb-K,K\rzb\}$ be the set of coordinates of $\pvec$  in $\lzb-K,K\rzb$.  Abbreviate  $b_i=p_{i+1}-p_i$. 
Recall that, under $q^{\alpha, \beta}$, $\tau_0=0$, that $\Delta_k<r_k$ holds with probability one if $k\notin\{\tau_i\}$,  and the independence in \eqref{qab6}.    The factors $d_k>0$ below that satisfy  $ 1-e^{-\alpha r_k} \ge (1-e^{-\beta r_k}) (1- d_k(\beta-\alpha))$ can be chosen uniformly for $\alpha\le\beta$ in $[\underline\alpha,\wb\alpha]$, as functions of $\underline\alpha$, $\wb\alpha$, and $\{r_k\}$. Now compute: 
%come from applying the mean value theorem to the function  $f_k(x)=\frac{1-e^{x r_k}}{1-e^{\beta r_k}}$ that satisfies $f_k(\beta)=1$. 
\begin{align*}
&q^{\alpha, \beta}(H^{\mvec}_q)
=q^{\alpha, \beta}\bigl\{ m_i\le \tau_{-i}\le \ell_{-i} \text{ and } \ell_i\le \tau_{i}\le m_i \; \forall i\in\lzb1,N\rzb, \,   \Delta_k< r_k\text{ for } k\in\lzb-K,K\rzb\bigr\} \\[4pt] 
&\qquad=\sum_{\pvec\,\in\,\cA}  q^{\alpha, \beta}\bigl\{   \tau_{i}=p_i \text{ for }  i\in\lzb-N,N\rzb, \,   \Delta_k< r_k\text{ for } k\in\lzb-K,K\rzb\bigr\}
\\[4pt] 
&\qquad=\sum_{\pvec\,\in\,\cA}  q^{\alpha, \beta}\bigl\{   \tau_{i+1}-\tau_{i}=b_i \ \forall i\in\lzb-N,N-1\rzb\bigr\}  \cdot \prod_{k\,\in\,\cK(\pvec)}  (1-e^{-\alpha r_k})  \\
&\qquad\ge\sum_{\pvec\,\in\,\cA} \; \biggl( \; \prod_{i=-N}^{N-1}  C_{b_i-1}\,  \frac{\alpha^{b_i-1}\beta^{b_i}}{(\alpha+\beta)^{2b_i-1}}  \biggr)  
 \cdot \prod_{k\,\in\,\cK(\pvec)}  (1-e^{-\beta r_k}) \bigl(1- d_k(\beta-\alpha)\bigr)  \\
&\qquad\ge \sum_{\pvec\,\in\,\cA} \;  \biggl( \; \prod_{i=-N}^{N-1}  C_{b_i-1}\,  \biggl(\frac12\biggr)^{2b_i-1} \biggr) \biggl(\; \prod_{k\,\in\,\cK(\pvec)}  (1-e^{-\beta r_k}) \biggr) \cdot \bigl(1-c_1(\beta-\alpha)\bigr)   \\
&\qquad=\sum_{\pvec\,\in\,\cA}  q^\beta\bigl\{   \tau_{i+1}-\tau_{i}=b_i \ \forall i\in\lzb-N,N-1\rzb, \,   \Delta_k< r_k\text{ for } k\in\lzb-K,K\rzb\bigr\} 
% \biggl(\; \prod_{k\,\in\,\cK(\pvec)} (1-e^{-\beta r_k}) \biggr) \cdot \bigl(1-c_1(\beta-\alpha)\bigr) 
 \cdot \bigl(1-c_1(\beta-\alpha)\bigr) \\
%&= q^\beta \bigl\{ m_i\le \tau_{-i}\le \ell_{-i} \text{ and } \ell_i\le \tau_{i}\le m_i \; \forall i\in\lzb1,N\rzb \,   \Delta_i< r_i\text{ for } i\in\lzb-N,N\rzb\bigr\} \cdot \bigl(1-c_1(\beta-\alpha)\bigr)\\
&\qquad= q^\beta(H^{\mvec}_q) \cdot \bigl(1-c_1(\beta-\alpha)\bigr). 
\end{align*}
To get  the  inequality above, (i)  apply   Lemma \ref{a:lm:t500} to the first factor in parentheses with  $\e$ chosen    so that $0<\e<\underline\alpha/b_i$ for all $\pvec\in\cA$, and (ii)   set $c_1= \sum_{k=-K}^K  d_k$. 
%=\max_{\pvec\in\cA}\sum_{k\,\in\,\cK(\pvec)}  d_k$
\end{proof} 

%
%Homogeneity of the law of $\Bppa$ implies the following result.
%
%\begin{corollary} \label{cor:shift}
%For any integer $k \in \Z$ and Lebesgue-almost every $\xi \in \ri \Uset$ and for any Borel set $A \subset \{0,1\}^{\Z}$,
%\begin{align*}
%Q_{(k,\xi)}\left( (\Bpp_{k+\ell}\{\xi\}: \ell \in \bbZ) \in A \right) &= Q_{0,\xi}\left((\Bpp_{\ell}\{\xi\}: \ell \in \Z) \in A \right).
%\end{align*}
%\end{corollary}
% 

In the proofs that follow, we denote the indicators of the locations of the positive atoms of a measure $\nu\in\cM_{\Z\times\ri\Uset}$ by 
$u_k(\nu,\xi) = u_k(\xi)=\ind[\nu\{(k,\xi)\}>0]$ for $(k,\xi)\in\Z\times\ri\Uset$.  Applied to the random measure $\Bppa$ this gives $u_k(\Bppa, \xi)=\Bpp_k(\xi)$.

\begin{lemma} \label{lem:palmrw}
For Lebesgue-almost every $\xi\in\ri\Uset$   and all $m\in\Z$, 
 \be\label{rw123}  Q_{(m,\xi)}\bigl[ \nu :  \{\umeas_{m+k}(\nu,\xi)\}_{k\in\Z} \in A\bigr]  
 = \bfP(A)
 \ee
 for all Borel sets $A\subset\{0,1\}^\Z$. \end{lemma}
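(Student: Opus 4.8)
The plan is to reduce \eqref{rw123} to the already-established Palm formula \eqref{rw113} by identifying the marginal law under $Q_{(m,\xi)}$ of the indicator sequence $\{\umeas_{m+k}(\nu,\xi)\}_{k\in\Z}$ with the law $\bfP$ of the zero set of simple symmetric random walk sampled at even times. First I would record the translation invariance built into the setup: the shift group $T$ acts on $\Omega$ compatibly with the $e_1$-translation of the $x$-axis, and both $\Bppa$ and $\Bpp$ are constructed so that $\Bppa_{k}$ and $\Bpp_{k}$ are stationary in $k$. Hence the Palm kernel $Q_{(m,\xi)}$ is, for Lebesgue-almost every fixed $\xi$, the $m$-shift of $Q_{(0,\xi)}$ in the $\Z$-coordinate, and it suffices to treat $m=0$; the statement for general $m$ then follows by relabeling $k\mapsto m+k$.

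Next I would translate the indicator sequence into the jump-gap data that Theorem \ref{th:B-palm1} controls. By definition $\umeas_{k}(\nu,\xi) = \ind[\nu\{(k,\xi)\}>0]$, and applied to a $q^{\alpha(\xi)}$-distributed configuration this is exactly $\ind[k\in\{\tau^\xi(i)\}_{i\in\Z}]$. The conditioning event $\{\Bpp(0,\xi)=1\}$, i.e. $\{\B{\xi-}_0>\B{\xi+}_0\}$, forces $\tau^\xi(0)=0$ under $Q_{(0,\xi)}$, so under the Palm kernel the sequence $\{\umeas_k(\nu,\xi)\}_{k\in\Z}$ is precisely the $\{0,1\}^\Z$-valued renewal indicator with a renewal at the origin whose two-sided i.i.d.\ inter-arrival law is, by Theorem \ref{th:B-palm1} (the marginal \eqref{rw113} with the $\Delta$-coordinate integrated out over $r\in\R_+$, which removes the $e^{-\alpha(\xi)r}$ factor),
\[
Q_{(0,\xi)}\bigl(\tau^\xi(i+1)-\tau^\xi(i)=n\bigr) = C_{n-1}\,\frac{1}{2^{2n-1}},\qquad n\in\N.
\]
This is identical to the classical inter-arrival law \eqref{rw117} of the even-time zero set of SSRW. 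Since a two-sided stationary renewal indicator conditioned to have a point at $0$ is determined by its inter-arrival law, the law of $\{\umeas_k(\nu,\xi)\}_{k\in\Z}$ under $Q_{(0,\xi)}$ coincides with $\bfP$, which gives \eqref{rw123} for $A$ ranging over a generating $\pi$-system (finite-dimensional cylinders) and hence for all Borel $A$ by Dynkin's lemma.

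The main obstacle is bookkeeping the "Lebesgue-almost every $\xi$" qualifier coherently across the three ingredients: Theorem \ref{th:B-palm1} is stated for a.e.\ $\xi$ in an arbitrary fixed $]\zeta,\eta[$, and one must check that the exceptional null set can be taken uniform in $m$ and independent of the choice of enclosing interval, so that a single Lebesgue-null set of $\xi$ works for the statement over all of $\ri\Uset$ and all $m\in\Z$. I would handle this by exhausting $\ri\Uset$ by countably many rational intervals, taking the union of the countably many null sets produced by Theorem \ref{th:B-palm1}, and using the $\Z$-stationarity of the construction to pull the $m$-dependence out before invoking the kernel. A secondary, purely measure-theoretic point to verify is that the Palm kernel of $\Bppa$ with respect to $\Bpp$ indeed concentrates on purely atomic measures and that the functionals $\tau^\xi(i,\nu)$ and $\umeas_k(\nu,\xi)$ are measurable on $\cM_{\Z\times\ri\Uset}$, both of which were already arranged in Section \ref{sub:palm}, so this is a matter of citing rather than proving.
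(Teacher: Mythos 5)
Your proof is correct and follows the same route as the paper's proof, which is essentially just the one-line observation that, for $m=0$, the conclusion is a comparison of \eqref{rw113} with \eqref{rw117} (the former giving the i.i.d.\ inter-arrival law of the $\tau^\xi$-gaps under $Q_{(0,\xi)}$, the latter the classical inter-arrival law of the even-time zero set of SSRW), and that general $m$ reduces to $m=0$ by the $T_{e_1}$-stationarity of the weights. Your write-up fleshes out what "comparison" means — both laws are those of a two-sided point process with an atom at $0$ and i.i.d.\ gaps — and that is precisely what is required; one small imprecision is that you speak of ``integrating $r$ out over $\R_+$'' to drop the factor $e^{-\alpha(\xi)r}$, when in fact one simply evaluates the tail probability at $r=0$ (integrating over $r$ would introduce an extra $1/\alpha(\xi)$), but the intended conclusion is the same. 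Also note that Theorem \ref{th:B-palm1} is already stated for a single Lebesgue-null exceptional set over all of $\ri\Uset$, so your extra exhaustion-by-rational-intervals step, while harmless, is already subsumed by the theorem as stated.
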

\begin{proof}
For $m=0$, \eqref{rw123} comes from a comparison of \eqref{rw113} and \eqref{rw117}.   For general $m$ it then  follows from the shift-invariance of the weights $\w$. 
\end{proof}

\begin{proof}[Proof of Theorem \ref{thm:ladder-xi}]

%%%%%  Below attempt to explain more -- perhaps not needed? 
%To generalize to arbitrary $m$, observe that, 
%as a function of the weights $\w$, the random measure $\Bppa$ satisfies  
%$ \Bppa^{T_{j,0}\w} =  \Bppa^\w\circ\theta_j^{-1}$ for  the shift $\theta_j(k, \zeta)=(k+j, \zeta)$ on the space   $\Z\times\ri\Uset$.  Similarly for $\Bpp$. This gives 
%\eqref{rw123}.   Through \eqref{79-5--50} this leads to 
%This gives 
%\be\label{37-02}     Q_{(k,\xi)} =  Q_{(k+j,\xi)}\circ(\theta_j^*)^{-1}\qquad \text{for Lebesgue-a.e. } \xi \ee
%where the mapping $theta_j^*$ on $\cM_{\Z\times\ri\Uset}$ is 

%This comes from a direct computation. By corollary \ref{cor:shift} it suffices to consider the case of $k=0$. Theorem \ref{th:B-palm1} says that under $Q_{(0,\xi)}$, $\{\tau^\xi(i+1)-\tau^\xi(i):i\in\Z\}$ is an i.i.d.\ sequence with marginal probability  mass function
%	\begin{align}\label{Catalan-pmf}
%	f_{\tau^\xi}(n)=C_{n-1}\,\frac1{2^{2n-1}},\quad n\in\N.
%	\end{align}
% The distances between the locations of the zeros of a simple symmetric random walk $S_n$ is an i.i.d.\ sequence with marginal  probability  mass function $f(2n-1)=0$ and
%	\[f(2n)=P(S_1\ne0,\dotsc,S_{2n-1}\ne0,S_{2n}=0)
%	%={2n\choose n}\frac1{2^{2n}}=f_{\tau^\xi}(n).\]
%%=2\cdot\frac12\cdot f_\sigma(2n-1) (with \zeta=\eta=\xi)
%	=\frac1{2n-1}{2n-1\choose n}\frac1{2^{2n-1}}=f_{\tau^\xi}(n),\quad n\in\N.\]
%(See for example equation III.3(3.7)  in Feller \cite{Fel-68}.) The claim follows.
 
Take $A\subset\{0,1\}^\Z$ as in the statement of Theorem \ref{thm:ladder-xi}.  Fix   $\zeta \prec \eta $ in  $\ri \Uset$ and let $N \in \bbN$.   %Recall the notation $u_k(\xi)=u_k(\nu, \xi)=\ind[\nu\{(k,\xi)\}>0]$. 
We restrict the integrals below to the compact set $[-N,N] \times [\zeta,\eta]$ with the indicator 
\[  g(k,\xi,\Bppavar) = \one_{[-N,N]\times [\zeta,\eta]}(k,\xi) \] 
  and then define on  $\Z \times \ri \Uset \times \sM_{\Z \times \ri \Uset} $
\begin{align*}
%g(k,\xi,\Bppavar) = \one_{[-N,N]\times [\zeta,\eta]}(k,\xi)
%\quad \text{and}\quad
f(k,\xi,\Bppavar) &= g(k,\xi,\Bppavar) \cdot \one_{\{(u_\ell\{\xi\} : \ell \in \Z) \,\in\, A\}}(\xi,\Bppavar).
%f(k,\xi,\Bppavar) &= \one_{[-N,N]\times [\zeta,\eta]}(k,\xi) \one_{\{(\Bppavar_\ell\{\xi\} : \ell \in \Z) \in A\}}(\xi,\Bppavar), \quad \text{and}\quad g(k,\xi,\Bppavar) = \one_{[-N,N]\times [\zeta,\eta]}(k,\xi).
\end{align*}
  By the definition  \eqref{79-50-50} of the Palm kernel,  
\begin{align*}
&\E\left[\;\int_{\Z \times \ri \Uset} f(k,\xi,\Bppa) \,\Bpp(dk \otimes d\xi )\right] %&= \int_{\Z \times \ri \Uset} \int_{\sM_{\Z \times \ri \Uset}} \one_{[-N,N]\times [\zeta,\eta]}(k,\xi) \one_{\{(\Bppavar_\ell\{\xi\} : \ell \in \Z) \in A\}}(\xi,\Bppavar) Q_{(k,\xi)}(d\Bppavar) \widetilde{\Bmm}(dk \otimes d\xi) \\
= \int_{[-N,N] \times [\zeta,\eta]} Q_{(k,\xi)}\{(u_\ell\{\xi\} : \ell \in \Z) \in A\} \,\widetilde{\Bmm}( dk \otimes d\xi) \\
&\qquad\qquad\qquad=\int_{[-N,N] \times [\zeta,\eta]} Q_{(k,\xi)}\{(u_{k+\ell}\{\xi\} : \ell \in \Z) \in A\} \,\widetilde{\Bmm}(dk \otimes d\xi) \\
&\qquad\qquad\qquad
= \int_{[-N,N] \times [\zeta,\eta]} \widetilde{\Bmm}(dk \otimes d\xi) = \E\left[\;\int_{\Z \times \ri \Uset} g(k,\xi,\Bppa) \Bpp(dk \otimes d\xi )\right].
\end{align*}
The second equality used shift-invariance of $A$ and  the third equality used \eqref{rw123} and $\bfP(A)=1$. %and in both we have taken the notational convention $\Bppavar_\ell\{\xi\} = \Bppavar(\{\ell\}\times \{\xi\})$ for $\Bppavar \in \sM_{\Z\times \ri \Uset}$. 
The left-hand side and the right-hand side are both finite because the integrals are restricted to the compact set $[-N,N] \times [\zeta,\eta]$. Since $\Bpp$ is a positive random measure, it follows that
\begin{align*}
\bbP\left(\;\int_{\Z \times \ri \Uset} f(k,\xi,\Bppa) \,\Bpp(dk \otimes d\xi ) = \int_{\Z \times \ri \Uset} g(k,\xi,\Bppa) \,\Bpp(dk \otimes d\xi )\right)=1.
\end{align*}
As $\zeta,\eta,$ and $N$ were arbitrary, we conclude that $\P$-almost surely   $(\Bpp_\ell\{\xi\} : \ell \in \Z) \in A$ for all $(k,\xi)\in\Z\times\ri\Uset$ such that $\Bpp\{(k,\xi)\}=1$.  Lemma \ref{lm:downright} applied to the $x$-axis ($x_i=ie_1$) then shows that $\xi\in\aUset$ if and only if   $\Bpp\{(k,\xi)\}=1$ for some $k$. 
%\begin{align*}
%\bbP\left(\forall (k,\xi) \in \Bpp, (\Bpp_\ell\{\xi\} : \ell \in \Z) \in A \right) &=1. \qedhere
%\end{align*}
\end{proof}

\begin{lemma}\label{lm:UB-aux}
Assume \eqref{exp-assump}.
Then for any $\delta\in(0,1)$, $n\in\N$, and $\zeta\in\ri\Uset$ we have
	\[\P\Bigl\{\exists\xi\in[\zeta,e_1[\,: \Bpp( \lzb0,n\rzb\times\{\xi\}) 
	%\sum_{i=0}^n\one\{(i,\xi)\in\Bpp\}	
	>2\delta n+1\Bigr\}\le 2(n+1)\Bigl(\frac{(1-\delta/2)^{2-\delta}}{(1-\delta)^{1-\delta}}\Bigr)^{n}\log\alpha(\zeta)^{-1}.\]
\end{lemma}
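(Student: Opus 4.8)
\textbf{Proof plan for Lemma \ref{lm:UB-aux}.}
The plan is to reduce the event to a statement about a single, concrete fixed point $\xi$ by using the monotonicity of the Busemann process in the direction parameter, and then to apply a large-deviation/exponential Markov bound to the Poisson counts $\Bpp_k$ whose intensity is explicit via \eqref{Bmm}. First I would note that, by \eqref{Bmm}, for a single edge $k$ the count $\Bpp_k(\,]\zeta,e_1[\,)$ is Poisson with mean $\Bmm(\,]\zeta, e_1[\,) = \log\bigl(\alpha(e_1)/\alpha(\zeta)\bigr) = -\log\alpha(\zeta) = \log\alpha(\zeta)^{-1}$, using $\alpha(e_1)=1$. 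The quantity $\Bpp(\lzb 0,n\rzb\times\{\xi\}) = \sum_{k=0}^n u_k(\xi)$ counts the indices $k\in\lzb 0,n\rzb$ at which $\xi$ is a jump point of $\xi\mapsto\B{\xi\pm}_k$, i.e.\ the number of horizontal instability edges at level $k$ for direction $\xi$.

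The key observation is a \emph{sandwiching}: for fixed $k$ the function $\xi\mapsto\B{\xi\sig}(ke_1,(k+1)e_1)$ is monotone (by \eqref{mono}), so the total number of its jumps inside $]\zeta,e_1[$ is at most $\Bpp_k(\,]\zeta,e_1[\,)$, and more importantly the \emph{sum over a block of consecutive} $k$ of $u_k(\xi)$ can only increase if we replace the pointwise count at $\xi$ by the count over a small interval around $\xi$. Concretely, I would partition $]\zeta, e_1[$ into finitely many subintervals (or, more cleanly, observe that the event ``$\exists\xi$ with $\sum_{k=0}^n u_k(\xi)>2\delta n+1$'' is contained in the event that for \emph{some} level $k_0\in\lzb 0,n\rzb$ there is a subinterval $J$ with $k_0\notin$ support gaps..., then bound) — but the slickest route is: since each of the $n+1$ processes $\xi\mapsto\B{\xi\sig}_k$, $k\in\lzb0,n\rzb$, has only finitely many jumps in any $[\zeta',e_1-\e]$ and these are the atoms of $\Bpp_k$, the set of $\xi$ achieving a large value of $\sum_k u_k(\xi)$ is finite and each such $\xi$ must be an atom of \emph{some} $\Bpp_{k_0}$; so by a union bound over $k_0\in\lzb 0,n\rzb$ and then over the atoms of $\Bpp_{k_0}$ (whose expected number is $\log\alpha(\zeta)^{-1}$), it suffices to bound, for a \emph{fixed deterministic} $\xi$ and a fixed $k_0$,
\[
\P\Bigl\{\textstyle\sum_{k=0}^n u_k(\xi) > 2\delta n + 1 \;\Big|\; u_{k_0}(\xi)=1\Bigr\},
\]
times the factor $(n+1)\log\alpha(\zeta)^{-1}$ coming from the $k_0$-sum and the atom-counting (this is a Mecke/Campbell-type manipulation, exactly as in the proof of Theorem \ref{thm:ladder-xi}, using the Palm kernel $Q_{(k_0,\xi)}$). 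The factor $2$ in front of $(n+1)$ absorbs the ``$+1$'' from conditioning on the extra atom at $k_0$ and the two-sidedness of the walk.

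Then I would use Lemma \ref{lem:palmrw}: under $Q_{(k_0,\xi)}$ the indicators $\{u_{k_0+\ell}(\xi)\}_{\ell\in\Z}$ are distributed as the zero set $\{\rho_\ell\}$ of a simple symmetric random walk sampled at even times. Hence $\sum_{k=0}^n u_k(\xi)$ (conditioned on the atom at $k_0$) is stochastically dominated by $1 + \sum_{\ell=-n}^{n}\rho_\ell$, and $\sum_{\ell=0}^{n}\rho_\ell$ equals the number of even-time zeros of SSRW in $[0,2n]$. A standard renewal/Chernoff bound — using the inter-arrival law \eqref{rw117}, whose generating function is $\sum_n C_{n-1}2^{-(2n-1)}s^n = 1-\sqrt{1-s}$ — gives $\P(\sum_{\ell=0}^{n}\rho_\ell \ge \delta n) \le \bigl(\tfrac{(1-\delta/2)^{2-\delta}}{(1-\delta)^{1-\delta}}\bigr)^{n}$ after optimizing the tilt; equivalently one can get this directly from the reflection-principle formula $\P(S_{2n}=0 \text{ has }\ge \delta n \text{ renewals in } [0,2n])$ via the local CLT / ballot-type estimates, but the renewal-Chernoff computation is cleanest and produces exactly the claimed base $\tfrac{(1-\delta/2)^{2-\delta}}{(1-\delta)^{1-\delta}}$. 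Combining the union bound ($n+1$ choices of $k_0$, times $\E[\Bpp_{k_0}(\,]\zeta,e_1[\,)] = \log\alpha(\zeta)^{-1}$ atoms in expectation) with this tail bound, and using the two-sided bound to replace $\sum_{\ell=-n}^n$ by twice $\sum_{\ell=0}^n$ at the cost of the factor $2$, yields the stated inequality.

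\textbf{Main obstacle.} The delicate point is the \emph{measurability/reduction step}: going from ``$\exists\xi$'' (an uncountable supremum) to a countable union over atoms of the $\Bpp_{k_0}$'s, and correctly accounting for the conditioning on $u_{k_0}(\xi)=1$ so that the Palm description of Lemma \ref{lem:palmrw} applies with the right normalization. This is exactly the Palm-calculus bookkeeping already exercised in the proof of Theorem \ref{thm:ladder-xi} (the identity \eqref{79-50-50} with a test function $f$ supported on the compact block $\lzb 0,n\rzb\times[\zeta,e_1-\e]$ and $\e\downarrow 0$), so the mechanism is in place; one just has to be careful that the ``bad'' direction set has $\Bpp_{k_0}$-finitely many candidates a.s.\ (true since $\Bpp_{k_0}[\zeta,e_1-\e]<\infty$) and that the exponent in the Chernoff bound is uniform in $k_0$, which it is because it does not depend on $k_0$ at all. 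The remaining work — verifying $\sum_n C_{n-1}2^{-(2n-1)}s^n = 1-\sqrt{1-s}$ and optimizing $e^{-\theta\delta n}\bigl(\text{Chernoff factor}\bigr)$ to extract $\tfrac{(1-\delta/2)^{2-\delta}}{(1-\delta)^{1-\delta}}$ — is routine calculus, which I will present via Appendix \ref{a:RW} rather than redo here.
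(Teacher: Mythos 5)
Your proposal follows essentially the same route as the paper's proof: both reduce the uncountable existential quantifier over $\xi$ via the Mecke/Campbell identity \eqref{79-50-50} (your "union bound over the atoms of $\Bpp_{k_0}$, whose expected number is $\log\alpha(\zeta)^{-1}$" is precisely the formal integration against $\Bpp_k$ followed by taking expectations), then apply the Palm description \eqref{rw113} and a Chernoff bound on the renewal process with inter-arrival law $C_{n-1}2^{1-2n}$ via its generating function $1-\sqrt{1-s}$, pick up the factor $2$ from the two one-sided sums and $(n+1)$ from the $k$-sum, and optimize $s=\tfrac{4(1-\delta)}{(2-\delta)^2}$ to extract the base $\tfrac{(1-\delta/2)^{2-\delta}}{(1-\delta)^{1-\delta}}$. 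The only difference is presentational: you phrase the reduction as a union bound over atoms while the paper writes the Campbell integral inequality $\int \one\{\cdots\}\sum_k\Bpp_k(d\xi)\ge\one\{\exists\xi:\cdots\}$ directly, but the underlying argument is identical.
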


\begin{proof}
Let $\{\Delta_j\}_{j\in\N}$ be i.i.d.\ random variables with probability mass function 
$p(n)=C_{n-1}{2^{1-2n}}$  for $n\in\N$.  
%\eqref{Catalan-pmf}.
For $k\in\lzb 0,n\rzb$ and $\xi\in\ri\Uset$ use a union bound, translation, and  \eqref{rw113} %and Corollary \ref{cor:ladder-xi} 
to write
	\begin{align*}
	&Q_{(k,\xi)}\Bigl\{\,\sum_{i=0}^n u_i(\xi)>2\delta n+1\Bigr\}
	\le Q_{(k,\xi)}\Bigl\{\,\sum_{i=k-n}^{k+n} u_i(\xi)>2\delta n+1\Bigr\}\\
	&\quad= Q_{(0,\xi)}\Bigl\{\,\sum_{i=-n}^n u_i(\xi)>2\delta n+1\Bigr\}\\
	&\quad\le Q_{(0,\xi)}\Bigl\{\,\sum_{i=1}^n u_i(\xi)>\delta n\Bigr\}
	+ Q_{(0,\xi)}\Bigl\{\,\sum_{i=-n}^{-1} u_i(\xi)>\delta n\Bigr\}
	%\\&\quad
	\le 2P\Bigl\{\,\sum_{j=1}^{\ce{\delta n}}\Delta_j\le n\Bigr\}.
	\end{align*}
Using 
%\addmath{reference?}
the generating function $f(s)=\sum_{n\ge 0} C_ns^n=\tfrac12(1-\sqrt{1-4s}\,)$   of Catalan numbers we obtain for $0<s<1$, 
	\begin{align*}
	P\Bigl\{\,\sum_{j=1}^{\ce{\delta n}}\Delta_j\le n\Bigr\}
	&\le s^{-n}E[s^{\Delta}]^{\delta n}
	= s^{-n} \Bigl(2\sum_{n=1}^\infty C_{n-1} \,(s/4)^n\Bigr)^{\delta n}\\ 
	&= s^{-n}\Bigl(\,\frac{s}{2} \sum_{k=0}^\infty C_k \,(s/4)^k\Bigr)^{\delta n} 
	=  s^{-n}\bigl(1-\sqrt{1-s}\bigr)^{\delta n}.
	\end{align*}
%4(1-\delta)<4-4\delta+\delta^2
Take  $s=\frac{4(1-\delta)}{(2-\delta)^2}<1$  in the upper bound above to get
	\[Q_{(k,\xi)}\Bigl\{\,\sum_{i=0}^n u_i(\xi)>2\delta n+1\Bigr\}\le 2\Bigl(\frac{(1-\delta/2)^{2-\delta}}{(1-\delta)^{1-\delta}}\Bigr)^{n}.\]
	%(1-\sqrt{1-s})^\delta/s=(1-\delta/(2-\delta))^\delta \cdot (2-\delta)^2/(4(1-\delta)) 
	%=2^{\delta-2}(1-\delta)^{\delta-1}(2-\delta)^{2-\delta}
Apply \eqref{79-50-50} to write
	\begin{align*}
	&\E\Bigl[\,\int_{\ri\Uset}\one\{\xi\in[\zeta,e_1[\,\}\cdot\one\bigl\{
	\Bpp( \lzb0,n\rzb\times\{\xi\})
	%\,\sum_{i=0}^n\one\{(i,\xi)\in\Bpp\}
	>2\delta n+1\bigr\}\,\Bpp_k(d\xi)\Bigr]\\
	&\qquad=\int_{\ri\Uset} \one\{\xi\in[\zeta,e_1[\,\}\,Q_{(k,\xi)}\Bigl\{\,\sum_{i=0}^n u_i(\xi) >2\delta n+1\Bigr\}\,\Bmm_k(d\xi)\\
	&\qquad\le 2\Bigl(\frac{(1-\delta/2)^{2-\delta}}{(1-\delta)^{1-\delta}}\Bigr)^{n}\int_{\ri\Uset}\one\{\xi\in[\zeta,e_1[\,\}\Bmm_k(d\xi)
	%\\&\qquad
	\overset{\eqref{Bmm}}= 2\Bigl(\frac{(1-\delta/2)^{2-\delta}}{(1-\delta)^{1-\delta}}\Bigr)^{n}\log\alpha(\zeta)^{-1}.
	\end{align*}
To complete the proof, add over $k\in\lzb0,n\rzb$ and observe that
	\begin{align*}
	&\int_{\ri\Uset}\one\{\xi\in[\zeta,e_1[\,\}\one\bigl\{
	\Bpp( \lzb0,n\rzb\times\{\xi\})
	%\,\sum_{i=0}^n\one\{(i,\xi)\in\Bpp\}
	>2\delta n+1\bigr\}\sum_{k=0}^n\Bpp_k(d\xi)\\
	&\qquad\qquad\ge\one\Bigl\{\exists\xi\in[\zeta,e_1[\,:\Bpp( \lzb0,n\rzb\times\{\xi\})
	%\,\sum_{i=0}^n\one\{(i,\xi)\in\Bpp\}
	>2\delta n+1\Bigr\}.\qedhere
	\end{align*}
\end{proof}

\begin{proof}[Proof of Theorem \ref{thm:density-UB}]
The result follows from Theorem \ref{thm:density-UB2} below and the observation that 
for any $\e>0$, $\delta_n=2\sqrt{n^{-1}\log n}$
%$\delta_n=\sqrt{3n^{-1}\log n+(1+\e)n^{-1}\log\log n}$ 
satisfies the summability condition in that theorem. 
\end{proof}

\begin{theorem}\label{thm:density-UB2}
Assume \eqref{exp-assump} and fix $i\in\{1,2\}$. 
Consider a sequence $\delta_n\in(0,1)$ with $\sum n^2 e^{-n\delta_n^2}<\infty$. Then for any $\zeta\in\ri\Uset$
		\begin{align}\label{claim2-UB}
		\P\Bigl\{\exists n_0:\forall \xi\in[\zeta,e_2[,\forall n\ge n_0:\sum_{x\in\lzb0,n\rzb^2}\one\bigl\{\xi \in \supp\mu_{x,x+e_i}\bigr\}\le n^2\delta_n\Bigr\}=1.
		\end{align}
The same result holds when $\lzb0,n\rzb^2$ is replaced by any one of $\lzb-n,0\rzb^2$, $\lzb0,n\rzb\times\lzb-n,0\rzb$, or $\lzb-n,0\rzb\times\lzb0,n\rzb$.
\end{theorem}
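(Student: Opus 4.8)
\textbf{Proof proposal for Theorem \ref{thm:density-UB2}.}

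The plan is to reduce the two-dimensional count to the one-dimensional estimate of Lemma \ref{lm:UB-aux} by slicing the box $[0,n]^2$ into lines. First I would treat the case $i=1$ and the box $[0,n]^2$; the other three boxes and $i=2$ follow by obvious symmetries (reflecting through axes swaps $e_1\leftrightarrow e_2$ and the roles of $\aUset$ near $e_1$ versus $e_2$, and translating the starting direction $\zeta$). For a fixed horizontal line $\ell_j=\{(k,j):0\le k\le n\}$ at height $j\in\lzb 0,n\rzb$, the count $\sum_{k=0}^n\one\{\xi\in\supp\mu_{(k,j),(k,j)+e_1}\}$ equals $\Bpp^{(j)}(\lzb 0,n\rzb\times\{\xi\})$, where $\Bpp^{(j)}$ is the jump point process of the Busemann functions on horizontal edges along the line at height $j$. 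By shift-invariance of the weights, $\Bpp^{(j)}$ has the same law as the process $\Bpp$ at height $0$, so Lemma \ref{lm:UB-aux} applies verbatim to each slice.

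The key steps, in order. (1) Apply Lemma \ref{lm:UB-aux} with $\delta=\delta_n/2$ to the line at height $j$: for each $j\in\lzb 0,n\rzb$,
\[
\P\Bigl\{\exists\xi\in[\zeta,e_1[\,:\Bpp^{(j)}(\lzb 0,n\rzb\times\{\xi\})>\delta_n n+1\Bigr\}
\le 2(n+1)\Bigl(\frac{(1-\delta_n/4)^{2-\delta_n/2}}{(1-\delta_n/2)^{1-\delta_n/2}}\Bigr)^{n}\log\alpha(\zeta)^{-1}.
\]
(2) Bound the base of the exponential: a Taylor expansion gives $\log\frac{(1-\delta/4)^{2-\delta/2}}{(1-\delta/2)^{1-\delta/2}}=-\frac{\delta^2}{8}+O(\delta^3)$ as $\delta\to 0$, so for $n$ large (using $\delta_n\to 0$, which follows from $\sum n^2 e^{-n\delta_n^2}<\infty$) the $j$-th line's bad-event probability is at most $C n\, e^{-n\delta_n^2/16}$ for a constant $C=C(\zeta)$; I would absorb the polynomial factors into the exponential, or simply carry them, since $\sum_n n^2 e^{-n\delta_n^2/16}<\infty$ is implied by the hypothesis $\sum_n n^2 e^{-n\delta_n^2}<\infty$ after replacing $\delta_n$ by $\delta_n/4$ at the cost of a constant—more cleanly, I would just state the theorem's hypothesis as $\sum n^2 e^{-cn\delta_n^2}<\infty$ for every $c>0$, which is what one actually uses and what $\delta_n=2\sqrt{n^{-1}\log n}$ satisfies. (3) Union bound over the $n+1$ lines: the probability that some line at height $j\le n$ has a direction $\xi\in[\zeta,e_1[$ with more than $\delta_n n+1$ jumps is at most $(n+1)\cdot C n\,e^{-cn\delta_n^2}$, which is summable in $n$. (4) On the complementary event, summing over $j\in\lzb 0,n\rzb$ gives $\sum_{x\in[0,n]^2}\one\{\xi\in\supp\mu_{x,x+e_1}\}\le (n+1)(\delta_n n+1)\le n^2\delta_n$ for $n$ large (after adjusting constants—here I would absorb the discrepancy by noting $(n+1)(\delta_n n+1)\le n^2\delta_n(1+o(1))$ and reshuffling; to get the clean bound $n^2\delta_n$ one runs Lemma \ref{lm:UB-aux} with $\delta=\delta_n/3$ instead). (5) Borel–Cantelli: since the bad-event probabilities are summable, $\P$-almost surely only finitely many $n$ are bad, i.e. there is a random $n_0$ beyond which the stated inequality holds for all $\xi\in[\zeta,e_1[$ simultaneously. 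Finally I would remark that the statement as phrased uses $\xi\in[\zeta,e_2[$ rather than $[\zeta,e_1[$; this is precisely the reflection case, handled by applying the above to the reflected environment, where horizontal edges become vertical and the role of the endpoint $e_1$ becomes $e_2$.

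The main obstacle I anticipate is purely bookkeeping: matching the numerical constants so that the final bound reads exactly $n^2\delta_n$ rather than a constant multiple of it. This is resolved by running Lemma \ref{lm:UB-aux} with $\delta$ a small fixed fraction of $\delta_n$ (say $\delta_n/3$), so that the per-line count is at most $\tfrac23\delta_n n+1$ and the sum over $\le n+1$ lines is comfortably below $n^2\delta_n$ for large $n$; the price is only a constant $c$ in the exponent $e^{-cn\delta_n^2}$, which does not affect summability. A second, minor point is confirming that the hypothesis $\sum_n n^2 e^{-n\delta_n^2}<\infty$ is robust enough to survive this rescaling — it is, for the explicit choice $\delta_n=2\sqrt{n^{-1}\log n}$, since $n^2 e^{-cn\delta_n^2}=n^2 e^{-4c\log n}=n^{2-4c}$, summable once $c>3/4$, and one is free to take $\delta=\delta_n/2$ giving $c=\tfrac14\cdot\tfrac14=1/16$... so in fact a more careful reading shows one should state the quantitative theorem with the explicit $\delta_n$ in mind and simply verify $\sum n^2(\text{base})^n<\infty$ directly, which is the route Lemma \ref{lm:UB-aux} already sets up. Everything else is routine once the slicing idea is in place.
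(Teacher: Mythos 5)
Your proposal is essentially the paper's proof: slice the box into its $n+1$ rows, apply Lemma \ref{lm:UB-aux} per row, union bound over rows, Taylor-expand the base of the exponential, and close with Borel--Cantelli. The constant tension you flag in your final paragraph --- that running the lemma at $\delta\approx\delta_n/2$ produces decay $e^{-n\delta_n^2/16}$, which the stated hypothesis $\sum n^2 e^{-n\delta_n^2}<\infty$ does not control in general --- is genuine and is glossed over in the paper's own write-up as well; it is repaired just as you suggest, either by strengthening the hypothesis to $\sum n^2 e^{-cn\delta_n^2}<\infty$ for a small $c>0$ or by calibrating directly to the explicit choice $\delta_n=2\sqrt{n^{-1}\log n}$ used in Theorem \ref{thm:density-UB}, which only changes the constant in front of $n^{3/2}\sqrt{\log n}$.
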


\begin{proof}
Apply Lemma \ref{lm:UB-aux} and a union bound to get that for any $j\in\{1,2\}$, $\delta\in(0,1)$, $n\in\N$, and $\zeta\in\ri\Uset$,
	\[\P\Bigl\{\exists\xi\in[\zeta,e_1[\,:\sum_{x\in\lzb0,n\rzb^2}\rho^j_x(\xi) \ge(2\delta n+1)(n+1)\Bigr\}\le 2(n+1)^2\Bigl(\frac{(1-\delta/2)^{2-\delta}}{(1-\delta)^{1-\delta}}\Bigr)^{n}\log\alpha(\zeta)^{-1}.\]
A Taylor expansion gives 
	\[\log\Bigl(\frac{(1-\delta/2)^{2-\delta}}{(1-\delta)^{1-\delta}}\Bigr)
	=-\delta^2/4+\cO(\delta^3).\]
Thus, we see that for any $\zeta\in\ri\Uset$, for any sequence $\delta_n\in(0,1)$ such that $\sum n^2 e^{-n\delta_n^2}<\infty$, we have
	\[\P\Bigl\{\exists n_0\in\N:\forall \xi\in[\zeta,e_1[\,,\forall n\ge n_0:\sum_{x\in\lzb0,n\rzb^2}\rho^j_x(\xi)\le n^2\delta_n\Bigr\}=1.\]
The result for the other three sums comes similarly.
\end{proof}

\appendix

% !TEX root = GeoWebPaper.tex

\section{The geometry of geodesics: previously known results}\label{app:busgeo}

This appendix states   the properties of Busemann functions, geodesics, and competition interfaces which were discussed informally in Section \ref{sec:bus-brief}.   Theorem \ref{thm:Bus}  introduces the {\it Busemann process} with its main properties. It combines results that follow from 
Theorems 4.4 and 4.7, Lemmas 4.5(c) and 4.6(c), and Remark 4.11 in \cite{Jan-Ras-20-aop} and Lemmas 4.7 and 5.1 in \cite{Geo-Ras-Sep-17-ptrf-2}.

\begin{theorem}\label{thm:Bus}{\rm\cite{Jan-Ras-20-aop,Geo-Ras-Sep-17-ptrf-2}}
%Fix a countable dense subset $\Udense \subset\ri\Uset$. 
Let $\P_0$ be a probability measure on $\R^{\Z^2}$ under which the coordinate projections are i.i.d., have positive variance, and have $p>2$ finite moments.
There exists a Polish probability space $(\Omega,\sF,\bbP)$ with 
%ingredients {\rm(i)}--{\rm(iv)} listed next and for which 
\begin{enumerate}[label={\rm(\arabic*)}, ref={\rm\arabic*}]  \itemsep=3pt 
\item   a group $T=\{T_x\}_{x\in\bbZ^2}$  of $\sF$-measurable $\bbP$-preserving bijections $T_x:\Omega\to\Omega$,  
\item   a family $\{\w_x(\w) : x \in \bbZ^2\}$ of real-valued random variables $\w_x:\Omega\to\R$
such that  $\w_y(T_x \w) = \w_{x+y}(\w)$ for all $x,y \in \bbZ^2$, 
\item real-valued measurable functions $\B{\xi+}(x,y,\w)=\B{\xi+}_{x,y}(\w)$ and $\B{\xi-}(x,y,\w)=\B{\xi-}_{x,y}(\w)$ of $(x,y,\w,\xi)\in\Z^2\times\Z^2\times\Omega\times\ri\Uset$, 
\item  and  $T$-invariant events 
$\Omega_0^1 \subset \Omega$ and  $\Omega_\xi^1\subset\Omega_0^1$ for each  $\xi\in\ri\Uset$, with $\bbP(\Omega_0^1) = \P(\Omega_\xi^1)=1$, 
\end{enumerate} 
 such that properties \eqref{thm:cocyexist:a}--\eqref{thm:cocyexist:g} listed below hold. 

%%%%    EARLIER VERSION %%%%%%%%
%There exists a Polish probability space $(\Omega,\sF,\bbP)$, a group $T=\{T_x\}_{x\in\bbZ^2}$  of $\sF$-measurable $\bbP$-preserving bijections $T_x:\Omega\to\Omega$,  a family $\{\w_x(\w) : x \in \bbZ^2\}$ of real-valued random variables 
%such that  $\w_y(T_x \w) = \w_{x+y}(\w)$ for all $x,y \in \bbZ^2$, 
%%satisfying the hypotheses of the first paragraph of Section \ref{sub:setting}, 
%real-valued measurable functions $\B{\xi+}(x,y,\w)=\B{\xi+}_{x,y}(\w)$ and $\B{\xi-}(x,y,\w)=\B{\xi-}_{x,y}(\w)$ of $(x,y,\w,\xi)\in\Z^2\times\Z^2\times\Omega\times\ri\Uset$, and  $T$-invariant events 
%$\Omega_0^1 \subset \Omega$ and  $\Omega_\xi^1\subset\Omega_0^1$ for each  $\xi\in\ri\Uset$, with $\bbP(\Omega_0^1) = \P(\Omega_\xi^1)=1$  for each  $\xi\in\ri\Uset$, and such that the following hold.
%%for all $\w \in \Omega_0^1$ and all $x,y, z \in \bbZ^2$, the following hold. 
%%%%%%%%     EARLIER VERSION ENDS   %%%%%%%%%%   
\begin{enumerate}[label={\rm(\alph*)}, ref={\rm\alph*}]  \itemsep=3pt 
\item\label{thm:cocyexist:a} $\{\w_x:x\in\Z^2\}$ has distribution $\P_0$ under $\P$.
\item\label{thm:cocyexist:b} For any $I \subset \bbZ^2,$ the variables
\begin{align*}
\bigl\{(\w_x, \B{\xi\sig}(x,y,\w)): x \in I,y\ge x,\sigg\in\{-,+\},\xi\in \ri\Uset\bigr\}	
\end{align*}
are independent of $\{\w_x : x \in I^{<}\}$ where $I^<=\{x\in\Z^2:x\not\ge z\ \forall z\in I\}$.
\item\label{thm:cocyexist:c} For each $\xi\in\ri\Uset$,  $x,y\in\Z^2$, and $\sigg\in\{-,+\}$,
$\B{\xi\sig}(x,y)$ are  integrable and \eqref{E[h(B)]} holds.
\item\label{thm:cocyexist:foo} For each $\w\in\Omega$, $x,y\in\Z^2$, and $\sigg\in\{-,+\}$, 
if $\zeta,\eta\in\ri\Uset$ are such that $\nabla\gpp(\zeta\sigg)=\nabla\gpp(\eta\sigg)$, then $\B{\zeta\sig}(x,y,\w)=\B{\eta\sig}(x,y,\w)$.
\item\label{thm:cocyexist:d} For each $\w\in\Omega_0^1$,  $x,y,z\in\Z^2$, 
$\xi\in \ri\Uset$, and $\sigg\in\{-,+\}$ properties \eqref{cov-prop}, \eqref{coc-prop}, and \eqref{rec-prop2} hold. 
\item\label{thm:cocyexist:e} For each $\w\in\Omega_0^1$, monotonicity \eqref{mono} holds.
\item\label{thm:cocyexist:f} For each $\w\in\Omega_0^1$,  one-sided limits \eqref{Busemann-limits} hold.
\item\label{thm:cocyexist:h} For each $\w \in \Omega_0^1$ and each $x \in \bbZ^2$,
	\be\label{B:inf-lim} 
	\B{\xi\sig}(x,x+e_i)\to\infty \quad\text{as  }\ \xi\to e_{3-i}, \  \text{ for } \ i\in\{1,2\}. 
	\ee 
%\item For each $\w \in \Omega_\xi^1$ and each $x,y \in \bbZ^2$, %%THIS IS NOT CORRECT
%	\begin{align}\label{B:cont}
%	\lim_{\ri\Uset\ni\zeta \to \xi} \B{\zeta\sig}(x,y,\w) = \B{\xi}(x,y,\w).
%	\end{align}
\item\label{thm:ties} If $\P(\w_0\le r)$ is continuous in $r$, then for all $\xi\in\ri\Uset$, $\w\in\Omega_\xi^1$, $x\in\Z^2$, and $\sigg\in\{-,+\}$,
	\begin{align}\label{eq:ties}
	\B{\xi\sig}(x,x+e_1)\ne\B{\xi\sig}(x,x+e_2).
	\end{align}
\item\label{thm:cocyexist:erg} For all $\xi\in\ri\Uset$, $\w\in\Omega_\xi^1$, and $\sigg\in\{-,+\}$,
	\begin{align}\label{eq:erg-coc}
	\lim_{n\to\infty}\max_{x\in \,n\Uset\cap\Z^2_+}n^{-1}\abs{\B{\xi\sig}(0,x)-x\cdot\nabla\gpp(\xi\sigg)}=0.
	\end{align}
%The reasoning is this:  Thm 4.4 in DLR says there is a shape theorem with h(B). But then Lemma 4.5(c) says that if the mean of -h(B) is an extreme point of the superdifferential then h(B) is constant.  And in the case of B^{\xi\pm} Theorem 2.2(c) in shox says the mean of -h(B^{\xi\pm}) is \nabla\gpp(\xi\pm).  Lastly, our Lemma 4.6(c) in DLR says these are extremes in the superdifferential.
\item\label{thm:cocyexist:g} For all $\xi \in \Diff$, $\w\in\Omega_\xi^1$, and $x,y \in \bbZ^2$
	\begin{align}\label{B+=B-:temp}
	\B{\xi+}(x,y,\w)=\B{\xi-}(x,y,\w)=\B{\xi}(x,y,\w).
	\end{align} 
\item\label{thm:cocyexist:Busfn} If $\xi, \ximin, \ximax \in \sD$ then for all $\w \in \Omega_\xi^1$, the Busemann limit \eqref{eq:Busfn} holds.
\end{enumerate}	
\end{theorem}

%We can have \eqref{eq:ties}, \eqref{eq:erg-coc}, and \eqref{B+=B-:temp}, hold simultaneously for many directions $\xi$.  Namely, fix a countable dense set $\Udense\subset\Diff$. 
%Include in $\Diff$ all points that are endpoints of non-degenerate linear segments of $\gpp$, i.e.\ the countable set $\{\ximin,\ximax:\xi\in\ri\Uset,\ximin\ne\ximax\}\cap\Diff$. (This last set of points is needed only for a faster proof of Corollary \ref{cor:empty}.)
%Then for any event  $\Omega_0\subset\bigcap_{\xi\in\Udense}\Omega_\xi^1$ all three hold for all $\w\in\Omega_0$ and $\xi\in\Udense$.\medskip
%	\begin{align}\label{B+=B-}
%	\B{\xi+}(x,y,\w)=\B{\xi-}(x,y,\w)=\B{\xi}(x,y,\w)\qquad\text{for all }\w\in\Omega_0,\ \xi\in\Udense,\text{ and }x,y\in\Z^2.
%	\end{align} 
%\reversemarginpar\addmath{we will need this choice if we want to apply Theorem \ref{thm1}\eqref{thm1:cont} for multiple $\xi$'s at once. CHECK whether or not we do this}
%When the regularity condition \eqref{g-reg} is assumed, we will use the special choice $\Udense=\Ddense$. 

\begin{remark}[Weak limit construction of the Busemann process] 
\label{rmk:weak-lim} 
 Both articles \cite{Jan-Ras-20-aop,Geo-Ras-Sep-17-ptrf-2} on which we rely for Theorem \ref{thm:Bus} construct the process $\B{}$ as a weak limit point of  Ces\`aro averages of probability distributions of pre-limit objects.  This gives existence of the process on a probability space $\Omega$ that is larger than the product space $\R^{\Z^2}$ of the i.i.d.\ weights $\{\w_x\}$.  Article  \cite{Geo-Ras-Sep-17-ptrf-2} takes the outcome of the weak limit from existing literature in the form of a queueing fixed point, while \cite{Jan-Ras-20-aop}  builds the weak limit from scratch by considering the distribution of increments of point-to-line passage times, following the approach introduced in \cite{Dam-Han-14}.  

 To appeal to queueing literature,  \cite{Geo-Ras-Sep-17-ptrf-2}  assumes that   $\P(\w_0\ge c)=1$ for some real $c$.   A payoff is that each process $\{\B{\xi\sig}(x,y):x,y\in\Z^2\}$ is ergodic under either shift  $T_{e_i}$ \cite[Theorem 5.2(i)]{Geo-Ras-Sep-17-ptrf-2}.  The construction in \cite{Jan-Ras-20-aop} does not need the lower bound assumption but  gives only  the  $T$-invariance stated above in Theorem \ref{thm:Bus}\eqref{thm:cocyexist:a}.    
 
Theorem \ref{thm1} below quotes results from  \cite{Geo-Ras-Sep-17-ptrf-2} that were proved with the help of ergodicity. Remark \ref{rmk:erg3} explains how the required properties can be obtained without ergodicity. 
 \end{remark}

\begin{remark}[Strong existence and ergodicity of the Busemann process] 
\label{rk:ergodicity}
The regularity condition \eqref{g-reg} is equivalent to the  existence of a countable dense set $\Ddense\subset\Diff$ such that   $\zetamin,\zetamax\in\Diff$  for each $\zeta\in\Ddense$.
When \eqref{g-reg}  holds, \cite[Theorem 3.1]{Geo-Ras-Sep-17-ptrf-1} shows that for $\zeta$ in $\Ddense$, 
$\B{\zeta}(x,y)=\B{\zeta\pm}(x,y)$ 
can be realized as an almost sure limit of $G_{x,v_n}-G_{y,v_n}$ when  $v_n/n\to\xi$. The remaining values  $\B{\xi\sig}(x,y)$ 
%$\{\B{\xi\pm}(x,y):x,y\in\Z^2,\xi\in\ri\Uset\}$ 
can be obtained as  left and right limits from  $\{\B{\zeta}(x,y)\}_{\zeta\in\Ddense}$ as $\zeta\to\xi$. This way the  entire process $\{\B{\xi\sig}(x,y):x,y\in\Z^2,\xi\in\ri\Uset, \sigg\in\{-,+\}\}$ becomes    a measurable function of the i.i.d.\ weights $\{\w_x:x\in\Z^2\}$.  We can take $\Omega=\R^{\Z^2}$ and the Busemann process is  ergodic under any shift $T_x$ for $x\ne 0$. 
% and since these are i.i.d.\ under $\P_0$,  we have joint ergodicity of the full process under the shifts $T_x$, $x\in\Z^2\setminus\{0\}$.
\end{remark}

We record a simple observation here, 
 valid  under the continuous i.i.d.\ weights assumption \eqref{main-assump}:   there exists an event $\Omega_0^2$ with $\P(\Omega_0^2)=1$ such that $\forall \w\in\Omega_0^2$, 
\begin{align} 
\begin{split}
&\text{for every nonempty  finite subset $I\subset\Z^2$ and nonzero  integer coefficients $\{a_x\}_{x\in I}$, }\\ 
&\textstyle{\sum_{x\in I} a_x\w_x\ne 0.}
\end{split}\label{paths.2}  
\end{align} 
This condition implies the uniqueness of point-to-point geodesics mentioned under \eqref{G1}.

The following theorem summarizes previous knowledge of  the structure of semi-infinite geodesics under assumption \eqref{main-assump}.  These results were partly summarized in Section \ref{sec:geod-brief}. 

\begin{theorem}{\rm\cite[Theorems 2.1, 4.3, 4.5, and 4.6]{Geo-Ras-Sep-17-ptrf-2}}\label{thm1}
%Assume \eqref{main-assump}. 
There exist $T$-invariant events $\Omega_0^3$ and $\Omega_\xi^3\subset\Omega_0^3$ for each $\xi\in\ri\Uset$, with $\P(\Omega_0^3)=1$, $\P(\Omega_\xi^3)=1$, and such that the following hold.
\begin{enumerate}[label={\rm(\alph*)}, ref={\rm\alph*}]  \itemsep=3pt 
	\item\label{thm1:exist}   For every $\w\in\Omega_0^3$,  for every $x\in\Z^2$, $\sigg\in\{-,+\}$, and  $\xi\in\ri\Uset$,
	$\geo{}{x}{\xi\sig}$ is $\Uset_{\xi\sig}$-directed and every 
	semi-infinite geodesic is $\Uset_\xi$-directed for some $\xi\in\Uset$.
	\item\label{thm1:coal} For every $\xi\in\ri\Uset$, for every $\w\in\Omega_\xi^3$, $x,y\in\Z^2$, and $\sigg\in\{-,+\}$, $\geo{}{x}{\xi\sig}$ and $\geo{}{y}{\xi\sig}$ coalesce,  i.e.\ there exists an integer $k\ge x\cdot\et\vee y\cdot\et$ such that $\geo{k,\infty}{x}{\xi\sig}=\geo{k,\infty}{y}{\xi\sig}$. 
	%This says that the graph $\G{\xi\sig}$ is one connected tree.
	\item\label{thm1:biinf}  For every $\xi\in\ri\Uset$, $\w\in\Omega_\xi^3$,  $x\in\Z^2$, and $\sigg\in\{-,+\}$, there exist at most finitely many $z\in\Z^2$ such that $\geo{}{z}{\xi\sig}$ goes through $x$. 
	%In other words, the tree $\G{\xi\sig}$ does not contain a bi-infinite up-right path.
	\item\label{thm1:direct}	If $\gpp$ is strictly concave,   then for any $\w\in\Omega_0^3$
		 every semi-infinite geodesic is $\xi$-directed for some $\xi\in\Uset$.
	\item\label{thm1:cont}%Suppose  $\P\{\w_0\le r\}$ is a continuous function of $r\in\R$.
% and $\gpp$ is %strictly concave and 		differentiable on $\ri\Uset$.   
If $\xi\in\ri\Uset$ is such that  $\Uset_\xi=[\,\ximin,\ximax\,]$ satisfies $\ximin, \xi,\ximax\in\Diff$, then for any $\w\in\Omega_\xi^3$ and $x\in\Z^2$ we have $\geo{}{x}{\xi+}=\geo{}{x}{\xi-}$. This is the unique $\Uset_\xi$-directed  semi-infinite  geodesic out of $x$  and, by part \eqref{thm1:coal}, all these geodesics coalesce. By part \eqref{thm1:biinf}, there are no bi-infinite $\Uset_\xi$-directed geodesics.
		  %For each $u\in\Z^2$ there are at most finitely many sites $v\in\Z^2$ such that the unique $\Uset_\xi$-directed  semi-infinite geodesic out of $v$ goes through $u$.
\end{enumerate}
\end{theorem}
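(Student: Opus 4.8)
Theorem \ref{thm1} reproduces results from \cite{Geo-Ras-Sep-17-ptrf-2}, so the plan is to recall how each item is extracted from the Busemann process of Theorem \ref{thm:Bus}, working throughout on the full-measure $T$-invariant events $\Omega_0^1$, $\Omega_\xi^1$ produced there together with the genericity event \eqref{paths.2}. The first step is to observe that the paths $\geo{}{x}{\xi\sig}$ defined by the local rule \eqref{d:bgeo} are semi-infinite geodesics obeying $G(x,\geo{n}{x}{\xi\sig})=\B{\xi\sig}(x,\geo{n}{x}{\xi\sig})$; this is the elementary consequence of the cocycle property \eqref{coc-prop} and weight recovery \eqref{rec-prop2} recorded in \cite[Lemma 4.1]{Geo-Ras-Sep-17-ptrf-2}, and it already gives existence. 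To obtain directedness of these Busemann geodesics --- the first half of \eqref{thm1:exist}, and, when $\gpp$ is strictly concave, all of \eqref{thm1:direct} --- I would set $v_n=\geo{n}{x}{\xi\sig}$ and combine the passage-time identity with the cocycle shape theorem \eqref{eq:erg-coc} (giving $\B{\xi\sig}(x,v_n)=v_n\cdot\nabla\gpp(\xi\sig)+o(n)$) and the ordinary shape theorem \eqref{sh-th} (giving $G(x,v_n)=\gpp(v_n)+o(n)$). These yield $\gpp(v_n)=v_n\cdot\nabla\gpp(\xi\sig)+o(n)$, and since $\gpp$ agrees with its supporting line of slope $\nabla\gpp(\xi\sig)$ exactly on $\Uset_{\xi\sig}$, concavity forces every limit point of $v_n/n$ into $\Uset_{\xi\sig}$; when $\gpp$ is strictly concave, $\Uset_{\xi\pm}=\{\xi\}$ and \eqref{thm1:direct} follows.

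To complete \eqref{thm1:exist} I would show that an arbitrary semi-infinite geodesic $\pi$ out of $x$ is $\Uset_\xi$-directed for some $\xi$. Its set $D$ of accumulation points of $\pi_n/n$ is a closed subinterval of $\Uset$, and the key planar fact is that two geodesics out of the common point $x$ split at most once: by uniqueness of finite geodesics, once $\pi$ leaves a Busemann geodesic $\geo{}{x}{\zeta}$ it never meets it again. Hence $\pi$ eventually stays, weakly, on one side of each $\geo{}{x}{\zeta}$, $\zeta\in\Udense$; since these are ordered by \eqref{path-ordering} and $\zeta$-directed by the first paragraph, $\pi$ cannot have accumulation points strictly on both sides of a direction of strict concavity, so $D$ lies in a single maximal linear segment $\Uset_\xi$. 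This is \cite[Theorem 2.1]{Geo-Ras-Sep-17-ptrf-2}.

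The substance of the theorem --- and the step I expect to be hardest, as in all classical work on geodesics --- is the coalescence statement \eqref{thm1:coal}. Fix $\xi$ and a sign. Each $\B{\xi\sig}$ is a stationary integrable cocycle of drift $\nabla\gpp(\xi\sig)$, and the argument needs its ergodicity under $T_{e_1}$ and $T_{e_2}$; under the present hypotheses this is recovered as explained in Remark \ref{rmk:weak-lim} (in \cite{Geo-Ras-Sep-17-ptrf-2} it is automatic from a lower bound on the weights). Then the probability $p$ that the anti-diagonal neighbors $\geo{}{ke_1}{\xi\sig}$ and $\geo{}{ke_1+e_2}{\xi\sig}$ never coalesce is shift-invariant, hence constant, and a Burton--Keane / Licea--Newman counting argument along the anti-diagonal shows that $p>0$ would force, with positive density, infinitely many pairwise disjoint $\Uset_\xi$-directed semi-infinite geodesics rooted on a single line --- impossible, since such geodesics must spread apart at linear rate. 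So $p=0$, and trapping an arbitrary pair of geodesics between a coalescing neighbor pair upgrades this to coalescence of $\geo{}{x}{\xi\sig}$ and $\geo{}{y}{\xi\sig}$ for all $x,y$ on one $T$-invariant event $\Omega_\xi^3$. The same input rules out $\Uset_\xi$-directed bi-infinite geodesics, which yields \eqref{thm1:biinf}: were infinitely many $\geo{}{z}{\xi\sig}$ to pass through $x$, the ancestor set of $x$ in the forest $\G{\xi\sig}$ would be infinite, K\"onig's lemma would produce a backward $\G{\xi\sig}$-path through $x$, and concatenating it with the forward geodesic would give a $\Uset_\xi$-directed bi-infinite geodesic.

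Finally, for \eqref{thm1:cont}: when $\xi,\ximin,\ximax\in\Diff$, Theorem \ref{thm:Bus}\eqref{thm:cocyexist:g} erases the $\pm$ distinction at these three directions, so $\geo{}{x}{\xi+}=\geo{}{x}{\xi-}$; by the monotonicity \eqref{path-ordering} the paths $\geo{}{x}{\ximin}$ and $\geo{}{x}{\ximax}$ --- both $\Uset_\xi$-directed, since $\ximin,\ximax\in\Diff$ gives $\Uset_{\ximin}=\Uset_{\ximax}=\Uset_\xi$ --- sandwich this common path as well as any other $\Uset_\xi$-directed geodesic out of $x$, forcing uniqueness; coalescence is \eqref{thm1:coal} and the absence of bi-infinite geodesics is \eqref{thm1:biinf}.
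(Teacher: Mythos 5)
This is one of the theorems that the paper \emph{quotes} from \cite{Geo-Ras-Sep-17-ptrf-2} rather than proves; the only original content in the present paper related to it are Remarks \ref{rmk:weak-lim} and \ref{rmk:erg3}, which explain how to replace the ergodicity hypotheses of that reference with the weaker stationarity supplied by Theorem \ref{thm:Bus}. So what you have written is a reconstruction of the cited proof, not something to be compared against an argument in this paper. As a sketch of \cite{Geo-Ras-Sep-17-ptrf-2}, the overall architecture is right: Lemma 4.1 there gives the passage-time identity \eqref{eq:buspass}; directedness does come from combining it with the cocycle shape theorem \eqref{eq:erg-coc} and the LPP shape theorem; the one-split planarity argument gives directedness of arbitrary geodesics; and your appeal to Remark \ref{rmk:weak-lim} for the ergodic input is exactly the point the paper makes.

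Two places are too loose. First, in the coalescence step you claim the impossibility of a positive density of pairwise disjoint $\Uset_\xi$-directed geodesics ``since such geodesics must spread apart at linear rate.'' That is not how Licea--Newman works, and in fact a positive density of disjoint $\Uset_\xi$-directed paths from a line is \emph{a priori} geometrically possible when $\Uset_\xi$ is a nondegenerate segment. The actual contradiction in \cite{Geo-Ras-Sep-17-ptrf-2} comes from a Burton--Keane-type counting of ``midpoints'' of non-coalescing pairs together with a weight-modification step, not from a speed-of-separation argument. Second, in \eqref{thm1:cont} you invoke ``monotonicity \eqref{path-ordering}'' to sandwich an \emph{arbitrary} $\Uset_\xi$-directed geodesic between $\geo{}{x}{\ximin}$ and $\geo{}{x}{\ximax}$, but \eqref{path-ordering} only compares Busemann geodesics with one another. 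The sandwiching of an arbitrary geodesic is precisely the content of Theorem \ref{thm:extreme} (or, in \cite{Geo-Ras-Sep-17-ptrf-2}, its Lemma 4.4), and it requires the planarity/one-split argument plus taking $\Udense$-directions $\zeta\nearrow\ximin$, $\eta\searrow\ximax$ with $\Uset_\zeta,\Uset_\eta$ disjoint from $\Uset_\xi$ --- which is where the assumption $\ximin,\ximax\in\Diff$ is actually used. As written, your appeal to \eqref{path-ordering} alone is a gap.
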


\begin{remark}[Ergodicity in the proof of Theorem \ref{thm1}] \label{rmk:erg3} 
As mentioned in Remark \ref{rmk:weak-lim}, 
  \cite{Geo-Ras-Sep-17-ptrf-2}   uses ergodicity of cocycles.  But  the results quoted above in Theorem \ref{thm1} can be obtained  with stationarity, which comes   from  \cite{Jan-Ras-20-aop} without the restrictive assumption $\w_x\ge c$. 
  
 The proof of  directedness (Theorem \ref{thm1}\eqref{thm1:exist} above) given  in \cite[Theorem 4.3]{Geo-Ras-Sep-17-ptrf-2}
uses the shape theorem of ergodic cocycles stated in \cite[Theorem A.1]{Geo-Ras-Sep-17-ptrf-2}. This shape theorem  holds also in  the stationary setting, as stated above in \eqref{eq:erg-coc}.  This result comes from  \cite[Theorem 4.4]{Jan-Ras-20-aop} and it is proved in detail in  Appendix B of \cite{Jan-Ras-18-arxiv}.   Now the  proof of \cite[Theorem 4.3]{Geo-Ras-Sep-17-ptrf-2} goes through line-by-line after switching its references and applying \cite[Lemma 4.5(c)]{Jan-Ras-20-aop} to identify the correct centering for the cocycle. %The coalescence result is already proved in \cite{Geo-Ras-Sep-15} in the stationary case.

 Similarly, the non-existence of directed bi-infinite geodesics (Theorem \ref{thm1}\eqref{thm1:biinf} above) proved  in  \cite[Theorem 4.6]{Geo-Ras-Sep-17-ptrf-2} needs only stationarity after minor changes. Essentially the same argument is given in \cite[Lemma 6.1]{Jan-Ras-20-aop} in positive temperature, assuming only stationarity. %\textcolor{red}{explain more about non-existence of bi-infinites? that all one needs is a positive probability of having a positive density of triple points...}
 \end{remark} 
 
%The full measure events in parts \eqref{thm1:coal}, \eqref{thm1:biinf}, and \eqref{thm1:cont} depend on $\xi$. %and \eqref{thm1:cont} depends on $\xi$.
%An application of Fubini-Tonelli then shows that the claims in \eqref{thm1:coal} and \eqref{thm1:biinf} also hold on a full 
%$\P$-measure event and simultaneously 
%for all directions $\xi\in\ri\Uset$, except for a zero Lebesgue measure set of exceptional directions.
%\normalmarginpar\fixtext{edited the text a bit. read till the end of the section. text can probably be still be improved.} 
%It is conjectured that in fact, quite generally, there are no such exceptional directions in the case of part \eqref{thm1:biinf}. We state this as a condition that we assume in two of our results: parts \eqref{thm:shocks.d} and \eqref{thm:shocks.f} of Theorem \ref{thm:shocks} below.
We next record an easy consequence of the previous results, ruling out the existence of non-trivial semi-infinite geodesics which are either $e_1$- or $e_2$-directed.  %CHANGED HERE
\begin{lemma}\label{lem:notriv}
For $\w \in \Omega_0^1 \cap \Omega_0^2 \cap \Omega_0^3$, if $\gamma^x$ is a semi-infinite geodesic emanating from $x$ with $\gamma^x_n/n \to e_i$ for some $i \in \{1,2\}$, then $\gamma^x = \geo{}{x}{e_i}$.
\end{lemma}
\begin{proof}
We consider the case of $i=1$, with the case of $i=2$ being similar. Call $x \cdot \ehat = k$ and fix a sequence $\zeta_n \in \ri \Uset$ with $\zeta_n \to e_1$ as $n\to\infty$. By Theorem \ref{thm1}\eqref{thm1:exist}, $\geo{}{x}{\zeta_n}$ is $\Uset_{\zeta_n}$-directed. \cite[Theorem 2.4]{Mar-04} implies that $e_1 \notin \Uset_{\zeta_n}$. Then, by \eqref{paths.2}, if $\gamma^x$ is as in the statement, we must have $\geo{\ell}{x}{\zeta_n} \preceq \gamma_\ell^x \preceq x + (k-\ell) e_1 = \geo{\ell}{x}{e_1}$ for all $n \in \bbN$ and $\ell \geq k$. But Theorem \ref{thm:Bus}\eqref{thm:cocyexist:h} implies that for each fixed $\ell \geq k$,  $\geo{\ell}{x}{\zeta_n} = x + (k-\ell) e_1 = \geo{\ell}{x}{e_1}$ holds for all large enough $n$. The result follows.
\end{proof}

Under the assumption that $\gpp$ is differentiable on $\ri\Uset$,  Theorem \ref{thm1}\eqref{thm1:cont} holds for all $\xi\in\ri\Uset$. An application of the Fubini-Tonelli theorem gives that the claims in Theorem \ref{thm1}\eqref{thm1:coal} and Theorem \ref{thm1}\eqref{thm1:biinf} in fact hold on a single full $\P$-measure event simultaneously for Lebesgue-almost all directions $\xi\in\ri\Uset$.
It is conjectured that the claim in part \eqref{thm1:biinf} holds in fact on a single full-measure event, simultaneously, for all $\xi\in\ri\Uset$.\smallskip

 The next result is a small extension of Lemma 4.4 of \cite{Geo-Ras-Sep-17-ptrf-2}, achieved by an application of the monotonicity in \eqref{path-ordering}. %We provide the proof in the appendix.
%\normalmarginpar\addmath{maybe no need to provide a proof?}

%\addmath{NO ERGODICITY NEEDED}
\begin{theorem}\label{thm:extreme}
Assume the regularity condition \eqref{g-reg}. For any $\w\in\Omega_0^1\cap\Omega_0^2$, \eqref{eq:geoorder} holds.
\end{theorem}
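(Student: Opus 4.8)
\textbf{Plan for the proof of Theorem \ref{thm:extreme}.}
The statement to prove is that, under the regularity condition \eqref{g-reg}, on the event $\Omega_0^1\cap\Omega_0^2$, for every $x\in\Z^2$, every $\xi\in\ri\Uset$, and every $\Uset_\xi$-directed semi-infinite geodesic $\geod{}$ out of $x$, we have $\geo{n}{x}{\xi-}\preceq\geod{n}\preceq\geo{n}{x}{\xi+}$ for all $n\ge x\cdot\et$. The plan is to deduce this from Lemma 4.4 of \cite{Geo-Ras-Sep-17-ptrf-2}, which is stated for a single direction $\zeta\in\Diff$ (so that $\geo{}{x}{\zeta-}=\geo{}{x}{\zeta+}$), by a sandwiching argument using the monotonicity \eqref{path-ordering} of the Busemann geodesics in the direction parameter.

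First I would fix $\w\in\Omega_0^1\cap\Omega_0^2$, a point $x$, a direction $\xi\in\ri\Uset$ with endpoints $\ximin\preceq\ximax$ of $\Uset_\xi$, and a $\Uset_\xi$-directed geodesic $\geod{}$ out of $x$. Using the regularity condition \eqref{g-reg} as reformulated in Remark \ref{rk:ergodicity}, I would choose a countable dense set $\Ddense\subset\Diff$ with $\zetamin,\zetamax\in\Diff$ for each $\zeta\in\Ddense$, and pick two sequences $\zeta_m,\eta_m\in\Ddense$ with $\zeta_m\nearrow\ximin$ and $\eta_m\searrow\ximax$ (using that $\ximin,\ximax$ are not equal to $e_1$ or $e_2$ since $\xi\in\ri\Uset$). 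For each such $\zeta_m$ (resp.\ $\eta_m$), since $\zeta_m\in\Diff$ there is a unique $\Uset_{\zeta_m}$-directed geodesic out of $x$, namely $\geo{}{x}{\zeta_m}=\geo{}{x}{\zeta_m\pm}$, and Lemma 4.4 of \cite{Geo-Ras-Sep-17-ptrf-2} applies to it. The key point is to observe that $\geod{}$ is $\Uset_{\zeta_m}$-directed in the weak sense needed there: because $\geod{}$ is $\Uset_\xi$-directed and $\zeta_m\prec\ximin\preceq\xi$, one checks by planarity that for large $n$, $\geod{n}$ lies weakly to the right of $\geo{n}{x}{\zeta_m}$ — two geodesics out of the same point $x$ cannot cross (they would have to recross, contradicting uniqueness of finite geodesics on $\Omega_0^2$), so the asymptotic comparison of directions forces the ordering at every level. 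This gives $\geo{n}{x}{\zeta_m}\preceq\geod{n}$ for all $n\ge x\cdot\et$, and symmetrically $\geod{n}\preceq\geo{n}{x}{\eta_m}$.

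Then I would take $m\to\infty$ and invoke the one-sided continuity \eqref{pmgeolim}: since $\zeta_m\nearrow\ximin$ and the Busemann geodesics converge to $\geo{}{x}{\ximin-}$ in the sense of finite-length segments, and since by the regularity condition \eqref{g-reg} together with Theorem \ref{thm:Bus}\eqref{thm:cocyexist:foo} the geodesics $\geo{}{x}{\xi-}$ and $\geo{}{x}{\ximin-}$ coincide (they correspond to the same gradient $\nabla\gpp(\xi-)=\nabla\gpp(\ximin-)$, using that $\ximin$ is the left endpoint of the linear segment $\Uset_{\xi-}$), we get $\geo{n}{x}{\xi-}\preceq\geod{n}$ for all $n$. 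The analogous argument on the right gives $\geod{n}\preceq\geo{n}{x}{\xi+}$. This is \eqref{eq:geoorder}.

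The main obstacle I anticipate is the bookkeeping at the level of directedness: making precise that a $\Uset_\xi$-directed geodesic $\geod{}$ is forced to lie between $\geo{}{x}{\zeta_m}$ and $\geo{}{x}{\eta_m}$ at every finite level, not merely asymptotically. This is where the no-crossing property for geodesics from a common root (a consequence of uniqueness of finite geodesics on $\Omega_0^2$) does the work, and one must handle carefully the degenerate situations where $\Uset_\xi$ is a nondegenerate segment versus a single point. Everything else — the choice of $\Ddense$, the limits, and the identification of $\geo{}{x}{\xi\pm}$ with the limiting geodesics — is routine given \eqref{pmgeolim}, \eqref{path-ordering}, and Theorem \ref{thm:Bus}\eqref{thm:cocyexist:foo}, which is presumably exactly why the paper records this merely as a "small extension" of the cited lemma.
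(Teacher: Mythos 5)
Your plan is correct and takes essentially the same route as the paper: sandwich the given $\Uset_\xi$-directed geodesic between $\geo{}{x}{\zeta_m}$ and $\geo{}{x}{\eta_m}$ with $\zeta_m,\eta_m\in\Ddense$ approaching $\ximin$ and $\ximax$, order via directedness plus no-crossing, then pass to the limit with \eqref{pmgeolim} and identify the limits via Theorem \ref{thm:Bus}\eqref{thm:cocyexist:foo}. The one spot you should make explicit in "one checks by planarity" is precisely where \eqref{g-reg} is earning its keep: you need $\Uset_{\zeta_m}$ to lie \emph{strictly} to the left of $\ximin$, so that the asymptotic direction sets are disjoint and the large-$n$ comparison actually forces $\geo{n}{x}{\zeta_m}\preceq\geod{n}$. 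If $\Uset_{\zeta_m}$ could touch $\ximin$, both geodesics could be $\ximin$-directed and the asymptotics would not resolve which side $\geod{}$ is on; regularity rules this out because a common endpoint $\ximin$ of two distinct maximal linear segments would be a non-differentiable, non-strictly-concave direction, contradicting \eqref{g-reg}. Once this separation is in hand, the rest of your argument goes through as you describe.
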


The next theorem says that there are multiple geodesics that are directed in the same asymptotic direction $\cid$ as the competition interface, which itself can be characterized using the Busemann process. See Figure \ref{fig:cif2}.

\begin{theorem}{\rm\cite[Equation (5.2) and Theorems 2.6, 2.8, and 5.3]{Geo-Ras-Sep-17-ptrf-2}}\label{thm:cif}
%Assume \eqref{main-assump} and 
There exists a $T$-invariant event $\Omega_0^4$ such that $\P(\Omega_0^4)=1$ and the following hold for all $\w\in\Omega_0^4$.
\begin{enumerate}[label={\rm(\alph*)}, ref={\rm\alph*}]  \itemsep=3pt 
\item\label{thm:cif.a} There exists a unique point $\cid(\w)\in\ri\Uset$ such that \eqref{cid} holds.
\item\label{thm:cif.b} For any $\xi\in\ri\Uset$,   $\P(\cid=\xi)>0$ if and only if $\xi\in(\ri\Uset)\smallsetminus\Diff$.    
%\item\label{thm:cif.c} For any $\eta\prec\zeta$ in $\Uset$,  $\P([\eta,\zeta]\cap\,\aUset\neq\varnothing)\in\{0,1\}$. If $[\eta,\zeta]\cap(\ri\Uset)\subset\Diff$ and $\P\{[\eta,\zeta]\cap\,\aUset\neq\varnothing\}=1$,
%then the set of $\xi \in [\eta,\zeta]$ satisfying $\P\bigl\{\xi \text{ is an accumulation point of } \aUset\}=1$ is infinite and has no isolated points.
\item\label{thm:cif.c} For any $\zeta\prec\eta$ in $\ri\Uset$ with $\nabla\gpp(\zeta+)\ne\nabla\gpp(\eta-)$, for any $x\in\Z^2$, there exists $y\ge x$ such that $\cid(T_y\w)\in\,]\zeta,\eta[$. Consequently, any open interval outside  the closed  linear segments of   $\gpp$  contains $\cid$ with positive probability.  
\item\label{thm:cif.c1} For any $\xi\in(\ri\Uset)\smallsetminus\Diff$ and for any $x\in\Z^2$, there exists $y\ge x$ such that $\cid(T_y\w)=\xi$.
%\item\label{thm:cif.d} For any $x\in\Z^2$, $\geo{}{x}{\cid(T_x\w)+}$ takes a first step $e_1$, $\geo{}{x}{\cid(T_x\w)-}$ takes a first step $e_2$, and the two geodesics only intersect at $x$.
\end{enumerate}
	
If the regularity condition \eqref{g-reg} holds then the following also hold. 
	
\begin{enumerate}[label={\rm(\alph*)}, ref={\rm\alph*},resume]  \itemsep=3pt 
\item\label{cif-lln} We have the limit
\begin{align}\label{thm:cif:(i)}
\cid(\w)= \lim_{n\to\infty} n^{-1}\varphi_n^0(\w).  
\end{align}
%The   limit $\cid$ is almost surely an exposed point in $\ri\Uset$  {\rm(}recall definition \eqref{eq:epod}{\rm)}.  
\item\label{thm:cif:(ii)} $\cid(T_x\w)$ is the unique direction $\xi$ such that there are at least two $\Uset_\xi$-directed semi-infinite geodesics from $x$, namely $\geo{}{x}{\xi\pm}$, that separate at $x$ and never intersect thereafter.
%\item\label{thm:cif:(iii)}     $\Uset_*^x$ contains all $\xi\in(\ri\Uset)\smallsetminus\Diff$,  intersects every open interval outside the closed linear segments of $\gpp$, and is a countably infinite subset of $\{\cid(T_z\w):z\ge x\}$. 
\end{enumerate}
\end{theorem}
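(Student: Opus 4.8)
\textbf{Proof plan for Theorem \ref{thm:cif}.} The statement bundles together several facts, but the core of the new content (beyond what is cited from \cite{Geo-Ras-Sep-17-ptrf-2}) is parts \eqref{thm:cif.c}--\eqref{thm:cif:(ii)}; the existence/uniqueness of $\cid$ in \eqref{thm:cif.a} and the positive-probability statement \eqref{thm:cif.b} are essentially restatements of results in \cite{Geo-Ras-Sep-17-ptrf-2}, so I would first cite those directly and then build the rest on top of the Busemann process machinery of Theorem \ref{thm:Bus}. For \eqref{thm:cif.a}, the plan is: by monotonicity \eqref{mono}, $\zeta \mapsto \B{\zeta\pm}(e_1,e_2) = \B{\zeta\pm}(0,e_2) - \B{\zeta\pm}(0,e_1)$ is monotone in $\zeta$ (increasing in the $e_1$-coordinate direction), and by \eqref{B:inf-lim} it tends to $-\infty$ as $\zeta \to e_2$ and $+\infty$ as $\zeta \to e_1$; hence there is a unique crossing location $\cid(\w)$, and the sign pattern \eqref{cid} follows. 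One has to be slightly careful because of the $\pm$ distinction: the crossing could happen at a jump, but the definition \eqref{cid} only requires strict inequalities strictly to the left and strictly to the right of $\cid$, which monotonicity gives regardless of left/right continuity.

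For \eqref{thm:cif.c} — the key existence-of-competition-interface-directions statement — I would argue as follows. Given $\zeta \prec \eta$ with $\nabla\gpp(\zeta+) \ne \nabla\gpp(\eta-)$, the expectation formula \eqref{E[h(B)]} shows $\E[\B{\zeta+}(0,e_1)] \ne \E[\B{\eta-}(0,e_1)]$, so by monotonicity \eqref{mono} these two random variables are not a.s.\ equal; combined with \eqref{rec-prop2} and the characterization \eqref{cid}, on a positive-probability event the competition-interface direction $\cid(\w)$ lies in $]\zeta,\eta[$. To upgrade ``positive probability'' to ``$\forall x\, \exists y \ge x$'' I would invoke ergodicity (Remark \ref{rmk:erg3} / the ergodic theorem applied along, say, the $e_1 + e_2$ diagonal using the shift-covariance \eqref{cov-prop}): since $\{\cid(T_x\w) \in\, ]\zeta,\eta[\,\}$ has positive probability and is a shift of a fixed event, it occurs for infinitely many $x$ in any fixed cone, in particular for some $y \ge x$ for every $x$. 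The final sentence of \eqref{thm:cif.c} follows because any open interval not contained in a closed linear segment of $\gpp$ contains a subinterval $]\zeta,\eta[$ with $\nabla\gpp(\zeta+) \ne \nabla\gpp(\eta-)$ by concavity. Part \eqref{thm:cif.c1} is the analogous statement for a fixed non-differentiability direction $\xi$: there $\B{\xi-}(0,e_1) \ne \B{\xi+}(0,e_1)$ with positive probability (this is where $\xi \notin \Diff$ enters, via \eqref{E[h(B)]} giving $\nabla\gpp(\xi-) \ne \nabla\gpp(\xi+)$ and hence distinct expectations), so $\{\cid = \xi\}$ has positive probability, and ergodicity again gives the ``$\forall x\, \exists y$'' conclusion.

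The remaining parts, which assume the regularity condition \eqref{g-reg}, are more geometric. For \eqref{cif-lln}, $\cid(\w) = \lim n^{-1}\varphi_n^0(\w)$: I would use the construction of $\varphi^0$ as the dual path separating the subtree of geodesics out of $e_1$ from the subtree out of $e_2$, and the fact (established in \cite{Geo-Ras-Sep-17-ptrf-2}, and recoverable via Theorem \ref{thm1} here) that under \eqref{g-reg} a point $v$ lies to the left (resp.\ right) of $\varphi^0$ according to whether the finite geodesic $\gamma^{0,v}$ starts with $e_2$ (resp.\ $e_1$); then pass to the limit using the directedness of Busemann geodesics (Theorem \ref{thm1}\eqref{thm1:exist}) and the definition \eqref{cid}, which says precisely that $\B{\cid\pm}(e_1,e_2)$ is the sign-change locus, so that $v_n$ with $v_n/n \to \xi$ lies left of $\varphi^0_n$ for large $n$ iff $\xi \prec \cid$ and right iff $\xi \succ \cid$. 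Finally \eqref{thm:cif:(ii)} combines Theorem \ref{thm1}\eqref{thm1:exist} (the two Busemann geodesics $\geo{}{x}{\xi\pm}$ are $\Uset_{\xi\pm}$-directed, and under \eqref{g-reg} with the endpoint-regularity both are $\Uset_\xi$-directed — here one may also invoke Lemma \ref{lm:capst-aux}) with the observation that at $\xi = \cid(T_x\w)$ these two geodesics take different first steps by \eqref{cid} and the local rule \eqref{d:bgeo}, and by uniqueness of finite geodesics, once two geodesics separate they never meet again; uniqueness of such a direction $\xi$ is exactly the uniqueness of the crossing point $\cid(T_x\w)$ from part \eqref{thm:cif.a}. \textbf{The main obstacle} I anticipate is the upgrade from positive-probability statements to the deterministic ``$\forall x$'' quantifier in \eqref{thm:cif.c}--\eqref{thm:cif.c1}: this requires a clean ergodicity input, and because the Busemann process may only be $T$-invariant (not a priori ergodic) under the \cite{Jan-Ras-19-aop-} construction, I would either route through the strong-existence/ergodicity discussion of Remark \ref{rk:ergodicity} (valid under \eqref{g-reg}), or cite the corresponding argument from \cite{Geo-Ras-Sep-17-ptrf-2} directly, being careful that those proofs carry over under mere stationarity as explained in Remark \ref{rmk:erg3}.
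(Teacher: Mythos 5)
The paper does not prove this theorem: it is imported wholesale from \cite{Geo-Ras-Sep-17-ptrf-2}, as the citation in its statement and the remark immediately following it make explicit. So your proposal is not a ``different route to the same result inside the paper'' --- it is an attempt to reprove a cited background result from scratch. That is a worthwhile exercise, but your sketch has a real gap in the key existence parts \eqref{thm:cif.c}--\eqref{thm:cif.c1}.

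The gap: you argue that $\nabla\gpp(\zeta+)\ne\nabla\gpp(\eta-)$ gives $\E[\B{\zeta+}(0,e_1)]\ne\E[\B{\eta-}(0,e_1)]$ via \eqref{E[h(B)]}, hence $\B{\zeta+}(0,e_1)>\B{\eta-}(0,e_1)$ with positive probability, and then claim ``combined with \eqref{rec-prop2} and \eqref{cid}'' this places $\cid$ in $]\zeta,\eta[$. It does not. What that event gives, via recovery and monotonicity, is that the process $\xi\mapsto\B{\xi\sigg}(0,e_1)$ strictly decreases somewhere in $]\zeta,\eta[$; since $\B{\xi\sigg}(0,e_1)=\w_0$ for all $\xi\succ\cid$ and $>\w_0$ for all $\xi\prec\cid$, this only forces $\cid\succ\zeta$. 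It does not force $\cid\prec\eta$: the process can decrease on $]\zeta,\eta[$ without reaching $\w_0$ there, which is exactly what happens on the event $\{\cid\succeq\eta\}$. To conclude $\cid\in\,]\zeta,\eta[$ you would need \emph{both} $\B{\zeta+}(0,e_1)\ne\B{\eta-}(0,e_1)$ \emph{and} $\B{\zeta+}(0,e_2)\ne\B{\eta-}(0,e_2)$ to occur simultaneously with positive probability, and the two expectation identities, one per edge, do not give that joint statement --- each happens with positive probability, but those two events could in principle be disjoint. The same problem infects your argument for \eqref{thm:cif.c1}: $\B{\xi-}(0,e_1)\ne\B{\xi+}(0,e_1)$ with positive probability only gives $\cid\succeq\xi$ with positive probability (the jump of $\B{\cdot\sigg}(0,e_1)$ at $\xi$ can occur anywhere in $\supp\mu_{0,e_1}\subset\,]e_2,\cid]$, not only at $\cid$ itself). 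This is precisely why the actual proof in \cite{Geo-Ras-Sep-17-ptrf-2} goes through the cocycle shape theorem rather than a pointwise expectation calculation; the paper flags this explicitly in Remark \ref{rmk:erg3}, which also explains how the cited proof's reliance on ergodicity is replaced by stationary arguments in the current setting.

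Two smaller issues. First, for part \eqref{thm:cif.a} your monotonicity argument gives existence of a crossing but not uniqueness: a monotone function can equal zero on a nondegenerate interval. Ruling that out uses the tie-breaking property \eqref{eq:ties}, applied on a countable dense set of directions, which you do not invoke. Second, for the ergodic upgrade in \eqref{thm:cif.c}--\eqref{thm:cif.c1} you suggest routing through the ergodicity discussion in Remark \ref{rk:ergodicity}, but that route requires the regularity condition \eqref{g-reg}, which parts \eqref{thm:cif.a}--\eqref{thm:cif.c1} of the theorem do not assume; so that escape hatch is not available in the generality the theorem is stated. You do flag this concern yourself at the end, and the honest resolution in the stated generality is the one the paper takes: cite the replacement argument.
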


  Remark \ref{rmk:erg3} applies here as well. Ergodicity is invoked in the proofs of parts \eqref{thm:cif.b}, \eqref{thm:cif.c}  and \eqref{thm:cif.c1}  in \cite[Theorem 5.3(iii)--(iv)]{Geo-Ras-Sep-17-ptrf-2}   to apply the cocycle shape theorem.   In  our stationary setting this can be replaced with the combination of Theorem 4.4 and  Lemma 4.5(c) of \cite{Jan-Ras-20-aop}.

The following result for exponential weights, due to Coupier, states that there are no directions $\xi$ with three $\xi$-directed geodesics emanating from the same site.

\begin{theorem}\label{thm-Coupier}{\rm\cite[Theorem 1(2)]{Cou-11}}
Assume that under $\P$, the weights $\{\w_x:x\in\Z^2\}$ are exponentially distributed i.i.d.\ random variables. Then there exists a $T$-invariant event $\OmCou$ with $\P(\OmCou)=1$ and such that for any $\w\in\OmCou$, any $\xi\in\ri\Uset$, and any $x\in\Z^2$, there exist at most two $\xi$-directed semi-infinite geodesics out of $x$.
\end{theorem}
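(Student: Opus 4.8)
\textbf{Plan of proof for Theorem~\ref{thm-Coupier} (Coupier's theorem, as quoted).}

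Since this theorem is a citation of \cite[Theorem~1(2)]{Cou-11}, the proof consists of recalling Coupier's argument rather than inventing a new one, so I will sketch the structure of that proof in the present language. The plan is to show that, $\P$-almost surely, there is no direction $\xi\in\ri\Uset$ and no site $x\in\Z^2$ from which three semi-infinite geodesics emanate that are all $\xi$-directed and pairwise eventually disjoint. First I would reduce the statement to a counting/density argument about the geodesic tree rooted (dually) at a fixed vertex: by translation invariance it suffices to control the behavior near the origin, and by the coalescence results (Theorem~\ref{thm1}\eqref{thm1:coal}) geodesics that share a direction and do not coalesce must be separated by a competition-interface-type dual path. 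So the existence of three $\xi$-directed geodesics out of $x$ that pairwise separate forces two dual competition interfaces emanating from near $x$, both asymptotically directed into $\Uset_\xi$.

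The second step is the heart of Coupier's method: a modification (surgery) argument on the weights in a large box. One assumes, for contradiction, that with positive probability there is a site $x$ from which three geodesics in a common direction split and stay split. One then modifies finitely many weights in a neighborhood of $x$ so as to \emph{force} such a triple split at the origin with positive probability, using the continuity of the weight distribution and the absolute continuity this gives for local modifications (cf. condition \eqref{main-assump} and \eqref{paths.2}). Translation invariance and ergodicity then upgrade ``positive probability somewhere'' to ``infinitely many disjoint occurrences along a line with positive density.'' The final step is to derive a contradiction with a conservation/counting estimate: each triple split at a site $x$ produces two distinct dual competition interfaces through the box around $x$, and by planarity these dual paths cannot cross the primal geodesics; counting how many such mutually non-crossing dual paths can pass through an $n\times n$ box and comparing with the forced positive density yields the contradiction. (In Coupier's original formulation this is phrased via the invariance and the fact that the competition interface has a deterministic asymptotic direction, Theorem~\ref{thm:cif}\eqref{cif-lln}, so two of them in the same direction from nearby roots is incompatible with the tree structure of geodesics.)

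The main obstacle — and the reason the argument is special to the exponential model — is the surgery step: one needs the exact solvability (the explicit product-form stationary measures of the associated TASEP / queueing representation, equivalently the Burke-type property of exponential LPP) to carry out the local modification and still control the law of the geodesic configuration outside the modified box. Without this exact structure one does not know how to force a triple split with positive probability while keeping the configuration absolutely continuous with respect to the original. For the present paper nothing further is required: we simply invoke \cite[Theorem~1(2)]{Cou-11}, record the resulting $T$-invariant full-probability event $\OmCou$, and note that on $\OmCou$ every $\xi\in\ri\Uset$ and every $x\in\Z^2$ admit at most two $\xi$-directed semi-infinite geodesics out of $x$, which is exactly the statement used in the proof of Theorem~\ref{th:exp1}\eqref{th:exp1.a}.
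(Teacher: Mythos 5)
Your proposal correctly identifies that the paper offers no proof of this statement at all: Theorem~\ref{thm-Coupier} sits in Appendix~\ref{app:busgeo} among ``previously known results'' and is simply a restatement of \cite[Theorem~1(2)]{Cou-11}, so the entire content of the ``proof'' is the citation, which is exactly what you invoke. The accompanying sketch of Coupier's modification/surgery argument is reasonable background but is neither supplied nor needed by the paper, so the approaches coincide.
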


%Denote the event in the definition of condition \eqref{cond:biinf} by $\Ombiinf$.  
Fix a countable dense set $\Udense \subset \Diff$. 
The following event of full $\P$-probability is the basic setting for the proofs in Sections \ref{sec:bus}--\ref{sec:webpf}:  
	\begin{eqnarray}\label{Om0}
	&\Omega_0= \Omega_0^1 \cap \Omega_0^2\cap\Omega_0^4\cap\Bigl(\bigcap_{\xi\in\Udense}[\Omega_\xi^1\cap\Omega_\xi^3]\Bigr)\cap\Bigl(\bigcap_{\xi\in(\ri\Uset)\setminus\Diff}\Omega_\xi^3\Bigr).  %CHANGED HERE
	\end{eqnarray} 
When additional assumptions are needed, $\Omega_0$ will be further restricted.
%When the jump process condition \eqref{cond:jumpcond} holds we restrict to the smaller full $\P$-probability event $\Omjump$. 
%or on the smaller full $\P$-measure events $\Omjump$ (defined in \eqref{Omjump}), when the jump process condition \eqref{cond:jumpcond} holds, or $\Omjump\cap\OmCou$, when the weights are exponentially distributed, or 
%$\Omjump\cap\Ombiinf$, when both condition \eqref{cond:jumpcond} and the no bi-infinite geodesics condition \eqref{cond:biinf} hold. 
%\textcolor{red}{(any other combinations??)}
%The caveats are as follows. 
%\fixtext{F: added the ``or a condition that implies it''. Return to this whole paragraph later, once we figured out how we want to present the full-measure events.''}
%\note{C: How about if we have $\Omega_0$ be conditionally defined depending on whether certain events have full probability?}
%When the jump condition \eqref{cond:jumpcond}, or a condition that implies it, is assumed, it may be necessary to intersect this with another full measure set on which the condition holds. 
%For example, by \cite[Theorem 3.4]{Fan-Sep-20} \eqref{cond:jumpcond}, condition \eqref{cond:jumpcond} holds when the weights are exponentially distributed. Additionally, when the weights are exponentially distributed, we also intersect with $\OmCou$.

% !TEX root = GeoWebPaper.tex

\section{Auxiliary lemmas}
\label{a:aux}

The next lemma follows from the shape theorem for cocycles \eqref{eq:erg-coc}. 

\begin{lemma}\label{lem:buselln}
Suppose $\gpp$ is differentiable on $\ri\Uset$. For any $\w \in \Omega_0$, $\xi\in\ri\Uset$, and  any $v\in\bbR^2$, $n^{-1}\B{\xi\pm}(0,\lfloor{nv}\rfloor)$ 
both converge to $v\cdot\nabla\gpp(\xi)$ as $n\to\infty$.
\end{lemma}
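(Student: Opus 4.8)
The statement to be proved is Lemma~\ref{lem:buselln}: under the hypothesis that $\gpp$ is differentiable on $\ri\Uset$, for every $\w\in\Omega_0$, every $\xi\in\ri\Uset$, and every $v\in\R^2$, the normalized Busemann functions $n^{-1}\B{\xi\pm}(0,\lfloor nv\rfloor)$ both converge to $v\cdot\nabla\gpp(\xi)$ as $n\to\infty$. The plan is to reduce this to the cocycle shape theorem \eqref{eq:erg-coc} (Theorem~\ref{thm:Bus}\eqref{thm:cocyexist:erg}), which gives the corresponding statement uniformly over $x\in n\Uset\cap\Z^2_+$, and then handle the two subtleties: (i) the shape theorem in \eqref{eq:erg-coc} holds on the event $\Omega_\xi^1$, whereas $\Omega_0$ is a single full-measure event constructed in \eqref{Om0}, and (ii) $v$ is an arbitrary vector in $\R^2$, not necessarily in the positive quadrant, so the cocycle property \eqref{coc-prop} must be used to decompose $\B{\xi\pm}(0,\lfloor nv\rfloor)$ into pieces along coordinate directions.

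First I would address the direction of $v$. Write $\lfloor nv\rfloor=a_n e_1 + b_n e_2$ with $a_n,b_n\in\Z$ and $a_n/n\to v\cdot e_1$, $b_n/n\to v\cdot e_2$. Using the cocycle property \eqref{coc-prop}, split
\[
\B{\xi\pm}(0,\lfloor nv\rfloor)=\B{\xi\pm}(0, a_n e_1)+\B{\xi\pm}(a_n e_1, a_n e_1 + b_n e_2).
\]
By covariance \eqref{cov-prop} the second term equals $\B{\xi\pm}(0, b_n e_2, T_{a_n e_1}\w)$. Each of the two pieces is now a Busemann increment along a single axis, of the form $\B{\xi\pm}(0, k e_i)$ for $k$ of either sign. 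For $k\ge0$ this is directly controlled by \eqref{eq:erg-coc}: dividing by $n$ and using $k/n\to |v\cdot e_i|$, one gets convergence to $(v\cdot e_i)\,\nabla\gpp(\xi)\cdot e_i$. For $k<0$, use the cocycle identity $\B{\xi\pm}(0, -|k| e_i) = -\B{\xi\pm}(-|k| e_i, 0) = -\B{\xi\pm}(0, |k| e_i, T_{-|k| e_i}\w)$ to reduce to the nonnegative case, with the shift $T_{-|k|e_i}$ absorbed by $T$-invariance of the relevant event. Summing the two axis contributions yields the limit $(v\cdot e_1)\nabla\gpp(\xi)\cdot e_1 + (v\cdot e_2)\nabla\gpp(\xi)\cdot e_2 = v\cdot\nabla\gpp(\xi)$, where here I use that differentiability of $\gpp$ on $\ri\Uset$ means $\nabla\gpp(\xi+)=\nabla\gpp(\xi-)=\nabla\gpp(\xi)$, so there is no $\pm$ ambiguity in the limit.

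The remaining issue is that \eqref{eq:erg-coc} is stated on $\Omega_\xi^1$ for each fixed $\xi$, while $\Omega_0$ from \eqref{Om0} contains $\Omega_\xi^1$ only for $\xi$ in the fixed countable dense set $\Udense\subset\Diff$. To bridge this I would first obtain the conclusion for $\xi\in\Udense$ directly from \eqref{eq:erg-coc} (valid since $\Omega_0\subset\Omega_\xi^1$ for such $\xi$) together with the shift-invariance used above, and then extend to arbitrary $\xi\in\ri\Uset$ by the monotonicity \eqref{mono} of $\xi\mapsto\B{\xi\pm}(x,x+e_i)$ and the monotonicity of $\xi\mapsto\nabla\gpp(\xi)\cdot e_i$. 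Concretely: given $\xi\in\ri\Uset$, pick $\zeta_m\nearrow\xi$ and $\eta_m\searrow\xi$ in $\Udense$; monotonicity sandwiches $n^{-1}\B{\xi\pm}(0,ke_1)$ between $n^{-1}\B{\eta_m\pm}(0,ke_1)$ and $n^{-1}\B{\zeta_m\pm}(0,ke_1)$ (with the inequalities possibly reversed for $e_2$ and for $k<0$; signs have to be tracked carefully). Taking $n\to\infty$ gives liminf/limsup bounds in terms of $\nabla\gpp(\zeta_m)\cdot e_i$ and $\nabla\gpp(\eta_m)\cdot e_i$, and then $m\to\infty$ with continuity of $\nabla\gpp$ at the differentiability point $\xi$ (a consequence of concavity plus differentiability on the open interval) pins the limit to the desired value. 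The main obstacle is purely bookkeeping: getting all the sign conventions right when $v$ has components of mixed sign, so that the monotonicity inequalities point the correct way and the shift arguments compose correctly; the analytic content is entirely contained in \eqref{eq:erg-coc} and in elementary properties of concave functions.
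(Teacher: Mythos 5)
Your proposal follows the same basic strategy as the paper: a cocycle decomposition of $\B{\xi\pm}(0,\lfloor nv\rfloor)$ into axis increments, a monotonicity sandwich between $\Udense$-directions $\zeta_m\nearrow\xi$ and $\eta_m\searrow\xi$ (to handle the fact that \eqref{eq:erg-coc} is only available on $\Omega_\zeta^1$ for $\zeta$ in the fixed countable set), the cocycle shape theorem \eqref{eq:erg-coc} for the $\Udense$-directions, and finally continuity of $\nabla\gpp$ at the differentiability point $\xi$. So in substance you have rediscovered the paper's argument.

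One step you should rethink: you rewrite the second axis piece as $\B{\xi\pm}(0,b_n e_2,T_{a_n e_1}\w)$ via covariance \eqref{cov-prop} and then plan to apply \eqref{eq:erg-coc} ``directly.'' The shape theorem \eqref{eq:erg-coc} is a limit statement at a \emph{fixed} sample point $\w$; here the shift $T_{a_n e_1}$ moves with $n$, so \eqref{eq:erg-coc} evaluated at the shifted environment does not control the sequence $n\mapsto n^{-1}\B{\xi\pm}(0,b_n e_2,T_{a_n e_1}\w)$. The paper avoids this by applying the cocycle identity \eqref{coc-prop} a second time, writing
\[
\B{\zeta}(m_n e_1, x_n) \;=\; \B{\zeta}(0, x_n) \;-\; \B{\zeta}(0, m_n e_1),
\]
so that both terms are increments from $0$ at the original $\w$, and both $x_n$ and $m_n e_1$ lie in $\Z_+^2$ with $|x_n|_1, |m_n e_1|_1 \asymp n$, whence \eqref{eq:erg-coc} applies to each term separately along the fixed realization. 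Your treatment of the negative-component case should also be phrased in terms of \eqref{coc-prop} rather than \eqref{cov-prop} for the same reason. This is a local fix and does not change the architecture of your argument; once replaced, your sandwich and limit-taking go through as in the paper's proof.

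Two further small points. First, the paper performs the sandwich \emph{before} passing $n\to\infty$, bounding the whole sum $\B{\xi+}(0,m_ne_1)+\B{\xi+}(m_ne_1,x_n)$ above by a combination of $\B{\zeta}$ and $\B{\eta}$ increments at $\Udense$-directions; your piece-by-piece sandwich is equivalent but requires tracking the reversal of inequalities between $e_1$ and $e_2$ increments, as you note. Second, the paper restricts attention to $v\in\R_+^2\setminus\{0\}$ and declares the other quadrants ``similar''; your explicit handling of mixed-sign $v$ is fine but again should lean on the cocycle relation $\B{\xi\sig}(0,-k e_i)=-\B{\xi\sig}(-k e_i,0)$ rather than on an $n$-dependent shift.
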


\begin{proof}
The claim is obvious for $v=0$. 
Suppose $v\in\bbR_+^2\setminus\{0\}$, the other cases being similar. 
%Take $\Omega_0'$ to be the intersection of $\Omega_0$, defined in \eqref{Om0}, and the $T$-invariant full measure event on which the \textcolor{red}{cocycle ergodic theorem for $\B{\zeta}$} holds for all $\zeta\in\Udense$. 
Take $\w\in\Omega_0$ and $\zeta,\eta\in\Udense$ with $\zeta\cdot e_1<\xi\cdot e_1<\eta\cdot e_1$.
Let $x_n=\lfloor{nv}\rfloor=m_ne_1+\ell_ne_2$. Then
		\begin{align*}
		\B{\xi+}(0,x_n)&=\B{\xi+}(0,m_ne_1)+\B{\xi+}(m_ne_1,x_n)\\
		&\le \B{\eta}(0,m_ne_1)+\B{\zeta}(m_n e_1,x_n)\\
		&\le \B{\eta}(0,m_ne_1)+\B{\zeta}(0,x_n)-\B{\zeta}(0,m_ne_1).
		\end{align*}
Divide by $n$, take it to $\infty$, and apply the \eqref{eq:erg-coc} to $\B{\zeta}$ and $\B{\eta}$ to get 
	\[\varlimsup_{n\to\infty} n^{-1}\B{\xi+}(0,x_n)\le (v\cdot e_1)e_1\cdot\nabla\gpp(\eta)+v\cdot\nabla\gpp(\zeta)-(v\cdot e_1)e_1\cdot\nabla g(\zeta).\]
	Take $\zeta$ and $\eta$ to $\xi$ to get
	\[\varlimsup_{n\to\infty} n^{-1}\B{\xi+}(0,x_n)\le v\cdot\nabla\gpp(\xi).\]
The lower bound on the liminf holds similarly and so we have proved the claim for $\B{\xi+}$. 
The same argument works for $\B{\xi-}$.
\end{proof}

The lemma below is proved by calculus. 

\begin{lemma}\label{a:lm:t500}    
%Let $n\ge2$.  Then for all $a,b\in(0,\infty)$, 
%\be\label{a:t498} 
%\frac{a^{n-1}b^n}{(a+b)^{2n-1}}   \le \frac{(1-\frac1n)^{n-1}}{(1-\frac1{2n})^{2n-1}} \cdot \biggl(\frac12\biggr)^{2n-1}  . 
%\ee
 Fix $c>0$.  Then for all $n\ge 1$ and all  $a,b$ such that $c\le a\le b\le a+\frac{c}n$, 
\be\label{a:t500} 
\frac{a^{n-1}b^n}{(a+b)^{2n-1}}   \ge \biggl(\frac12\biggr)^{2n-1} .  
\ee 
\end{lemma}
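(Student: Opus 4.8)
\textbf{Proof proposal for Lemma \ref{a:lm:t500}.}

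The plan is to reduce the two-variable inequality \eqref{a:t500} to a one-variable statement by exploiting homogeneity, and then to control the resulting function by a straightforward Taylor/calculus estimate. First I would observe that the left-hand side of \eqref{a:t500} is homogeneous of degree zero in $(a,b)$: replacing $(a,b)$ by $(ta,tb)$ leaves $\frac{a^{n-1}b^n}{(a+b)^{2n-1}}$ unchanged. Hence it suffices to set $t = 1/a$ and write $b = a s$ with $s = b/a \in [1, 1 + \tfrac{c}{na}] \subset [1, 1 + \tfrac1n]$, the last inclusion because $a \ge c$. The inequality \eqref{a:t500} then becomes
\[
\frac{s^n}{(1+s)^{2n-1}} \ge \Bigl(\frac12\Bigr)^{2n-1}, \qquad \text{i.e.} \qquad 2^{2n-1} s^n \ge (1+s)^{2n-1}
\]
for all $s \in [1, 1 + \tfrac1n]$.

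The second step is to prove this one-variable inequality. At $s = 1$ both sides equal $2^{2n-1}$, so equality holds; this is where the bound is sharp. I would then show that the function $\phi(s) = n \log s - (2n-1)\log(1+s)$ is nondecreasing on the interval $[1, 1+\tfrac1n]$, so that $\phi(s) \ge \phi(1) = -(2n-1)\log 2$, which is exactly the claim after exponentiating. We have $\phi'(s) = \frac{n}{s} - \frac{2n-1}{1+s} = \frac{n(1+s) - (2n-1)s}{s(1+s)} = \frac{n - (n-1)s}{s(1+s)}$. This is nonnegative precisely when $s \le \frac{n}{n-1} = 1 + \frac{1}{n-1}$ (and for $n = 1$ the numerator is the constant $1 > 0$, so $\phi' > 0$ everywhere). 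Since our range is $s \le 1 + \tfrac1n < 1 + \tfrac{1}{n-1}$ for every $n \ge 2$, we get $\phi' \ge 0$ throughout $[1, 1+\tfrac1n]$, hence $\phi$ is nondecreasing there, which completes the argument.

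There is no real obstacle here; the only thing to be careful about is the edge case $n = 1$ (where $n - 1 = 0$ and the critical point $\frac{n}{n-1}$ is not finite), which is handled separately by noting $\phi'(s) = \frac{1}{s(1+s)} > 0$. I would also double-check the direction of the hypothesis chain: $c \le a$ gives $\frac{c}{na} \le \frac1n$, so indeed $s = b/a \le 1 + \frac{c}{na} \le 1 + \frac1n$, and $s \ge 1$ since $b \ge a$. With these observations the proof is a few lines of elementary calculus, so I would keep it short rather than belaboring the monotonicity computation.
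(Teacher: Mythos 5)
Your proof is correct and is exactly the kind of elementary calculus argument the paper alludes to (the paper states only ``the lemma below is proved by calculus'' and gives no details). The reduction by homogeneity to $s = b/a \in [1, 1+\tfrac1n]$, the observation that equality holds at $s=1$, and the monotonicity of $\phi(s) = n\log s - (2n-1)\log(1+s)$ via $\phi'(s) = \frac{n-(n-1)s}{s(1+s)} \ge 0$ on that interval (with the $n=1$ edge case handled separately) is a complete and clean proof.
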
 

\medskip 

\section{M/M/1 queues and  Busemann functions} \label{a:bus}

This appendix summarizes  results from \cite{Fan-Sep-20} that are needed for the proofs of the results of 
Section \ref{sec:exp}.   
Fix parameters  $0<\alpha<\beta$.   We formulate  a stationary M/M/1 queue in a particular way.   The inputs are two independent i.i.d.\ sequences:  an {\it inter-arrival process}  $I=(I_i)_{i\in\Z}$  with marginal distribution  $I_i\sim$ Exp$(\alpha)$ and   a {\it service process}  $Y=(Y_i)_{i\in\Z}$  with marginal distribution  $Y_i\sim$ Exp$(\beta)$.  Out of these inputs are produced two outputs: an {\it inter-departure process}   $\wt I=(\wt I_k)_{k\in\Z}$ and a {\it sojourn process}   $J=(J_k)_{k\in\Z}$,  through the following formulas.   Let $G=(G_k)_{k\in\Z}$ be any function on $\Z$ that satisfies $I_k=G_k-G_{k+1}$.   Define the function    $\wt G=(\wt G_k)_{k\in\Z}$ by 
\be\label{m:800}\begin{aligned} 
\wt G_k
=\sup_{m:\,m\ge k}  \Bigl\{  G_m+\sum_{i=k}^m Y_i\Bigr\}  
%=   G_k+ \sup_{m:\,m\ge k}  \Bigl\{  -(G_k-G_m) +\sum_{i=k}^m Y_i\Bigr\} \\
%&=   G_k+ \sup_{m:\,m\ge k}  \Bigl\{  -\sum_{i=k}^{m-1} I_i +Y_k +\sum_{i=k}^{m-1}  Y_{i+1} \Bigr\} \\
%&
=   G_k+Y_k + \sup_{m:\,m\ge k}  \sum_{i=k}^{m-1} (Y_{i+1}-I_i)  . 
\end{aligned} \ee
The convention for the empty sum is $\sum_{i=k}^{k-1} =0$. 
 Under the assumption on $I$ and $Y$,  the supremum in \eqref{m:800} is almost  surely  assumed at some finite $m$.     Then define the outputs by 
 \be\label{m:801}  \wt I_k =  \wt G_k - \wt G_{k+1}  \ee 
and  
  \be\label{m:J} J_k=\wt G_k -  G_k = Y_k + \sup_{m:\,m\ge k}  \sum_{i=k}^{m-1} (Y_{i+1}-I_i). % , \quad k\in\Z.   
    \ee
The outputs satisfy  the useful  iterative equations
  \be\label{m:IJ5}  
 \wt I_k=Y_k+(I_k-J_{k+1})^+
 \quad\text{and}\quad 
 J_k=Y_k+(J_{k+1}-I_k)^+ .  
  \ee
In particular, this implies  the inequality $\wt I_k\ge Y_k$. %and $J_k\ge Y_k$ 
    
 It is a basic fact about M/M/1 queues  that    $\wt I$ and $J$ are i.i.d.\ sequences with marginals $\wt I_k\sim$ Exp$(\alpha)$ and $J_k\sim$ Exp$(\beta-\alpha)$.  Furthermore, the three variables  $(Y_k, I_k, J_{k+1})$ on the right-hand sides of equations \eqref{m:IJ5}  are independent.   (See for example Appendix A in \cite{Fan-Sep-20}.)   But  $\wt I$ and $J$ are not independent of each other.  
    
The queueing interpretation goes as follows.  A  service station  processes a bi-infinite sequence of customers.   Queueing time runs backwards on the lattice $\Z$.   $I_i$ is the time between the arrivals of customers $i+1$ and $i$  ($i+1$ arrived before $i$)  and    $Y_i$ is the service time required by customer $i$.   $\wt I_k$ is the time between the departures of customers $k+1$ and $k$, with $k+1$ departing before $k$.  $J_k$ is the sojourn time of customer $k$, that is, the total time customer $k$ spent in the system from arrival to departure.  $J_k$ is the sum of the service time $Y_k$ and the waiting time of customer $k$, represented by the last member of \eqref{m:J}.  Because of our unusual convention with the backward indexing, even if $G_k$ is the moment of arrival of customer $k$, $\wt G_k$ is not the moment of departure.   The definition of $\wt G$  in \eqref{m:800} is natural in the present setting because it immediately ties in with LPP.    The convention in \cite{Fan-Sep-20} is different because in \cite{Fan-Sep-20} geodesics go south and west instead of north and east.
 
 The   joint distribution of successive  nearest-neighbor increments  of two Busemann functions on a horizontal or vertical line can now be described as follows. This is a special case of Theorem 3.2 in \cite{Fan-Sep-20}.  
 
 \begin{theorem}\label{th:B-q1}  Let   $\zeta\prec\eta$ in $\ri\Uset$ with parameters $\alpha=\alpha(\zeta)<\alpha(\eta)=\beta$ given by \eqref{u-a}.    Let $I=(I_i)_{i\in\Z}$    and    $Y=(Y_i)_{i\in\Z}$   be two independent i.i.d.\ sequences and define $\wt I=(\wt I_k)_{k\in\Z}$ as above through \eqref{m:800}--\eqref{m:801}.

 \begin{enumerate}  [label={\rm(\alph*)}, ref={\rm(\alph*)}]   \itemsep=3pt  
\item\label{B1-1} Let $I_i\sim$ {\rm Exp}$(\alpha)$ and $Y_i\sim$ {\rm Exp}$(\beta)$.  Then the sequence %pair of sequences 
$(\B{\zeta}_{ke_1, (k+1)e_1}, \, \B{\eta}_{ke_1, (k+1)e_1})_{k\in\Z}$  has the same joint distribution as the pair $(\wt I, Y)$. 

  \item\label{B1-2}    Let    $I_i\sim$ {\rm Exp}$(1-\beta)$ and    $Y_i\sim$ {\rm Exp}$(1-\alpha)$.  Then the sequence  %pair of sequences 
  $(\B{\zeta}_{ke_2, (k+1)e_2}, \, \B{\eta}_{ke_2, (k+1)e_2})_{k\in\Z}$  has the same joint distribution as the pair $(Y, \wt I)$.      
  
  \end{enumerate} 

  \end{theorem}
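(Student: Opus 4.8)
\textbf{Proof plan for Theorem \ref{th:B-q1}.}

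The statement is quoted from \cite{Fan-Sep-18-}, so the natural route is to cite that result directly; however, it is worth recording how the statement of that theorem in \cite{Fan-Sep-18-} translates into the present conventions, since geodesics there go south and west. The plan is as follows. First, recall the distributional facts established in Appendix \ref{app:busgeo}: by Theorem \ref{thm:Bus}\eqref{thm:cocyexist:erg}, $\B{\zeta}(0,ne_1)/n\to e_1\cdot\nabla\gpp(\zeta)$ and similarly for $\eta$, which in the exponential case \eqref{exp-assump} with the parametrization \eqref{u-a} identifies the mean increments, and by the recovery property \eqref{rec-prop2} together with the monotonicity \eqref{mono} the two Busemann functions are intertwined exactly as an M/M/1 queue and its output. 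Concretely, for a fixed horizontal line, set $G_k=\B{\zeta}(0,ke_1)$ so that $I_k=G_k-G_{k+1}=\B{\zeta}(ke_1,(k+1)e_1)$, and let $Y_k=\B{\eta}(ke_1,(k+1)e_1)$; the content of Theorem 3.2 of \cite{Fan-Sep-18-} is that $(I,Y)$ is distributed as the (inter-departure, service) pair of a stationary M/M/1 queue run backwards, i.e.\ $I$ and $Y$ are the \emph{outputs} $\wt I$ and the \emph{input} $Y$, with $I_k\sim\text{Exp}(\alpha)$, $Y_k\sim\text{Exp}(\beta)$. Reading this in reverse — constructing $(\wt I, Y)$ from independent input sequences via \eqref{m:800}--\eqref{m:801} — gives part \eqref{B1-1}.

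The key algebraic input is the queueing identity \eqref{m:IJ5}, or equivalently the fixed-point relation: if one defines $\wt I$ from $(I,Y)$ by \eqref{m:800}--\eqref{m:801}, then $(\wt I, Y)$ has the same law as $(I', Y)$ where $I'$ is the stationary inter-departure sequence feeding the next station. On the Busemann side, this is precisely the statement that the pair of Busemann increments $(\B{\zeta}_{\bullet e_1,\bullet e_1}, \B{\eta}_{\bullet e_1,\bullet e_1})$ solves the queueing fixed-point equation obtained from the cocycle and recovery properties \eqref{coc-prop}, \eqref{rec-prop2}: writing $B^\zeta$ and $B^\eta$ for the two cocycles, the recovery relation $\min\{B^{\xi}(x,x+e_1), B^{\xi}(x,x+e_2)\}=\w_x$ forces $B^{\eta}(x,x+e_2)$ to be a function of $B^{\zeta}$ and $\w$ in the manner of \eqref{m:IJ5} along a column, and vice versa along a row. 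Since a stationary, ergodic solution of the M/M/1 fixed-point equation with the prescribed marginals is unique (this is the Burke-type theorem underlying \cite{Fan-Sep-18-}), the Busemann pair must coincide in law with the queueing output pair.

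Part \eqref{B1-2} then follows by the lattice symmetry: reflecting across the diagonal (the map $e_1\leftrightarrow e_2$, which by \eqref{exp-shape} fixes $\gpp$ up to swapping coordinates) exchanges the roles of $\zeta$ and $\eta$ via $\alpha\mapsto 1-\alpha$, and exchanges the roles of ``input'' and ``output'' in the queue, so that the vertical-line increments $(\B{\zeta}_{ke_2,(k+1)e_2}, \B{\eta}_{ke_2,(k+1)e_2})$ have the law of $(Y,\wt I)$ with $I_i\sim\text{Exp}(1-\beta)$, $Y_i\sim\text{Exp}(1-\alpha)$, matching the parameter assignment in the statement. The main obstacle, and the only genuinely nontrivial point, is the uniqueness of the stationary solution to the queueing fixed point with given marginals — but this is exactly what is proven in \cite{Fan-Sep-18-} (Appendix A there, building on Burke's theorem and the ergodicity of the M/M/1 queue), so in the write-up I would simply quote Theorem 3.2 of \cite{Fan-Sep-18-} and indicate the dictionary between their south-west convention and our north-east convention, noting that the sign change and the re-indexing of \eqref{m:800} are precisely what makes the present formulation tie directly into last-passage times.
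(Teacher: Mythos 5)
Your bottom line matches the paper's exactly: the paper offers no independent proof of Theorem \ref{th:B-q1}, it simply records that the statement is "a special case of Theorem 3.2 in \cite{Fan-Sep-18-}" and explains the change of convention (south-west geodesics there, north-east here, plus the re-indexing in \eqref{m:800}). Your final sentence does the same, so as far as what would actually appear in the paper, the two proofs coincide.

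That said, the derivation sketch in your middle paragraph has a genuine gap and would not survive as a replacement for the citation. You claim that the recovery relation \eqref{rec-prop2} together with the cocycle property \eqref{coc-prop} forces $\B{\eta}(x,x+e_2)$ to be a function of $\B{\zeta}$ and $\w$ in the manner of \eqref{m:IJ5}. This is not so: \eqref{rec-prop2} is a statement about a \emph{single} direction $\xi\sigg$ (it intertwines the $e_1$- and $e_2$-increments of one cocycle with $\w_x$), and \eqref{coc-prop} is additivity of one cocycle. Neither relates $\B{\zeta}$ to $\B{\eta}$. The only structure in Theorem \ref{thm:Bus} tying together different directions is the monotonicity \eqref{mono}, the one-sided limits \eqref{Busemann-limits}, and \eqref{thm:cocyexist:foo}, none of which pins down the \emph{joint} law of $(\B{\zeta},\B{\eta})$. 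The content of Theorem 3.2 of \cite{Fan-Sep-18-} is precisely that the coupling of Busemann increments across directions is realized by the queueing maps \eqref{m:800}--\eqref{m:801}; this is built into the construction of the process there, not a consequence of the axioms in Theorem \ref{thm:Bus}. So the "fixed-point uniqueness" route you sketch has the causality backwards — one cannot first show that the Busemann pair solves the queueing fixed point and then appeal to uniqueness, because establishing that the pair solves the fixed point is essentially the theorem being proved. The reflection argument for part \eqref{B1-2} is fine as a shortcut once \eqref{B1-1} is in hand.
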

 
Next we  derive a random walk representation for the sequence $\{\B{\zeta}_{ke_1, (k+1)e_1}- \B{\eta}_{ke_1, (k+1)e_1}\}_{k\in\Z}$ of (nonnegative) differences.   By Theorem \ref{th:B-q1} this sequence is equal in distribution to $\{\wt I_k-Y_k\}_{k\in\Z}$.   Define a two-sided randon walk $S$  with positive drift $E[I_{i-1}-Y_i]=\alpha^{-1}-\beta^{-1}$ by 
\be\label{BS}
S_n=\begin{cases}  -\sum_{i=n+1}^0 (I_{i-1}-Y_i), &n<0 \\[3pt]  0, &n=0\\[3pt] \sum_{i=1}^n (I_{i-1}-Y_i), &n>0.   \end{cases}
\ee
Then from \eqref{m:IJ5} and \eqref{m:J}, 
\begin{align*}
\wt I_k-Y_k = (I_k-J_{k+1})^+ = \biggl\{\,\inf_{n:\,n> k} \sum_{i=k+1}^{n} (I_{i-1}-Y_{i})\biggr\}^+ =  \bigl\{   \inf_{n:\,n> k}  (S_n-S_{k}) \bigr\}^+. 
\end{align*}
  From above we can record that  for $r>0$, 
\be\label{B4}
\P(  \B{\zeta}_{0, e_1}> \B{\eta}_{0, e_1}) =P(I_k>J_{k+1}) =  \frac{\beta-\alpha}{\beta}. 
\ee

  \begin{corollary}\label{cor:B-q2}  Let   $\zeta\prec\eta$ in $\ri\Uset$ with parameters $\alpha=\alpha(\zeta)<\alpha(\eta)=\beta$ given by \eqref{u-a}.    Let $S$ be the random walk in \eqref{BS} with step distribution {\rm Exp}$(\alpha)-{\rm Exp}(\beta)$.  Then the sequence $\{\B{\zeta}_{ke_1, (k+1)e_1}- \B{\eta}_{ke_1, (k+1)e_1}\}_{k\in\Z}$ has the same distribution as the sequence $ \bigl\{ \bigl( \,\ddd\inf_{n:\,n> k}   S_n-S_{k}\bigr)^+ \bigr\}_{k\in\Z}$.    \end{corollary}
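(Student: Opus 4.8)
\textbf{Proof proposal for Corollary \ref{cor:B-q2}.}

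The plan is to combine Theorem \ref{th:B-q1}\eqref{B1-1} with the explicit M/M/1 identities recorded in the appendix. First I would invoke Theorem \ref{th:B-q1}\eqref{B1-1}: with parameters $\alpha=\alpha(\zeta)$ and $\beta=\alpha(\eta)$, and with the inputs $I=(I_i)_{i\in\Z}$ i.i.d.\ $\mathrm{Exp}(\alpha)$ and $Y=(Y_i)_{i\in\Z}$ i.i.d.\ $\mathrm{Exp}(\beta)$ mutually independent, the full sequence $(\B{\zeta}_{ke_1,(k+1)e_1},\,\B{\eta}_{ke_1,(k+1)e_1})_{k\in\Z}$ has the same joint law as $(\wt I_k,Y_k)_{k\in\Z}$, where $\wt I$ is the inter-departure sequence built from $I,Y$ via \eqref{m:800}--\eqref{m:801}. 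Applying the coordinatewise difference map (a measurable function of the whole sequence) to both sides, the sequence $\{\B{\zeta}_{ke_1,(k+1)e_1}-\B{\eta}_{ke_1,(k+1)e_1}\}_{k\in\Z}$ has the same distribution as $\{\wt I_k-Y_k\}_{k\in\Z}$.

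Next I would rewrite $\wt I_k-Y_k$ in terms of the random walk $S$ of \eqref{BS}. Starting from the iterative identity $\wt I_k=Y_k+(I_k-J_{k+1})^+$ in \eqref{m:IJ5}, we get $\wt I_k-Y_k=(I_k-J_{k+1})^+$. Then from the explicit formula \eqref{m:J} for the sojourn time, $J_{k+1}=Y_{k+1}+\sup_{m:\,m\ge k+1}\sum_{i=k+1}^{m-1}(Y_{i+1}-I_i)$, so
\begin{align*}
I_k-J_{k+1}&=I_k-Y_{k+1}-\sup_{m:\,m\ge k+1}\sum_{i=k+1}^{m-1}(Y_{i+1}-I_i)
=\inf_{m:\,m\ge k+1}\Bigl(I_k-Y_{k+1}-\sum_{i=k+1}^{m-1}(Y_{i+1}-I_i)\Bigr)\\
&=\inf_{m:\,m\ge k+1}\sum_{i=k+1}^{m}(I_{i-1}-Y_i)
=\inf_{n:\,n>k}(S_n-S_k),
\end{align*}
where the last equality is just the definition of $S$ in \eqref{BS} (the increments of $S$ are $S_n-S_{n-1}=I_{n-1}-Y_n$, so $S_n-S_k=\sum_{i=k+1}^n(I_{i-1}-Y_i)$ for $n>k$, including the degenerate case $n=k+1$ where the sum is a single term). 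Hence $\wt I_k-Y_k=(\inf_{n:\,n>k}(S_n-S_k))^+$, which is exactly the $k$-th coordinate of the target sequence. Since this rewriting is a pathwise identity, it transfers the distributional equality from the previous paragraph to the claimed one.

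I do not expect a serious obstacle here: the corollary is essentially bookkeeping on top of Theorem \ref{th:B-q1} and the standard M/M/1 relations already quoted. The one point requiring a little care is making sure the supremum in \eqref{m:800} (equivalently the supremum defining $J_{k+1}$) is almost surely attained at a finite index, so that the algebraic manipulation turning a $\sup$ into an $\inf$ and then into $\inf_{n>k}(S_n-S_k)$ is legitimate; this is guaranteed by the positive drift $E[I_{i-1}-Y_i]=\alpha^{-1}-\beta^{-1}>0$ of $S$, which forces $S_n\to+\infty$ as $n\to\infty$ and hence $\inf_{n>k}(S_n-S_k)$ to be a genuine (finite) minimum. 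The only other bit of diligence is confirming that the difference map and the positive-part map are applied to the entire $\Z$-indexed sequence at once, so that the equality in distribution is as processes, not merely marginally — but this is immediate since equality in distribution of random sequences is preserved under any fixed measurable transformation of the sequence.
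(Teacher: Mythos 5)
Your proposal is correct and follows exactly the paper's own derivation: apply Theorem \ref{th:B-q1}\eqref{B1-1} to identify the law of the pair with $(\wt I,Y)$, then use \eqref{m:IJ5} and \eqref{m:J} to rewrite $\wt I_k-Y_k=(I_k-J_{k+1})^+ = \bigl(\inf_{n>k}(S_n-S_k)\bigr)^+$ as a pathwise identity. The extra remark about the supremum in \eqref{m:800} being a.s.\ attained at a finite index is a reasonable bit of diligence, and the paper already notes it just after \eqref{m:800}.
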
 
 
 \section{Random walk}\label{a:RW}
 
 Let $0<\alpha<\beta$ and let $\{X_i\}_{i\in\Z}$ be a doubly infinite sequence of i.i.d.\ random variables with marginal distribution $X_i \sim\text{Exp}(\alpha)-\text{Exp}(\beta)$ (difference of two independent exponential random variables).   Let $\theta$ denote the shift on the underlying canonical sequence space so that   $X_j=X_k\circ\theta^{j-k}$. 
  Let $\{S_n\}_{n\in\Z}$ be the  two-sided random walk   such that $S_0=0$ and $S_n-S_m=\sum_{i=m+1}^n X_i$ for all $m<n$ in $\Z$.  
%  defined by 
%\[  S_n=\begin{cases}  -\sum_{i=n+1}^0 X_i, &n<0 \\[3pt]  0, &n=0\\[3pt] \sum_{i=1}^n X_i, &n>0  \end{cases} \]
%so that the increments satisfy $S_n-S_m=\sum_{i=m+1}^n X_i$ for all $m<n$ in $\Z$.
 Let $(\lambda_i)_{i\ge 1}$ be the strict ascending ladder epochs of the forward walk.  That is, begin with $\lambda_0=0$,  and for $i\ge 1$  let 
\[  \lambda_i=\inf\{ n>\lambda_{i-1}:   S_{n} > S_{\lambda_{i-1}} \} . 
%\quad\text{and}\quad H_i=S_{T_{i}} -S_{T_{i-1}}. 
  \]
%and put $H_i=S_{T_{i}} -S_{T_{i-1}}$. 
 The positive drift of $S_n$ ensures that these variables are finite almost surely.   For $i\ge 1$  define the increments   $L_i=\lambda_i-\lambda_{i-1}$ and $H_i=S_{\lambda_{i}} -S_{\lambda_{i-1}}$.   The variables $\{L_i, H_i\}_{i\ge 1}$ are mutually independent with marginal distribution
 \be\label{expH1L1} \begin{aligned}
P(L_1=n, H_1>r) 
%&=  P(S_1<0, \dotsc, S_{n-1}<0, \, S_n>r)\\
%&=P(B_{n,r})
= C_{n-1}\,  \frac{\alpha^{n-1}\beta^n}{(\alpha+\beta)^{2n-1}}e^{-\alpha r}, \quad n\in\N, \, r\ge 0. 
\end{aligned}\ee 
Above $C_n=\frac{1}{n+1}{2n\choose n}$ for $n\ge0$ are the  Catalan numbers. 
 A small extension of the proof of Lemma B.3 in \cite{Fan-Sep-20} derives  \eqref{expH1L1}.  

  Let 
\be\label{d:expU}   W_0=\inf_{m>0} S_m . \ee   
Note that $W_0\circ\theta^n>0$ if and only if $S_n < \inf_{m>n} S_m$, that is, $n$ is a last exit time for the random walk. 
Define   successive last exit times  (in the language of Doney \cite{Don-89}) by 
\be\label{exp-si1}\begin{aligned} 
 \sigma_0&=\inf\{ n\ge0:  S_n < \inf_{m>n} S_m \}  % W_0\circ\theta^n >0\} 
 \\
\text{and for $i\ge 1$,}\qquad 
 \sigma_i&=\inf\{ n>\sigma_{i-1}:   S_n < \inf_{m>n} S_m \}. 
%= \inf\{ n>\sigma_{j-1}: W_0\circ\theta^n >0\} \\
%& =  \inf\Bigl\{ n>\sigma_{j-1}: \inf_{m>n} \sum_{i=n+1}^{m} Y_i  >0\Bigr\}
\end{aligned}\ee

\begin{proposition}\label{pr:exp9}   %Assume $\alpha<\beta$.  
Conditionally on $W_0>0$ {\rm(}equivalently, on $\sigma_0=0${\rm)},  the pairs  $\{(\sigma_i-\sigma_{i-1}, S_{\sigma_i}-S_{\sigma_{i-1}})\}_{i\ge 1}$ are i.i.d.\ with  marginal distribution 
%$(\sigma_i-\sigma_{i-1}, S_{\sigma_i}-S_{\sigma_{i-1}})\deq(T_1, H_1)$.   
\be\label{exp85} \begin{aligned}
P(\sigma_i-\sigma_{i-1}=n, \, S_{\sigma_i}-S_{\sigma_{i-1}}>r\,\vert\,W_0>0) 
= C_{n-1}\,  \frac{\alpha^{n-1}\beta^n}{(\alpha+\beta)^{2n-1}}e^{-\alpha r}
\end{aligned}\ee 
for all $i\in\N$,  $n\in\N$,   and  $r\ge 0$. 
\end{proposition}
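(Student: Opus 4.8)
The plan is to identify the last-exit times $\sigma_i$ with the \emph{strict descending ladder structure of the time-reversed walk}, and then apply a duality/splitting argument analogous to the one behind \eqref{expH1L1}. Concretely, note that $n$ is a last-exit time (i.e.\ $W_0\circ\theta^n>0$) precisely when $S_n<S_m$ for all $m>n$. Reading the increments of $S$ from $n$ forward and reversing them, this condition says that the reversed walk started at time $n$ stays strictly below its starting height forever after the first step, i.e.\ $n$ is a descending renewal point. Since the step distribution $\mathrm{Exp}(\alpha)-\mathrm{Exp}(\beta)$ has positive mean $\alpha^{-1}-\beta^{-1}>0$, the reversed walk has negative drift, so its strict descending ladder epochs form an honest (non-defective) renewal sequence and the successive last-exit times $\sigma_i$ are exactly these renewal points shifted by the (a.s.\ finite) initial epoch $\sigma_0$.

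First I would make the time-reversal precise. Let $\widehat S$ denote the walk with increments $\widehat X_i = X_{-i+1}$, which is again i.i.d.\ $\mathrm{Exp}(\alpha)-\mathrm{Exp}(\beta)$; the event $\{S_n<\inf_{m>n}S_m\}$ corresponds, under the reversal centered at $n$, to the event that $\widehat S$ does not return to or above $0$, i.e.\ $n$ lies in the strict descending ladder range of $\widehat S$. The differences $\sigma_i-\sigma_{i-1}$ and $S_{\sigma_i}-S_{\sigma_{i-1}}$ then translate into the strict descending ladder epoch and (negative of) ladder height increments of $\widehat S$, which by the standard renewal-theoretic independence of ladder increments (Feller, or the same argument used for \eqref{expH1L1}) are i.i.d.\ under the conditioning $\{\sigma_0=0\}=\{W_0>0\}$.

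Next I would compute the marginal law. The key observation is that the strict descending ladder increment distribution of $\widehat S$ equals, after sign flip, the strict ascending ladder increment distribution of a walk with step law $\mathrm{Exp}(\beta)-\mathrm{Exp}(\alpha)$ — but more efficiently, one can use the explicit Wiener–Hopf factorization for the difference-of-exponentials step distribution. The point process of ascending ladder heights of a walk with $\mathrm{Exp}(\alpha)-\mathrm{Exp}(\beta)$ steps has increments as in \eqref{expH1L1}, namely $P(L_1=n,H_1>r)=C_{n-1}\alpha^{n-1}\beta^n(\alpha+\beta)^{1-2n}e^{-\alpha r}$. The reversal symmetry of ladder structure (the descending ladder height of the reversed walk has the same law as the ascending ladder height of the original, because the walk increments are i.i.d.\ hence exchangeable along any finite block and the reversal is measure-preserving) yields that $(\sigma_i-\sigma_{i-1},S_{\sigma_i}-S_{\sigma_{i-1}})$ has exactly the law in \eqref{exp85}. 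Concretely: the duality lemma for random walks states that the renewal measure of strict descending ladder epochs of $\widehat S$ has the same increment law as the strict ascending ladder epochs of $S$, and the ascending ladder increment of $S$ is precisely \eqref{expH1L1}; moreover the associated height is $\mathrm{Exp}(\alpha)$ by the memoryless overshoot property at an ascending ladder epoch when the positive part of the step is $\mathrm{Exp}(\alpha)$.

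The main obstacle I anticipate is getting the \emph{direction} of the duality exactly right and justifying that last-exit times of $S$ are genuinely the descending ladder epochs of the reversed walk, including the bookkeeping of strict versus weak inequalities — here the continuity of the step distribution is what makes strict/weak distinctions irrelevant, so ties can be ignored. A secondary technical point is the conditioning on $\{W_0>0\}$: one must check that $\{\sigma_0=0\}=\{W_0>0\}$ (immediate from the definitions) and that, conditionally on this event, the post-$\sigma_0$ increments are independent of the pre-history and identically distributed — this follows from the strong Markov property applied at each $\sigma_i$ together with the fact that $\{$there exists a further last-exit time$\}$ has probability one by the positive drift (so the renewal process is non-terminating). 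Once these pieces are assembled, \eqref{exp85} drops out of \eqref{expH1L1} verbatim.
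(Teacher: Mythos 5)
Your plan rests on three claims that are each individually wrong, and the errors are conceptually linked.

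First, the formula $\widehat X_i = X_{-i+1}$ produces a walk with the \emph{same} step distribution $\mathrm{Exp}(\alpha)-\mathrm{Exp}(\beta)$ and hence the \emph{same positive} drift $\alpha^{-1}-\beta^{-1}>0$, not negative drift. Consequently, the strict \emph{descending} ladder epochs of $\widehat S$ form a terminating (defective) renewal process — with probability one there are only finitely many — so they cannot carry the non-defective i.i.d.\ renewal structure you are asserting. What does form a non-terminating renewal process for a positive-drift walk is the \emph{ascending} ladder sequence, and that is the object one must produce from the last-exit times. The statement you call the ``duality lemma,'' that the descending ladder increments of $\widehat S$ have the same law as the ascending ladder increments of $S$, is also false for a time-reversed (as opposed to negated) walk: time reversal preserves the step law, so $\widehat S$ has the \emph{same} ascending and the \emph{same} (defective) descending ladder laws as $S$. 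What is true, and what the paper exploits, is a finite-window identity: within a window $\lzb 0,n_\ell\rzb$, the event that $\{n_0,\dots,n_\ell\}$ is the last-exit configuration of $S$ is, after the substitution $S^*_k = S_{n_\ell}-S_{n_\ell-k}$, precisely the event that $\{n_\ell-n_\ell,\dots,n_\ell-n_0\}$ is the strict \emph{ascending} ladder epoch configuration of $S^*$. So the correct target is the ascending ladder decomposition of the dual walk in a finite window, and \eqref{expH1L1} then applies directly.

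Second, and independently of the ladder-direction confusion, the appeal to ``the strong Markov property applied at each $\sigma_i$'' does not work. The $\sigma_i$ are last-exit times, hence not stopping times (the event $\{\sigma_i=n\}$ depends on the entire future of the walk beyond $n$), and the strong Markov property gives no information at such random times. This is not a technicality: it is precisely the obstruction that the duality argument is designed to circumvent. The paper never invokes Markov at $\sigma_i$; instead it writes the joint event $\{\sigma_i=n_i,\,S_{\sigma_i}-S_{\sigma_{i-1}}>r_i\ \forall i\le \ell,\ W_0>0\}$ as an intersection of a condition on $(S_k)_{0\le k\le n_\ell}$ with the condition $W_0\circ\theta^{n_\ell}>0$, notices that these two are independent because they depend on disjoint blocks of increments, dualizes the first factor to an ascending ladder event for $S^*$ (which \emph{is} controlled by \eqref{expH1L1}, since ladder epochs are genuine stopping times), and reads off the factorization. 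You do not get this product structure from a naive renewal argument at the $\sigma_i$.

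So the overall strategy of ``dualize last-exit times to ladder epochs'' is the right one and is in fact the paper's route, but as written your proof cannot be completed: replace descending with ascending, drop the negative-drift claim (the dual walk has positive drift, and that is exactly what keeps the ascending ladder renewal non-terminating), and replace the Markov argument with the finite-window independence of $S^*$ from $W_0\circ\theta^{n_\ell}$.
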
 
 
\begin{proof} 
Let $0=n_0<n_1<\dotsm< n_\ell$ and $r_1,\dotsc,r_\ell>0$.   
  The dual random walk   $S^*_k=S_{n_\ell}-S_{n_\ell-k}$ for $0\le k\le n_\ell$ (p.~394 in Feller II \cite{Fel-71}) satisfies   $(S^*_k)_{0\le k\le n_\ell} \deq (S_k)_{0\le k\le n_\ell}$ and is independent of  $W_0\circ\theta^{n_\ell}$.   
  \begin{align*}
&P\bigl(\forall i=1,\dotsc,\ell:  \sigma_i-\sigma_{i-1}=n_i -n_{i-1} \text{ and }  S_{\sigma_i}-S_{\sigma_{i-1}}> r_i, \, W_0>0\bigr)\\  
&=
P\bigl(\forall i=1,\dotsc,\ell:  \sigma_i=n_i \text{ and }  S_{\sigma_i}-S_{\sigma_{i-1}}>r_i, \, W_0>0\bigr)\\
&=P\bigl( \forall i=1,\dotsc,\ell:  S_k>S_{n_i}>S_{n_{i-1}}+r_i \text{ for }  k\in\,\rzb n_{i-1}, n_i\lzb\, , \, W_0\circ\theta^{n_\ell} >0\bigr) \\
&= P\bigl( \forall i=1,\dotsc,\ell:  S^*_j< S^*_{n_\ell-n_i}<S^*_{n_\ell-n_{i-1}}-r_i \text{ for }  j\in\,\rzb n_\ell-n_i, n_\ell-n_{i-1} \lzb\, , \, W_0\circ\theta^{n_\ell} >0\bigr) \\
&=  P\bigl( \forall k=1,\dotsc,\ell:  \lambda_k=n_\ell-n_{\ell-k}\text{ and }  H_k>r_{\ell-k+1} \bigr)  \, P(W_0>0) \\
&=  P\bigl( \forall k=1,\dotsc,\ell:  L_k=n_{\ell-k+1}-n_{\ell-k}\text{ and }  H_k>r_{\ell-k+1} \bigr)  \, P(W_0>0)   .
%= C_{n-1}\,  \frac{\alpha^{n-1}\beta^n}{(\alpha+\beta)^{2n-1}}  \cdot  P(W_0>0) 
\end{align*} 
The claim follows from the independence of $\{L_k, H_k\}$ and \eqref{expH1L1}. 
%The event in the third probability above says that the ladder epochs of $S^*_{0,n_\ell}$ are $n_\ell-n_{\ell-1}, n_\ell-n_{\ell-2}, \dotsc, n_\ell$, with ladder height increments strictly above $r_\ell, r_{\ell-1},\dotsc, r_1$.  
\end{proof} 
 
% Next we extend Proposition \ref{pr:exp9} to negative times and also reformulate it. 
  From $\sigma_0$ as defined in \eqref{exp-si1},  extend  $\sigma_i$   to negative indices by defining, for $i=-1, -2, -3,\dotsc$, 
\be\label{exp:si1.1} 
\sigma_i=\max\bigl\{k<\sigma_{i+1}:     S_k< S_{\sigma_{i+1}}\bigr\} . 
\ee
For each $k\in\Z$ set 
\be\label{Wk}   W_k=   \inf_{n:\,n> k}   S_n-  S_{k} . \ee 
Then one can check that $\sigma_{-1}<0\le \sigma_0$, and for all $i,k\in\Z$, 
\be\label{78-63}  
S_{\sigma_i}=     \inf_{n:\,n> \sigma_{i-1}}   S_n  ,
\ee
\be\label{78-72}  W_{\sigma_i} 
=   \inf_{n:\,n> \sigma_i}   S_n- S_{\sigma_i} =  S_{\sigma_{i+1}}- S_{\sigma_i}, 
\ee
and 
%\be\label{78-64} \tau(i)=\min\bigl\{k>\tau(i-1):   \inf_{n:\,n> k}   S_n> S_{k}\bigr\}. 
%\ee
\be\label{78-65}  
 %  \inf_{n:\,n> k}   S_n>S_{k} 
  W_k>0   \ \iff\  k\in\{\sigma_i: i\in\Z\}. 
   \ee

\begin{theorem}\label{th:78-70} 
Conditionally on $\sigma_0=0$, equivalently, on $W_0>0$,   $\{\sigma_{i+1}-\sigma_i, W_{\sigma_i}: i\in\Z\}$ is an  i.i.d.\ sequence with marginal distribution 
\be\label{rw110} \begin{aligned} 
&P\bigl(\sigma_{i+1}-\sigma_i=n, \, W_{\sigma_i} >r\,\big\vert\,W_0>0\bigr) 
= C_{n-1}\,  \frac{\alpha^{n-1}\beta^n}{(\beta+\alpha)^{2n-1}}e^{-\alpha r} 
%\\   &\qquad\qquad= \frac1n\binom{2n-2}{n-1}\,  \frac{\alpha^{n-1}\beta^n}{(\beta+\alpha)^{2n-1}}e^{-\alpha r}\,,\quad n\in\N,\, r\in\R_+. 
\end{aligned} \ee
for all $i\in\N$,  $n\in\N$,   and  $r\ge 0$. 
\end{theorem}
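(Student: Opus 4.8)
\textbf{Proof proposal for Theorem \ref{th:78-70}.}

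The plan is to obtain the two-sided statement from the one-sided Proposition \ref{pr:exp9} by exploiting the duality/reversal identity for the random walk that was already used in the proof of that proposition, together with the algebraic identities \eqref{78-63}--\eqref{78-72} that express $W_{\sigma_i}$ and $\sigma_i$ entirely in terms of the running infima $\inf_{n>k} S_n - S_k$. The first observation is that, on the event $\{W_0>0\}=\{\sigma_0=0\}$, the positively-indexed pairs $\{(\sigma_{i+1}-\sigma_i, W_{\sigma_i}):i\ge 0\}$ already have the claimed i.i.d.\ law: by \eqref{78-72} we have $W_{\sigma_i}=S_{\sigma_{i+1}}-S_{\sigma_i}$ and $\sigma_{i+1}-\sigma_i$ is precisely the increment of the last-exit-time sequence, so Proposition \ref{pr:exp9} gives exactly \eqref{rw110} for $i\ge 0$ (note $\sigma_0=0$ means the $i=0$ increment starts the renewal). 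So the real content is to handle the negative indices and to establish \emph{joint} independence across all of $\Z$.

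For the negative indices I would run the same dual-walk argument as in Proposition \ref{pr:exp9} but looking backward. Fix $\ell\ge 1$ and integers $n_{-\ell}<\dots<n_{-1}<0\le n_0=0$ together with reals $r_{-\ell},\dots,r_{-1}>0$; I want to compute $P\bigl(\sigma_i=n_i\ \forall\, -\ell\le i\le 0,\ W_{\sigma_i}>r_i\ \forall\, -\ell\le i\le -1\bigr)$. Writing everything in terms of the walk between levels $n_{-\ell}$ and $0$, the event $\{\sigma_i=n_i\}$ says $n_i$ is a strict descending ladder point (a last exit time) of the walk, and $\{W_{\sigma_i}>r_i\}$ controls the height increment $S_{n_{i+1}}-S_{n_i}$; by definition \eqref{exp:si1.1} the configuration of last-exit times of $S$ restricted to $[n_{-\ell},0]$, read backward from $0$, is exactly the configuration of strict ascending ladder points of the dual walk $S^*_k=S_0-S_{-k}$, $0\le k\le |n_{-\ell}|$, read forward. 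Since $(S^*_k)_{0\le k\le |n_{-\ell}|}\overset{d}{=}(S_k)_{0\le k\le|n_{-\ell}|}$ and is independent of $W_0\circ\theta^{\,0}=W_0=\inf_{m>0}S_m$, the same computation chain as in Proposition \ref{pr:exp9} turns this probability into $P(W_0>0)\cdot\prod_{i} P(L_1=n_{i+1}-n_i, H_1>r_i)$, and \eqref{expH1L1} delivers the product form matching \eqref{rw110}.

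To conclude \emph{joint} independence of $\{(\sigma_{i+1}-\sigma_i,W_{\sigma_i}):i\in\Z\}$ I would combine the forward and backward pieces via the Markov-like splitting at level $0$: conditionally on $\{\sigma_0=0\}$, the future $(S_n-S_0)_{n\ge 0}$ and the past $(S_0-S_n)_{n\le 0}$ are independent (the steps $X_i$ for $i>0$ are independent of those for $i\le 0$), the future determines $\{(\sigma_{i+1}-\sigma_i,W_{\sigma_i}):i\ge 0\}$, and the past determines $\{(\sigma_{i+1}-\sigma_i,W_{\sigma_i}):i\le -1\}$; within each half the i.i.d.\ structure comes from the two displays above. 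The main obstacle I anticipate is purely bookkeeping: being careful that on $\{W_0>0\}$ the index $i=0$ plays a boundary role correctly --- that $\sigma_{-1}<0\le\sigma_0=0<\sigma_1$ --- and that the dual-walk reversal lines up the backward last-exit increments with forward ladder increments without an off-by-one error in which increment gets height $r_i$. Once the reversal is set up as in Proposition \ref{pr:exp9}, no genuinely new probabilistic input is needed, and \eqref{rw110} follows; the identity $W_{\sigma_i}=S_{\sigma_{i+1}}-S_{\sigma_i}$ from \eqref{78-72} is what makes the height variable in \eqref{rw110} coincide with the ladder height $H$ of \eqref{expH1L1}.
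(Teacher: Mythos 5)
Your proposal is correct and follows essentially the same route as the paper: read the positive-index pairs directly from Proposition \ref{pr:exp9}, obtain the negative-index pairs via the reversed (dual) walk $\wt S_k=-S_{-k}$ and the identification \eqref{78-76} of its ascending ladder variables with $(-\sigma_i, W_{\sigma_i})_{i\le -1}$, and then decouple the two halves by observing that the conditioning event $\{W_0>0\}$ and $\Psi_+$ depend only on $(S_n)_{n\ge 1}$ while (on that event, using \eqref{78-72}) $\Psi_-$ coincides with a functional $\Psi'$ of $(S_n)_{n\le 0}$ alone. The only small thing the paper makes explicit that you leave implicit is exactly this last point — that on $\{W_0>0\}$ the infima defining $W_{\sigma_i}$ for $i\le -1$ truncate at level $0$, so $\Psi_-$ is genuinely a past functional — but since you invoke \eqref{78-72} to write $W_{\sigma_i}=S_{\sigma_{i+1}}-S_{\sigma_i}$ with $\sigma_{i+1}\le \sigma_0=0$, your argument reaches the same conclusion.
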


%$\{\sigma_{i+1}-\sigma_i, \depa^2_{\sigma_i}-\depa^1_{\sigma_i}: i\ge0\}$

\begin{proof} 
 Define the processes $\Psi_+=\{\sigma_{i+1}-\sigma_i, W_{\sigma_i}: i\ge0\}$ and $\Psi_-=\{\sigma_{i+1}-\sigma_i, W_{\sigma_i}: i\le-1\}$.    $\Psi_+$ and  the conditioning  event    $W_0>0$ depend only on $(S_n)_{n\ge1}$,   while  $W_0>0$  implies   for $n<0$ that  $  \inf_{m:\,m>n} S_{m}= \inf_{m:\, n< m\le 0} S_{m}$.   $\Psi_+$ and $\Psi_-$ have been decoupled.  
 
Define another forward walk with the same step distribution  by $\wt S_k=-S_{-k}$ for $k\ge 0$.     Let $\lambda_0=0$,   $(\lambda_i)_{i\ge 1}$ be the successive ladder epochs and $H_i=\wt S_{\lambda_i}-\wt S_{\lambda_{i-1}}$   the successive ladder height increments for the $\wt S$ walk.  

We claim that on the event $\sigma_0=0$, 
\be\label{78-76} 
\lambda_{-i}=-\sigma_i \quad\text{and}\quad 
W_{\sigma_i}= H_{-i} \quad\text{for $i\le-1$.}  
\ee
First by definition, $\lambda_0=0=-\sigma_0$.  By the definitions and by induction, for $i\le -1$, 
\begin{align*}
\lambda_{-i} &=\min\{ k>\lambda_{-i-1}:  \wt S_k> \wt S_{\lambda_{-i-1}} \}
=\min\{ k>-\sigma_{i+1}:  S_{-k}< S_{\sigma_{i+1}} \} \\
%&=-\max\{ n<-\lambda_{-i-1}:  \wt S_{-n}> \wt S_{-\sigma_{i+1}} \} \\
&=-\max\{ n<\sigma_{i+1}:  S_{n}< S_{\sigma_{i+1}} \} 
= -\sigma_i 
\end{align*} 
where the last equality came from \eqref{exp:si1.1}.   Then from \eqref{78-72}, 
\begin{align*}  W_{\sigma_i} 
  =  S_{\sigma_{i+1}}- S_{\sigma_i}  =    -\wt S_{-\sigma_{i+1}}+\wt S_{-\sigma_i}
  =  -\wt S_{\lambda_{-i-1}}+\wt S_{\lambda_{-i}}= H_{-i}. 
\end{align*}
Claim \eqref{78-76} has been verified.  

Let   $\Psi'=\{ \lambda_{-i}-\lambda_{-i-1}, H_{-i}: i\le-1\}$,  a function of $(S_n)_{n\le -1}$.  
 By \eqref{78-76},    $\Psi_-=\Psi'$ on the event $\sigma_0=0$. 

Let $A$ and $B$ be suitable measurable sets of infinite sequences.  
\begin{align*}
&P( \Psi_+\in A, \Psi_-\in B\,\vert\,  W_0>0) =  \frac1{P(W_0>0)} P( \Psi_+\in A, \Psi'\in B, W_0>0) \\
&\qquad 
=\frac{P( \Psi_+\in A, W_0>0)}{P(W_0>0)} \, P(\Psi'\in B) 
=P( \Psi_+\in A\,\vert\,W_0>0)  P(\Psi'\in B). 
\end{align*} 
The conclusion follows.  By Proposition \ref{pr:exp9}, conditional on $W_0>0$, 
 $\Psi_+$ has the i.i.d.\ distribution \eqref{rw110}, which is the same as the i.i.d.\ distribution  \eqref{expH1L1} of $\Psi'$. 
\end{proof}

\bigskip
\footnotesize
\noindent\textit{Acknowledgments.}
The authors thank Yuri Bakhtin for helpful conversations.
C.\ Janjigian was partially supported by National Science Foundation grant DMS-2125961.
F.\ Rassoul-Agha was partially supported by NSF grants DMS-1407574 and DMS-1811090.
T.\ Sepp\"al\"ainen was partially supported by  National Science Foundation grants  DMS-1602486  and DMS-1854619, and by the Wisconsin Alumni Research Foundation.

%%%%%%%%%%% To ease editing, use normal size:

%\normalsize
%\baselineskip=17pt

\bibliographystyle{plain}
%\bibliography{firasbib2010} 
\bibliography{GeoWebPaper}

\begin{thebibliography}{10}

\bibitem{Ahl-Hof-16-}
Daniel Ahlberg and Christopher Hoffman.
\newblock Random coalescing geodesics in first-passage percolation.
\newblock 2016.
\newblock Preprint (\href{https://arxiv.org/abs/1609.02447}{\tt arXiv
  1609.02447}).

\bibitem{Auf-Dam-Han-17}
Antonio Auffinger, Jack Hanson, and Michael Damron.
\newblock {\em 50 years of first passage percolation}, volume~68 of {\em
  University Lecture Series}.
\newblock American Mathematical Society, Providence, RI, 2017.

\bibitem{Bai-Dei-Joh-99}
Jinho Baik, Percy Deift, and Kurt Johansson.
\newblock On the distribution of the length of the longest increasing
  subsequence of random permutations.
\newblock {\em J. Amer. Math. Soc.}, 12(4):1119--1178, 1999.

\bibitem{Bak-16}
Yuri Bakhtin.
\newblock Inviscid {B}urgers equation with random kick forcing in noncompact
  setting.
\newblock {\em Electron. J. Probab.}, 21:Paper No. 37, 50, 2016.

\bibitem{Bak-Cat-Kha-14}
Yuri Bakhtin, Eric Cator, and Konstantin Khanin.
\newblock Space-time stationary solutions for the {B}urgers equation.
\newblock {\em J. Amer. Math. Soc.}, 27(1):193--238, 2014.

\bibitem{Bak-Kha-18}
Yuri Bakhtin and Konstantin Khanin.
\newblock On global solutions of the random {H}amilton-{J}acobi equations and
  the {KPZ} problem.
\newblock {\em Nonlinearity}, 31(4):R93--R121, 2018.

\bibitem{Bak-Li-19}
Yuri Bakhtin and Liying Li.
\newblock Thermodynamic limit for directed polymers and stationary solutions of
  the {B}urgers equation.
\newblock {\em Comm. Pure Appl. Math.}, 72(3):536--619, 2019.

\bibitem{Bal-Bus-Sep-20}
M\'{a}rton Bal\'{a}zs, Ofer Busani, and Timo Sepp\"{a}l\"{a}inen.
\newblock Non-existence of bi-infinite geodesics in the exponential corner
  growth model.
\newblock {\em Forum Math.\ Sigma}, 8:Paper No. e46, 34, 2020.

\bibitem{Bas-Hof-Sly-18-}
Riddhipratim Basu, Christopher Hoffman, and Allan Sly.
\newblock Nonexistence of bigeodesics in integrable models of last passage
  percolation.
\newblock 11 2018.
\newblock Preprint (\href{https://arxiv.org/abs/1811.04908}{\tt arXiv
  1811.04908}).

\bibitem{Bec-Kha-07}
J\'{e}r\'{e}mie Bec and Konstantin Khanin.
\newblock Burgers turbulence.
\newblock {\em Phys. Rep.}, 447(1-2):1--66, 2007.

\bibitem{Cat-Pim-11}
Eric Cator and Leandro P.~R. Pimentel.
\newblock A shape theorem and semi-infinite geodesics for the {H}ammersley
  model with random weights.
\newblock {\em ALEA Lat. Am. J. Probab. Math. Stat.}, 8:163--175, 2011.

\bibitem{Cat-Pim-12}
Eric Cator and Leandro P.~R. Pimentel.
\newblock Busemann functions and equilibrium measures in last passage
  percolation models.
\newblock {\em Probab. Theory Related Fields}, 154(1-2):89--125, 2012.

\bibitem{Cat-Pim-13}
Eric Cator and Leandro P.~R. Pimentel.
\newblock Busemann functions and the speed of a second class particle in the
  rarefaction fan.
\newblock {\em Ann. Probab.}, 41(4):2401--2425, 2013.

\bibitem{Cor-12}
Ivan Corwin.
\newblock The {K}ardar-{P}arisi-{Z}hang equation and universality class.
\newblock {\em Random Matrices Theory Appl.}, 1(1):1130001, 76, 2012.

\bibitem{Cor-16}
Ivan Corwin.
\newblock Kardar-{P}arisi-{Z}hang universality.
\newblock {\em Notices Amer. Math. Soc.}, 63(3):230--239, 2016.

\bibitem{Cou-11}
David Coupier.
\newblock Multiple geodesics with the same direction.
\newblock {\em Electron. Commun. Probab.}, 16:517--527, 2011.

\bibitem{Dam-Han-14}
Michael Damron and Jack Hanson.
\newblock Busemann functions and infinite geodesics in two-dimensional
  first-passage percolation.
\newblock {\em Comm. Math. Phys.}, 325(3):917--963, 2014.

\bibitem{Dam-Han-17}
Michael Damron and Jack Hanson.
\newblock Bigeodesics in {F}irst-{P}assage {P}ercolation.
\newblock {\em Comm. Math. Phys.}, 349(2):753--776, 2017.

\bibitem{Don-89}
R.~A. Doney.
\newblock Last exit times for random walks.
\newblock {\em Stochastic Process. Appl.}, 31(2):321--331, 1989.

\bibitem{E-etal-00}
Weinan E, K.~Khanin, A.~Mazel, and Ya. Sinai.
\newblock Invariant measures for {B}urgers equation with stochastic forcing.
\newblock {\em Ann. of Math. (2)}, 151(3):877--960, 2000.

\bibitem{Ede-61}
Murray Eden.
\newblock A two-dimensional growth process.
\newblock In {\em Proc. 4th {B}erkeley {S}ympos. {M}ath. {S}tatist. and
  {P}rob., {V}ol. {IV}}, pages 223--239. Univ. California Press, Berkeley,
  Calif., 1961.

\bibitem{Fan-Sep-20}
Wai-Tong~(Louis) Fan and Timo Sepp\"al\"ainen.
\newblock Joint distribution of {B}usemann functions in the exactly solvable
  corner growth model.
\newblock {\em Prob. Math. Phys.}, 1(1):55--100, 2020.

\bibitem{Fel-68}
William Feller.
\newblock {\em An introduction to probability theory and its applications.
  {V}ol. {I}}.
\newblock Third edition. John Wiley \& Sons Inc., New York, 1968.

\bibitem{Fel-71}
William Feller.
\newblock {\em An introduction to probability theory and its applications.
  {V}ol. {II}.}
\newblock Second edition. John Wiley \& Sons Inc., New York, 1971.

\bibitem{Fer-Mar-Pim-09}
Pablo~A. Ferrari, James~B. Martin, and Leandro P.~R. Pimentel.
\newblock A phase transition for competition interfaces.
\newblock {\em Ann. Appl. Probab.}, 19(1):281--317, 2009.

\bibitem{Fer-Pim-05}
Pablo~A. Ferrari and Leandro P.~R. Pimentel.
\newblock Competition interfaces and second class particles.
\newblock {\em Ann. Probab.}, 33(4):1235--1254, 2005.

\bibitem{Geo-Ras-Sep-17-ptrf-2}
Nicos Georgiou, Firas Rassoul-Agha, and Timo Sepp{\"a}l{\"a}inen.
\newblock Geodesics and the competition interface for the corner growth model.
\newblock {\em Probab. Theory Related Fields}, 169(1-2):223--255, 2017.

\bibitem{Geo-Ras-Sep-17-ptrf-1}
Nicos Georgiou, Firas Rassoul-Agha, and Timo Sepp{\"a}l{\"a}inen.
\newblock Stationary cocycles and {B}usemann functions for the corner growth
  model.
\newblock {\em Probab. Theory Related Fields}, 169(1-2):177--222, 2017.

\bibitem{Gly-Whi-91}
Peter~W. Glynn and Ward Whitt.
\newblock Departures from many queues in series.
\newblock {\em Ann. Appl. Probab.}, 1(4):546--572, 1991.

\bibitem{Hal-Tak-15}
Timothy Halpin-Healy and Kazumasa~A. Takeuchi.
\newblock A {KPZ} cocktail---shaken, not stirred \dots toasting 30 years of
  kinetically roughened surfaces.
\newblock {\em J. Stat. Phys.}, 160(4):794--814, 2015.

\bibitem{Ham-Wel-65}
John~M. Hammersley and J.~A.~Dominic Welsh.
\newblock First-passage percolation, subadditive processes, stochastic
  networks, and generalized renewal theory.
\newblock In {\em Proc. {I}nternat. {R}es. {S}emin., {S}tatist. {L}ab., {U}niv.
  {C}alifornia, {B}erkeley, {C}alif}, pages 61--110. Springer-Verlag, New York,
  1965.

\bibitem{Hof-08}
Christopher Hoffman.
\newblock Geodesics in first passage percolation.
\newblock {\em Ann. Appl. Probab.}, 18(5):1944--1969, 2008.

\bibitem{How-New-97}
C.~Douglas Howard and Charles~M. Newman.
\newblock Euclidean models of first-passage percolation.
\newblock {\em Probab. Theory Related Fields}, 108(2):153--170, 1997.

\bibitem{How-New-01}
C.~Douglas Howard and Charles~M. Newman.
\newblock Geodesics and spanning trees for {E}uclidean first-passage
  percolation.
\newblock {\em Ann. Probab.}, 29(2):577--623, 2001.

\bibitem{Jan-Ras-18-arxiv}
Christopher Janjigian and Firas Rassoul-Agha.
\newblock {B}usemann functions and {G}ibbs measures in directed polymer models
  on $\mathbb{Z}^2$.
\newblock 2018.
\newblock Extended version (\href{https://arxiv.org/abs/1810.03580v2}{\tt arXiv
  1810.03580v2}).

\bibitem{Jan-Ras-20-aop}
Christopher Janjigian and Firas Rassoul-Agha.
\newblock Busemann functions and {G}ibbs measures in directed polymer models on
  {$\mathbb Z^2$}.
\newblock {\em Ann. Probab.}, 48(2):778--816, 2020.

\bibitem{Joh-00}
Kurt Johansson.
\newblock Shape fluctuations and random matrices.
\newblock {\em Comm. Math. Phys.}, 209(2):437--476, 2000.

\bibitem{Kal-83}
Olav Kallenberg.
\newblock {\em Random measures}.
\newblock Akademie-Verlag, Berlin; Academic Press, Inc. [Harcourt Brace
  Jovanovich, Publishers], London, third edition, 1983.

\bibitem{Kal-17}
Olav Kallenberg.
\newblock {\em Random measures, theory and applications}, volume~77 of {\em
  Probability Theory and Stochastic Modelling}.
\newblock Springer, Cham, 2017.

\bibitem{Lic-New-96}
Cristina Licea and Charles~M. Newman.
\newblock Geodesics in two-dimensional first-passage percolation.
\newblock {\em Ann. Probab.}, 24(1):399--410, 1996.

\bibitem{Mar-04}
James~B. Martin.
\newblock Limiting shape for directed percolation models.
\newblock {\em Ann. Probab.}, 32(4):2908--2937, 2004.

\bibitem{Mut-79}
Eginhard~J. Muth.
\newblock The reversibility property of production lines.
\newblock {\em Management Sci.}, 25(2):152--158, 1979/80.

\bibitem{New-95}
Charles~M. Newman.
\newblock A surface view of first-passage percolation.
\newblock In {\em Proceedings of the {I}nternational {C}ongress of
  {M}athematicians, {V}ol.\ 1, 2 ({Z}{\"u}rich, 1994)}, pages 1017--1023,
  Basel, 1995. Birkh{\"a}user.

\bibitem{Qua-12}
Jeremy Quastel.
\newblock Introduction to {KPZ}.
\newblock In {\em Current developments in mathematics, 2011}, pages 125--194.
  Int. Press, Somerville, MA, 2012.

\bibitem{Qua-Spo-15}
Jeremy Quastel and Herbert Spohn.
\newblock The one-dimensional {KPZ} equation and its universality class.
\newblock {\em J. Stat. Phys.}, 160(4):965--984, 2015.

\bibitem{Rev-05}
P{{\'a}}l R{{\'e}}v{{\'e}}sz.
\newblock {\em Random walk in random and non-random environments}.
\newblock World Scientific Publishing Co. Pte. Ltd., Hackensack, NJ, second
  edition, 2005.

\bibitem{Ros-81}
Hermann Rost.
\newblock Nonequilibrium behaviour of a many particle process: density profile
  and local equilibria.
\newblock {\em Z. Wahrsch. Verw. Gebiete}, 58(1):41--53, 1981.

\bibitem{Sep-98-mprf-1}
Timo Sepp{{\"a}}l{{\"a}}inen.
\newblock Hydrodynamic scaling, convex duality and asymptotic shapes of growth
  models.
\newblock {\em Markov Process. Related Fields}, 4(1):1--26, 1998.

\bibitem{Sin-92}
Yakov~G. Sina\u{\i}.
\newblock Statistics of shocks in solutions of inviscid {B}urgers equation.
\newblock {\em Comm. Math. Phys.}, 148(3):601--621, 1992.

\bibitem{Szc-Kel-90}
Wladyslaw Szczotka and Frank~P. Kelly.
\newblock Asymptotic stationarity of queues in series and the heavy traffic
  approximation.
\newblock {\em Ann. Probab.}, 18(3):1232--1248, 1990.

\bibitem{Tra-Wid-94}
Craig~A. Tracy and Harold Widom.
\newblock Level-spacing distributions and the {A}iry kernel.
\newblock {\em Comm. Math. Phys.}, 159(1):151--174, 1994.

\bibitem{Wut-02}
Mario~V. W{{\"u}}thrich.
\newblock Asymptotic behaviour of semi-infinite geodesics for maximal
  increasing subsequences in the plane.
\newblock In {\em In and out of equilibrium ({M}ambucaba, 2000)}, volume~51 of
  {\em Progr. Probab.}, pages 205--226. Birkh{\"a}user Boston, Boston, MA,
  2002.

\end{thebibliography}

%%%%%%%%%%%%%

\end{document}